\newtheorem{theorem}{Theorem}[subsection]
\newtheorem*{theorem*}{Theorem}
\newtheorem{corollary}[theorem]{Corollary}
\newtheorem*{corollary*}{Corollary}
\newtheorem{lemma}[theorem]{Lemma}
\newtheorem{proposition}[theorem]{Proposition}
\theoremstyle{definition}
\newtheorem{example}[theorem]{Example}
\newtheorem*{example*}{Example}
\newtheorem*{examples*}{Examples}
\newtheorem{remark}[theorem]{Remark}
\newtheorem{definition}[theorem]{Definition}
\newtheorem{question}{Question}
\newtheorem{conjecture}{Conjecture}
\newtheorem*{con}{Conventions}
\newsavebox{\eqbox}
\newenvironment{longequation*} {\begin{lrbox}{\eqbox}$} {$\end{lrbox}\begin{equation*}\resizebox{\linewidth}{!}{\ensuremath{\displaystyle\usebox{\eqbox}}}\end{equation*}}
\newcommand*\bigcdot{\mathpalette\bigcdot@{.5}}
\newcommand*\bigcdot@[2]{\mathbin{\vcenter{\hbox{\scalebox{#2}{\(\m@th#1\bullet\)}}}}}
\def\bydef{\coloneqq}
\DeclarePairedDelimiter{\abs}{\lvert}{\rvert}
\DeclarePairedDelimiter{\Set}{\lbrace}{\rbrace}
\newcommand{\suchthat}{\hskip.5ex\color{black!66}\middle/\color{black}\hskip1ex}
\DeclareMathOperator{\diff}{d}
\DeclareMathOperator{\Flag}{Flag}
\DeclareMathOperator{\rank}{rank}
\DeclareMathOperator{\codim}{codim}
\DeclareMathOperator{\ord}{ord}
\DeclareMathOperator{\Co}{Col}
\DeclareMathOperator{\Div}{div}
\DeclareMathOperator{\Pic}{Pic}
\DeclareMathOperator{\inv}{inv}
\DeclareMathOperator{\Diag}{Diag}
\DeclareMathOperator{\Sing}{Sing}
\DeclareMathOperator{\Ext}{\mathchoice
  {\scalebox{1.2}{$\mathsf{\Lambda}$}}
  {\scalebox{1.1}{$\mathsf{\Lambda}$}}
  {\scalebox{0.8}{$\mathsf{\Lambda}$}}
  {\scalebox{.65}{$\mathsf{\Lambda}$}}
}
\newcommand{\C}{\mathbb{C}}
\renewcommand{\O}{\mathcal{O}} % ( was Ø )
\renewcommand{\P}{\mathbf{P}} % ( was ¶ )
\newcommand{\R}{\mathbb{R}}
\newcommand{\N}{\mathbb{N}}
\newcommand{\Z}{\mathbb{Z}}
\newcommand{\B}{\mathbb{B}}
\newcommand{\kk}{\mathbf{k}}
\let\mathbi\boldsymbol
\author{Antoine Etesse}
\email{antoine.etesse@univ-amu.fr}
\address{Aix Marseille Univ, CNRS, Centrale Marseille, I2M, Marseille, France}
\title
[Geometric generalized Wronskians. Applications in intermediate hyperbolicity and foliation theory.]
{Geometric generalized Wronskians. Applications in intermediate hyperbolicity and foliation theory.}
\subjclass{}
\keywords{Wronskian, hyperbolicity, foliation}
\begin{document}
\sloppy

\begin{abstract}
In this paper, we introduce a sub-family of the usual generalized Wronskians, that we call geometric generalized Wronskians. It is well-known that one can test linear dependance of holomorphic functions (of several variables) via the identical vanishing of generalized Wronskians. We show that such a statement remains valid if one tests the identical vanishing only on geometric generalized Wronskians. 

It turns out that geometric generalized Wronskians allow to define intrinsic objects on projective varieties polarized with an ample line bundle: in this setting, the lack of existence of global functions is compensated by global sections of powers of the fixed ample line bundle. Geometric generalized Wronskians are precisely defined so that their local evaluations on such global sections globalize up to a positive twist by the ample line bundle.

We then give three applications of the construction of geometric generalized Wronskians: one in intermediate hyperbolicity, and two in foliation theory. In intermediate hyperbolicity, we show the algebraic degeneracy of holomorphic maps from \(\C^{p}\) to a Fermat hypersurface in \(\P^{N}\) of degree \(\delta > (N+1)(N-p)\): this interpolates between two well-known results, namely for \(p=1\) (first proved via Nevanlinna theory) and \(p=N-1\) (in which case the Fermat hypersurface is of general type). The first application in foliation theory provides a criterion for algebraic integrability of leaves of foliations: our criterion is not optimal in view of current knowledges, but has the advantage of having an elementary proof. Our second application deals with positivity properties of adjoint line bundles of the form \(K_{\mathcal{F}} + L \), where \(K_{\mathcal{F}}\) is the canonical bundle of a regular foliation \(\mathcal{F}\) on a smooth projective variety \(X\), and where \(L\) is an ample line bundle on \(X\).

\end{abstract}

\maketitle

\section*{Introduction}
Wronskians first appeared in the litterature with the work of Wronski \cite{hoene1812refutation} in 1812: being given \(f_{0}, \dotsc, f_{m}\) \((m+1)\) holomorphic functions on the complex line \(\C\) (or any open subset of it), their \textsl{Wronskian} \(W(f_{0}, \dotsc, f_{m})\) is the determinant of the square matrix formed with their multi-derivatives:
\[
W(f_{0}, \dotsc, f_{m})
\bydef
\det
\begin{pmatrix}
f_{0} & \cdot & \cdot & f_{m}
\\
f_{0}' & \cdot & \cdot & f_{m}'
\\ 
\cdot  & & & \cdot
\\
\cdot & & & \cdot
\\
 f_{0}^{(m)} & \cdot & \cdot & f_{m}^{(m)}
\end{pmatrix}.
\]
This definition, specific to one variable functions, has been widely used in the litterature since it was first introduced: see e.g \cite{surveyonevariable} for a survey of its applications in various areas of mathematics. (Note that their definition of ``generalized Wronskians`` differs from ours). A fundamental property of Wronskians is the following:
\begin{center}
\((f_{0}, \dotsc, f_{m})\) are linearly independant over \(\C\) if and only if their Wronskian \(W(f_{0}, \dotsc, f_{m})\) does not vanish identically.
\end{center}
This was pointed out by Peano in \cite{Peano}, and first proved by Bôcher in \cite{Bocher}.

It is only later that the definition of Wronskians was extended to the multi-variables case in \cite{Ostro}.
The definition goes as follows (see also Section~\ref{sse: g W}). Let \(p\in \N_{\geq 1}\), and let \(f_{0}, \dotsc, f_{m}\) be \((m+1)\) holomorphic functions on \(\C^{p}\)(or an open subset of it). Consider a set of words \(\mathcal{U}=\Set{\overline{u_{1}}, \dotsc, \overline{u_{m}}}\), where each word \(\overline{u_{i}}\) is written with the alphabet \(\Set{1, \dotsc, p}\). Define the following determinant:
\[
W_{\mathcal{U}}(f_{0}, \dotsc, f_{m})
\bydef
\det
\begin{pmatrix}
f_{0} & \cdot & \cdot & f_{m}
\\
\partial_{\overline{u_{1}}} f_{0} & \cdot & \cdot & \partial_{\overline{u_{1}}} f_{m}
\\ 
\cdot & & & \cdot
\\
\cdot & & & \cdot
\\
\partial_{\overline{u_{m}}} f_{0} & \cdot & \cdot & \partial_{\overline{u_{m}}} f_{m}
\end{pmatrix},
\]
where by definition
\[
\partial_{\overline{u}}
=
\frac{\partial^{\alpha_{1}(\overline{u})+ \dotsb + \alpha_{p}(\overline{u})}}
{\partial z_{1}^{\alpha_{1}(\overline{u})} \dotsb \partial z_{p}^{\alpha_{p}(\overline{u})}},
\]
with \(\overline{u}=1^{\alpha_{1}(\overline{u})}\dotsb p^{\alpha_{p}(\overline{u})}\).
This is called a \textsl{pure generalized Wronskian} if and only if the length \(\ell(\overline{u_{i}})\) of the word \(\overline{u_{i}}\) is less or equal than \(i\) for every \(1 \leq i \leq m\). A \textsl{generalized Wronskian} is then simply a linear combination of pure generalized Wronskians. It was already hinted at in \cite{Ostro} that generalized Wronskians must statisfy the same fundamental property than Wronskians, namely:
\begin{center}
\((f_{0}, \dotsc, f_{m})\) are linearly independant over \(\C\) if and only if one of their pure generalized Wronskians \(W_{\mathcal{U}}(f_{0}, \dotsc, f_{m})\) does not vanish identically.
\end{center}
This result was proved and used in \cite{Roth} in the context of Diophantine approximation, and reproved later in \cite{Ber} in the context of Nevanlinna Theory and algebraic dependance of real numbers. Note that each proof provided in the previously quoted papers uses an induction on the number of variables.

Recently, in respectively \cite{luza2018extactic} and \cite{brotbek2017hyperbolicity}, Wronskians in one variable were used to construct global jet differentials (of \(1\)-germs) on projective varieties: see Section~\ref{ssse: jets differentials} for  definitions of jet differentials for \(p\)-germs, \(p\geq 1\), or \cite{brotbek2017hyperbolicity} for the case \(p=1\). Their construction reads as follows.
\begin{proposition}[\cite{brotbek2017hyperbolicity}, \cite{luza2018extactic}]
\label{prop: Brot}
Let \(s_{0}, \dotsc, s_{m} \) be global sections of a line bundle \(L \) on a projective variety \(X\). The Wronskian operator \( W \) induces a global section \(W(s_{0}, \dotsc, s_{m}) \) of \(E_{1,m,\frac{m(m+1)}{2}}X \otimes L^{m+1}\), defined locally on a trivializing open set \(U \) of \(L\) as follows:
\[
W(s_{0}, \dotsc, s_{m})(\gamma)
\overset{loc}{=}
W(s_{0, U}\circ \gamma, \dotsc, s_{m,U} \circ \gamma)(0),
\]
where \( \gamma: (\C,0) \to (X,x) \) is a \(1\)-germ passing through \(x \in U\), and where the notation \(s_{i,U}\) indicates that the section \(s_{i}\) is written under a trivialization of the line bundle \(L\) on the open set \(U\).
\end{proposition}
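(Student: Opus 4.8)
The plan is to treat this as a gluing problem. I must check three things: that the prescribed local evaluation defines, fiber by fiber, an element of the jet bundle $E_{1,m,\frac{m(m+1)}{2}}X$; that it depends holomorphically on the base point $x$; and---crucially---that on the overlap of two trivializing charts the local expressions differ exactly by the transition cocycle of $L^{m+1}$. The first two points are essentially bookkeeping about the structure of the Wronskian as a polynomial in jet coordinates, while the third reduces to a single algebraic scaling identity satisfied by the Wronskian operator.

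First I would fix a trivializing open set $U$ and a $1$-germ $\gamma\colon(\C,0)\to(X,x)$, and expand each entry of the Wronskian matrix. Writing $f_i = s_{i,U}\circ\gamma$, the Fa\`a di Bruno formula expresses $f_i^{(k)}(0)$ as a polynomial in the derivatives $\gamma'(0),\dots,\gamma^{(k)}(0)$ whose coefficients involve the derivatives of $s_{i,U}$ at $x$; assigning weight $\ell$ to $\gamma^{(\ell)}$, this polynomial is homogeneous of weighted degree $k$. Since the $k$-th row of the determinant carries the $k$-th derivatives, the Leibniz expansion picks exactly one entry from each row $k=0,\dots,m$, so the determinant is homogeneous of weighted degree $0+1+\dots+m=\frac{m(m+1)}{2}$. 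This shows that $W(s_{0,U}\circ\gamma,\dots,s_{m,U}\circ\gamma)(0)$ is, as a function of the $m$-jet of $\gamma$, a (Green--Griffiths) jet differential of order $m$ and weighted degree $\frac{m(m+1)}{2}$, depending holomorphically on $x$ because the $s_{i,U}$ do.

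The heart of the argument is the behavior under change of trivialization. If $V$ is a second chart with transition function $g=g_{UV}$, so that $s_{i,U}=g\,s_{i,V}$ on $U\cap V$, then $s_{i,U}\circ\gamma=(g\circ\gamma)\cdot(s_{i,V}\circ\gamma)$, and I would invoke the scaling identity
\[
W(h f_0,\dots,h f_m)=h^{m+1}\,W(f_0,\dots,f_m),
\]
valid for any holomorphic $h$. This follows from an elementary row reduction on the Wronskian matrix: subtracting suitable multiples of the upper rows successively clears every derivative of $h$, after which a factor of $h$ can be pulled out of each of the $m+1$ columns, yielding the power $h^{m+1}$. Applying it with $h=g\circ\gamma$ and evaluating at $0$ gives
\[
W(s_{0,U}\circ\gamma,\dots)(0)=g(x)^{m+1}\,W(s_{0,V}\circ\gamma,\dots)(0),
\]
which is precisely the cocycle relation for $L^{m+1}$. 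Combined with the first step, the local sections glue to a global section of $E_{1,m,\frac{m(m+1)}{2}}X\otimes L^{m+1}$.

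The only genuinely delicate point is the scaling identity together with its exponent: one must verify, either by the row reduction above or by induction on $m$, that the power of $h$ is exactly $m+1$ and carries no residual dependence on the derivatives of $h$. Once this is pinned down, matching it against the cocycle of $L^{m+1}$ is immediate, and the weighted-degree count of the first step guarantees membership in the correct graded piece of the jet bundle.
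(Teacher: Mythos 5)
Your proposal is correct and follows essentially the same route as the paper: the paper (which cites this statement but proves the general \(p\)-germ version in Propositions \ref{prop: gg W germ} and \ref{prop: ggW pgg W}) likewise combines equivariance under the torus rescaling of jets to identify the graded piece \(E_{1,m,\frac{m(m+1)}{2}}X\) with the scaling identity \(W(hf_{0},\dotsc,hf_{m})=h^{m+1}W(f_{0},\dotsc,f_{m})\), established by exactly your row operations via the Leibniz rule, to obtain the cocycle of \(L^{m+1}\). Your intermediate division by \(h\) in the row reduction is harmless since \(h=g_{UV}\circ\gamma\) is nonvanishing near \(0\) (and the identity is polynomial in the jet data anyway), so there is no gap.
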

This is the starting point of this paper, as our main goal is to generalize the previous proposition in the setting of \(p\)-germs, \(p>1\), using generalized Wronskians. There is however an obstacle in this higher dimensional case: whereas the compatibility condition to define a global object \(W(s_{0}, \dotsc, s_{m})\)  is automatically satisfied for the one variable Wronskian W (this gives rise to the positive twist \(L^{m+1}\)), this is no longer true for generalized Wronskian. This lead us to introduce \textsl{geometric generalized Wronskians} (see Section~\ref{sse: gg W}): those are exactly the generalized Wronskians satisfying the wanted compatibility condition. The analogue of Proposition \ref{prop: Brot}  reads as follows (see Section~\ref{sse: g W} for definitions of order and weight, and see Section~\ref{ssse: jets differentials} for definitions of relevant jet differentials bundles):
\begin{proposition}
\label{prop: constr}
Let \(s_{0}, \dotsc, s_{m} \) be global sections of a line bundle \(L \). Let \( W \) be a non-zero geometric generalized Wronskian of order \(k\) and weight \(w\). The operator \( W \) induces a global section \(W(s_{0}, \dotsc, s_{m}) \) of \(E_{p,k,w}X \otimes L^{m+1}\), defined locally on a trivializing open set \(U \) of \(L\) as follows:
\[
W(s_{0}, \dotsc, s_{m})(\gamma)
\overset{loc}{=}
W(s_{0, U}\circ \gamma, \dotsc, s_{m,U} \circ \gamma)(0),
\]
where \( \gamma: (\C^{p},0) \to (X,x) \) is a \(p\)-germ passing through \(x \in U\), and where the notation \(s_{i,U}\) indicates that the section \(s_{i}\) is written under a trivialization of the line bundle \(L\) on the open set \(U\).
\end{proposition}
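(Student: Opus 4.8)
The plan is to verify that the locally defined quantities $\gamma\mapsto W(s_{0,U}\circ\gamma,\dotsc,s_{m,U}\circ\gamma)(0)$ glue into a single global section of $E_{p,k,w}X\otimes L^{m+1}$. Fix a cover $\Set{U_{\alpha}}$ trivialising $L$, with nowhere-vanishing holomorphic transition functions $g_{\alpha\beta}$, so that the local representatives obey $s_{i,\alpha}=g_{\alpha\beta}\,s_{i,\beta}$ on $U_{\alpha}\cap U_{\beta}$. I would then separate the statement into two independent checks: first, that on each $U_{\alpha}$ the assignment $\gamma\mapsto W(s_{0,\alpha}\circ\gamma,\dotsc,s_{m,\alpha}\circ\gamma)(0)$ really is a local section of $E_{p,k,w}X$; and second, that over an overlap the two local sections differ by exactly the factor $g_{\alpha\beta}^{\,m+1}$, i.e.\ by the cocycle defining $L^{m+1}$.

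For the first check I would expand each entry $\partial_{\overline{u_j}}(s_{i,\alpha}\circ\gamma)(0)$ by the multivariate chain rule: it is a universal polynomial in the jet-coordinates of $\gamma$ up to order $\ell(\overline{u_j})$ and in the partial derivatives of $s_{i,\alpha}$ at $x=\gamma(0)$. As $W$ has order $k=\max_j\ell(\overline{u_j})$, only jet-coordinates of $\gamma$ of order at most $k$ occur, so the determinant is a jet differential of order $k$. For the weighted degree I would use the scaling $\gamma\mapsto\big(t\mapsto\gamma(\lambda t)\big)$: the row indexed by $\overline{u_j}$ is multiplied by $\lambda^{\ell(\overline{u_j})}$, so the determinant is multiplied by $\lambda^{\sum_j\ell(\overline{u_j})}=\lambda^{w}$ and is thus homogeneous of weight $w$. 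This places it in $E_{p,k,w}X$ locally, the remaining reparametrisation-invariance being as recorded in Section~\ref{ssse: jets differentials}.

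The crux is the second check, and it is exactly here that the geometric hypothesis is used. Writing $g=g_{\alpha\beta}\circ\gamma$ and $f_i=s_{i,\beta}\circ\gamma$, so that $s_{i,\alpha}\circ\gamma=g\,f_i$, I would Leibniz-expand
\[
\partial_{\overline{u_j}}(g\,f_i)=\sum_{\overline{v}\preceq\overline{u_j}}\binom{\overline{u_j}}{\overline{v}}\,(\partial_{\overline{v}}g)\,\partial_{\overline{u_j}\setminus\overline{v}}f_i ,
\]
which writes the $\overline{u_j}$-row of the Wronskian matrix of the $g\,f_i$ as a $g$-linear combination of the rows $(\partial_{\overline{w}}f_0,\dotsc,\partial_{\overline{w}}f_m)$ indexed by sub-words $\overline{w}\preceq\overline{u_j}$. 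The point of the geometric condition is precisely that all these sub-words again index rows of the Wronskian matrix — equivalently, that the words defining $W$ form an order ideal for the sub-word order — so that, after ordering the rows compatibly with that order, the base-change matrix is triangular with every diagonal entry equal to $g$. Its determinant is then exactly $g^{m+1}$, yielding
\[
W(s_{0,\alpha}\circ\gamma,\dotsc,s_{m,\alpha}\circ\gamma)(0)=g_{\alpha\beta}(x)^{\,m+1}\,W(s_{0,\beta}\circ\gamma,\dotsc,s_{m,\beta}\circ\gamma)(0).
\]
I expect this to be the main obstacle: for a non-geometric generalized Wronskian some sub-word $\overline{w}$ falls outside the index set, the triangular reduction collapses, and no clean $g^{m+1}$ factor survives — which is the whole reason the notion of geometric generalized Wronskian is introduced.

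Combining the two checks, the local sections agree on overlaps up to the cocycle $\Set{g_{\alpha\beta}^{\,m+1}}$ of $L^{m+1}$, tensored with the intrinsic bundle $E_{p,k,w}X$; hence they glue to a well-defined global section $W(s_0,\dotsc,s_m)$ with the stated local description. Proposition~\ref{prop: Brot} is recovered as the special case $p=1$, where the index set $\Set{0,1,\dotsc,m}$ is automatically an order ideal and no geometric restriction is needed.
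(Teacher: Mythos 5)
You have a genuine gap, and it stems from a mismatch with the paper's definition of the objects in the statement. In the paper (Definition \ref{def: gg W}), a geometric generalized Wronskian is \emph{any} element of \(\mathcal{W}(m)\) — i.e.\ any linear combination of pure generalized Wronskians \(W_{\mathcal{U}}\) over admissible, not necessarily subword-closed, sets — that satisfies the functional identity \(W(gf_{0},\dotsc,gf_{m})=g^{m+1}W(f_{0},\dotsc,f_{m})\) for all holomorphic \(g\). Your claim that the geometric condition is ``equivalently, that the words defining \(W\) form an order ideal for the sub-word order'' is therefore not the definition, and it is not established in the paper either: only one implication is proved (Proposition \ref{prop: ggW pgg W}, that full sets yield geometric generalized Wronskians), while a general \(W\in\mathcal{W}_{g}(m)\) is a linear combination of determinants over possibly non-full sets, so there is no single matrix on which your triangular base-change can act. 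Consequently your ``crux'' step establishes the overlap identity only when \(W=W_{\mathcal{U}}\) with \(\mathcal{U}\) a full set — which is exactly the paper's Proposition \ref{prop: ggW pgg W}, recast elegantly as a lower-triangular change of rows with diagonal entries \(g\) instead of the paper's successive row operations — whereas for the general \(W\) of the statement the factor \(g_{\alpha\beta}^{m+1}\) on overlaps holds \emph{by definition}, and the paper's proof of Proposition \ref{prop: gg W germ} accordingly consists only of your first check plus a one-line gluing remark invoking Definition \ref{def: gg W}.

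Your first check is fine and equivalent to the paper's: the paper computes the full torus action, \(D_{\boldsymbol{\lambda}}\cdot W_{\mathcal{U}}=\boldsymbol{\lambda}^{\boldsymbol{\beta}(\mathcal{U})}W_{\mathcal{U}}\), which even locates \(W_{\mathcal{U}}\) in the finer summand \(E_{p,k,\boldsymbol{\beta}(\mathcal{U})}X\) (recorded as Proposition \ref{prop: Wronskian jet}); your scalar scaling \(\gamma\mapsto\gamma(\lambda\,\cdot)\) recovers the coarser weight-\(w\) statement, provided you apply it term by term using the unmixedness built into the hypothesis ``weight \(w\)'' (as written, your scaling argument treats \(W\) as a single determinant, which again only makes sense in the pure case). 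The repair is simple: for the overlap step, quote the defining functional equation of \(\mathcal{W}_{g}(m)\) verbatim, and present your Leibniz/order-ideal triangular reduction as what it really is — a proof that full sets satisfy that defining equation, i.e.\ the pure case, which is indeed the case used downstream in the paper. Your closing remark about \(p=1\) is correct: the only admissible set of size \(m\) in one variable is \(\Set{1,1^{2},\dotsc,1^{m}}\), which is automatically full.
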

In view of this, and the fundamental property of generalized Wronskians, the following question is important:
\begin{center}
``Are geometric generalized Wronskians enough to ensure linear dependance of functions? ``
\end{center}
The answer is positive, and it is our first Main Theorem (labeled Theorem \ref{thm: geometric generalized W} in Section~\ref{sse: proof linear independance}):
\begin{theorem}[Main Theorem I]
\label{thm: mainthm intro}
Let \((f_{0}, \dotsc, f_{m})\) be \((m+1)\) holomorphic functions on \(\C^{p}\). They are linearly independant if and only there exists a geometric generalized Wronskian \(W\) such that \(W(f_{0}, \dotsc, f_{m})\) does not vanish identically.
\end{theorem}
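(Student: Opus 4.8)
The plan is to prove the two implications separately; the reverse one is immediate, and the direct one is the heart of the matter. For the \emph{if} direction, suppose \((f_0,\dotsc,f_m)\) satisfy a nontrivial relation \(\sum_j c_j f_j \equiv 0\) with \((c_j)\in\C^{m+1}\setminus\{0\}\). Applying any \(\partial_{\overline{u}}\) gives \(\sum_j c_j\,\partial_{\overline{u}}f_j\equiv 0\), so the columns of the matrix defining \emph{any} generalized Wronskian \(W_{\mathcal{U}}(f_0,\dotsc,f_m)\) are \(\C\)-dependent; hence every generalized Wronskian, and in particular every geometric one, vanishes identically. This is the contrapositive of one direction.

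For the \emph{only if} direction I would work over the field \(\mathcal{M}\) of meromorphic functions on a connected open subset of \(\C^p\), and study the \(\mathcal{M}\)-linear span \(R\subseteq\mathcal{M}^{m+1}\) of the rows \(r_\alpha\bydef(\partial^\alpha f_0,\dotsc,\partial^\alpha f_m)\), \(\alpha\in\N^p\). The single structural input I need from Section~\ref{sse: gg W} is that, for any order ideal (lower set) \(S\subseteq\N^p\) with \(0\in S\) and \(|S|=m+1\), the determinant \(W_S\bydef\det(\partial^\alpha f_j)_{\alpha\in S,\,0\le j\le m}\) is a geometric generalized Wronskian: under a change of trivialization \(s_j\mapsto g\,s_j\) the Leibniz expansion of row \(\alpha\) equals \(g\,r_\alpha\) plus an \(\mathcal{M}\)-combination of the rows \(r_{\alpha-\beta}\) with \(0<\beta\le\alpha\), all of which lie in \(S\) because \(S\) is a lower set, so row reduction telescopes the correction into the clean factor \(g^{m+1}\) of Proposition~\ref{prop: constr}. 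Thus it suffices to produce, under the hypothesis of linear independence, such an order ideal \(S\) with \(W_S\not\equiv 0\), i.e. with \(\{r_\alpha\}_{\alpha\in S}\) being \(\mathcal{M}\)-independent.

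I would build \(S\) greedily: fix a graded monomial order \(\prec\) on \(\N^p\) (say graded lexicographic) and set \(S=\{\alpha: r_\alpha\notin\Span_{\mathcal{M}}\{r_\beta:\beta\prec\alpha\}\}\), so that \(\{r_\alpha\}_{\alpha\in S}\) is a basis of \(R\) and \(|S|=\dim_{\mathcal{M}}R\). The crucial claim is that \(S\) is a lower set. If not, pick \(\alpha\in S\) with \(\alpha-e_i\notin S\) for some \(i\); writing \(r_{\alpha-e_i}=\sum_{\beta\prec\alpha-e_i}c_\beta r_\beta\) and applying \(\partial_i\), with \(\partial_i r_\gamma=r_{\gamma+e_i}\), yields
\[
r_\alpha=\sum_\beta(\partial_i c_\beta)\,r_\beta+\sum_\beta c_\beta\,r_{\beta+e_i},
\]
in which every index is \(\prec\alpha\) — the first sum because \(\prec\) is graded, the second because \(\prec\) is translation-compatible (\(\beta\prec\alpha-e_i\Rightarrow\beta+e_i\prec\alpha\)). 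This contradicts \(\alpha\in S\). The same graded bookkeeping shows that the \((i+1)\)-th element of \(S\) has total degree at most \(i\), so \(W_S\) is a legitimate pure generalized Wronskian. It then remains to show \(\dim_{\mathcal{M}}R=m+1\). Otherwise the \(m+1\) columns of \((r_\alpha)_{\alpha\in S}\) are \(\mathcal{M}\)-dependent, giving \(c\in\mathcal{M}^{m+1}\setminus\{0\}\) with \(\sum_j c_j \partial^\alpha f_j\equiv 0\) for all \(\alpha\in S\), hence for all \(\alpha\) since \(S\) spans \(R\); differentiating these relations shows the annihilator \(K=\{c:\sum_j c_j r_\alpha\equiv0\ \forall\alpha\}\) is stable under every \(\partial_i\). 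Taking \(c\in K\setminus\{0\}\) of minimal support and normalizing one entry to \(1\), each \(\partial_i c\in K\) has strictly smaller support and so vanishes by minimality; thus \(c\) is constant, and \(\sum_j c_j f_j\equiv0\) contradicts linear independence.

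I expect the order-ideal property of the greedily chosen \(S\) to be the main obstacle: it is the only place where the combinatorics of \(\prec\) (gradedness together with translation-compatibility) is genuinely used, and it is exactly what pins the resulting Wronskian inside the geometric sub-family rather than among arbitrary generalized Wronskians. Everything else reduces to elementary linear algebra over \(\mathcal{M}\) and the Leibniz rule; in particular, this route needs no induction on the number of variables, in contrast to the proofs of \cite{Roth, Ber} for general generalized Wronskians.
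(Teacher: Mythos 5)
Your proposal is correct, and it reaches the theorem by a genuinely different route than the paper's. The paper first reduces to formal power series with pairwise distinct leading exponents (Lemma \ref{lemma: serie}), then to monomials \(\boldsymbol{z}^{\boldsymbol{\alpha}_{i}}\) via a lowest-term argument, and finally --- using the geometric property to shift all exponents into general position --- to the combinatorial Theorems \ref{mainthm A} and \ref{mainthm B} on geometric Vandermondes, whose proof is the technical heart of the paper: an induction on \(m+p\) analyzing projections onto irreducible components that are shown to be generically two-to-one or one-to-one. You replace all of this by linear algebra over the differential field \(\mathcal{M}\): your key insight is that the greedy staircase \(S\) extracted with a graded, translation-compatible monomial order is automatically an order ideal (your \(\partial_{i}\)-telescoping step is sound), so \(S\setminus\{0\}\) is exactly a full set in the sense of Definition \ref{def: pgg W} --- admissibility \(\ell(\overline{u_{i}})\leq i\) follows from the chain argument in a lower set, and the Leibniz row-reduction making \(W_{S}\) geometric is the same mechanism as Proposition \ref{prop: ggW pgg W}. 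The minimal-support derivation argument (the annihilator \(K\) is \(\partial_{i}\)-stable, hence spanned by constant vectors, hence trivial by linear independence and the identity theorem) closes the loop, giving \(\dim_{\mathcal{M}}R=m+1\) and \(W_{S}(f_{0},\dotsc,f_{m})=\det(r_{\alpha})_{\alpha\in S}\not\equiv 0\). Note that you in fact prove the paper's stronger Theorem \ref{thm: geometric generalized W} (non-vanishing of a \emph{pure} geometric generalized Wronskian), with no induction at all, whereas the paper's induction on \(m+p\) does descend in the number of variables \(p\). What the longer route buys the paper is Theorems \ref{mainthm A} and \ref{mainthm B} themselves: an explicit description of the zero loci of the families of (homogeneous) geometric Vandermondes, a determinantal statement of independent interest which moreover locates a working full set in terms of the leading exponents \(\boldsymbol{\alpha}_{i}\); your argument yields no such explicit identity, but is shorter and self-contained for the theorem as stated.
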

As a matter of fact, we prove a stronger result: one can test the non-vanishing condition only on \textsl{pure geometric generalized Wronskians}. Those form a particular subfamily of pure generalized Wronskians: see Section~\ref{sse: pgg W} for definitions, and Section~\ref{sse: proof linear independance} for the proof.

In the second part of this paper, we use our construction (detailed in Proposition \ref{prop: constr}) and our Main Theorem \ref{thm: mainthm intro} in order to study intermediate hyperbolicity properties of Fermat hypersurfaces. For an introduction to hyperbolic spaces see e.g. \cite{kob13}, and for intermediate notions of hyperbolicity, namely \textsl{\(p\)-analytic hyperbolicity} see e.g. \cite{Santa}. For our purposes, it is enough to know that intermediate hyperbolicity is concerned with family of \textsl{entire curves} (an entire curve in a variety \(X\) is a non-constant holomorphic map from the complex line \(\C\) to \(X\)). Our application in intermediate hyperbolicity (labeled Theorem \ref{thm: hyperbolicity} in Section~\ref{sse: hyperbolicity}) reads as follows:
\begin{theorem}[Main Theorem II]
\label{thm: mainthm 2}
Let \(H_{\boldsymbol{\lambda}}=\Set{\lambda_{0}X_{0}^{\delta} + \dotsb + \lambda_{N}X_{N}^{\delta}=0} \subset \P^{N} \) be a Fermat hypersurface of degree \(\delta \in \N_{\geq1}\) in the projective space \(\P^{N}\), \(N \geq 1\), where \(\boldsymbol{\lambda}=[\lambda_{0}, \dotsc, \lambda_{N}] \in \P^{N}\). 
Let \(1 \leq p \leq N-1\), and suppose that \(\delta > (N+1)(N-p)\). Then any non-degenerate holomorphic map
\[
f\colon \C^{p} \to H_{\boldsymbol{\lambda}}
\]
is algebraically degenerate, i.e. its image lies in an hypersurface of \(H_{\boldsymbol{\lambda}}\).
\end{theorem}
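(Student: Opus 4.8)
The plan is to run the standard jet-differential strategy for algebraic degeneracy, using geometric generalized Wronskians (Proposition~\ref{prop: constr}) as the engine that manufactures the jet differentials and Main Theorem~\ref{thm: mainthm intro} to guarantee that these suffice. First I would invoke a fundamental vanishing theorem (an Ahlfors--Schwarz lemma for \(p\)-dimensional sources): any non-degenerate \(f\colon \C^{p}\to H_{\boldsymbol{\lambda}}\) annihilates every global jet differential valued in \(E_{p,k,w}H_{\boldsymbol{\lambda}}\otimes A^{-1}\) with \(A\) ample, i.e.\ \(P(j_{k}f)\equiv 0\). It then suffices to produce such sections \(P\) with a negative enough twist whose common zero locus is a proper subvariety of \(H_{\boldsymbol{\lambda}}\); the image of \(f\) lands in that locus, hence in a hypersurface.

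To build the \(P\)'s, I would apply Proposition~\ref{prop: constr} to the \((N+1)\) monomial sections \(X_{0}^{\delta},\dots,X_{N}^{\delta}\in H^{0}(\P^{N},\O(\delta))\): for a geometric generalized Wronskian \(W\) of order \(k\) and weight \(w\) this yields a global section \(W(X_{0}^{\delta},\dots,X_{N}^{\delta})\) of \(E_{p,k,w}\P^{N}\otimes\O(\delta(N+1))\). The twist is far too positive as it stands, and negativity is extracted from two sources of divisibility. First, the Fermat relation \(\sum_{i}\lambda_{i}X_{i}^{\delta}=0\) makes the \((N+1)\) sections linearly dependent along \(H_{\boldsymbol{\lambda}}\), so the Wronskian vanishes there and is divisible by the defining section \(F\in H^{0}(\O(\delta))\). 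Second, since each \(X_{j}^{\delta}\) is a \(\delta\)-th power, every entry \(\partial_{\overline{u}}(X_{j}^{\delta})\) of the \(j\)-th column carries a factor \(X_{j}^{\delta-\ell(\overline{u})}\), so the whole column is divisible by \(X_{j}^{\delta-k}\) and the Wronskian by the monomial \((X_{0}\cdots X_{N})^{\delta-k}\). For generic \(\boldsymbol{\lambda}\) these divisors are coprime, and dividing both out leaves a reduced section valued in \(E_{p,k,w}\P^{N}\otimes\O\big((N+1)k-\delta\big)\).

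The numerical heart is the choice of \(W\) realizing the smallest order. The Fermat relation pays off twice: modulo \(F\), the first row of the Wronskian is redundant along \(H_{\boldsymbol{\lambda}}\), so one effectively needs only to separate the \(N\) linearly independent restrictions of the \(X_{i}^{\delta}\), and with \(p\) variables there are \(\binom{N}{p}\ge N\) multi-derivative operators of order \(\le N-p\) available for this (the inequality holds throughout \(1\le p\le N-1\)). One can therefore take \(k=N-p\), making the residual twist \(\O\big((N+1)(N-p)-\delta\big)\), which restricts to an anti-ample power of \(\O(1)|_{H_{\boldsymbol{\lambda}}}\) exactly under the hypothesis \(\delta>(N+1)(N-p)\). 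Restricting the reduced section to \(H_{\boldsymbol{\lambda}}\) then gives the desired \(P\in H^{0}\big(H_{\boldsymbol{\lambda}},E_{p,k,w}H_{\boldsymbol{\lambda}}\otimes A^{-1}\big)\).

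The main obstacle — and where Main Theorem~\ref{thm: mainthm intro} is indispensable — is to turn \(P(j_{k}f)\equiv 0\) into genuine degeneracy. The dichotomy I would use is: the pulled-back functions \(f_{0}^{\delta},\dots,f_{N}^{\delta}\) already satisfy the one relation coming from \(f(\C^{p})\subset H_{\boldsymbol{\lambda}}\); if they satisfy a second independent relation \(\sum_{i}c_{i}f_{i}^{\delta}\equiv 0\), then the image of \(f\) lies in \(\Set{\sum_{i}c_{i}X_{i}^{\delta}=0}\cap H_{\boldsymbol{\lambda}}\) and we are done. Otherwise the remaining \(N\) sections are \(\C\)-linearly independent, and Main Theorem~\ref{thm: mainthm intro} furnishes a \emph{geometric} generalized Wronskian not vanishing identically on them, forcing the corresponding reduced \(P\) to satisfy \(P(j_{k}f)\not\equiv 0\), contradicting the fundamental vanishing theorem. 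The delicate points are precisely that the separating Wronskian must be geometric (only those globalize through Proposition~\ref{prop: constr}) and that a geometric one attaining order \(N-p\) exists and survives both divisibilities; this is what pins \(\delta>(N+1)(N-p)\) as the right threshold.
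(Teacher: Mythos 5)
Your overall scaffolding (vanishing theorem for $p$-germs, Wronskians of the Fermat monomials, extraction of negativity from the $\delta$-th powers, Main Theorem~\ref{thm: mainthm intro} to convert identical vanishing into a linear relation) matches the paper's strategy, but two of your key steps fail as stated. First, the divisibility by the defining section $F$: if you take all $(N+1)$ sections $X_{0}^{\delta},\dotsc,X_{N}^{\delta}$, then by multilinearity $W(X_{0}^{\delta},\dotsc,X_{N}^{\delta})=\lambda_{N}^{-1}W(X_{0}^{\delta},\dotsc,X_{N-1}^{\delta},F)$, and only the zeroth-order row of the $F$-column vanishes along $H_{\boldsymbol{\lambda}}$; the entries $\partial_{\overline{u}}(F_{U}\circ\gamma)$ do not, so the ambient section of $E_{p,k,w}\P^{N}\otimes\O(\delta(N+1))$ is \emph{not} divisible by $F$. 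On the other hand, restricted to germs $\gamma$ with image in $H_{\boldsymbol{\lambda}}$ one has $F\circ\gamma\equiv 0$, so the whole column dies and the restricted section is \emph{identically zero} — you obtain no information, rather than a section divisible by $F$. (A sanity check: a nonzero section of $E_{p,k,w}\P^{N}$ twisted by a negative power of $\O(1)$, which your ``reduced section'' on $\P^{N}$ would be, is ruled out by exactly the kind of vanishing proved in Lemma~\ref{lemma: lem3}.) The paper avoids this by using only the $N$ sections $X_{1}^{\delta},\dotsc,X_{N}^{\delta}$ and exploiting the Fermat relation as a \emph{column substitution} valid on germs into $H_{\boldsymbol{\lambda}}$, namely $W_{\mathcal{U}}(X_{1}^{\delta},\dotsc,X_{N}^{\delta})_{|H}=W_{\mathcal{U}}(X_{1}^{\delta},\dotsc,X_{N-1}^{\delta},-X_{0}^{\delta})_{|H}$, which is what upgrades the factor $(X_{1}\dotsb X_{N})^{\delta-(N-p)}$ to $(X_{0}X_{1}\dotsb X_{N})^{\delta-(N-p)}$ and produces the twist $(N+1)(N-p)-\delta$ on $H_{\boldsymbol{\lambda}}$.

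Second, and more seriously, your final dichotomy has no control on the order of the separating Wronskian. Theorem~\ref{thm: geometric generalized W} only asserts that \emph{some} full set of size $N-1$ separates linearly independent functions; that set may have order as large as $N-1$ (for instance, functions of $z_{1}$ alone require order up to $N-1$), in which case the residual twist is not negative and the vanishing theorem says nothing. Counting $\binom{N}{p}\geq N$ operators of order $\leq N-p$ does not fix this: you must kill \emph{every} full set of size $N-1$ on the pulled-back functions to apply Main Theorem~\ref{thm: mainthm intro} in the contrapositive, not find one good set. The paper's missing idea is the partition $F_{p,N-1}=F_{+}\sqcup F_{-}$ combined with the non-degeneracy normal form $f\circ\varphi=(z_{1},\dotsc,z_{p},g_{p+1},\dotsc,g_{N})$: any full set in $F_{-}$ misses some singleton $\{i\}$ with $i\leq p$, hence (by fullness) no word of $\mathcal{U}$ contains the letter $i$, so the $i$-th column of $W_{\mathcal{U}}(z_{1}^{\delta},\dotsc,z_{p}^{\delta},g_{p+1}^{\delta},\dotsc,g_{N-1}^{\delta},-1)$ is $(z_{i}^{\delta},0,\dotsc,0)^{t}$, proportional to the constant column produced by the Fermat relation, and the Wronskian vanishes \emph{for free}; while any full set in $F_{+}$ automatically has order $\leq N-p$, so the vanishing theorem applies there. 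Note also that because the normalization introduces the biholomorphism $\varphi$, the vanishing theorem must be applied to the whole orbit $\psi\cdot W_{\mathcal{U}}$ (legitimate since geometric generalized Wronskians are stable under the biholomorphism action), with $\psi=\varphi(z+\cdot)-\varphi(z)$, to transfer the conclusion from $f$ to $f\circ\varphi$ — a point your proposal omits. Without the $F_{+}/F_{-}$ mechanism your contradiction argument cannot close, so the proof as proposed does not go through.
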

This result is well-known for the extremal cases. For \(p=N-1\), this follows from the fact that \(H_{\boldsymbol{\lambda}}\) is of general type. For \(p=1\), this was first proved via Nevanlinna theory: see e.g \cite{kob13}[p.144-145]. Therefore, our result interpolates between these two situations. 

In the third and last part of this paper, we provide two applications in foliation theory.
The first application has starting point the following application of Proposition \ref{prop: Brot}, which was pointed out to me by Erwan Rousseau, following an observation of Jorge Vitorio Pereira (see Section \ref{se: foliation intro} for relevant definitions).
\begin{proposition}
\label{prop: fol curves}
Let \(X\) be a normal projective variety, and let \(\mathcal{F}\) be a foliation by curves on \(X\). 
Suppose that \(\mathcal{F}\) is not algebraically integrable, i.e. that a general leaf of \(\mathcal{F}\) is not algebraic. Then the canonical bundle of the foliation \(K_{\mathcal{F}}\) is pseudo-effective.
\end{proposition}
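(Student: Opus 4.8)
The plan is to reduce the pseudo-effectivity of $K_{\mathcal{F}}$ to the construction of many global sections via the Wronskian operator of Proposition~\ref{prop: Brot}, exploiting the fact that a non-algebraically-integrable foliation by curves has leaves whose germs behave like linearly independent local functions. Since $\mathcal{F}$ is a foliation by curves, its tangent sheaf $T_{\mathcal{F}}$ is a rank-one subsheaf of $T_X$, and $K_{\mathcal{F}} = T_{\mathcal{F}}^{*}$. To prove $K_{\mathcal{F}}$ is pseudo-effective, it suffices to produce, for a sufficiently ample line bundle $A$ on $X$ and for every $m$, a non-zero section of $mK_{\mathcal{F}} \otimes A$ (i.e.\ to show $K_{\mathcal{F}} + \tfrac{1}{m} A$ carries effective classes in the limit), so that $K_{\mathcal{F}}$ lies in the closure of the effective cone.

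First I would fix an ample line bundle $L$ on $X$ with enough global sections $s_0, \dotsc, s_m$ to separate jets along a general leaf. Restricting to a local leaf, parametrized by a $1$-germ $\gamma \colon (\C,0) \to (X,x)$ tangent to $\mathcal{F}$, the compositions $s_{i,U} \circ \gamma$ are holomorphic functions of one variable, and by Proposition~\ref{prop: Brot} the Wronskian $W(s_0, \dotsc, s_m)$ defines a global section of $E_{1,m,\frac{m(m+1)}{2}} X \otimes L^{m+1}$. The key observation is that, since we are reading jets only along the leaves of $\mathcal{F}$, the jet differential bundle $E_{1,m,w} X$ restricts along $T_{\mathcal{F}}$ to (a power of) $K_{\mathcal{F}}$: concretely, a $1$-jet differential of weight $w$ evaluated on leaf-germs factors through $(T_{\mathcal{F}}^{*})^{\otimes w} = K_{\mathcal{F}}^{\otimes w}$. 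Thus the Wronskian section, restricted to directions tangent to $\mathcal{F}$, yields a section of $K_{\mathcal{F}}^{\otimes w} \otimes L^{m+1}$ with $w = \tfrac{m(m+1)}{2}$.

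Next I would argue that this section is \emph{non-zero}, and this is where non-algebraic-integrability enters. If $K_{\mathcal{F}}^{\otimes w} \otimes L^{m+1}$ had no section for all $m$, or the Wronskian vanished identically along $\mathcal{F}$, then by the fundamental property of Wronskians (the one-variable case of Theorem~\ref{thm: mainthm intro}) the restrictions $s_{i,U}\circ\gamma$ would be linearly dependent over $\C$ along a general leaf. Linear dependence $\sum_i c_i\, s_{i,U}\circ\gamma \equiv 0$ means the leaf is contained in the hypersurface $\{\sum_i c_i s_i = 0\}$; arranging the $s_i$ to cut out, as $m \to \infty$, algebraic subvarieties of decreasing dimension would force the general leaf to be algebraic, contradicting the hypothesis. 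Hence for $m$ large the Wronskian section is non-zero, giving a non-trivial section of $K_{\mathcal{F}}^{\otimes w} \otimes L^{m+1}$. Dividing the class by $w = \tfrac{m(m+1)}{2}$ and letting $m \to \infty$, the term $\tfrac{m+1}{w} L = \tfrac{2}{m} L \to 0$, so $K_{\mathcal{F}}$ is a limit of effective $\R$-classes, i.e.\ pseudo-effective.

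The main obstacle I anticipate is the non-vanishing step: translating the Wronskian nonvanishing into genuine algebraicity of leaves requires care, because a single hypersurface containing the leaf does not make it algebraic. The honest argument must either invoke that a general leaf not contained in any proper algebraic subvariety forces the jets $(s_{i,U}\circ\gamma)$ to be eventually independent for large $m$ (a Zariski-density statement for general leaves), or replace the bare Wronskian with a family of Wronskians whose common nonvanishing locus is controlled. Making this bookkeeping precise — relating ``algebraically integrable'' to linear dependence of \emph{bounded} jets of sections along leaves — is the delicate point; the normality hypothesis on $X$ should be used here to ensure $K_{\mathcal{F}}$ and the relevant reflexive sheaves are well-behaved.
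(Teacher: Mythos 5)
Your overall strategy coincides with the paper's: evaluate the one-variable Wronskian of Proposition~\ref{prop: Brot} along local integrations of \(\mathcal{F}\), use the reparametrization weight \((\varphi')^{m(m+1)/2}\) to see that the evaluation glues into a section of \(K_{\mathcal{F}}^{\otimes w}\otimes L^{\mathrm{twist}}\) away from \(\Sing(\mathcal{F})\), and let the twist-to-weight ratio tend to zero. However, there is a genuine gap in your asymptotics: you keep the ample line bundle \(L\) \emph{fixed} while letting \(m\to\infty\). This cannot work, since as soon as \(m+1 > h^{0}(X,L)\) the sections \(s_{0},\dotsc,s_{m}\) are linearly dependent in \(H^{0}(X,L)\) and \emph{every} Wronskian vanishes identically; in particular the limit \(\frac{m+1}{w}L=\frac{2}{m}L\to 0\) you invoke is vacuous. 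The paper instead takes sections of growing powers \(L^{r}\), and here is where the hypothesis enters \emph{quantitatively}: if a general leaf \(F\) is not algebraic, its Zariski closure has dimension at least \(2\), and since \(F\) is dense in it, the restriction of \(H^{0}(X,L^{r})\) to \(F\) has rank at least \(Cr^{2}\) for some constant \(C>0\). Choosing \(m+1=Cr^{2}\) sections of \(L^{r}\) independent along \(F\), the Wronskian yields a non-zero section of \(K_{\mathcal{F}}^{\otimes \frac{m(m+1)}{2}}\otimes L^{r(m+1)}\) on \(X\setminus\Sing(\mathcal{F})\), and the relevant ratio is \(\frac{r(m+1)}{m(m+1)/2}\sim\frac{2}{Cr}\to 0\). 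The superlinear growth \(m\sim Cr^{2}\) is precisely the point: had the independent restrictions grown only linearly in \(r\), the ratio would tend to a positive constant and one would only conclude pseudo-effectivity of \(K_{\mathcal{F}}+\mathrm{const}\cdot L\), not of \(K_{\mathcal{F}}\).

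This same choice also dissolves the non-vanishing difficulty you correctly flag as unresolved. Your proposed contrapositive (identically vanishing Wronskian \(\Rightarrow\) leaf in a hypersurface \(\Rightarrow\) induct down in dimension) is indeed a dead end: one hypersurface through a leaf says nothing about algebraicity, and no bookkeeping of "subvarieties of decreasing dimension" is needed. The paper runs the implication in the opposite direction: non-algebraicity \emph{first} furnishes the \(Cr^{2}\) sections whose restrictions \(s_{i,U}\circ\gamma_{\mathbf{t}}\) to a general leaf are linearly independent, and \emph{then} the classical one-variable Peano--B\^ocher property guarantees the Wronskian does not vanish identically along that leaf. Finally, your appeal to normality should be made precise: the section is a priori defined only on \(X\setminus\Sing(\mathcal{F})\), and it extends because the rank-one sheaf \(K_{\mathcal{F}}^{\otimes w}\otimes L^{r(m+1)}\) is reflexive and \(\codim\Sing(\mathcal{F})\geq 2\) on the normal variety \(X\) (Serre's \(S_{2}\) condition, as in Section~\ref{se: foliation intro}) --- this extension step, not the good behavior of jet bundles, is the actual use of normality.
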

Note that pseudo-effectiveness of canonical bundles of foliations was thoroughly studied in the litterature, and one has in particular the following difficult result: 
if the canonical bundle of a foliation on a smooth projective variety is not pseudo-effective, then the foliation is uniruled (see \cite{CampanaPaun}). Since a uniruled foliation of rank \(1\) is always algebraically integrable, this implies in particular the above Proposition \ref{prop: fol curves}.

However, in Proposition \ref{prop: fol curves}, one should see the canonical bundle \(K_{\mathcal{F}}\) as the cotangent bundle \(\Omega_{\mathcal{F}}\). This is indeed the right terminology to obtain a result valid for any foliation, which constitutes a result owing to works of Bost \cite{Bostfol}, Bogomolov and McQuillan \cite{bogomolov2016rational}, Campana and Paun \cite{CampanaPaun}, and Druel \cite{Druel}:
\begin{theorem}[\cite{Bostfol}, \cite{bogomolov2016rational}, \cite{CampanaPaun}, \cite{Druel}]
\label{thm: CPBD}
Let \(X\) be a normal projective variety, and let \(\mathcal{F}\) be a foliation of rank \(p\) on \(X\). 
Suppose that \(\mathcal{F}\) is not algebraically integrable. Then the cotangent bundle of the foliation \(\Omega_{\mathcal{F}}\) is pseudo-effective.
\end{theorem}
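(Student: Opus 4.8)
The plan is to argue by contraposition: assuming $\Omega_{\mathcal{F}}$ is \emph{not} pseudo-effective, I would produce a nonzero subfoliation of $\mathcal{F}$ with algebraic leaves, and then propagate algebraic integrability to all of $\mathcal{F}$. The first step is to convert non-pseudo-effectivity of the sheaf $\Omega_{\mathcal{F}}$ into a slope statement about the dual $T_{\mathcal{F}}$. Using the duality of Boucksom--Demailly--Paun--Peternell between the pseudo-effective cone and the cone of movable curves (applied on \(\P(\Omega_{\mathcal{F}})\) to the tautological class, in the incarnation for torsion-free sheaves exploited by Campana--Paun), the failure of \(\Omega_{\mathcal{F}}\) to be pseudo-effective yields a movable class \(\omega\) — which may be taken to be the class of a general complete intersection curve \(C = H_{1}\cap \dotsb \cap H_{n-1}\), with the \(H_{i}\) very ample of large degree and \(n = \dim X\) — against which the maximal slope of \(T_{\mathcal{F}}\) is strictly positive. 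Let \(\mathcal{G} \subseteq T_{\mathcal{F}}\) be the maximal destabilizing subsheaf of the Harder--Narasimhan filtration relative to \(\omega\), so that \(\mathcal{G}\) is \(\omega\)-semistable with \(\mu_{\omega}(\mathcal{G}) = \mu_{\omega,\max}(T_{\mathcal{F}}) > 0\).

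Next I would check that this ``positive part'' \(\mathcal{G}\) is itself a foliation. The obstruction to involutivity is the \(\O_{X}\)-linear map \(\Ext^{2}\mathcal{G} \to T_{\mathcal{F}}/\mathcal{G}\) induced by \((\,\cdot\,,\,\cdot\,) \mapsto [\,\cdot\,,\,\cdot\,] \bmod \mathcal{G}\). Since we work over \(\C\) and \(\mathcal{G}\) is \(\omega\)-semistable of slope \(\mu > 0\), the source \(\Ext^{2}\mathcal{G}\) is semistable of slope \(2\mu\), while every Harder--Narasimhan slope of the target \(T_{\mathcal{F}}/\mathcal{G}\) is strictly smaller than \(\mu\); a nonzero morphism would force \(2\mu \leq \mu_{\omega,\max}(T_{\mathcal{F}}/\mathcal{G}) < \mu\), hence \(\mu < 0\), a contradiction. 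Thus the bracket map vanishes and \(\mathcal{G}\) is a (saturated, involutive) subfoliation. I would then restrict to the curve: by the Mehta--Ramanathan restriction theorem, for \(C\) a general complete intersection curve of sufficiently large degree the Harder--Narasimhan filtration of \(T_{\mathcal{F}}\) restricts to that of \(T_{\mathcal{F}}|_{C}\), so \(\mathcal{G}|_{C}\) is semistable of positive slope on the smooth curve \(C\). On a smooth projective curve semistability together with positive slope is equivalent to \(\mu_{\min} > 0\), hence to ampleness, so \(\mathcal{G}|_{C}\) is ample.

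The main obstacle is the algebraicity criterion that follows. Invoking Bost's arithmetic algebraization theorem (and the Bogomolov--McQuillan criterion in its foliated form): if \(\mathcal{G}\) is a foliation whose restriction to a general member \(C\) of a covering family of curves is ample, then the leaf of \(\mathcal{G}\) through a general point is algebraic, with rationally connected closure. This is the deep input of the whole argument — it is precisely what converts the differential/cohomological positivity of \(\mathcal{G}\) into algebraicity of leaves — and I expect it, rather than the formal manipulations above, to be the genuine crux. Its proof rests either on Bost's formal-principle and slope-inequality machinery or on McQuillan's foliation-theoretic techniques, neither of which is elementary. At this stage \(\mathcal{G}\) is algebraically integrable with rationally connected general leaf; note that the weaker statement that \(K_{\mathcal{F}} = \det \Omega_{\mathcal{F}}\) is pseudo-effective (equivalently, non-uniruledness) is exactly the level at which Campana--Paun operate, and the upgrade to the full sheaf \(\Omega_{\mathcal{F}}\) is what forces the passage through this subfoliation.

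Finally I would propagate algebraic integrability from \(\mathcal{G}\) to \(\mathcal{F}\). If \(\mathcal{G} = \mathcal{F}\) we are done; otherwise the family of leaves of \(\mathcal{G}\) defines a rational fibration \(\pi\colon X \dashrightarrow Y\) whose general fibre is the closure of a \(\mathcal{G}\)-leaf. Because \(\mathcal{G} \subseteq \mathcal{F}\) is an involutive subfoliation, \(\mathcal{F}\) is \(\mathcal{G}\)-invariant and descends to a foliation \(\mathcal{F}_{Y}\) on \(Y\) with \(\rank \mathcal{F}_{Y} = \rank \mathcal{F} - \rank \mathcal{G} < \rank \mathcal{F}\), and \(\mathcal{F}\) is algebraically integrable if and only if \(\mathcal{F}_{Y}\) is, since the \(\mathcal{F}\)-leaves are assembled from the fibres of \(\pi\) and the \(\mathcal{F}_{Y}\)-leaves on the base. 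One then runs an induction on the rank, organized around the maximal algebraically integrable subfoliation \(\mathcal{F}_{\mathrm{alg}} \supseteq \mathcal{G} \neq 0\). To close the induction one must show that non-pseudo-effectivity of \(\Omega_{\mathcal{F}}\) forces non-pseudo-effectivity of \(\Omega_{\mathcal{F}_{Y}}\); this compatibility — relating the positivity of \(\Omega_{\mathcal{F}}\), \(\Omega_{\mathcal{G}}\) and \(\Omega_{\mathcal{F}_{Y}}\) through the (co)normal exact sequences of the tower \(\mathcal{G} \subseteq \mathcal{F}\) and the behaviour of \(\O_{\P(\cdot)}(1)\) in extensions — is the second delicate point, and is where the precise sheaf-theoretic definition of pseudo-effectivity must be handled with care. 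Granting it, \(\mathcal{F}_{\mathrm{alg}} = \mathcal{F}\), contradicting the hypothesis that \(\mathcal{F}\) is not algebraically integrable, and the theorem follows.
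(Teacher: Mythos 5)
First, a point of calibration: the paper does not prove Theorem~\ref{thm: CPBD} at all --- it is quoted, with attribution, from \cite{Bostfol}, \cite{bogomolov2016rational}, \cite{CampanaPaun}, \cite{Druel}, and the paper's own contribution (Theorem~\ref{thm: foliation}) is a deliberately \emph{weaker} statement (tensor powers \(\Omega_{\mathcal{F}}^{[m]}\) instead of symmetric powers \(S^{[m]}\Omega_{\mathcal{F}}\)) proved in the opposite, direct direction: non-integrability is used to manufacture sections via geometric generalized Wronskians evaluated along local integrations of the foliation. So your sketch is not comparable to anything in the paper; it is a reconstruction of the Campana--P\u{a}un/Druel route, with the two deep inputs (BDPP-type duality for torsion-free sheaves, and the Bost/Bogomolov--McQuillan algebraicity criterion) correctly identified and cited as black boxes. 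Since those are the very papers the theorem is attributed to, what remains to assess is the glue, and there the sketch has genuine gaps.

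The first gap is at the duality step, and it propagates. With the paper's definition of pseudo-effectivity (sections of \(S^{[m]}\Omega_{\mathcal{F}}\otimes L^{n}\) with \(m>nc\) for all \(c\)), pseudo-effectivity of a sheaf \(\mathcal{E}\) only forces \(\mu_{\alpha,\max}(\mathcal{E})\geq 0\) for movable \(\alpha\) --- note that \(\O_{X}\oplus A^{-1}\), \(A\) ample, is pseudo-effective in this sense while \(\mu_{\min}<0\). Consequently the correct output of the duality is a movable class \(\alpha\) with \(\mu_{\alpha,\max}(\Omega_{\mathcal{F}})<0\), i.e.\ \(\mu_{\alpha,\min}(T_{\mathcal{F}})>0\), which is strictly stronger than the \(\mu_{\alpha,\max}(T_{\mathcal{F}})>0\) you extract. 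This matters: with \(\mu_{\min}>0\) the Campana--P\u{a}un algebraicity theorem applies to \(\mathcal{F}\) itself, and your entire final induction evaporates; with only \(\mu_{\max}>0\), the induction is genuinely needed, and its pivot --- ``non-pseudo-effectivity of \(\Omega_{\mathcal{F}}\) forces non-pseudo-effectivity of \(\Omega_{\mathcal{F}_{Y}}\)'' --- is exactly the step you leave granted, and it is doubtful as stated: generically \(\pi^{*}\Omega_{\mathcal{F}_{Y}}\) is only a \emph{subsheaf} of \(\Omega_{\mathcal{F}}\), and the quotient \(\Omega_{\mathcal{G}}\) is anti-positive along the rationally connected leaf closures of \(\mathcal{G}\), so the extension yields no implication in either direction. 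The second gap is the reduction to complete intersection curves: BDPP-type duality produces a movable class, which in general is only a limit of pushforwards of complete intersection classes from birational models, \emph{not} a complete intersection class on \(X\); Mehta--Ramanathan restriction is unavailable for movable polarizations, and handling semistability, tensor operations (needed also for your involutivity argument on \(\Ext^{2}\mathcal{G}\to T_{\mathcal{F}}/\mathcal{G}\), which is otherwise sound in characteristic zero) and the algebraicity criterion with respect to movable classes is precisely the technical heart of \cite{CampanaPaun} that your sketch replaces with an invalid shortcut. In short: the architecture is recognizably the right one from the literature, but as written the proof neither closes nor matches anything the paper itself proves.
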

In this statement, the definition of pseudo-effectiveness is the following: the (reflexive) coherent sheaf \(\Omega_{\mathcal{F}} \bydef T_{\mathcal{F}}^{*}\) (\(T_{\mathcal{F}}\) is the involutive saturated coherent subsheaf of \(T_{X}\bydef \Omega_{X}^{*}\) defining the foliation \(\mathcal{F}\), see Definition \ref{def: foliation}) is pseudo-effective if and only if for any \(c \in \N_{\geq 1}\), there exists \(m, n \in \N_{\geq 1}\) satisfying \(m > nc\) such that the following holds
\[
H^{0}(X, S^{[m]}\Omega_{\mathcal{F}} \otimes L^{n}) \neq \Set{0},
\]
where \(S^{[m]} \Omega_{\mathcal{F}} \bydef (S^{m} \Omega_{F})^{**}\) is the reflexive hull of \(S^{m} \Omega_{\mathcal{F}}\), see Definition \ref{def: reflexive}.
Using geometric generalized Wronskians, we were able to obtain a weak version of the previous Theorem \ref{thm: CPBD}. Our first application in foliation theory (labeled Theorem \ref{thm: foliation} in Section~\ref{sse: foliation integrability}) reads then as follows:
\begin{theorem}[Main Theorem III]
\label{thm: mainthm 3}
Let \(X\) be a normal projective variety, \(\mathcal{F}\) a  foliation of rank \(p \geq 1\) on \(X\) and \(L \to X\) an ample line bundle. Suppose that \(\mathcal{F}\) is not algebraically integrable. Then for any \(c \in \N_{\geq 1}\), there exists \(m, n \in \N_{\geq 1}\) satisfying \(m > nc\) such that the following holds
\[
H^{0}(X, \Omega_{\mathcal{F}}^{[m]} \otimes L^{n}) \neq \Set{0}.
\]
\end{theorem}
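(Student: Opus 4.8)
The plan is to run the Wronskian machinery of Proposition~\ref{prop: constr} along the leaves of \(\mathcal{F}\) and to convert the resulting jet differentials into honest symmetric differentials of the foliation. Fix \(c \in \N_{\geq 1}\). First I would pick \(q \gg 0\) so that \(L^{q}\) is very ample and choose \(s_{0}, \dotsc, s_{m}\) a basis of \(H^{0}(X, L^{q})\), so that \(m+1 = h^{0}(X, L^{q})\). For any geometric generalized Wronskian \(W\) of order \(k\) and weight \(w\), Proposition~\ref{prop: constr} produces a global section \(W(s_{0}, \dotsc, s_{m}) \in H^{0}(X, E_{p,k,w}X \otimes L^{q(m+1)})\). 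The first key point is that, because \(W\) is geometric, its restriction to germs tangent to \(\mathcal{F}\) is homogeneous of pure weight \(w\) and thus factors through a natural morphism \(E_{p,k,w}X \to S^{[w]}\Omega_{\mathcal{F}}\); this is exactly the phenomenon already visible in the rank one case of Proposition~\ref{prop: fol curves}, where the one-variable Wronskian transforms under reparametrisation of a leaf by the \(w\)-th power of the Jacobian and hence defines a section of \(K_{\mathcal{F}}^{[w]} = \Omega_{\mathcal{F}}^{[w]}\). I thus obtain a section
\[
\sigma_{W} \in H^{0}\bigl(X, \Omega_{\mathcal{F}}^{[w]} \otimes L^{q(m+1)}\bigr),
\]
whose pairing along a leaf \(\mathcal{L}\) parametrised by a \(p\)-germ \(\gamma\) tangent to \(\mathcal{F}\), evaluated on the tangent frame of \(\gamma\), is the Wronskian \(W(s_{0} \circ \gamma, \dotsc, s_{m} \circ \gamma)\).

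The second step is the non-vanishing of \(\sigma_{W}\) for a suitable \(W\). Assume first that a general leaf \(\mathcal{L}\) is Zariski dense in \(X\). If \(\gamma\) parametrises a germ of such a leaf, then a linear relation \(\sum_{j} a_{j}(s_{j} \circ \gamma) \equiv 0\) forces the section \(\sum_{j} a_{j} s_{j}\) of \(L^{q}\) to vanish on \(\mathcal{L}\), hence on \(X\), hence to be zero; thus \(s_{0} \circ \gamma, \dotsc, s_{m} \circ \gamma\) are linearly independent over \(\C\). By Main Theorem~\ref{thm: mainthm intro} there is a pure geometric generalized Wronskian \(W\) with \(W(s_{0} \circ \gamma, \dotsc, s_{m} \circ \gamma) \not\equiv 0\); since a non-zero symmetric tensor that pairs non-trivially with some argument is non-zero, the global section \(\sigma_{W}\) is then non-zero.

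For the numerics \(w > q(m+1)\,c\) I need the non-vanishing Wronskian \(W\) to have large weight. The inductive construction behind Main Theorem~\ref{thm: mainthm intro} can be arranged to produce a pure geometric generalized Wronskian of order \(m\), whose weight \(w\) consequently grows quadratically in \(m\), mirroring the value \(\tfrac{m(m+1)}{2}\) of the single top Wronskian in the one-variable case. Since \(\mathcal{F}\) is not algebraically integrable we have \(p < \dim X\) (otherwise \(\mathcal{F} = T_{X}\) has algebraic leaves), so \(\dim X \geq 2\) and \(m+1 = h^{0}(X, L^{q})\) grows like \(q^{\dim X}\). Hence \(w / \bigl(q(m+1)\bigr) \sim m/q \sim q^{\dim X - 1} \to \infty\) as \(q \to \infty\), and for \(q\) large enough \(w > q(m+1)\,c\). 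Setting \(n = q(m+1)\) then gives \(H^{0}(X, \Omega_{\mathcal{F}}^{[w]} \otimes L^{n}) \neq \Set{0}\) with \(w > nc\), as required.

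The hard part will be twofold. The delicate structural point is the first one: showing that a geometric generalized Wronskian restricts to the foliation as a pure symmetric differential of weight \(w\), rather than as a jet differential whose naive symbol could a priori lie in a strictly smaller symmetric power \(S^{[w']}\Omega_{\mathcal{F}}\) with \(w' \approx w/k\), which would spoil the estimate \(w > nc\); this is exactly where the defining ``geometric'' compatibility must be used in an essential way, and where the refined (pure) form of Main Theorem~\ref{thm: mainthm intro} is needed to guarantee a non-vanishing Wronskian of near-maximal weight. The second difficulty is the hypothesis that a general leaf be Zariski dense: in general a non-algebraically-integrable \(\mathcal{F}\) only guarantees that the Zariski closure \(Z\) of a general leaf is a proper subvariety with \(p < \dim Z\), and these closures are the fibres of a dominant rational map, i.e. the leaves of an algebraically integrable foliation \(\mathcal{G} \supseteq \mathcal{F}\) to whose fibres \(\mathcal{F}\) is tangent. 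On a general fibre \(Z\) the induced foliation \(\mathcal{F}|_{Z}\) has Zariski dense leaves, so the argument above applies fibrewise with \(\Omega_{\mathcal{F}}|_{Z} = \Omega_{\mathcal{F}|_{Z}}\); the remaining work is to perform the Wronskian construction relatively over the base and absorb the resulting base twist into a further power of the ample bundle \(L\), which stays harmless precisely because the weight \(w\) still dominates. I expect this relative globalisation, together with the pure-weight restriction property, to be the technical heart of the proof.
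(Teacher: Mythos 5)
Your overall strategy (many sections of a power of \(L\), linear independence along a non-algebraic leaf, a Wronskian of large weight versus a linear twist) is the same as the paper's, but the step you yourself flag as the ``delicate structural point'' is a genuine gap, and the mechanism you propose for it is false. A geometric generalized Wronskian does \emph{not} factor through a natural morphism \(E_{p,k,w}X \to S^{[w]}\Omega_{\mathcal{F}}\) along the foliation. Geometricity (Definition \ref{def: gg W}) controls only the change of trivialization of \(L\) — it is what produces the twist \(L^{m+1}\) in Proposition \ref{prop: gg W germ} — and unmixedness gives pure weight \(w\) under the torus \((\C^{*})^{p}\); neither controls the reparametrization of a leaf by a general biholomorphism \(\varphi_{\mathbi{t}}\) of \((\C^{p},0)\). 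For \(p>1\) the Wronskians \(W_{\mathcal{U}}\) are not invariant: under \(\varphi \cdot W\) one gets a combination of other pure Wronskians including lower-order mixed terms (the paper notes explicitly after Proposition \ref{prop: Wronskian jet} that invariance fails for \(p>1\), the only exception being \(\mathcal{U}=\mathcal{W}_{p,\leq n}\), Lemma \ref{lemma: Wronskian det}). Consequently the single evaluation \(W(s_{0},\dotsc,s_{m})(\gamma_{\mathbi{t}})\) is not intrinsic to \(\mathcal{F}\) and your \(\sigma_{W}\) is simply not defined. The paper's resolution, which your proposal lacks any substitute for, is the filtration \(\mathcal{W}_{g}^{\leq \mathbi{n}}(m)\) by characteristic sequences (Section \ref{sse: gg W and repr}): one chooses \(\mathbi{n}\) \emph{minimal} among sequences supporting a non-vanishing evaluation along the leaves, so that \(\varphi \cdot W = \diff\varphi(0)\cdot W + W'\) with \(W' \in \mathcal{W}_{g}^{<\mathbi{n}}(m)\) evaluating to zero by minimality; one then picks an irreducible \(GL_{p}(\C)\)-subrepresentation \(\mathcal{V}\simeq S^{\lambda}\C^{p}\) with \(\lambda\) a partition of \(w(\mathbi{n})\), and evaluates a whole basis \((W_{T})_{T\in\mathcal{T}}\) \emph{simultaneously} to obtain a genuine section of \(S^{\lambda}\Omega_{\mathcal{F}}\otimes L^{r(m+1)}\), hence of \(\Omega_{\mathcal{F}}^{[w(\mathbi{n})]}\otimes L^{r(m+1)}\) after extending across \(\Sing(\mathcal{F})\). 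Note also that the target is the reflexive tensor power \(\Omega_{\mathcal{F}}^{[m]}\), not the symmetric power \(S^{[m]}\Omega_{\mathcal{F}}\) your morphism aims at — the symmetric statement is the strictly stronger theorem of Druel et al.\ that the paper does not reprove.

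Two secondary points. First, your reduction to Zariski-dense leaves plus a sketched ``relative'' construction over the family of leaf closures is unnecessary and incomplete: the paper bypasses it by a counting statement — a non-algebraic general leaf is Zariski dense in its closure \(Z\) with \(\dim Z \geq p+1\), so at least \(Cr^{p+1}\) sections of \(H^{0}(X,L^{r})\) remain linearly independent on the leaf — and this exponent \(p+1\) is exactly what drives the asymptotics (\(m(n)=\abs{\mathcal{W}_{p,\leq n}}\sim n^{p}/p!\), \(r(n)\lesssim m^{1/(p+1)}\), weight at least \(w(\mathcal{U}_{n})\sim \frac{p}{(p+1)!}n^{p+1}\), whence \(r(m+1)/w \to 0\)). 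Second, your claim that the non-vanishing Wronskian can be ``arranged'' to have weight quadratic in \(m\) is unjustified: Theorem \ref{thm: mainthm intro} only gives existence of \emph{some} full set with non-vanishing Wronskian, and you cannot choose which. The correct and automatic bound is that any \(m\) distinct words in \(p\) letters have total length at least \(\sim m^{(p+1)/p}\) (this is the paper's \(w_{\min}(n)=w(\mathcal{U}_{n})\)); that weaker superlinear bound would still close your numerics in the dense case, but it — together with the fact that the weight actually realized is \(w(\mathbi{n})\) for the minimal characteristic sequence — is the estimate the argument must rest on.
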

Therefore, the conclusion of our Theorem \ref{thm: mainthm 3} concerns the non-vanishing of (reflexive hulls of) twisted tensor products of the cotangent bundle of the foliation, whereas in Theorem \ref{thm: CPBD}, it concerns the non-vanishing of (reflexive hulls of) twisted symmetric powers of the cotangent bundle of the foliation.

Our second application in foliation theory concerns positivity of adjoint line bundles of the form
\[
K_{\mathcal{F}} + L
\]
on a foliated smooth projective variety \((X, \mathcal{F})\), where \(L\) is an ample line bundle and \(\mathcal{F}\) is a regular foliation on \(X\). A somewhat simplified statement reads as follows:
\begin{theorem}[Main Theorem IV]
\label{thm: mainthm 4}
Let \(X\) be a smooth projective variety, \(\mathcal{F}\) a regular foliation of rank \(p \geq 1\)  and \(L \to X\) an ample line bundle. 
Suppose that there exists a constant \(K >0\) such that the following inequality holds outside a countable union of points:
\[
\varepsilon(L;x) \geq K.
\]
Then the line bundle \(K_{\mathcal{F}}+\frac{(p+1)}{K}L\) is nef.
\end{theorem}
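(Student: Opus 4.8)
The plan is to verify nefness curve by curve: it suffices to show $(K_{\mathcal{F}} + \tfrac{p+1}{K}L)\cdot C \geq 0$ for every irreducible curve $C \subset X$. Fix such a $C$ and choose a general point $x \in C$; since the locus $\{\varepsilon(L;\cdot)<K\}$ is a countable union of points while $C$ is a curve, I may assume $\varepsilon(L;x)\geq K$. The strategy is to manufacture, for a large power $L^{a}$ and a well-chosen geometric generalized Wronskian $W$, a nonzero global section of an honest line bundle of the form $K_{\mathcal{F}}^{\otimes w/p}\otimes L^{a(m+1)}$ that does not vanish at $x$; restricting it to $C$ then forces $\tfrac{w}{p}\deg_{C}K_{\mathcal{F}} + a(m+1)\deg_{C}L \geq 0$, and an asymptotic optimization in $a$ will produce the stated coefficient.

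First I would make precise the passage from the jet-differential section of Proposition~\ref{prop: constr} to a section of a line bundle built from $K_{\mathcal{F}}$. I take $W$ to be the \emph{complete} pure geometric generalized Wronskian of order $k$: the $(m+1)\times(m+1)$ determinant whose rows are indexed by all multi-derivatives $\partial^{\alpha}$, $\alpha\in\N^{p}$, $\abs{\alpha}\leq k$, so that $m+1=\binom{k+p}{p}$ and the weight is $w=\sum_{\abs{\alpha}\leq k}\abs{\alpha}=\sum_{j=0}^{k} j\binom{j+p-1}{p-1}$. Because $\mathcal{F}$ is regular, $K_{\mathcal{F}}$ is a genuine line bundle, and evaluating $W(s_{0},\dots,s_{m})$ only on $p$-germs tangent to $\mathcal{F}$, the leafwise prolongation of a change of leaf coordinates is block-triangular with order-$j$ diagonal block $S^{j}(J)$, of determinant $(\det J)^{\frac{j}{p}\binom{j+p-1}{p-1}}$. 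Summing over $j$, the leafwise Wronskian transforms by $(\det J)^{w/p}$, hence defines a global section $\sigma$ of $K_{\mathcal{F}}^{\otimes w/p}\otimes L^{a(m+1)}$ (clearing the denominator $p$ if needed), where $s_{0},\dots,s_{m}$ are sections of $L^{a}$, so the twist of Proposition~\ref{prop: constr} becomes $L^{a(m+1)}$. The one point to check is that this complete Wronskian is indeed geometric in the sense of Section~\ref{sse: pgg W}; the clean transformation law $(\det J)^{w/p}$ is precisely the compatibility condition globalization demands, so this should follow directly from the definitions.

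Next I would arrange $\sigma(x)\neq 0$. Evaluating $\sigma$ at $x$ amounts to evaluating the $(m+1)\times(m+1)$ matrix of leafwise $k$-jets of $s_{0},\dots,s_{m}$ at $x$, and $\sigma(x)\neq 0$ iff that matrix is invertible. Since $\varepsilon(L;x)\geq K$, the Seshadri constant governs asymptotic jet separation: for $a$ large, $L^{a}$ separates $k$-jets at $x$ as soon as $k<aK$ (up to an additive $\dim X$ correction, negligible in the limit). Thus for $k$ of order $aK$ I can prescribe the leafwise $k$-jets of the $s_{i}$ freely at $x$, in particular making the jet matrix the identity, so $\sigma(x)\neq 0$. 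Then $\sigma$ is a nonzero section not vanishing at $x\in C$, whence $\sigma|_{C}\not\equiv 0$ and $\tfrac{w}{p}\deg_{C}K_{\mathcal{F}} + a(m+1)\deg_{C}L \geq 0$, i.e. $\deg_{C}K_{\mathcal{F}} + \tfrac{p\,a(m+1)}{w}\deg_{C}L \geq 0$.

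Finally I would let $a\to\infty$ with $k\sim aK$ and compute the coefficient. Using $m+1=\binom{k+p}{p}\sim k^{p}/p!$ and $w\sim \tfrac{k^{p+1}}{(p+1)(p-1)!}$, one finds $\tfrac{p(m+1)}{w}\sim \tfrac{p+1}{k}$, hence $\tfrac{p\,a(m+1)}{w}\sim \tfrac{(p+1)a}{k}\to \tfrac{p+1}{K}$. Passing to the limit in the displayed inequality yields $\deg_{C}K_{\mathcal{F}} + \tfrac{p+1}{K}\deg_{C}L \geq 0$ for every curve $C$, which is the nefness of $K_{\mathcal{F}}+\tfrac{p+1}{K}L$. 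I expect the main obstacle to be the sharp form of the jet-separation step: one must produce \emph{actual} sections of $L^{a}$ realizing prescribed leafwise $k$-jets at $x$ with $k/a\to K$, and extracting genuine sections (rather than mere nefness of $\pi^{*}L^{a}-(k+1)E$ on the blow-up at $x$) from the Seshadri bound requires care, since $a$ and the Wronskian size are coupled and one cannot freely pass to further multiples. Controlling this loss so that the separation rate still tends to $K$—together with verifying that the complete Wronskian is geometric and that its leafwise symbol is exactly $K_{\mathcal{F}}^{\otimes w/p}$—is the technical heart of the argument.
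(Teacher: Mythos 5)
Your proposal is correct and takes essentially the same approach as the paper: the complete Wronskian associated to \(\mathcal{W}_{p,\leq n}\) transforming by \(\det(\diff\varphi)^{w/p}\) along leaves (Lemma \ref{lemma: Wronskian det}), a nonvanishing section of \(\frac{w}{p}K_{\mathcal{F}}+(m+1)aL\) at a point of the curve obtained from leafwise jet generation (Proposition \ref{prop: Wronskian construction}), and the same asymptotics \(p\,a(m+1)/w \to (p+1)/K\). Your closing worry about the jet-separation step is resolved exactly as you suspect: Demailly's theorem \(\sigma(L,TX;x)=\varepsilon(L;x)\) already supplies a sequence of pairs \((a,k)\) with \(k/a\to\varepsilon(L;x)\geq K\) along which \(L^{a}\) generates \(k\)-jets (hence leafwise \(k\)-jets) at \(x\), and this \(\limsup\)-type statement is all the limiting argument requires.
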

In this statement, the number \(\varepsilon(L;x)\) denotes the Seshadri constant of \(L\) at \(x\) (see e.g. \cite{Laz1}[Section 5]).
For instance, if one takes \(\mathcal{F}=TX\) the trivial foliation, one obtains informations on the usual adjoint line bundles of the form \(K_{X}+mL\), where \(m \in \N_{\geq 1}\) and \(L\) is an ample line bundle. Note however that our result gives nothing new in this regard: powerful tools such as Kodaïra type vanishing theorem (see \cite{Laz1}[Theorem 4.3.1]) allows to obtain much better conclusions (see \cite{Laz1}[Proposition 5.1.19]). It may however provide indications that some vanishing results involving the canonical bundle \(K_{X}\) might also work if one replaces it by the canonical bundle of a foliation \(\mathcal{F}\). We refer to Section \ref{sse: adjoint} for a more detailed discussion.
\subsubsection*{The paper is organized as follows.}\mbox{}

Section~\ref{sse: g W} is devoted to recalling the construction of generalized Wronskians as well as the proof of their fundamental property, namely that they allow to detect linear independance of holomorphic functions. 

Section~\ref{sse: gg W} introduces \textsl{geometric generalized Wronskians}, and is twofold. In Section~\ref{ssse: jets differentials}, we define jet differentials for \(p\)-germs, \(p\in \N_{\geq 1}\), on a complex manifold: these are the right geometric objects to define differential operators acting on \(p\)-germs, e.g. generalized Wronskians of functions. In Section \ref{ssse: gg W}, we then define geometric generalized Wronskian: these are the generalized Wronskians which, when evaluated on \textsl{sections of line bundles},  act effectively on \(p\)-germs (not every generalized Wronskian does so!).

Section~\ref{sse: pgg W} exhibits a particular family of geometric generalized Wronskians, that we call \textsl{pure geometric generalized Wronskians}.

Section~\ref{sse: proof linear independance} is devoted to the proof of our Main Theorem \ref{thm: mainthm intro}. It is actually a more precise version since we show the following: pure geometric generalized Wronskians are enough to read linear independance of holomorphic functions. This is the main technical part of the paper.

Section~\ref{sse: gg W and repr} describes a filtration of geometric generalized Wronskians that will be used in our first application in folitation theory Theorem \ref{thm: mainthm 3}.

Section~\ref{sse: hyperbolicity} details the proof of our application in intermediate hyperbolicity Theorem \ref{thm: mainthm 2}. We show the algebraic degeneracy of non-degenerate holomorphic maps from \(\C^{p}\), \(p \in \N_{\geq 1}\), to Fermat hypersurfaces in \(\P^{N}\) of degree \(\delta > (N+1)(N-p)\).

Section~\ref{sse: foliation integrability} details the proof of our first application in foliation theory Theorem \ref{thm: mainthm 3}, which gives a criterion for algebraic integrability of leaves of foliations.

Section~\ref{sse: adjoint} contains our second application in foliation theory Theorem \ref{thm: mainthm 4}, where some specific geometric generalized Wronskians are used to study the positivity of adjoint line bundles of the form \(K_{\mathcal{F}} + mL\), where \(\mathcal{F}\) is a regular foliation, \(L\) an ample line bundle,  and \(m\) a positive integer.

 Appendix~\ref{appendix: formula} gives two formulas repeatedly used in the text: one is a Leibniz rule to compute partial multi-derivatives of functions, and one is a formula to compute partial multi-derivatives of composition of maps.

 Appendix~\ref{appendix: vanishing} details the proof of a vanishing result needed for our application in intermediate hyperbolicity Theorem \ref{thm: mainthm 2}.

\begin{con}
Throughout this paper, a \textsl{variety} is a reduced and irreducible scheme separated and of finite type over the field of complex numbers \(\C\).
\end{con}

\section{Geometric Generalized Wronskians}

\subsection{Generalized Wronskians}
\label{sse: g W}
In this section, we introduce  generalized Wronskians as it was defined in e.g. \cite{Roth}. We prove their fundamental property, following the scheme of proof given in \cite{Wronskians}: note that it differs from the one in given in \cite{Roth} or \cite{Ber} as it does not proceed by induction. Every detail is given since the proof of our first Main Theorem \ref{thm: mainthm intro} relies on it.

Let \(p \in \N_{\geq 1} \) be an integer. Denote \(\mathcal{W}_{p}\) the set of words \textsl{written in the lexicographic order} with the alphabet \(\Set{1, \dotsc, p}\). Accordingly, any word \( \overline{u} \in \mathcal{W}_{p} \) writes uniquely
\[
\overline{u}
=
1^{\alpha_{1}(\overline{u})} \dotsb p^{\alpha_{p}(\overline{u})},
\]
where by definition, \( \alpha_{i}(\overline{u}) \) is the number of occurences of the letter \(i\) in the word \(\overline{u}\). The \textsl{length} of the word \(\overline{u}\) is the number \( \ell(\overline{u})=\alpha_{1}(\overline{u}) + \dotsb + \alpha_{p}(\overline{u})\). Given a natural number \(k \in \N_{\geq 1}\), denote \(\mathcal{W}_{p, \leq k}\) the set of words of length less or equal than \(k\), and \(\mathcal{W}_{p,k}\) the set of words of length \(k\).
\begin{definition}[Size, order, weight and caracteristic exponent of sets of words]
\label{def: order}

Let \(\mathcal{U}\) be a finite set of words in \(\mathcal{W}_{p}\). 

The \textsl{size} \(m\) of the set \(\mathcal{U}\) is its cardinal, i.e. \(m=\abs{\mathcal{U}}\).

The \textsl{order} of \(\mathcal{U}\) is the maximal length \(k\) of a word belonging to \(\mathcal{U}\). 

The \textsl{weight} of \(\mathcal{U}\) is the integer 
 \(
 w(\mathcal{U})
 \bydef
 \sum\limits_{\overline{u} \in \mathcal{U}} \ell(\overline{u})
 \).
 
 The \textsl{caracteristic exponent} \(\boldsymbol{\beta}(\mathcal{U}) \in \N^{p}\) of the set \(\mathcal{U}\) is defined as follows:
\[
\boldsymbol{\beta}(\mathcal{U})
\bydef
(\beta_{1}, \dotsc, \beta_{p}),
\]
where \(\beta_{i}=\sum\limits_{\overline{u} \in \mathcal{U}} \alpha_{i}(\overline{u})\). In particular, one has that
\[
|\mathbi{\beta}(\mathcal{U})|
= 
w(\mathcal{U}).
\]
\end{definition}
\begin{definition}[Admissible sets]
We say that \(\mathcal{U}\) is an \textsl{admissible set} if and only if there exists an ordering \(\Set{\overline{u_{1}}, \dotsc, \overline{u_{m}}}\) of the words in \(\mathcal{U}\) such that for every \( 1 \leq i \leq m \), \( \ell(\overline{u_{i}}) \leq i \).
\end{definition}
\begin{remark}
\label{rmk: order}
The following ordering will be adopted. Start by listing every word of length \(1\) in \(\mathcal{U}\), according to the lexicographic order on \(\Set{1, \dotsc, p}\). Then, list every word of length \(2\) in \(\mathcal{U}\), according to the lexicographic order on  \(\Set{1, \dotsc, p}^{2}\). Do this until all the words of \(\mathcal{U}\) are exhausted.
\end{remark}
To any set of words \( \mathcal{U} \) of size \(m\) is associated the linear functional on \(\Ext^{m+1}\mathcal{O}_{\C^{p}} \)  defined as follows:
\[
\left(W_{\mathcal{U}}\right)_{x}(f_{0}, \dotsc, f_{m})
\bydef
\det
\begin{pmatrix}
f_{0} & \cdot & \cdot & f_{m}
\\
\partial_{\overline{u_{1}}} f_{0} & \cdot & \cdot & \partial_{\overline{u_{1}}} f_{m}
\\ 
\cdot & & & \cdot
\\
\cdot & & & \cdot
\\
\partial_{\overline{u_{m}}} f_{0} & \cdot & \cdot & \partial_{\overline{u_{m}}} f_{m}
\end{pmatrix}(x),
\]
where the \(f_{i}\)'s belong to \( \mathcal{O}_{\C^{p},x} \) (i.e. are germs of holomorphic functions around \(x\)), and where we recall that
\[
\partial_{\overline{u}}
=
\frac{\partial^{\alpha_{1}(\overline{u})+ \dotsb + \alpha_{p}(\overline{u})}}
{\partial z_{1}^{\alpha_{1}(\overline{u})} \dotsb \partial z_{p}^{\alpha_{p}(\overline{u})}}.
\]
\begin{definition}[Generalized Wronskians]
Let \(m \in \N_{\geq 1}\) be an integer, and let \( \mathcal{U} \) be an admissible set of size \(m \).
The functional \(W_{\mathcal{U}}\) is called a \textsl{pure generalized Wronskian}, and its \textsl{size} is by definition equal to \(m\).

Denote by \(\mathcal{W}(m) \) the finite dimensional \( \C \)-vector space spanned by pure generalized Wronskians \(\left(W_{\mathcal{U}}\right)_{\abs{\mathcal{U}}=m} \). 
An element \(W\) of \(\mathcal{W}(m)\) is called a \textsl{generalized Wronskian} of size \(m\). Its \textsl{order} is by definition the least order of the admissible sets appearing in the writing of \(W\).

A generalized Wronskian \(W\) is called \textsl{unmixed of weight \(w\)} if and only if the admissible sets appearing in its writing all have the same weight \(w\).
\end{definition}
\begin{remark}
By definition, a pure generalized Wronskian is always unmixed.
\end{remark}
There is a natural action of the group of biholomorphisms of \( (\C^{p}, 0) \) on the vector space \(\mathcal{W}(m) \) obtained as follows. Let \( \varphi \) be a biholomorphism of \( (\C^{p},0) \), let \(x \in \C^{p}\), and let \(f_{0}, \dotsc, f_{m} \) be germs of holomorphic functions around \(x\). Define
\[
  (\varphi \cdot W)_{x}(f_{0}, \dotsc, f_{m})
  \bydef 
  (W_{\mathcal{U}})_{0}(f_{0}(x+\varphi(\cdot)), \dotsc, f_{m}(x+\varphi(\cdot))).
 \]
Using the formula Lemma \ref{lemma: compo} to compute multi-derivatives of compositions of holomorphic maps, as well as the multi-linearity of the determinant, one easily sees that the functional \( \varphi \cdot W \) can be expressed as a linear combination of the functionals \( \left(W_{\mathcal{U}}\right)_{\abs{\mathcal{U}}=m}\), with constant coefficients depending on the multi-derivatives (evaluated in \(0\)) of the coordinates functions of the biholomorphism \( \varphi \): the functional \( \varphi \cdot W \) is indeed in \(\mathcal{W}(m)\), and the action is well-defined.  In particular, considering only the linear biholomorphisms of \( (\C^{p},0) \), one obtains a representation of the linear group \( GL_{p}(\C) \). Hence the natural following question:
\begin{question}
\label{q: irr rep}
What are the irreducible components of the previous representation?
\end{question}

The fundamental property of generalized Wronskians is the following:
\begin{theorem}[\cite{Roth}]
\label{thm: generalized W}
Let \(f_{0}, \dotsc, f_{m} \) be \((m+1)\) be holomorphic functions on \(\C^{p}\). They are linearly independant if and only if there exists an admissible set \( \mathcal{U} \) of size \( m \) such that \( W_{\mathcal{U}}(f_{0}, \dotsc, f_{m}) \not\equiv 0 \).
\end{theorem}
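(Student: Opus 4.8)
The plan is to prove both implications, the forward one being elementary and the converse forming the technical core. For the easy direction, suppose $(f_0, \dotsc, f_m)$ are linearly dependent, say $\sum_i c_i f_i \equiv 0$ with $(c_i) \in \C^{m+1}\setminus\{0\}$. Applying any operator $\partial_{\overline{u}}$ to this identity yields $\sum_i c_i \partial_{\overline{u}} f_i \equiv 0$ for every word $\overline{u}$, so for \emph{any} set of words the columns of the matrix defining $W_{\mathcal{U}}$ satisfy the fixed relation $(c_i)$; hence $W_{\mathcal{U}}(f_0,\dotsc,f_m)\equiv 0$ for every admissible $\mathcal{U}$. The contrapositive gives the ``if'' part, so the whole content lies in the converse.

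For the converse I would assume $(f_0,\dotsc,f_m)$ linearly independent and work over the field $\mathcal{M}$ of meromorphic functions on the connected manifold $\C^p$. To each word $\overline{u}$ associate the row $R_{\overline{u}} \bydef (\partial_{\overline{u}} f_0, \dotsc, \partial_{\overline{u}} f_m) \in \mathcal{M}^{m+1}$, and let $C_i \bydef (\partial_{\overline{u}} f_i)_{\overline{u}}$ be the columns of the full jet matrix. First I would show these columns are $\mathcal{M}$-linearly independent, i.e. the rows $R_{\overline{u}}$ span $\mathcal{M}^{m+1}$. If not, choose a relation $\sum_i g_i \partial_{\overline{u}} f_i \equiv 0$ (for all $\overline{u}$) with $g \in \mathcal{M}^{m+1}\setminus\{0\}$ of \emph{minimal support}, normalized so that $g_{i_0}=1$ for some $i_0$. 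Differentiating the relation for the word $\overline{u}$ by $\partial_j$ gives $\sum_i (\partial_j g_i)\,\partial_{\overline{u}} f_i + \sum_i g_i\,\partial_{\overline{u}j} f_i \equiv 0$, and the second sum vanishes since it is the relation at the word $\overline{u}j$; thus $(\partial_j g_i)_i$ is again a relation, whose $i_0$-entry is $\partial_j(1)=0$. By minimality this relation is trivial, so $\partial_j g_i \equiv 0$ for all $i,j$, whence the $g_i$ are constants by connectedness, and the word-$\emptyset$ relation $\sum_i g_i f_i \equiv 0$ contradicts independence. Therefore the jet matrix has rank $m+1$ over $\mathcal{M}$.

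Next I would filter by length: set $V_{\leq k} \bydef \Span_{\mathcal{M}}\Set{R_{\overline{u}} \suchthat \ell(\overline{u}) \leq k}$ and $d_k \bydef \dim_{\mathcal{M}} V_{\leq k}$, with $d_0 = 1$. The key lemma, proved without induction on the number of variables, is a \emph{stabilization} property: if $d_k = d_{k-1}$ then $V_{\leq k+1} = V_{\leq k}$. Indeed, any word of length $k+1$ reads $j\overline{u'}$ with $\ell(\overline{u'})=k$; expanding $R_{\overline{u'}} = \sum_{\ell(\overline{v})\leq k-1} a_{\overline{v}} R_{\overline{v}}$ (possible since $R_{\overline{u'}} \in V_{\leq k} = V_{\leq k-1}$) and applying $\partial_j$ expresses $R_{j\overline{u'}}$ through the rows $R_{\overline{v}}$ with $\ell(\overline{v})\leq k-1$ and $R_{j\overline{v}}$ with $\ell(j\overline{v})\leq k$, all of which lie in $V_{\leq k}=V_{\leq k-1}$. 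Consequently the sequence $d_0 < d_1 < \dotsb$ is strictly increasing until it reaches the full value $m+1$ (it cannot stabilize below it, by the rank computation), which forces $d_{k-1}\geq k$ whenever a length-$k$ word still contributes a new dimension.

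Finally I would build an $\mathcal{M}$-basis greedily in increasing order of length: the empty word supplies $R_{\emptyset}\neq 0$ as first row, and among words of length $k$ one selects exactly $d_k - d_{k-1}$ rows independent modulo $V_{\leq k-1}$. This produces non-empty words $\overline{u_1},\dotsc,\overline{u_m}$ with $R_{\emptyset}, R_{\overline{u_1}}, \dotsc, R_{\overline{u_m}}$ a basis, so the associated $(m+1)\times(m+1)$ determinant, which is precisely $W_{\mathcal{U}}(f_0,\dotsc,f_m)$ for $\mathcal{U}=\Set{\overline{u_1},\dotsc,\overline{u_m}}$, is a nonzero meromorphic (indeed holomorphic) function, hence $\not\equiv 0$. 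Ordering the chosen words by increasing length, the $i$-th one has length $k$ with $i \geq d_{k-1} \geq k$, so $\ell(\overline{u_i})\leq i$ and $\mathcal{U}$ is admissible. I expect the main obstacle to be exactly the stabilization lemma together with the resulting inequality $d_{k-1}\geq k$, since it is this bound that converts a plain nonvanishing minor into an \emph{admissible} Wronskian; the remaining steps are bookkeeping.
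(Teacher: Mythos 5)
Your proof is correct, but it follows a genuinely different route from the paper's. You work over the field \(\mathcal{M}\) of meromorphic functions on \(\C^{p}\): the minimal-support derivation trick shows that the full jet matrix has column rank \(m+1\) over \(\mathcal{M}\) (any \(\mathcal{M}\)-relation among the columns is killed by differentiation down to a constant-coefficient relation, contradicting independence), the stabilization lemma for the filtration \(V_{\leq k}\) shows the dimensions \(d_{k}\) increase strictly until they reach \(m+1\), and the greedy basis extraction together with the bound \(d_{k-1}\geq k\) produces an admissible set whose Wronskian is a nonzero element of \(\mathcal{M}\), hence \(\not\equiv 0\); all the steps check out, including the admissibility count \(\ell(\overline{u_{i}})\leq d_{k-1}\leq i\). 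The paper argues quite differently: it reduces to formal power series, uses Lemma \ref{lemma: serie} to replace the \(f_{i}\) by series with pairwise distinct lexicographic orders via a constant invertible matrix, and then observes that the lowest-order term of \(W_{\mathcal{U}}(f_{0},\dotsc,f_{m})\) is governed by the monomial Wronskian \(W_{\mathcal{U}}(\boldsymbol{z}^{\boldsymbol{\alpha}_{0}},\dotsc,\boldsymbol{z}^{\boldsymbol{\alpha}_{m}})\), reducing the theorem to a combinatorial nonvanishing statement for monomials (equivalently, the geometric Vandermondes of Section \ref{sse: proof linear independance}). Your argument is more self-contained and arguably shorter, since the paper at this point defers the monomial statement to \cite{Wronskians} or to the much harder Theorem \ref{mainthm A}, and like the paper's proof it avoids induction on the number of variables. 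What the paper's route buys, however, is the refinement the rest of the text depends on: proving nonvanishing for monomials via geometric Vandermondes indexed by \emph{full} sets upgrades the conclusion to Theorem \ref{thm: geometric generalized W} (pure geometric generalized Wronskians suffice), and your filtration argument cannot deliver this, because nothing in the greedy selection forces the chosen set of words to be closed under passing to subwords.
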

We detail the proof, following \cite{Wronskians}.
\begin{proof}
If the holomorphic  functions \(f_{0}, \dotsc, f_{m}\) are linearly dependant, it follows from the multi-linearity and alternating property of the determinant that, for every admissible set \(\mathcal{U} \) of size \(m\), the function \(W_{\mathcal{U}}(f_{0}, \dotsc, f_{m})\) vanishes identically. Thus, the non-trivial part amounts to proving that if \((f_{0}, \dotsc, f_{m})\) is a free family, then there exists an admissible set \(\mathcal{U} \) of size \(m\) such that \(W_{\mathcal{U}}(f_{0}, \dotsc, f_{m}) \not\equiv 0\). It is enough to prove this in the local case, i.e. one can suppose without loss of generality that \(f_{0}, \dotsc, f_{m}\) are series in \(\C[[z_{1}, \dotsc, z_{p}]]\). Put the lexicographic order on the exponents of the power series, and call the \textsl{order} of a power serie $s$, denoted \(\ord(s)\), the least exponent in the lexicographic order appearing effectively in the writing of the serie \(s\).
We start with the following elementary lemma
\begin{lemma}
\label{lemma: serie}
There exists \(t_{0}, \dotsc, t_{m} \in \C[[z_{1}, \dotsc, z_{p}]]\) with distinct order,  and an invertible matrix \(A \in GL_{m+1}(\C)\) such that
\[
(t_{0}, \dotsc, t_{m})=(f_{0}, \dotsc, f_{m})\cdot A.
\]
\end{lemma}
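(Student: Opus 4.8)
The plan is to read the statement as a mere change of basis inside the finite-dimensional \(\C\)-vector space \(V \bydef \Span_{\C}(f_{0}, \dotsc, f_{m})\). Since we are in the case where \(f_{0}, \dotsc, f_{m}\) are linearly independent, \(\dim_{\C} V = m+1\), and any basis \((t_{0}, \dotsc, t_{m})\) of \(V\) is related to \((f_{0}, \dotsc, f_{m})\) by a matrix \(A \in GL_{m+1}(\C)\) with \((t_{0}, \dotsc, t_{m}) = (f_{0}, \dotsc, f_{m}) \cdot A\), which is invertible precisely because both tuples are bases. So it suffices to produce a basis of \(V\) whose members have pairwise distinct orders.

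First I would record the \emph{leading-term principle} attached to the lexicographic order on exponents: if \(v_{1}, \dotsc, v_{r} \in V \setminus \Set{0}\) have pairwise distinct orders, then they are \(\C\)-linearly independent. Indeed, in a putative relation \(\sum_{k} \lambda_{k} v_{k} = 0\), choose among the indices with \(\lambda_{k} \neq 0\) the one minimizing \(\ord(v_{k})\); this minimizer is unique because the orders are distinct, so the coefficient of \(z^{\ord(v_{k})}\) in the sum equals \(\lambda_{k}\) times the (nonzero) leading coefficient of \(v_{k}\), forcing the sum to be nonzero unless all \(\lambda_{k}\) vanish. The key consequence is that the set of achievable orders
\[
E \bydef \Set{\ord(v) : v \in V \setminus \Set{0}} \subseteq \N^{p}
\]
is finite with \(\abs{E} \leq m+1\): infinitely many distinct orders would, by the principle just proved, yield arbitrarily large linearly independent families in the finite-dimensional space \(V\).

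Next, for each \(\boldsymbol{\alpha} \in E\) I would fix a representative \(v_{\boldsymbol{\alpha}} \in V\) with \(\ord(v_{\boldsymbol{\alpha}}) = \boldsymbol{\alpha}\); these are linearly independent (distinct orders). To see they span \(V\), take any \(v \neq 0\) and run the standard reduction: subtract from \(v\) the scalar multiple of \(v_{\ord(v)}\) cancelling its leading coefficient, which strictly raises the order (or yields \(0\)). The successive orders produced form a strictly increasing chain inside the \emph{finite} set \(E\), so the process terminates at \(0\) after finitely many steps and writes \(v\) as a \(\C\)-combination of the \(v_{\boldsymbol{\alpha}}\). Hence \(\Set{v_{\boldsymbol{\alpha}} : \boldsymbol{\alpha} \in E}\) is a basis, so in fact \(\abs{E} = m+1\); writing \(E = \Set{\boldsymbol{\alpha}_{0} < \dotsb < \boldsymbol{\alpha}_{m}}\) and setting \(t_{i} \bydef v_{\boldsymbol{\alpha}_{i}}\) produces the desired family with distinct orders, whence the invertible change-of-basis matrix \(A\).

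The only genuinely delicate point is termination, and it is entirely absorbed by the finiteness of \(E\): orders live in a finite totally ordered set, so every strictly increasing chain of orders is finite, which is exactly what makes the reduction (and the spanning argument) stop. Equivalently, one could phrase everything as a Gauss-type column reduction directly on \((f_{0}, \dotsc, f_{m})\) — replacing one series by a combination that increases its order whenever two of them share an order — with termination guaranteed by the strictly monotone potential \(\sum_{i} \phi(\ord f_{i})\) for any order-preserving injection \(\phi \colon E \to \N\); I find the basis-theoretic formulation cleaner since it avoids bookkeeping on elementary operations. Note that, in contrast with the classical proofs, this argument uses no induction on the number \(p\) of variables.
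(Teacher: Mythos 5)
Your proof is correct, but it takes a genuinely different route from the paper's. The paper proves Lemma \ref{lemma: serie} algorithmically: sort the \(f_{i}\) by nondecreasing order via a permutation matrix, locate the least index \(i_{0}\) with \(\ord(f_{i_{0}})=\ord(f_{i_{0}+1})\), use freeness of the pair to find \(\lambda\in\C^{*}\) with \(\infty>\ord(f_{i_{0}+1}-\lambda f_{i_{0}})>\ord(f_{i_{0}})\), apply the corresponding transvection, re-sort, and conclude by ``an obvious induction'' resting on the claim that the new least tie index satisfies \(i_{1}>i_{0}\). You instead work structurally inside the span \(V\): the leading-term principle, the resulting finiteness of the set \(E\) of achievable orders with \(\abs{E}\leq m+1\), and a reduction showing that order-representatives form a basis, so that \(A\) is a mere change-of-basis matrix. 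What each buys: the paper's version is explicitly constructive (\(A\) is a product of a permutation matrix and transvections), while yours is slightly stronger, since it shows \(\abs{E}=m+1\), i.e. the orders of the \(t_{i}\) exhaust every order realized in the span. More substantively, your finiteness of \(E\) quietly supplies the termination detail the paper glosses over: as written, the invariant \(i_{1}>i_{0}\) fails when three or more series share the minimal tied order (one transvection resolves a single pair, and after re-sorting the tie can recur at index \(i_{0}\)), and since strictly increasing chains in \(\N^{p}\) for the lexicographic order may be infinite, termination of the elimination is not automatic from monotonicity of orders alone; it is exactly the observation that all orders arising along the process lie in the finite set \(E\) (equivalently, your monotone potential for the Gauss-type reduction) that makes the paper's induction legitimate. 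One small caveat on your framing: your closing contrast about avoiding induction on the number of variables is aimed at the classical proofs of \cite{Roth} and \cite{Ber}, not at this paper, which already follows \cite{Wronskians} in avoiding such an induction.
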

\begin{proof}[proof of Lemma \ref{lemma: serie}]
By applying a suitable permutation matrix \(A\), one can beforehand suppose that 
\[
\ord(f_{0}) \leq \ord(f_{1}) \leq \dotsc \leq \ord(f_{m}).
\]
If the series already have distinct orders, there is nothing to prove. Otherwise, let \(i_{0}\) be the least integer \(i\) such that \(\ord(f_{i})=\ord(f_{i+1})\). Then, as the family \((f_{i}, f_{i+1})\) is free, there exists \(\lambda \in \C^{*}\) such that 
\[
\infty > \ord(f_{i+1}-\lambda f_{i}) > \ord(f_{i+1})=\ord(f_{i}).
\]
Let \(A\) be the matrix of transvection associated to the previous transformation, so that
\[
(f_{0}, \dotsc, f_{i}, f_{i+1}-\lambda f_{i}, f_{i+2}, \dotsc, f_{m})
=
(f_{0}, \dotsc, f_{m}) \cdot A.
\]
Apply now the same reasoning to the new free family of series 
\[
(\tilde{f_{1}}, \dotsc, \tilde{f_{m}})=(f_{1}, \dotsc, f_{i}, f_{i+1}-\lambda f_{i}, f_{i+2}, \dotsc, f_{m}).
\]
Reorder them, and consider the least integer \(i_{1}\) such that the order of two series is the same (if it exists, otherwise we are done). As \(i_{1} > i_{0}\), an obvious induction proves the result.
\end{proof}
One immediately checks that, since the coefficients of the invertible matrix \(A\) are constants, one has the equality 
\[
W_{\mathcal{U}}(t_{0}, \dotsc, t_{m})
=
\det(A) W_{\mathcal{U}}(f_{0}, \dotsc, f_{m})
\]
for every admissible set \(\mathcal{U}  \) of size \(m\). Therefore, with Lemma \ref{lemma: serie}, one can suppose without loss of generality that the series \(f_{0}, \dotsc, f_{m}\) have distinct orders 
\[
\boldsymbol{\alpha}_{i}=(\alpha_{1,i}, \dotsc, \alpha_{p,i}).
\]

The key observation is then the following: if for an admissible set \(\mathcal{U}\) of size \(m\), the holomorphic function 
\(
W_{\mathcal{U}}(\boldsymbol{z}^{\boldsymbol{\alpha}_{0}}, \dotsc, \boldsymbol{z}^{\boldsymbol{\alpha}_{m}})
\)
is non-zero, then so is \(W_{\mathcal{U}}(f_{0}, \dotsc, f_{m})\). (Recall the multi-index notation \(\mathbi{z}^{\mathbi{\alpha}} \bydef z_{1}^{\alpha_{1}} \dotsc z_{p}^{\alpha_{p}}\)).
Indeed, by multi-linearity of the determinant, one sees that in this situation, the monomial term with least order appearing in the writing of \(W_{\mathcal{U}}(f_{0}, \dotsc, f_{m})\) comes from 
\(W_{\mathcal{U}}(\boldsymbol{z}^{\boldsymbol{\alpha}_{1}}, \dotsc, \boldsymbol{z}^{\boldsymbol{\alpha_{m}}})\); more specifically, this least order is then equal to 
\[
\sum\limits_{i=0}^{m} \mathbi{\alpha_{i}} - \big(\sum\limits_{\overline{u} \in \mathcal{U}} \alpha_{1}(\overline{u}), \dotsc, \sum\limits_{\overline{u} \in \mathcal{U}} \alpha_{p}(\overline{u})\big).
\]

To conclude the proof, it is now enough to prove that if the \(p\)-uples  \( \left(\boldsymbol{\alpha_{i}}\right)_{0 \leq i \leq m} \) are distinct, then there exists an admissible set \(\mathcal{U}\) of size \(m\) such that 
\[
W_{\mathcal{U}}(\boldsymbol{z}^{\boldsymbol{\alpha}_{0}}, \dotsc, \boldsymbol{z}^{\boldsymbol{\alpha}_{m}})
\not\equiv
0.
\]
For a proof of this statement, we refer to \cite{Wronskians}. In Section \ref{sse: proof linear independance}, we will actually prove a stronger statement.
\end{proof}

\subsection{Geometric Generalized Wronskians}
\label{sse: gg W}
In this section, we first recall the construction of jet spaces and jet differentials bundles for \(p\)-germs into a complex variety \( X\), \( p \in \N_{\geq 1} \). This is the right context to define generalized Wronskians acting on sections of line bundles on the complex variety \( X \): those generalized Wronskians are called \textsl{geometric}. Their definition and properties is the object of the second part of this section.

\subsubsection{Jet spaces and jet differentials bundles for \(p\)-germs}
\label{ssse: jets differentials}
Let \(X\) be a complex manifold of dimension \(N\), and let \(p \in \N_{\geq 1}\). For \(k \in \N_{\geq 1}\), define the bundle \(J_{p,k}X\) of \(k\)-jets of \(p\)-germs of holomorphic maps \(\gamma\colon (\C^{p},0) \rightarrow X\) on the complex manifold $X$ as follows. Consider an atlas \((U_{i}, \varphi_{i})_{i \in I}\) of \(X\), and for \(i \in I\), consider the (trivial) bundle on \(U_{i}\) whose fiber over \(x \in U_{i}\) is the \(\C\)-vector space of dimension \(N\times |\mathcal{W}_{p,\leq k}|\)
\[
\left\{
\Big(\partial_{\overline{u}}(\varphi_{i} \circ \gamma)(0)\Big)_{\overline{u} \in \mathcal{W}_{p,\leq k}}
\
\big|
\
\gamma: (\C^{p},0) \rightarrow (X,x) \ \text{holomorphic \(p\)-germ} \
\right\}.
\]
Glue these trivial bundles 
\(U_{i} \times \C^{N\times |\mathcal{W}_{p,\leq k}|}\) via (the maps naturally induced by) the transition maps \(\varphi_{j} \circ \varphi_{i}^{-1}\) to obtain (up to isomorphism) the bundle \(J_{p,k}X\). The general formula to change charts involves higher order derivatives of the transition maps as soon as \(k>1\), and in particular, it does not preserve the structure of vector space of the fibers: \(J_{p,k}X\) is not a vector bundle for \(k>1\). For sake of notation, denote
\(
\partial_{p,k}\gamma
\)
the element in $(J_{p,k}X)_{x}$ defined by the holomorphic \(p\)-germ \(\gamma\colon (\C^{p},0) \to (X,x)\).
\begin{example}
In the case where \(k=1\), \(J_{p,1}X\) is \(p\) copies of the tangent bundle \(TX\). In the case where \(p=1\), one recovers jet spaces of curves.
\end{example}
 
The torus $(\C^{*})^{p}$ acts on the fibers of \(J_{p,k}X\) as follows
\[
 \boldsymbol{\lambda}
\cdot 
\Big(\partial_{\overline{u}}(\varphi_{i} \circ \gamma)\Big)_{\overline{u} \in \mathcal{W}_{p,\leq k}}
=
\Big(\lambda_{1}^{\alpha_{1}(\overline{u})}\dotsc \lambda_{p}^{\alpha_{p}(\overline{u})} \partial_{\overline{u}}(\varphi_{i} \circ \gamma)\Big)_{\overline{u} \in \mathcal{W}_{p,\leq k}},
\]
 where \(\boldsymbol{\lambda}=(\lambda_{1}, \dotsc, \lambda_{p}) \in (\C^{*})^{p}\).
Indeed, the element $\boldsymbol{\lambda}=(\lambda_{1}, \dotsc, \lambda_{p}) \in (\C^{*})^{p}$ induces a diagonal automorphism of $\C^{p}$  
\[
D_{\boldsymbol{\lambda}} 
\colon
(z_{1}, \dotsc, z_{p}) \mapsto (\lambda_{1}z_{1}, \dotsc, \lambda_{p}z_{p}),
\]
and one has the equality
\[
\boldsymbol{\lambda} \cdot \partial_{p,k} \gamma
=
 \left(\partial_{\overline{u}}(\varphi_{i} \circ \gamma \circ D_{\boldsymbol{\lambda}})\right)_{\overline{u} \in \mathcal{W}_{p,\leq k}}.
 \]
Therefore, the action commutes with a  change of chart, and is thus well defined on the bundle $J_{p,k}X$. More generally, the group of biholomorphisms of \( (\C^{p}, 0) \) acts on \(J_{p,k}X\) by setting
\[
\varphi \cdot \partial_{p,k} \gamma
\bydef
\partial_{p,k}(\gamma \circ \varphi),
\]
where \(\varphi\) is a biholomorphism of \((\C^{p},0)\) and \(\gamma: (\C^{p}, 0) \to X\) is a \(p\)-germ. Since one considers derivatives up to the order \(k\), the algebraic description of the previous action is encoded in the following complex Lie group
\[
\mathbb{G}_{p,k}
\bydef
\Big\{
\big(\sum \limits_{1 \leq \ell(\overline{u}) \leq k} a_{1,\overline{u}} \cdot \mathbi{z}^{\overline{u}}, \dotsc, \sum \limits_{1 \leq \ell(\overline{u}) \leq k} a_{p,\overline{u}} \cdot \mathbi{z}^{\overline{u}} \big)
\ \big| \ 
\big(a_{i,j}\big)_{1 \leq i, j \leq p} \in GL_{p}(\C)
\Big\},
\]
 in which the composition law is given by reducing modulo every monomial of degree strictly greater than \(k\), and where \(\mathbi{z}^{\overline{u}}\bydef z_{1}^{\alpha_{1}(\overline{u})} \dotsb z_{p}^{\alpha_{p}(\overline{u})}\).

 \begin{comment}
Regarding the general expression of \(\partial_{\overline{u}}(\gamma \circ \varphi)\) in term of the partial derivatives of and \(\gamma\) and \(\varphi\), let us record in a lemma the following elementary observation:
\begin{lemma}[Composition of maps]
\label{lemma: comp}
Let \(f\colon (\C^{p},0) \to \C\) be a holomorphic function, and let \(\varphi: (\C^{p},0) \to (\C^{p},0) \) be an holomorphic map. Let \(\overline{u}\) be a word in \(\mathcal{W}_{p}\). There exists constants in \(\N\), independant of \(f\) and \(\varphi\), such that \(\partial_{\overline{u}}(f \circ \varphi)\) is expressed as a linear combination of terms of the form
\[
(\partial_{\overline{a}}f \circ \varphi)
(\partial_{\overline{b_{1}}}\varphi_{1})^{\alpha_{1}} \dotsc (\partial_{\overline{b_{p}}}\varphi_{p})^{\alpha_{p}},
\]
where \( \ell(\overline{a})+ \sum\limits_{i=1}^{p} \alpha_{i}\ell(\overline{b_{i}})=\ell(\overline{u})+1\).
\end{lemma}
\begin{proof}
This follows from a straightforward induction, using the usual formula to compute the derivative of a composition of maps.
\end{proof}
\end{comment}

Define the vector bundle \(E_{p,k,\boldsymbol{\beta}}X\) of partial jet differentials (of \(p\)-germs) of order \(k\) and multi-degrees \(\boldsymbol{\beta}=(\beta_{1}, \dotsc, \beta_{p}) \in \N^{p}\) as follows. Construct the bundle whose fiber over $x \in X$ is the vector space of complex valued polynomials $Q$ of muti-degrees $\boldsymbol{\beta}$ on the fiber $(J_{p,k}X)_{x}$, i.e. 
for any \(\boldsymbol{\lambda} \in \C^{p}\) and any \(p\)-germ \(\gamma\colon (\C^{p},0) \to (X,x) \), the polynomial \(Q\) satisfies the equality
\[
Q(\boldsymbol{\lambda} \cdot \partial_{p,k}\gamma)
=
\boldsymbol{\lambda}^{\boldsymbol{\beta}} Q(\partial_{p,k}\gamma),
\]
where we recall the usual multi-indexes notation \(\boldsymbol{\lambda}^{\boldsymbol{\beta}}=\lambda_{1}^{\beta_{1}}\dotsb \lambda_{p}^{\beta_{p}}\). The formula to compute multi-derivatives of compositions of maps Lemma \ref{lemma: compo} allows one to see that the structure of vector space of the fibers is preserved under a change of chart: the bundle \(E_{p,k,\boldsymbol{\beta}}X\) is indeed a vector bundle.

Let \(w \in \N_{\geq 1}\), and gather the multi-indexes \(\boldsymbol{\beta} \in \N^{p}\) of same weight \( \abs{\boldsymbol{\beta}}=w\) in order to define the vector bundle of partial jet differentials of order \(k\) and weight \(w\)
\[
E_{p,k,w}X
\bydef
\underset{|\boldsymbol{\beta}|=w}{\bigoplus} E_{p,k, \boldsymbol{\beta}}X.
\]
The group of biholomorhisms of \((\C^{p},0)\) acts in a natural fashion on \(E_{p,k,w}X\) by acting on \(J_{p,k}X\):
\[
(\varphi \cdot Q)(\partial_{p,k} \gamma)
\bydef
Q(\varphi \cdot \partial_{p,k}\gamma),
\]
where \(Q\) is a partial jet differentials of order \(k\) and weight \(w\), and \(\varphi\) is a biholomorphism of \((\C^{p},0)\). Indeed, observe that one has the following equality for any \(\lambda \in \C^{*}\) and any \(\varphi\) biholomorphism of \((\C^{p},0)\):
\[
(\lambda I_{p} \circ \varphi) \cdot \partial_{p,k} \gamma
 =
 (\varphi \circ \lambda I_{p}) \cdot \partial_{p,k} \gamma,
\]
which follows for instance from the formula Lemma \ref{lemma: compo}. It then readily implies that if \(Q \in E_{p,k,w}X\), then so does \(\varphi \cdot Q\) for any \(\varphi\) biholomorphism of \((\C^{p},0)\).
\begin{remark}
\label{rmk: lin}
Note that \( E_{p,k, \boldsymbol{\beta}}X \) is \emph{not} stable under the action of biholomorphisms, unless \(\boldsymbol{\beta}=(0, \dotsc, 0)\). 
\end{remark}
One can define particular subbundles of \(E_{p,k,w}X\) by imposing a condition under the action of the biholomorphisms of \((\C^{p},0)\). Here, we introduce the following:
\begin{definition}[Linear and invariant subbundles]
The \textsl{linear subbbundle of \(E_{p,k,w}X\)} is defined as follows:
\[
E_{p,k,w}^{1}X
\bydef
\Set{
Q \in E_{p,k,w}X 
\ | \
\forall \ \varphi \ \text{biholomorphism of \((\C^{p},0)\)},
\
\varphi \cdot Q
=
\diff \varphi(0) \cdot Q
}.
\]
This is the subbundle on which the action of the biholomorphisms of \((\C^{p},0)\) reduces to the action of the linear group \(GL_{p}(\C)\). The bundle \(E_{p,k,w}^{1}X\) has itself a particular subbundle, the \textsl{invariant subbundle of \(E_{p,k,w}X\)}, defined as follows:
\[
E_{p,k,w}^{\inv}X
\bydef 
\Set{
Q \in E_{p,k,w}X 
\ | \
\forall \ \varphi \ \text{biholomorphism of \((\C^{p},0)\)},
\
\varphi \cdot Q
=
\det(\diff \varphi(0))^{\frac{w}{p}} Q
}.
\]
Note that this bundle is the zero-bundle if \(w\) is not divisible by \(p\).
\end{definition}
\begin{example}
In the case where \(p=1\), one recovers the usual invariant jet differentials bundle. Note that in this situation, one has the equality
\[
E_{1,k,w}^{\inv}X
=
E_{1,k,w}^{1}X.
\]
\end{example}

There is a natural filtration of the vector bundles \(E_{p,k,\boldsymbol{\beta}}X\) obtained as follows. Locally, an element of \(E_{p,k,\boldsymbol{\beta}}X\) is a linear combination of monomial terms of the following form
\[
\prod\limits_{i=1}^{N} (\partial_{\overline{u_{1}}}\gamma_{1})^{\tau_{1}} \dotsb (\partial_{\overline{u_{N}}}\gamma_{N})^{\tau_{N}}
\]
where \(\gamma=(\gamma_{1}, \dotsc, \gamma_{N}): (\C^{p},0) \to X\) is a \(p\)-germ (written in a fixed trivialization, with \(N=\dim(X)\)), and where the following  is satisfied:
\[
\forall \ 1 \leq j \leq N, \
\ell(\overline{u_{j}}) \leq k
\
\
\text{and}
\
\
\forall \ 1 \leq i \leq p, \
\sum\limits_{j=1}^{N} \tau_{j} \times \alpha_{i}(\overline{u_{j}})=\beta_{i}.
\]
%(Recall that, by definition, \(\alpha_{i}(\overline{u})\) is the number of occurences of the letter \(i\) in the word \(\overline{u}\)).%
Fix \(\overline{u} \in \mathcal{W}_{p,k}\) a word of length \(k\), and define the total degree \(\deg_{\overline{u}}\) with respect to the word \(\overline{u}\) as follows:
\[
 \deg_{\overline{u}}\left(\prod\limits_{i=1}^{N} (\partial_{\overline{u_{1}}}\gamma_{1})^{\tau_{1}} \dotsb (\partial_{\overline{u_{N}}}\gamma_{N})^{\tau_{N}}\right)
 \bydef
 \sum\limits_{i, \ \overline{u_{i}}=\overline{u}} \tau_{i}.
 \]
 It is extended to any element \(P\) of \(E_{p,k,\boldsymbol{\beta}}X\) by taking the maximum degree of the monomials appearing in the writing of \(P\). The degree \(\deg_{\overline{u}}\) does not depend on the trivialization, as one can readily check by changing charts and using the formula Lemma \ref{lemma: compo} to compute multi-derivatives of compositions of maps. It is thus well-defined on \(E_{p,k, \boldsymbol{\beta}}X\) and it induces the following filtration \(F_{\overline{u}}\) of \(E_{p,k, \boldsymbol{\beta}}X\):
 \[
 F_{\overline{u}}(E_{p,k, \boldsymbol{\beta}}X)\colon
 0 
 \subset 
 F_{\overline{u},0}(E_{p,k, \boldsymbol{\beta}}X)
  \subset 
 \dotsb 
 \subset 
 F_{\overline{u}, \lceil \frac{\abs{\boldsymbol{\beta}}}{k} \rceil}(E_{p,k, \boldsymbol{\beta}}X)
 =E_{p,k, \boldsymbol{\beta}}X
 \]
 where \(F_{\overline{u},d}(E_{p,k, \boldsymbol{\beta}}X)\) is the subvector bundle of elements of degree \(\deg_{\overline{u}}\leq d\). Note that for any \(0\leq d \leq \lceil \frac{\abs{\boldsymbol{\beta}}}{k} \rceil\), one has the isomorphism
 \[
 F_{\overline{u},d}(E_{p,k, \boldsymbol{\beta}}X)
 /
 F_{\overline{u},d-1}(E_{p,k, \boldsymbol{\beta}}X)
 \simeq
 S^{d}\Omega_{X} \otimes F_{\overline{u},0}(E_{p,k, \tilde{\boldsymbol{\beta}}(\overline{u},d)})
 \]
 where 
 \(\tilde{\boldsymbol{\beta}}(\overline{u}, d)
 \bydef 
 \boldsymbol{\beta} - d\times (\alpha_{1}(\overline{u}), \dotsc, \alpha_{p}(\overline{u}))\). 
 Inductively, this allows to obtain a filtration of \(E_{p,k,\boldsymbol{\beta}}X\) whose graded terms are as follows:
 \[
 \left(
 \bigotimes_{\overline{u}\in \mathcal{W}_{p,k}} S^{d(\overline{u})}\Omega_{X}
 \right)
 \bigotimes 
 \dotsb
 \bigotimes
 \left(
 \bigotimes_{\overline{u}\in \mathcal{W}_{p,1}} S^{d(\overline{u})} \Omega_{X}
 \right),
 \]
where the integers \(d(\overline{u})\) satisfy the following equality for any \(1 \leq j \leq p\)
\[
\sum\limits_{\overline{u} \in \mathcal{W}_{p, \leq k}} d(\overline{u})\alpha_{j}(\overline{u})
=
\beta_{j}.
\]
\begin{remark}
Let \(w\) be an integer, and consider all the \(p\)-uples \(\boldsymbol{\beta}\) of weight \(\abs{\boldsymbol{\beta}}=w\). Putting  the previous filtrations together, one obtains a filtration for the vector bundle \(E_{p,k,w}X\).
\end{remark} 
\subsubsection{Geometric generalized Wronskians}
\label{ssse: gg W}
Fix \( m \in \N_{\geq 1} \), and consider the following subvector space \(\mathcal{W}_{g}(m)\) of \(\mathcal{W}(m)\):
\[
\mathcal{W}_{g}(m)
\bydef
\Set{
W \in \mathcal{W}(m)
\ | \
 \forall g, f_{0}, \dotsc, f_{m} \in \mathcal{O}_{\C^{p}},
 \ 
 W(gf_{0}, \dotsc, gf_{m})
 =
 g^{m+1}W(f_{0}, \dotsc, f_{m})
 }.
 \]
\begin{definition}[Geometric generalized Wronskians]
\label{def: gg W}
A \textsl{geometric generalized Wronskian} is an element of \(\mathcal{W}_{g}(m)\), for some \( m \in \N_{\geq 1} \).
\end{definition}
Observe that the vector space of geometric generalized Wronskians of size \(m\) is stable under the action of the biholomorphisms of \((\C^{p},0)\). Hence the analogue of Question \ref{q: irr rep}:
\begin{question}
\label{q: irr rep bis}
What are the irreducible components of the representation (of \(GL_{p}(\C)\)) \(\mathcal{W}_{g}(m)\)?
\end{question}

Let \( X \) be a complex variety, equipped with a line bundle \( L \to X \). By very definition of geometric generalized Wronskians and partial jet differentials bundles, one has the following proposition:
\begin{proposition}
\label{prop: gg W germ}
Let \(s_{0}, \dotsc, s_{m} \) be global sections of the line bundle \(L \). Let \( W \) be an unmixed geometric generalized Wronskian of size \(m\), order \(k\) and weight \(w\). The operator \( W \) induces a global section \(W(s_{0}, \dotsc, s_{m}) \) of \(E_{p,k,w}X \otimes L^{m+1}\), defined locally on a trivializing open set \(U \) of \(L\) as follows:
\[
W(s_{0}, \dotsc, s_{m})(\gamma)
\overset{loc}{=}
W_{0}(s_{0, U}\circ \gamma, \dotsc, s_{m,U} \circ \gamma),
\]
where \( \gamma: (\C^{p},0) \to (X,x) \) is a \(p\)-germ passing through \(x \in U\), and where the notation \(s_{i,U}\) indicates that the section \(s_{i}\) is written under the trivialization of the line bundle \(L\) on the open set \(U\).
\end{proposition}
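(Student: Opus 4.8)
The plan is to unwind the two properties that are in play here—the homogeneity condition cutting out \(\mathcal{W}_{g}(m)\) and the torus-equivariance built into \(E_{p,k,w}X\)—and to verify that the prescribed local expression is, first, a well-defined section of \(E_{p,k,w}X\) over each trivialization and, second, compatible with changes of trivialization of \(L\) up to the cocycle of \(L^{m+1}\). I would fix an open set \(U\) that simultaneously trivializes \(L\) and serves as a chart of \(X\), and study the map \(\gamma \mapsto W_{0}(s_{0,U}\circ\gamma, \dotsc, s_{m,U}\circ\gamma)\). Since \(W\) has order \(k\), only the operators \(\partial_{\overline{u}}\) with \(\ell(\overline{u})\leq k\) occur; by the composition formula Lemma~\ref{lemma: compo}, each entry \(\partial_{\overline{u}}(s_{i,U}\circ\gamma)(0)\) depends only on the \(k\)-jet \(\partial_{p,k}\gamma\) and is polynomial in its coordinates, so the whole expression descends to a polynomial function on the fiber \((J_{p,k}X)_{x}\). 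To pin down its weight I would run the torus action: using \(\partial_{\overline{u}}\big((s_{i,U}\circ\gamma)\circ D_{\boldsymbol{\lambda}}\big)(0)=\lambda_{1}^{\alpha_{1}(\overline{u})}\dotsb\lambda_{p}^{\alpha_{p}(\overline{u})}\,\partial_{\overline{u}}(s_{i,U}\circ\gamma)(0)\) together with multilinearity of the determinant in its rows, one gets \((W_{\mathcal{U}})_{0}(\dotsc\circ D_{\boldsymbol{\lambda}})=\boldsymbol{\lambda}^{\boldsymbol{\beta}(\mathcal{U})}(W_{\mathcal{U}})_{0}(\dotsc)\) for each pure generalized Wronskian. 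As \(W\) is unmixed of weight \(w\), every admissible set in its writing satisfies \(\abs{\boldsymbol{\beta}(\mathcal{U})}=w\), so the expression lands in \(\bigoplus_{\abs{\boldsymbol{\beta}}=w}E_{p,k,\boldsymbol{\beta}}X=E_{p,k,w}X\); this is exactly where unmixedness is used. Since the value depends only on \(x\) and on the intrinsic jet \(\partial_{p,k}\gamma\) (not on the chart of \(X\)), the change of chart on \(X\) is already absorbed into the transition maps defining the vector bundle \(E_{p,k,w}X\).

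The heart of the proof is compatibility with changes of trivialization of \(L\), and this is where the geometric property does the work. On an overlap \(U\cap V\) with transition function \(g=g_{UV}\) of \(L\) one has \(s_{i,U}=g\,s_{i,V}\), hence \(s_{i,U}\circ\gamma=(g\circ\gamma)\,(s_{i,V}\circ\gamma)\). Writing \(\tilde{g}=g\circ\gamma\), invertible near \(0\), and applying the membership \(W\in\mathcal{W}_{g}(m)\) to the germs \(s_{i,V}\circ\gamma\), I obtain \(W(\tilde{g}\,(s_{0,V}\circ\gamma),\dotsc)=\tilde{g}^{\,m+1}\,W(s_{0,V}\circ\gamma,\dotsc)\); evaluating at \(0\) gives
\[
W_{0}(s_{0,U}\circ\gamma,\dotsc)=g(x)^{m+1}\,W_{0}(s_{0,V}\circ\gamma,\dotsc).
\]
Thus the locally defined sections of \(E_{p,k,w}X\) transform by the cocycle \(g_{UV}^{\,m+1}\) of \(L^{m+1}\), which is precisely the gluing datum for a global section of \(E_{p,k,w}X\otimes L^{m+1}\).

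I expect the only genuine subtlety—the crux—to be this second step: recognizing that the defining homogeneity \(W(gf_{0},\dotsc,gf_{m})=g^{m+1}W(f_{0},\dotsc,f_{m})\), pulled back along \(\gamma\), is exactly the transition cocycle of \(L^{m+1}\). Everything else (order \(\Rightarrow\) factoring through \(J_{p,k}X\), polynomiality via Lemma~\ref{lemma: compo}, and the weight via the torus action of Section~\ref{ssse: jets differentials}) is a routine unwinding of the constructions. It is worth stressing that the geometric condition is not merely convenient but necessary: a general generalized Wronskian would produce on overlaps a germ-valued rather than constant correction factor, which would fail to assemble into any power of \(L\).
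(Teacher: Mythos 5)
Your proposal is correct and takes essentially the same route as the paper's proof: you use the torus equivariance \(D_{\boldsymbol{\lambda}} \cdot W_{\mathcal{U}} = \boldsymbol{\lambda}^{\boldsymbol{\beta}(\mathcal{U})} W_{\mathcal{U}}\) together with polynomiality in the coordinates of \(\partial_{p,k}\gamma\) to place the local expression in \(E_{p,k,w}U = \bigoplus_{\abs{\boldsymbol{\beta}}=w} E_{p,k,\boldsymbol{\beta}}U\) (unmixedness entering exactly as in the paper), and then invoke the defining homogeneity of \(\mathcal{W}_{g}(m)\) to obtain the transition cocycle \(g_{UV}^{m+1}\) of \(L^{m+1}\). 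Your explicit overlap computation with \(\tilde{g} = g_{UV}\circ\gamma\) simply spells out the step the paper compresses into ``the very definition of geometric generalized Wronskians allows then to glue,'' so there is nothing to correct.
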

\begin{proof}
Compute that, for any \(\mathbi{\lambda}=(\lambda_{1}, \dotsc, \lambda_{p}) \in (\C^{*})^{p}\) and any full set \(\mathcal{U}\) of size \(m\) and order \(k\), one has the following equality
\[
D_{\mathbi{\lambda}} \cdot W_{\mathcal{U}}
=
\mathbi{\lambda}^{\mathbi{\beta}(\mathcal{U})} W_{\mathcal{U}},
\]
where we recall that \(D_{\mathbi{\lambda}}=\Diag(\lambda_{1}, \dotsc, \lambda_{p})\).
This implies in particular that for any \(p\)-germ \(\gamma\colon (\C^{p},0) \to (U,x)\), and any \(\mathbi{\lambda} \in (\C^{*})^{p}\) one has the following equality:
\[
(W_{\mathcal{U}})_{0}\big(s_{0,U}(\gamma \circ D_{\mathbi{\lambda}}), \dotsc, s_{m,U}(\gamma \circ D_{\mathbi{\lambda}})\big)
=
\mathbi{\lambda}^{\mathbi{\beta}(U)}  (W_{\mathcal{U}})_{0}\big(s_{0, U}\circ \gamma, \dotsc, s_{m,U} \circ \gamma\big).
\]
As \((W_{\mathcal{U}})_{0}\big(s_{0, U}\circ \gamma, \dotsc, s_{m,U} \circ \gamma\big)\) clearly writes as a polynomial in the coordinates of \(\partial_{p,k} \gamma\), this implies that \(W_{\mathcal{U}}(s_{0,U}, \dotsc, s_{m,U})\) defines a section of \(E_{p,k, \mathbi{\beta}(\mathcal{U})}U\).

Now, by hypothesis, the geometric generalized Wronskian \(W\) writes
\[
W=\sum\limits_{\mathcal{U}, w(\mathcal{U})=w} \lambda_{\mathcal{U}} W_{\mathcal{U}},
\]
where \(\lambda_{\mathcal{U}} \in \C\), and the above shows that \(W(s_{0,U}, \dotsc, s_{m,U})\) defines a section of
\[
E_{p,k,w}U=\bigoplus_{|\mathbi{\beta}|=w} E_{p,k, \mathbi{\beta}}U.
\]
The very definition of geometric generalized Wronskians allows then to glue the local sections \(W(s_{0,U}, \dotsc, s_{m,U})\) into a global section \(W(s_{0}, \dotsc, s_{m})\) of \(E_{p,k,w}X\otimes L^{m+1}\).
\end{proof}
\subsection{Pure geometric generalized Wronskians}
\label{sse: pgg W}
\begin{definition}[Pure geometric generalized Wronskians]
\label{def: pgg W}
Let \(\mathcal{U}\) be an admissible set. One says that \(\mathcal{U}\) is a \textsl{full set} if and only if the set \(\mathcal{U} \) satisfies the following property:
\begin{center}
if a word \(\overline{u}\) belongs to \(\mathcal{U}\), then so does every one of its subwords.
\end{center}
In this situation, the generalized Wronskian \(W_{\mathcal{U}}\) is called a \textsl{pure geometric generalized Wronskian}.
\end{definition}
In order to justify this terminology, we show that pure geometric generalized Wronskian are indeed geometric generalized Wronskians. To this end, one uses a Leibniz rule for the operators \(\partial_{\overline{u}}\), given in Lemma \ref{lemma: Leibniz}.
\begin{proposition}
\label{prop: ggW pgg W}
A pure geometric generalized Wronskian in the sense of Definition \ref{def: pgg W} is a geometric generalized Wronskian in the sense of Definition \ref{def: gg W}.
\end{proposition}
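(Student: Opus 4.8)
The plan is to verify directly the defining multiplicativity property of $\mathcal{W}_g(m)$, namely that $W_{\mathcal{U}}(gf_0, \dotsc, gf_m) = g^{m+1} W_{\mathcal{U}}(f_0, \dotsc, f_m)$ for every $g, f_0, \dotsc, f_m \in \mathcal{O}_{\C^p}$, whenever $\mathcal{U}$ is a full admissible set. Write $\mathcal{U} = \{\overline{u_1}, \dotsc, \overline{u_m}\}$ in the length-first ordering of Remark \ref{rmk: order}, and set $\overline{u_0} \bydef \emptyset$ so that the matrix defining $W_{\mathcal{U}}$ has its rows indexed by $\overline{u_0}, \dotsc, \overline{u_m}$. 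Let $M \bydef (\partial_{\overline{u_i}} f_j)_{0 \le i,j \le m}$ and $M' \bydef (\partial_{\overline{u_i}}(g f_j))_{0 \le i,j \le m}$, so that $\det M = W_{\mathcal{U}}(f_0,\dotsc,f_m)$ and $\det M' = W_{\mathcal{U}}(gf_0,\dotsc,gf_m)$. The first step is to expand each entry of $M'$ with the Leibniz rule of Lemma \ref{lemma: Leibniz}, which yields
\[
\partial_{\overline{u_i}}(g f_j) = \sum_{\overline{v} \subseteq \overline{u_i}} c_{\overline{u_i}, \overline{v}}\,(\partial_{\overline{v}} g)\,(\partial_{\overline{u_i} \setminus \overline{v}} f_j),
\]
the sum ranging over subwords $\overline{v}$ of $\overline{u_i}$, with constants $c_{\overline{u_i}, \overline{v}}$ independent of $g$ and of the $f_j$, and with $c_{\overline{u_i}, \emptyset} = 1$.

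The crucial step is to see that fullness turns this expansion into a triangular transformation of the rows of $M$. Since $\overline{u_i} \in \mathcal{U}$ and $\mathcal{U}$ is full, every subword $\overline{u_i} \setminus \overline{v}$ again lies in $\mathcal{U} \cup \{\overline{u_0}\}$, so there is an index $k = k(i,\overline{v})$ with $\overline{u_k} = \overline{u_i} \setminus \overline{v}$ and hence $\partial_{\overline{u_i}\setminus\overline{v}} f_j = M_{kj}$. Collecting terms, I would write $M' = TM$, where $T_{ik} \bydef \sum_{\overline{v}:\, \overline{u_i}\setminus\overline{v} = \overline{u_k}} c_{\overline{u_i}, \overline{v}}\, \partial_{\overline{v}} g$ depends only on $g$. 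The diagonal entry $T_{ii}$ arises solely from $\overline{v} = \emptyset$ and equals $\partial_{\emptyset} g = g$; and any nonzero off-diagonal contribution to $T_{ik}$ forces $\ell(\overline{u_k}) = \ell(\overline{u_i}) - \ell(\overline{v}) < \ell(\overline{u_i})$, hence $k < i$ by the length-first ordering. Thus $T$ is lower triangular with every diagonal entry equal to $g$.

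Taking determinants then gives $\det M' = \det T \cdot \det M = g^{m+1} \det M$, which is precisely the desired identity and places $W_{\mathcal{U}}$ in $\mathcal{W}_g(m)$. The main obstacle, and the only genuine use of the hypotheses, is the triangularity of $T$: it rests on the conjunction of two facts, that fullness guarantees each $\overline{u_i}\setminus\overline{v}$ is again one of the indexing words (so that $T$ acts as a bona fide left multiplier on the rows of $M$, with no term falling outside the matrix), and that the length-first ordering of Remark \ref{rmk: order} sends strictly shorter subwords to strictly smaller indices (so that these contributions land strictly below the diagonal). I would be careful to confirm that the Leibniz coefficient attached to $\overline{v} = \emptyset$ is exactly $1$, so that the diagonal is $g$ itself and the determinant of $T$ is $g^{m+1}$ on the nose.
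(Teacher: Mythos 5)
Your proof is correct and is essentially the paper's argument: the paper performs the same Leibniz expansion (Lemma \ref{lemma: Leibniz}) and uses fullness together with the length-first ordering of Remark \ref{rmk: order} to reduce each row $L'_{\overline{u}}$ to $g\cdot L_{\overline{u}}$ by successive row operations, which is precisely your triangular factorization $M' = TM$ carried out step by step. Your packaging as a single factorization is marginally cleaner, since the paper's intermediate operations involve a factor $\frac{1}{g}$ (harmless, as one may work on $\Set{g \neq 0}$ and conclude by the identity theorem), whereas your identity $\det M' = \det T \cdot \det M = g^{m+1}\det M$ is a polynomial identity in the derivatives of $g$ and the $f_{j}$'s, valid everywhere with no division.
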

\begin{proof}
Let \( \mathcal{U} \) be a full set of size \(m\), and let \(g, f_{0}, \dotsc, f_{m} \) be elements of \(\mathcal{O}_{\C^{p}}\). One must show the following equality:
\[
W_{\mathcal{U}}(gf_{0}, \dotsc, gf_{m})
=
g^{m+1} W_{\mathcal{U}}(f_{0}, \dotsc, f_{m}).
\]
Denote, for \(\overline{u}\) a word belonging to \(\mathcal{U}\), \(L_{\overline{u}}\) (resp. \(L'_{\overline{u}}\)) the line
\[
L_{\overline{u}}
\bydef
(\partial_{\overline{u}}f_{0}, \dotsc, \partial_{\overline{u}}f_{m}) 
\
\left(\text{resp.}
\
L'_{\overline{u}}
\bydef
\left(\partial_{\overline{u}}(gf_{0}), \dotsc, \partial_{\overline{u}}(gf_{m})\right) \right).
\]
By convention, set \(L_{\emptyset}=(f_{0}, \dotsc, f_{m})\) (resp. \(L'_{\emptyset}=(gf_{0}, \dotsc, gf_{m})\)). Observe that those lines are the lines of the matrix defining \(W_{\mathcal{U}}(f_{0}, \dotsc, f_{m})\) (resp. \(W_{\mathcal{U}}(gf_{0}, \dotsc, gf_{m})\)). 

One proceeds by describing operations on the lines yielding the wanted result. For sake of notation, one keeps the same notation for the lines after having performed an operation on them: e.g. if one transforms \(L'_{\overline{u}}\), then \(L'_{\overline{u}}\) still denotes the transformed line.
Start by taking \(\overline{u}\) a word of length \(1\) in the full set \(\mathcal{U}\), and make the operation on the lines 
\[
L'_{\overline{u}}
\leftarrow
L'_{\overline{u}}
-
\partial_{\overline{u}}g \cdot L_{\emptyset},
\]
which transforms \(L'_{\overline{u}}\) into \(g\cdot L_{\overline{u}}\) by the classic Leibniz rule.
Do this for all the elements of size \(1\) in \(\mathcal{U}\).
Take now \(\overline{u}\) a word of length \(2\) in \(\mathcal{U}\), and make the following operation on the lines
\begin{equation*}
\begin{aligned}
L'_{\overline{u}}
\leftarrow
L'_{\overline{u}}
-
\sum\limits_{\substack{\overline{u_{1}} \cdot \overline{u_{2}}=\overline{u} \\ \overline{u_{1}} \neq \emptyset}}
\frac{1}{g}
C_{\overline{u_{1}}, \overline{u_{2}}}
\partial_{\overline{u_{1}}}g
\cdot
 gL_{\overline{u_{2}}},
 \end{aligned}
 \end{equation*}
 where \(C_{\overline{u_{1}}, \overline{u_{2}}}\) are constants defined in Lemma \ref{lemma: Leibniz}.
This is a well defined operation, keeping in mind the effect of the previous transformations and the fact that \(\mathcal{U}\) is a full set. This operation transforms the line \(L'_{\overline{u}}\) into the line \(g\cdot L_{\overline{u}}\) by the formula Lemma \ref{lemma: Leibniz}. One performs this for all words of length \(2\) in \(\mathcal{U}\).
One keeps doing these operations until all the words in the full set \(\mathcal{U}\) are exhausted. The key features are on the one hand formula Lemma \ref{lemma: Leibniz}, which gives the equality
\[
\partial_{\overline{u}}(gf)
-
g
\partial_{\overline{u}}f
=
\sum\limits_{\substack{\overline{u_{1}} \cdot \overline{u_{2}}=\overline{u} \\ \overline{u_{1}} \neq \emptyset}}
 C_{\overline{u_{1}}, \overline{u_{2}}} 
\partial_{\overline{u_{1}}}g \cdot \partial_{\overline{u_{2}}}f,
\]
and on the other hand the fact that in the sum on the right, the words \(\overline{v_{2}}\) appearing belong to \(\mathcal{U}\) (since they are subwords of \(\overline{u}\)) and are of length strictly less than \(\overline{u}\): the previous operations made on the lines allow then to implement the right operation to transform \(L'_{\overline{u}}\) into \(g\cdot L_{\overline{u}}\).
To conclude, the multi-linearity and alternating property of the determinant yields the equality
\[
W_{\mathcal{U}}(gf_{1}, \dotsc, gf_{m})
=
g^{m+1} W_{\mathcal{U}}(f_{0}, \dotsc, f_{m}),
\]
which is what we wanted to prove.
\end{proof}

As a corollary of the proof of Proposition \ref{prop: gg W germ}, one then has the following:
\begin{proposition}
\label{prop: Wronskian jet}
Let \(s_{0}, \dotsc, s_{m} \) be global sections of the line bundle \(L \). Let \( \mathcal{U} \) be a full set of size \(m\) and order \(k\), with caracteristic exponent \(\boldsymbol{\beta}=\boldsymbol{\beta}(\mathcal{U})\). Then \(W_{\mathcal{U}}(s_{0}, \dotsc, s_{m}) \) is in fact a global section of \(E_{p,k,\boldsymbol{\beta}}X \otimes L^{m+1}\).
\end{proposition}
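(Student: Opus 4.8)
The plan is to obtain this statement as a direct refinement of Proposition \ref{prop: gg W germ}, exploiting the fact that here a \emph{single} full set \(\mathcal{U}\) is involved rather than a linear combination of several, so that a single characteristic exponent \(\boldsymbol{\beta}=\boldsymbol{\beta}(\mathcal{U})\) governs the homogeneity. First I would record that \(W_{\mathcal{U}}\) is an unmixed geometric generalized Wronskian: being pure it is unmixed, and Proposition \ref{prop: ggW pgg W} guarantees that it is geometric, i.e. \(W_{\mathcal{U}}\in \mathcal{W}_{g}(m)\). Its order is \(k\) and its weight is \(w=w(\mathcal{U})=\abs{\boldsymbol{\beta}(\mathcal{U})}\). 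Proposition \ref{prop: gg W germ} already produces from this a global section \(W_{\mathcal{U}}(s_{0}, \dotsc, s_{m})\) of \(E_{p,k,w}X \otimes L^{m+1}\), so it remains only to show that it lands in the single graded summand \(E_{p,k,\boldsymbol{\beta}(\mathcal{U})}X \otimes L^{m+1}\) of the decomposition \(E_{p,k,w}X \otimes L^{m+1}=\bigoplus_{\abs{\boldsymbol{\beta}}=w} E_{p,k,\boldsymbol{\beta}}X \otimes L^{m+1}\).

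The crucial input, already computed inside the proof of Proposition \ref{prop: gg W germ}, is the torus-homogeneity \(D_{\boldsymbol{\lambda}} \cdot W_{\mathcal{U}}=\boldsymbol{\lambda}^{\boldsymbol{\beta}(\mathcal{U})} W_{\mathcal{U}}\) for all \(\boldsymbol{\lambda}\in (\C^{*})^{p}\). Translating this into local sections over a trivializing open set \(U\), the polynomial \(Q=(W_{\mathcal{U}})_{0}(s_{0,U}\circ \gamma, \dotsc, s_{m,U}\circ \gamma)\) in the coordinates of \(\partial_{p,k}\gamma\) satisfies \(Q(\boldsymbol{\lambda}\cdot \partial_{p,k}\gamma)=\boldsymbol{\lambda}^{\boldsymbol{\beta}(\mathcal{U})} Q(\partial_{p,k}\gamma)\), which is precisely the defining condition for membership in \(E_{p,k,\boldsymbol{\beta}(\mathcal{U})}U\). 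Hence the local section has a single, well-defined multi-degree \(\boldsymbol{\beta}(\mathcal{U})\), rather than merely weight \(w\); this is exactly where the passage from a combination of admissible sets (as in Proposition \ref{prop: gg W germ}) to a single full set sharpens the conclusion.

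The step requiring care — and what I regard as the main obstacle — is ensuring that this refinement is chart-independent, so that it genuinely cuts out a \emph{global} subbundle and is not an artefact of one trivialization. Here I would invoke that the \((\C^{*})^{p}\)-action on \(J_{p,k}X\) commutes with change of chart, whence the splitting \(E_{p,k,w}X=\bigoplus_{\abs{\boldsymbol{\beta}}=w} E_{p,k,\boldsymbol{\beta}}X\) is a genuine direct sum of vector bundles over \(X\); consequently membership of a section in a single summand is a global condition that may be verified in any one trivialization. This is fully compatible with Remark \ref{rmk: lin}: although \(E_{p,k,\boldsymbol{\beta}}X\) is \emph{not} stable under the full group of biholomorphisms of \((\C^{p},0)\), it is a well-defined direct summand over \(X\), and only the diagonal torus-homogeneity — which the transition maps preserve — is needed to detect it.

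Finally I would glue the local sections \(W_{\mathcal{U}}(s_{0,U}, \dotsc, s_{m,U})\). This is legitimate by the geometric property \(W_{\mathcal{U}}(gf_{0}, \dotsc, gf_{m})=g^{m+1}W_{\mathcal{U}}(f_{0}, \dotsc, f_{m})\) of \(W_{\mathcal{U}}\), exactly as in the concluding step of Proposition \ref{prop: gg W germ}, since the transition functions of \(L\) are accounted for by the twist \(L^{m+1}\). Combined with the multi-degree computation above, the glued object is then a global section of \(E_{p,k,\boldsymbol{\beta}(\mathcal{U})}X \otimes L^{m+1}\), as claimed.
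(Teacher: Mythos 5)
Your proof is correct and takes essentially the same route as the paper, which states this proposition as a direct corollary of the proof of Proposition \ref{prop: gg W germ}: there the computation \(D_{\boldsymbol{\lambda}} \cdot W_{\mathcal{U}} = \boldsymbol{\lambda}^{\boldsymbol{\beta}(\mathcal{U})} W_{\mathcal{U}}\) already shows that for a single full set the local sections land in the single summand \(E_{p,k,\boldsymbol{\beta}(\mathcal{U})}U\), and the gluing via the geometric property then yields the global section of \(E_{p,k,\boldsymbol{\beta}}X \otimes L^{m+1}\). Your chart-independence paragraph merely makes explicit what the paper established in Section~\ref{ssse: jets differentials} when showing that \(E_{p,k,\boldsymbol{\beta}}X\) is a well-defined vector bundle (the torus action commuting with changes of chart), so it is a welcome clarification rather than a deviation.
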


Note that in the case where \(p=1\), one easily shows that \(W(s_{0}, \dotsc, s_{m})\) defines in fact a section of \(E_{1,k,\frac{m(m+1)}{2}}^{\inv}X\otimes L^{m+1}\): see \cite{brotbek2017hyperbolicity}. This is no longer true for \(p>1\), unless for very specific Wronskians.
\begin{example}
If \(\mathcal{U}\) is the full set of order \(k\) containing every word of length \(i\) for \(1 \leq i \leq k \), i.e. \(\mathcal{U}=\mathcal{W}_{p, \leq k}\), then one checks that \(W_{\mathcal{U}}(s_{0}, \dotsc, s_{m})\) defines an invariant section: see Lemma \ref{lemma: Wronskian det}.
\end{example}

\subsection{Pure geometric generalized Wronskians and linear dependance}
\label{sse: proof linear independance}
The goal of this section is to prove that, in the statement of Theorem \ref{thm: generalized W}, one can replace ``generalized Wronskians`` with ``pure geometric generalized Wronskians``:
\begin{theorem}
\label{thm: geometric generalized W}
Let \(f_{0}, \dotsc, f_{m} \) be \((m+1)\) be \(m+1\) holomorphic functions on \(\C^{p}\). They are linearly independant if and only if there exists a full set \( \mathcal{U} \) of size \( m \) such that \( W_{\mathcal{U}}(f_{0}, \dotsc, f_{m}) \not\equiv 0 \).
\end{theorem}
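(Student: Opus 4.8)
The plan is to follow the scheme of the proof of Theorem~\ref{thm: generalized W}, checking at each reduction step that it survives the restriction to \emph{full} sets, and then to isolate the one genuinely new ingredient. The ``only if'' direction is free: if $f_0, \dots, f_m$ are linearly dependent, then multilinearity and the alternating property of the determinant force $W_{\mathcal{U}}(f_0, \dots, f_m) \equiv 0$ for \emph{every} admissible set, in particular for every full set. For the converse I would first reduce to germs, hence to power series, and then invoke Lemma~\ref{lemma: serie}: since the scalar identity $W_{\mathcal{U}}(t_0, \dots, t_m) = \det(A)\, W_{\mathcal{U}}(f_0, \dots, f_m)$ holds verbatim for \emph{any} set of words $\mathcal{U}$ (the coefficients of $A$ being constant), I may assume the $f_j$ are series with pairwise distinct orders $\boldsymbol{\alpha}_0, \dots, \boldsymbol{\alpha}_m \in \N^p$. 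The leading-order argument of Theorem~\ref{thm: generalized W} then applies unchanged, so it suffices to find a full set $\mathcal{U}$ of size $m$ with $W_{\mathcal{U}}(\boldsymbol{z}^{\boldsymbol{\alpha}_0}, \dots, \boldsymbol{z}^{\boldsymbol{\alpha}_m}) \not\equiv 0$.

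Next I would make this monomial Wronskian explicit. Writing $\boldsymbol{\alpha}(\overline{u}) = (\alpha_1(\overline u), \dots, \alpha_p(\overline u))$ and $c(\boldsymbol{\alpha}, \overline u) = \prod_{l=1}^p \alpha_l(\alpha_l - 1)\cdots(\alpha_l - \alpha_l(\overline u) + 1)$, one has $\partial_{\overline u}\boldsymbol{z}^{\boldsymbol{\alpha}} = c(\boldsymbol{\alpha}, \overline u)\,\boldsymbol{z}^{\boldsymbol{\alpha} - \boldsymbol{\alpha}(\overline u)}$. Since every term in the determinant expansion carries the same total exponent $\sum_j \boldsymbol{\alpha}_j - \boldsymbol{\beta}(\mathcal{U})$, the Wronskian factors as a single monomial times the numerical determinant $\det\big(c(\boldsymbol{\alpha}_j, \overline{u_i})\big)_{i,j}$ (with the convention $\overline{u_0} = \emptyset$). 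Thus the claim becomes: for distinct $\boldsymbol{\alpha}_0, \dots, \boldsymbol{\alpha}_m$ there is a full set whose associated falling-factorial matrix is nonsingular.

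Now I would exploit that full sets correspond, via $\mathcal{W}_p \cong \N^p$, exactly to finite \emph{lower sets} $S \subseteq \N^p$ (order ideals: $\boldsymbol{b} \le \boldsymbol{a} \in S \Rightarrow \boldsymbol{b} \in S$) containing the origin; one checks moreover that any such $S$ is automatically admissible, since an element of length $\ell$ lies above a saturated chain exhibiting one element of each smaller length. For a lower set $S$ the polynomials $\prod_l (x_l)_{k_l}$ (indexed by $\boldsymbol{k} \in S$) and the monomials $\boldsymbol{x}^{\boldsymbol{k}}$ span the same space, the change of basis being unipotent and triangular for the product order; hence $\det\big(c(\boldsymbol{\alpha}_j, \overline{u_i})\big) = \det\big(\boldsymbol{\alpha}_j^{\boldsymbol{\alpha}(\overline{u_i})}\big)$ reduces to an ordinary generalized Vandermonde determinant. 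The statement to prove is therefore: \emph{given distinct points $\boldsymbol{\alpha}_0, \dots, \boldsymbol{\alpha}_m \in \N^p$, there is a lower set $S$ of cardinality $m+1$ for which the evaluation matrix $(\boldsymbol{\alpha}_j^{\boldsymbol{k}})_{\boldsymbol{k} \in S,\ j}$ is invertible.}

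This last point is the main obstacle, and it is exactly the unisolvence of a suitable order ideal of monomials for interpolation at a finite point set. I would resolve it with the ideal of the points: let $Z = \{\boldsymbol{\alpha}_0, \dots, \boldsymbol{\alpha}_m\}$ and consider the radical ideal $I(Z) \subseteq \C[x_1, \dots, x_p]$. Fixing any monomial order, the standard monomials (those outside the leading-term ideal) form, the leading-term ideal being monomial, a lower set $S$, and they are a $\C$-basis of $\C[x]/I(Z)$, whence $|S| = \dim_\C \C[x]/I(Z) = m+1$. Since $Z$ is reduced, the evaluation map $\C[x]/I(Z) \to \C^Z$ is an isomorphism; it carries the basis $\{\boldsymbol{x}^{\boldsymbol{k}}\}_{\boldsymbol{k} \in S}$ to the rows of $(\boldsymbol{\alpha}_j^{\boldsymbol{k}})_{\boldsymbol{k} \in S,\ j}$, which are therefore linearly independent. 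This produces the desired full set and completes the proof. Alternatively one can build $S$ by induction on $m$, peeling off one point and enlarging the lower set by a single boundary exponent; the Gröbner-theoretic formulation is cleaner, but the combinatorial core---that some boundary monomial separates the new point---is the same difficulty.
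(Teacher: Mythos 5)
Your proposal is correct, and it diverges from the paper precisely at the key combinatorial step. Up to the reduction to distinct monomial leading terms you follow the paper verbatim (Lemma~\ref{lemma: serie} plus the leading-order argument of Theorem~\ref{thm: generalized W}), and both proofs then exploit fullness in the same way, via a unitriangular passage between falling factorials and pure powers over an order ideal: the paper does this through row operations establishing the evaluation identity \(W_{\mathcal{U}}(\boldsymbol{z}^{\boldsymbol{\alpha}_{0}}, \dotsc, \boldsymbol{z}^{\boldsymbol{\alpha}_{m}})(1)=V_{\mathcal{U}}(\boldsymbol{\alpha}_{0}, \dotsc, \boldsymbol{\alpha}_{m})\) (after first shifting all exponents by a common \(\boldsymbol{\alpha}\), using the geometric property, to avoid annihilated entries), while your global factorization into a single Laurent monomial times the numerical determinant makes the shifting trick unnecessary, since vanishing falling factorials automatically account for the annihilated entries. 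The real difference is how the nonvanishing of some \(V_{\mathcal{U}}\) at distinct points is obtained: the paper proves the full description of the common zero locus (Theorem~\ref{mainthm A}, together with its homogeneous companion Theorem~\ref{mainthm B}) by a fairly long induction on \(m+p\), resting on the translation invariance of Lemma~\ref{lemma: key lemma} and a generic fiber-counting analysis of projections; you instead invoke Macaulay's basis theorem and the Chinese remainder isomorphism for the reduced ideal \(I(Z)\), so that the standard monomials form an order ideal of cardinality \(m+1\) whose evaluation matrix at \(Z\) is invertible. What each approach buys: yours is dramatically shorter, works verbatim for arbitrary distinct points of \(\C^{p}\) (not just integer exponents), and in fact reproves the nontrivial direction of Theorem~\ref{mainthm A} in a paragraph (the converse being immediate, as equal columns kill every \(V_{\mathcal{U}}\)); the cost is importing standard commutative algebra, whereas the paper's induction is elementary and entirely self-contained. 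One small point worth making explicit in your write-up: the paper's full sets consist of \(m\) nonempty words, so the order ideal \(S\) is \(\mathcal{U}\cup\Set{\emptyset}\) of cardinality \(m+1\), and the containment of the origin in \(S\) corresponds to the undifferentiated first row of the Wronskian matrix — your identification and your observation that order ideals are automatically admissible are both correct, but the bookkeeping between \(\abs{\mathcal{U}}=m\) and \(\abs{S}=m+1\) deserves a sentence.
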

Following verbatim the scheme of proof of Theorem \ref{thm: generalized W} (as well as the notations), recall that it is enough to prove the following: if the \(p\)-uples  \( \left(\boldsymbol{\alpha_{i}}\right)_{0 \leq i \leq m} \) are distinct, then there exists a full set \(\mathcal{U}\) of size \(m\) such that 
\[
W_{\mathcal{U}}(\boldsymbol{z}^{\boldsymbol{\alpha}_{0}}, \dotsc, \boldsymbol{z}^{\boldsymbol{\alpha}_{m}})
\not\equiv
0.
\]

In the first part of this section, we introduce \textsl{geometric Vandermondes} as well as relevant algebraic sets associated to them. Provided a suitable description of these sets, we end the first subsection with the proof of Theorem \ref{thm: geometric generalized W}.
The second part of this section, longer and more technical, is devoted to the proof of the description of these algebraic sets.

\subsubsection{Geometric Vandermondes}
Let \(p \in \N_{\geq 1}\), and let \(m \in \N_{\geq 1}\). Denote \(F_{p,m}\) the set of full sets of size \(m\), whose elements are words in \(\mathcal{W}_{p}\).
For \(\overline{u}\) a word in \( \mathcal{W}_{p} \), denote
\[
X_{\overline{u}}(m)
\bydef
(\prod\limits_{k} x_{u_{k},0}, \dotsc, \prod\limits_{k} x_{u_{k},m}),
\]
where the product runs implicitely on the letters of the word \(\overline{u}=u_{1}u_{2} \dotsc \), and where 
\((x_{k,\ell})_{\substack{1 \leq k \leq p \\ 0 \leq \ell \leq m}}\) are indeterminates. 
By convention, one sets for the empty word:
\[
X_{\emptyset}(m)
=
(\underbrace{1, \dotsc, 1}_{\times (m+1)}).
\]
Using the \textsl{Kronecker product}, one has the equality:
\[
X_{\overline{u}}(m)
=
X_{1}^{\alpha_{1}(\overline{u})}(m) \dotsc X_{p}^{\alpha_{p}(\overline{u})}(m).
\]

Let \(\mathcal{U} \in F_{p,m} \). Denote \(M_{\mathcal{U}} \) (resp. \(\tilde{M}_{\mathcal{U}}\)) the following square matrix 
\[
M_{\mathcal{U}}
=
\begin{pmatrix}
X_{\emptyset}(m)
\\
X_{\overline{u_{1}}}(m)
\\
\cdot
\\
\cdot
\\
X_{\overline{u_{m}}}(m)
\end{pmatrix}
\ \
\left( \text{resp.}
\ \
\tilde{M}_{\mathcal{U}}
=
\begin{pmatrix}
X_{\overline{u_{1}}}(m-1)
\\
\cdot
\\
\cdot
\\
X_{\overline{u_{m}}}(m-1)
\end{pmatrix}
\right),
\]
where \(\Set{\overline{u_{1}}, \dotsc, \overline{u_{m}} } \) is the listing of the words in \( \mathcal{U} \) obtained by applying the algorithm of Remark \ref{rmk: order}.
\begin{definition}[Geometric Vandermondes]
The polynomial \(V_{\mathcal{U}} \bydef \det M_{\mathcal{U}} \) (resp. \(\tilde{V}_{\mathcal{U}} \bydef \det \tilde{M}_{\mathcal{U}}\)) is called a \textsl{geometric Vandermonde} (resp. an \textsl{homogeneous geometric Vandermonde}).
\end{definition}
Define the following algebraic sets:
\[
V_{p,m}
=
Z(V_{\mathcal{U}} \ | \ \mathcal{U} \in F_{p,m}) \subset (\C^{p})^{m+1}
\ \
; 
\ \ 
\tilde{V}_{p,m}
=
Z(\tilde{V}_{\mathcal{U}} \ | \ \mathcal{U} \in F_{p,m}) \subset (\C^{p})^{m}.
\]
Denote, for \( 0 \leq i \leq m \), 
\[
C_{i}(p)
=
\begin{pmatrix} x_{1,i} \\ \cdot \\ \\ \cdot \\ x_{p,i} \end{pmatrix}.
\]
For a full set \(\mathcal{U} \in F_{p,m}\), the polynomial \(V_{\mathcal{U}}\) (resp. \(\tilde{V}_{\mathcal{U}}\)) can be seen as depending on the variables \(\left(C_{1}(p), \dotsc, C_{m}(p)\right)\) (resp. \(\left(C_{1}(p), \dotsc, C_{m-1}(p)\right))\). As the index \(p\) is already specified in the nature of set \(\mathcal{U}\), we will write
\[
V_{\mathcal{U}}=V_{\mathcal{U}}(C_{1}, \dotsc, C_{m})
\ 
\left(\text{resp.}
\
\tilde{V}_{\mathcal{U}}=\tilde{V}_{\mathcal{U}}(C_{1}, \dotsc, C_{m-1})
\right).
\]
Note that this polynomial can also be seen as depending on the variables \(\left(X_{1}(m), \dotsc, X_{p}(m)\right)\) (resp. \(\left(X_{1}(m-1), \dotsc, X_{p}(m-1)\right)\)), so that we will also sometimes write
\[
V_{\mathcal{U}}=V_{\mathcal{U}}(X_{1}, \dotsc, X_{p})
\ 
\left(\text{resp.}
\
\tilde{V}_{\mathcal{U}}=\tilde{V}_{\mathcal{U}}(X_{1}, \dotsc, X_{p})
\right).
\]
(Once again we drop the index \(m\), as it is implicit).
We aim at proving the following description of the sets \(V_{p,m} \) and \(\tilde{V}_{p,m}\).
\begin{theorem}
\label{mainthm A}
Let $p,m \in \N_{\geq 1}$. The following holds:
\begin{center}
\((C_{0}, \dotsc, C_{m}) \in (\C^{p})^{m+1}\) belongs to \(V_{p,m}\) if and only if there exists two distinct indexes \(i \neq j\) such that \(C_{i}=C_{j}\).
\end{center}
\end{theorem}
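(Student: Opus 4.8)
The statement is an equivalence, and only one implication requires work. \textbf{Easy direction.} Every entry of $M_{\mathcal{U}}$ in the $i$-th column is a monomial $C_i^{\boldsymbol{\alpha}(\overline{u})}$ depending only on the point $C_i$, so if $C_i = C_j$ for some $i \neq j$ then the $i$-th and $j$-th columns of \emph{every} matrix $M_{\mathcal{U}}$ coincide; hence $V_{\mathcal{U}}(C_0, \dotsc, C_m) = 0$ for all full sets $\mathcal{U}$, i.e.\ $(C_0, \dotsc, C_m) \in V_{p,m}$. By contraposition it suffices to prove the converse: if $C_0, \dotsc, C_m$ are pairwise distinct, then some geometric Vandermonde does not vanish there.

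\textbf{Reformulation.} I would first recast the problem in interpolation-theoretic terms. To a full set $\mathcal{U}$ of size $m$ I associate the finite set $D \bydef \Set{0} \cup \Set{\boldsymbol{\alpha}(\overline{u}) \suchthat \overline{u} \in \mathcal{U}} \subset \N^p$; by the very definition of a full set (closure under subwords) $D$ is downward closed for the componentwise order, contains $0$, and has $m+1$ elements. Conversely any such order ideal $D$ gives $\mathcal{U} = D \setminus \Set{0}$, which is automatically admissible: ordering its words by increasing length as in Remark \ref{rmk: order} works because a downward-closed set contains an element of every degree between $0$ and its top degree, which yields the inequality $\#\Set{\boldsymbol{\alpha} \in D \suchthat |\boldsymbol{\alpha}| \le \ell} \ge \ell + 1$ that guarantees $\ell(\overline{u_i}) \le i$. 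Under this dictionary $V_{\mathcal{U}}(C_0, \dotsc, C_m) = \det\big(C_i^{\boldsymbol{\alpha}}\big)_{\boldsymbol{\alpha} \in D,\ 0 \le i \le m}$, so nonvanishing means exactly that the monomial functions $\Set{x \mapsto x^{\boldsymbol{\alpha}}}_{\boldsymbol{\alpha} \in D}$ are linearly independent on the set $\Set{C_0, \dotsc, C_m}$.

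\textbf{Main argument.} I would then produce such a $D$ by commutative algebra. Since the $C_i$ are pairwise distinct, the evaluation map $\mathrm{ev}\colon \C[x_1, \dotsc, x_p] \to \C^{m+1}$, $f \mapsto (f(C_0), \dotsc, f(C_m))$, is surjective by Lagrange interpolation, so its kernel $I$ is a zero-dimensional radical ideal with $\dim_{\C} \C[x]/I = m+1$. Fix any monomial order $<$ and let $D$ be the exponent set of the standard monomials, those not in $\mathrm{in}_<(I)$. Then $D$ is finite, it is an order ideal (the complement of a monomial ideal is closed under taking divisors), it contains $0$ since $1 \notin I$, and the standard monomials form a $\C$-basis of $\C[x]/I$, so $\#D = m+1$. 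As $\mathrm{ev}$ induces an isomorphism $\C[x]/I \xrightarrow{\sim} \C^{m+1}$, the vectors $\Set{\mathrm{ev}(x^{\boldsymbol{\alpha}})}_{\boldsymbol{\alpha} \in D}$ are independent, so $\big(C_i^{\boldsymbol{\alpha}}\big)_{\boldsymbol{\alpha} \in D,\, i}$ is invertible. Transporting $D$ back through the dictionary yields a full set $\mathcal{U}$ with $V_{\mathcal{U}}(C_0, \dotsc, C_m) \neq 0$.

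\textbf{Obstacle and alternative.} The conceptual core --- distinct points admit a monomial basis forming an order ideal --- is exactly a Gröbner-basis statement, so with that tool the proof is short; the only points needing care are the bookkeeping of the dictionary, namely that the standard-monomial ideal genuinely corresponds to an admissible full set in $F_{p,m}$ (the degree count above) and that $1$ is always standard. If instead one wants an elementary self-contained proof (as seems to be the intent here, given that the homogeneous Vandermondes $\tilde{V}_{\mathcal{U}}$ and the set $\tilde{V}_{p,m}$ are introduced alongside), I would argue by induction on $m$: specialising the point $C_m$ and expanding along the corresponding column should relate $V_{p,m}$ to $\tilde{V}_{p,m}$ and to $V_{p,m-1}$, reducing the number of points. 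The main obstacle in that route is not the linear algebra but maintaining the fullness (order-ideal) condition through the induction --- showing that the smaller sets produced are again full --- and this is where I expect the bulk of the technical work to lie.
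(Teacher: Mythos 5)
Your proposal is correct, but it proves the theorem by a genuinely different route than the paper. The paper's proof is a delicate induction on \(m+p\): after the base cases (\(m=0\) trivial, \(p=1\) the classical Vandermonde), it uses the translation-invariance Lemma \ref{lemma: key lemma} to reduce to the homogeneous Vandermondes \(\tilde{V}_{\mathcal{U}}\), and then carries out a geometric analysis of the projection \(\pi\colon X \to (\C^{p})^{m-1}\) onto the irreducible components \(Y_{i}\), \(Y_{i,j}\) of the limit set — showing \(\pi\) is generically \(2\)-\(1\) over \(Y_{i}\) and \(1\)-\(1\) over \(Y_{i,j}\) by exhibiting explicit full sets \(a(\mathcal{U})\), \(b(\mathcal{U})\), \(\mathcal{U}'\), and handling infinite fibers via the leading coefficient of the determinant of \(\tilde{M}_{c(\mathcal{U})}\) in \(x_{p,1}\); Theorem \ref{mainthm B} is established simultaneously through Lemma \ref{lemma: rec}. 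You instead recast full sets as order ideals \(D \subset \N^{p}\) (your dictionary, including the degree-count verification of admissibility, is accurate: a downward-closed set containing an exponent of degree \(\ell\) contains a chain of exponents of every degree \(\leq \ell\)), and then invoke Macaulay's standard-monomial basis: the vanishing ideal \(I\) of the \(m+1\) pairwise distinct points is zero-dimensional with \(\dim_{\C}\C[x]/I = m+1\) by comaximality of the point ideals, the complement of \(\mathrm{in}_{<}(I)\) is a finite order ideal containing \(0\) whose monomials form a basis of \(\C[x]/I\), and transporting the basis through the evaluation isomorphism makes the matrix \(\big(C_{i}^{\boldsymbol{\alpha}}\big)_{\boldsymbol{\alpha}\in D,\, i}\) invertible. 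All steps check out, and since Theorem \ref{mainthm B} follows from Theorem \ref{mainthm A} by Lemma \ref{lemma: rec} alone, your argument subsumes everything the paper's induction delivers. The trade-off: your proof is far shorter, conceptually transparent, and works verbatim over any field (only distinctness of the points is used), at the cost of importing Gr\"obner-basis machinery; the paper's proof is elementary and self-contained, and incidentally produces finer information (the fiber cardinalities over the strata and the explicit injections \(a\), \(b\), \(c\) between families of full sets), though none of that extra structure is used elsewhere in the paper. Your closing remark correctly anticipates where the paper's elementary route concentrates its difficulty — preserving fullness through the induction is exactly what the constructions \(a(\mathcal{U})=\Set{p}\cup\mathcal{U}\), \(b(\mathcal{U})=\Set{p,p^{2}}\cup\mathcal{U}\) and \(c(\mathcal{U})=\Set{p^{k+1}}\cup\mathcal{U}\) are engineered to do.
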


\begin{theorem}
\label{mainthm B}
Let $p,m \in \N_{\geq 1}$. The following holds:
\begin{center}
\((C_{0}, \dotsc, C_{m-1}) \in (\C^{p})^{m}\) belongs to \(\tilde{V}_{p,m}\) if and only if there exists an index \(i\) such that \(C_{i}=0\), or two disctinct indexes \(i \neq j\) such that \(C_{i}=C_{j}\).
\end{center}
 \end{theorem}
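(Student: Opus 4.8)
The plan is to derive Theorem~\ref{mainthm B} from Theorem~\ref{mainthm A} by adjoining the origin as an extra interpolation point, so that essentially no new work beyond \ref{mainthm A} is required. I begin with the easy implication. Fix a full set $\mathcal{U} \in F_{p,m}$ and recall that in $\tilde{M}_{\mathcal{U}}$ the $(\overline{u},i)$-entry is $\prod_{k} x_{u_{k},i} = C_{i}^{\boldsymbol{\alpha}(\overline{u})}$ (with $C_{i}^{\boldsymbol{\alpha}(\overline{u})} \bydef \prod_{k} x_{k,i}^{\alpha_{k}(\overline{u})}$). Thus $\tilde{V}_{\mathcal{U}} = \det \tilde{M}_{\mathcal{U}}$ is an alternating function of the columns $C_{0},\dotsc,C_{m-1}$, hence vanishes as soon as $C_{i}=C_{j}$ for some $i\neq j$. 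Moreover every word of a full set is nonempty, so $\boldsymbol{\alpha}(\overline{u})\neq 0$ for each row; consequently if some $C_{i}=0$ then $C_{i}^{\boldsymbol{\alpha}(\overline{u})}=0$ for every row, the $i$-th column is identically zero, and again $\tilde{V}_{\mathcal{U}}=0$. Since this holds for \emph{all} $\mathcal{U}\in F_{p,m}$, any configuration with a vanishing coordinate-vector or a repeated one lies in $\tilde{V}_{p,m}=Z(\tilde{V}_{\mathcal{U}}\mid \mathcal{U}\in F_{p,m})$, which is one direction.

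For the converse I argue by contraposition. Suppose $C_{0},\dotsc,C_{m-1}\in\C^{p}$ are pairwise distinct and all nonzero, and set $C_{m}\bydef 0\in\C^{p}$. Then $C_{0},\dotsc,C_{m-1},C_{m}$ are $m+1$ pairwise distinct points, so by Theorem~\ref{mainthm A} they do not lie in $V_{p,m}$: there exists a full set $\mathcal{U}\in F_{p,m}$ with $V_{\mathcal{U}}(C_{0},\dotsc,C_{m-1},0)\neq 0$. The crux of the reduction is that, in $M_{\mathcal{U}}$, the last column (the evaluation at $C_{m}=0$) equals $(1,0,\dotsc,0)^{\mathsf{T}}$: its entry in the empty-word row $X_{\emptyset}$ is $1$, while every other entry is $C_{m}^{\boldsymbol{\alpha}(\overline{u})}=0$ since each $\overline{u}\in\mathcal{U}$ is nonempty. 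Expanding $\det M_{\mathcal{U}}$ along this column and noting that deleting the empty-word row together with the last column of $M_{\mathcal{U}}$ leaves exactly $\tilde{M}_{\mathcal{U}}$ (with columns $C_{0},\dotsc,C_{m-1}$), we obtain
\[
V_{\mathcal{U}}(C_{0},\dotsc,C_{m-1},0)=(-1)^{m}\,\tilde{V}_{\mathcal{U}}(C_{0},\dotsc,C_{m-1}).
\]
Hence $\tilde{V}_{\mathcal{U}}(C_{0},\dotsc,C_{m-1})\neq 0$, so the point does not belong to $\tilde{V}_{p,m}$. Combined with the first paragraph, this establishes the stated equivalence.

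It is worth emphasising where the difficulty actually sits. Granting Theorem~\ref{mainthm A}, the passage to Theorem~\ref{mainthm B} is the elementary Laplace-expansion argument above, so the \emph{main obstacle} is entirely contained in \ref{mainthm A}. There I would exploit that, under $\overline{u}\mapsto C^{\boldsymbol{\alpha}(\overline{u})}$, full sets of size $m$ (augmented by the empty word) correspond bijectively to order ideals, i.e.\ divisor-closed monomial staircases, of size $m+1$ in $\N^{p}$ containing the constant monomial. For $m+1$ distinct points $Z$, the coordinate ring $\C[\mathbf{z}]/I(Z)$ has dimension $m+1$, and for any term order its standard monomials form precisely such a staircase \emph{and} a vector-space basis; since evaluation $\C[\mathbf{z}]/I(Z)\to\C^{Z}$ is an isomorphism, the images of that basis are a basis of $\C^{Z}$, which is to say the corresponding geometric Vandermonde $V_{\mathcal{U}}$ is nonzero on $Z$. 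The genuinely technical point — producing, for an arbitrary distinct configuration, a down-closed (full) monomial set whose evaluation matrix is invertible, and checking the admissibility/size bookkeeping of the associated full set — is thus the heart of \ref{mainthm A}, whereas \ref{mainthm B} follows from it by the above once the origin is added.
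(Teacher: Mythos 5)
Your proof is correct and follows essentially the same route as the paper: the paper's Lemma~\ref{lemma: rec} likewise adjoins the zero column, notes that each word of a full set is nonempty so that \(V_{\mathcal{U}}(C_{0},\dotsc,C_{m-1},0)\) reduces (up to sign, via expansion along the last column) to \(\tilde{V}_{\mathcal{U}}(C_{0},\dotsc,C_{m-1})\), and invokes Theorem~\ref{mainthm A}. Your contrapositive phrasing, the explicit sign \((-1)^{m}\), and the spelled-out easy direction are just more detailed renderings of the same argument.
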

 Before proceeding to the (simultaneous) proof of the two previous theorems, let us finish the proof of Theorem \ref{thm: geometric generalized W}.
 \begin{proof}[Proof of Theorem \ref{thm: geometric generalized W}]
Suppose that the \(p\)-uples  \( \left(\boldsymbol{\alpha_{i}}\right)_{0 \leq i \leq m} \) are distinct.  We must show that there exists a full set of size \(m\) such that 
\[
W_{\mathcal{U}}(\boldsymbol{z}^{\boldsymbol{\alpha}_{0}}, \dotsc, \boldsymbol{z}^{\boldsymbol{\alpha}_{m}})
\not\equiv
0.
\]
By Proposition \ref{prop: ggW pgg W}, the functional \(W_{\mathcal{U}}\) is a geometric generalized Wronskian. By multiplying each monomial \(\boldsymbol{z}^{\boldsymbol{\alpha}_{i}}\) by the same monomial \(\boldsymbol{z}^{\boldsymbol{\alpha}}\), one obtains therefore the equality
\[
W_{\mathcal{U}}(\boldsymbol{z}^{\boldsymbol{\alpha}+\boldsymbol{\alpha}_{0}}, \dotsc, \boldsymbol{z}^{\boldsymbol{\alpha}+\boldsymbol{\alpha}_{m}})
=
\boldsymbol{z}^{(m+1)\boldsymbol{\alpha}}
W_{\mathcal{U}}(\boldsymbol{z}^{\boldsymbol{\alpha}_{0}}, \dotsc, \boldsymbol{z}^{\boldsymbol{\alpha}_{m}}).
\]
Therefore, up to replacing the \(p\)-uples  \( \left(\boldsymbol{\alpha_{i}}\right)_{0 \leq i \leq m} \) by the \(p\)-uples
\( \left(\boldsymbol{\alpha}+\boldsymbol{\alpha_{i}}\right)_{0 \leq i \leq m} \), one can always suppose that the \(\boldsymbol{\alpha_{i}}\) are large enough so that for any word \(\overline{u}\) appearing in a full set \(\mathcal{U} \) of size \(m\), \(\partial_{\overline{u}} \boldsymbol{z}^{\boldsymbol{\alpha}_{i}}\) is not identically zero.

To conclude, the key observation is the following equality for any full set \(\mathcal{U}\) of size \(m\)
\[
W_{\mathcal{U}}(\boldsymbol{z}^{\boldsymbol{\alpha}_{0}}, \dotsc, \boldsymbol{z}^{\boldsymbol{\alpha}_{m}})(1)
=
V_{\mathcal{U}}(\boldsymbol{\alpha}_{0}, \dotsc, \boldsymbol{\alpha}_{m}),
\]
which is shown using appropriate operations on the lines (in a fashion very similar to the proof of Proposition \ref{prop: ggW pgg W}) . The result then follows immediately from Theorem \ref{mainthm A}.
 \end{proof}

 \subsubsection{Proof of Theorem \ref{mainthm A} and \ref{mainthm B}}

Let us denote \(\mathcal{H}_{p, m}\) (resp. \(\tilde{\mathcal{H}}_{p, m}\)) the property that Theorem \ref{mainthm A} (resp. Theorem \ref{mainthm B}) holds for the values \(p, m\). We first prove the following elementary lemma which relates these two properties, and follows from definitions:
\begin{lemma}
\label{lemma: rec}
Let \(p, m \in \N_{\geq 1}\). If \(\mathcal{H}_{p,m}\) holds, then so does \(\tilde{\mathcal{H}}_{p,m}\).
\end{lemma}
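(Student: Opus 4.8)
The plan is to reduce the homogeneous geometric Vandermondes to ordinary geometric Vandermondes through a single column specialization, and then to read off $\tilde{\mathcal H}_{p,m}$ directly from $\mathcal H_{p,m}$. The pivotal point is the determinantal identity
\[
\tilde V_{\mathcal U}(C_0, \dotsc, C_{m-1})
=
V_{\mathcal U}(0, C_0, \dotsc, C_{m-1})
\qquad (\mathcal U \in F_{p,m}),
\]
where on the right-hand side the first of the $m+1$ column-slots is filled with the zero vector of $\C^p$. Once this is established, it transports the zero locus of the full family $\{\tilde V_{\mathcal U}\}$ to that of $\{V_{\mathcal U}\}$ at the augmented point, and the set-theoretic description provided by $\mathcal H_{p,m}$ then applies verbatim.

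To prove the identity I would simply unwind the definitions of $M_{\mathcal U}$ and $\tilde M_{\mathcal U}$. The top row of $M_{\mathcal U}$ is $X_\emptyset(m) = (1, \dotsc, 1)$, while every remaining row is $X_{\overline{u_i}}(m)$ with $\overline{u_i}$ a nonempty word of the full set $\mathcal U$. Specializing the zeroth column to $0$ kills the zeroth entry of each row $X_{\overline{u_i}}(m)$, since $\prod_k x_{u_k,0}$ then contains at least one factor drawn from the null column, whereas the zeroth entry of $X_\emptyset(m)$ remains $1$. Thus after specialization the zeroth column equals $(1, 0, \dotsc, 0)^{T}$, and Laplace expansion along it (with cofactor sign $+1$, the surviving entry being in the top-left corner) leaves exactly the minor obtained by deleting the empty-word row and the zeroth column. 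That minor is $\tilde M_{\mathcal U}$ evaluated on the columns $C_0, \dotsc, C_{m-1}$, which yields the identity.

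The conclusion is then immediate. Because $F_{p,m}$ indexes both families at once, a point $(C_0, \dotsc, C_{m-1}) \in (\C^p)^m$ lies in $\tilde V_{p,m}$ if and only if the augmented point $(0, C_0, \dotsc, C_{m-1}) \in (\C^p)^{m+1}$ lies in $V_{p,m}$. Invoking $\mathcal H_{p,m}$, the latter membership holds precisely when two of the $m+1$ columns $0, C_0, \dotsc, C_{m-1}$ coincide. But two of them coincide exactly when either the zero column equals some $C_i$, that is $C_i = 0$, or $C_i = C_j$ for some pair $i \neq j$; this is verbatim the condition asserted by $\tilde{\mathcal H}_{p,m}$ in Theorem \ref{mainthm B}.

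I do not expect a genuine obstacle here: the whole argument is a bookkeeping identity between determinants. The one point deserving a word of care is that every word in a full set is nonempty (the empty word being carried separately by the top row of $M_{\mathcal U}$); it is exactly this, together with the convention $X_\emptyset = (1, \dotsc, 1)$, that makes the specialized zeroth column collapse to a basis vector, and it is what accounts for the extra alternative $C_i = 0$ appearing in Theorem \ref{mainthm B} but absent from Theorem \ref{mainthm A}.
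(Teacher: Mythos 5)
Your proposal is correct and takes essentially the same approach as the paper: augment the point with a zero column so that every \(V_{\mathcal{U}}\) at the augmented point reduces, by Laplace expansion along the column \((1,0,\dotsc,0)^{T}\), to \(\tilde{V}_{\mathcal{U}}(C_{0}, \dotsc, C_{m-1})\), and then invoke \(\mathcal{H}_{p,m}\) to get two equal columns among \(0, C_{0}, \dotsc, C_{m-1}\). The only (cosmetic) difference is that the paper appends the zero column in the last slot, where the determinant identity holds up to a sign \((-1)^{m}\) that it glosses over, while your placement in the first slot makes the cofactor sign exactly \(+1\); in either case only the common vanishing locus matters.
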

\begin{proof}
Let \((C_{0}, \dotsc, C_{m-1})\) be in \(\tilde{V}_{n, m}\), and denote \(0\) the zero column in \(\C^{p}\). Observe that
\[
M_{\mathcal{U}}(C_{0}, \dotsc, C_{m-1}, 0)
=
\tilde{M}_{\mathcal{U}}(C_{0}, \dotsc, C_{m-1}),
\]
so that \(V_{\mathcal{U}}(C_{0}, C_{1}, \dotsc, C_{m-1}, 0)=0\) for every \(\mathcal{U} \in F_{p,m}\). Since \(\mathcal{H}_{p,m}\) holds, this implies that two of these columns are equal, which gives exactly that either there exists \(0 \leq i \leq m-1\) such that \(C_{i}=0\) or \(1 \leq i < j \leq m-1\) such that \(C_{i}=C_{j}\).
\end{proof}

Accordingly, if Theorem \ref{mainthm A} holds, so does Theorem \ref{mainthm B}. Before diving into the proof Theorem \ref{mainthm A}, we start with the following lemma:
\begin{lemma}
\label{lemma: key lemma}
For any full set $\mathcal{U} \in F_{p,m}$, and any column 
\[
C=\begin{pmatrix} \lambda_{1} \\ \cdot \\ \cdot \\ \lambda_{p} \end{pmatrix},
\]
where the $\lambda_{i}$'s are indeterminates, the following equality holds:
\[
V_{\mathcal{U}}(C_{0}, \dotsc, C_{m})
=
V_{\mathcal{U}}(C_{0}+C, \dotsc, C_{m}+C).
\]
\end{lemma}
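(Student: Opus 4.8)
The plan is to show that the translation $C_\ell \mapsto C_\ell + C$ acts on the matrix $M_{\mathcal{U}}$ by a row operation of determinant $1$. First I would unwind the entries of $M_{\mathcal{U}}$: by the very definition of $X_{\overline{u}}(m)$, its $\ell$-th coordinate is the monomial $\prod_{i=1}^{p} x_{i,\ell}^{\alpha_i(\overline{u})} = C_\ell^{\boldsymbol{\alpha}(\overline{u})}$, so the row indexed by $\overline{u}$ is $(C_0^{\boldsymbol{\alpha}(\overline{u})}, \dotsc, C_m^{\boldsymbol{\alpha}(\overline{u})})$. Consequently the shifted matrix $M_{\mathcal{U}}(C_0 + C, \dotsc, C_m + C)$ has $(\overline{u},\ell)$-entry $\prod_{i=1}^{p}(x_{i,\ell} + \lambda_i)^{\alpha_i(\overline{u})}$, and the goal becomes to re-express each such entry in terms of the unshifted rows.

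Next I would expand this entry by the multi-binomial theorem:
\[
\prod_{i=1}^{p}(x_{i,\ell} + \lambda_i)^{\alpha_i(\overline{u})}
=
\sum_{\boldsymbol{\beta} \leq \boldsymbol{\alpha}(\overline{u})}
\Big( \prod_{i=1}^{p} \binom{\alpha_i(\overline{u})}{\beta_i} \lambda_i^{\alpha_i(\overline{u}) - \beta_i} \Big)
C_\ell^{\boldsymbol{\beta}},
\]
the sum running over $\boldsymbol{\beta}=(\beta_1,\dotsc,\beta_p)$ with $\beta_i \leq \alpha_i(\overline{u})$ for all $i$. The coefficients depend only on $\boldsymbol{\lambda}$ and on $\overline{u}$, not on the column index $\ell$, and the coefficient of the top term $\boldsymbol{\beta}=\boldsymbol{\alpha}(\overline{u})$ equals $1$. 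The crucial point is that each exponent $\boldsymbol{\beta}\leq\boldsymbol{\alpha}(\overline{u})$ is exactly the characteristic exponent of a subword of $\overline{u}$; since $\mathcal{U}$ is a \emph{full} set (Definition \ref{def: pgg W}), every such subword — together with the empty word, which accounts for the term $\boldsymbol{\beta}=0$ via the top row $X_\emptyset(m)$ of $M_{\mathcal{U}}$ — indexes an actual row of $M_{\mathcal{U}}$. Hence $C_\ell^{\boldsymbol{\beta}}$ is, for every $\ell$, the corresponding entry of an existing row.

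Putting this together, the shifted matrix factors as $M_{\mathcal{U}}(C_0 + C, \dotsc, C_m + C) = T\, M_{\mathcal{U}}(C_0, \dotsc, C_m)$, where $T$ is the matrix of the coefficients above acting on rows (hence independent of the columns). Since a proper subword of $\overline{u}$ has length strictly smaller than $\ell(\overline{u})$, in the ordering of Remark \ref{rmk: order} — which lists the words by increasing length — each proper subword precedes $\overline{u}$; therefore $T$ is lower triangular in this ordering, with all diagonal entries equal to $1$. Thus $\det T = 1$, and
\[
V_{\mathcal{U}}(C_0 + C, \dotsc, C_m + C) = \det(T)\cdot V_{\mathcal{U}}(C_0, \dotsc, C_m) = V_{\mathcal{U}}(C_0, \dotsc, C_m).
\]

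The only delicate points I anticipate are bookkeeping ones: matching the index set $\{\boldsymbol{\beta}\leq\boldsymbol{\alpha}(\overline{u})\}$ of the multinomial expansion with the subwords of $\overline{u}$, and verifying that fullness of $\mathcal{U}$ is precisely what guarantees that $T$ has all the rows it needs and is triangular with unit diagonal in the ordering of Remark \ref{rmk: order}. Once these are in place the determinant computation is immediate, and the identity holds as an equality of polynomials in the indeterminates $(x_{k,\ell})$ and $(\lambda_i)$.
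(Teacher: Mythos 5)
Your proof is correct and is essentially the same argument as the paper's: both realize the translation \(C_{\ell} \mapsto C_{\ell}+C\) as determinant-preserving row operations on \(M_{\mathcal{U}}\), with fullness guaranteeing that every subword produced by the binomial expansion (including the empty word, via the top row \(X_{\emptyset}(m)\)) indexes an actual row. The only difference is organizational: you expand in all \(p\) variables at once and package the operations into a single unipotent lower-triangular matrix \(T\), whereas the paper translates one coordinate at a time, partitioning \(\mathcal{U}\) by the projections \(\pi_{j}\) and performing one-variable binomial row operations fiber by fiber.
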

\begin{proof}
Consider the surjective map 
\[
\pi_{1}: \mathcal{U} \rightarrow \mathcal{U}'=\pi_{1}(\mathcal{U}),
\ \
\overline{u} \mapsto \overline{u}'=2^{\alpha_{1}(\overline{u})} \cdot \dotsb \cdot {p}^{\alpha_{p}(\overline{u})},
\]
whose fibers form a partition of \(\mathcal{U}\).
Pick an element \(\overline{u}'\) in  \(\mathcal{U'}\), and observe that since \(\mathcal{U}\) is a full set, there exists \(k_{1} \geq 0\) such that
\[
\pi_{1}^{-1}(\{\overline{u}'\})    
=
\{\overline{u}', 1 \cdot \overline{u}', \dotsc, 1^{k_{1}} \cdot \overline{u}' \}.
\]
Consider the lines \(X_{\overline{u}'}, X_{1\cdot \overline{u}'}, \dotsc, X_{1^{k_{1}} \cdot \overline{u}'}\). Denote \(Q=\prod\limits_{j=2}^{p} T_{j}^{\alpha_{j}(\overline{u}')} \in \Z[T_{2}, \dotsc, T_{n}]\), and observe that by definition
\[
X_{1^{i} \cdot \overline{u}'}
=
\big(x_{1,0}^{i}Q(x_{2,0}, \dotsc, x_{p,0}), \dotsc, x_{1,m}^{i}Q(x_{2,m}, \dotsc, x_{p,m})\big).
\]
Make then successively the following operations on the lines:
\begin{itemize}
\item{}
\(X_{1^{k_{1}} \cdot \overline{u}'} 
\leftarrow
X_{1^{k_{1}} \cdot \overline{u}'} 
+
\binom{k_{1}}{1} \lambda_{1} X_{1^{k_{1}-1}\cdot \overline{u}'} 
+
\dotsb
+
\lambda_{1}^{k_{1}}X_{\overline{u'}}\)
\item{}
\(X_{1^{k_{1}-1} \cdot \overline{u}'} 
\leftarrow
X_{1^{k_{1}-1} \cdot \overline{u}'} 
+
\binom{k_{1}-1}{1} \lambda_{1} X_{1^{k_{1}-2}\cdot \overline{u}'} 
+
\dotsb
+
\lambda_{1}^{k_{1}-1}X_{\overline{u'}}\)
\item{} \(\cdot\)
\item{} \(\cdot\)
\item{}
\(X_{1\cdot \overline{u}'} \leftarrow X_{1\cdot \overline{u}'}  + \lambda_{1} X_{\overline{u'}}\),
\end{itemize}
after which the line \(X_{1^{i} \cdot \overline{u}'}\) has been replaced by
\[
\big((x_{1,0}+\lambda_{1})^{i}Q(x_{2,0}, \dotsc, x_{p,0}), \dotsc, (x_{1,m}+\lambda_{1})^{i}Q(x_{2,m}, \dotsc, x_{p,m})\big).
\]

By doing this operation for every fiber, observe that one simply replaces, in the arguments of \(V_{\mathcal{U}}\), the line \(X_{1}\) by the line \(X_{1}+(\lambda_{1}, \dotsc, \lambda_{1})=(x_{1,1}+\lambda_{1}, \dotsc, x_{1,m} + \lambda_{1})\). Therefore, the following equality is proved:
\[
V_{\mathcal{U}}(X_{1}+(\lambda_{1}, \dotsc, \lambda_{1}), X_{2}, \dotsc, X_{p})
=
V_{\mathcal{U}}(X_{1}, X_{2}, \dotsc, X_{p}).
\]

By considering the other projections
\[
\pi_{j}:
\overline{u} \mapsto \overline{u}'
=
1^{\alpha_{1}(\overline{u})}  \dotsb  (j-1)^{\alpha_{j-1}(\overline{u})}  \cdot (j+1)^{\alpha_{j+1}(\overline{u})}  \dotsb {p}^{\alpha_{p}(\overline{u})}
\]
and doing the same reasoning, one finishes the proof of the lemma.
\end{proof}

We now turn to the proof of Theorem \ref{mainthm A} (giving also Theorem \ref{mainthm B} by Lemma \ref{lemma: rec}).

\begin{proof}[proof of theorem \ref{mainthm A}]
Proceed by induction on \((m+p)\geq 1\), where \(m \in \N_{\geq 0}\) and \(p \in \N_{\geq 1}\). One first proves the result for the extremal cases, i.e. for \(m=0\), \(p \in \N_{\geq 1}\) arbitrary, as well as for \(p=1\), \(m \in \N_{\geq 0}\) arbitrary.  If \(m=0\), there is nothing to prove since the condition is empty, and the algebraic set \(V_{p,1}\) is the whole set. If \(p=1\), the only full set in $\mathcal{W}_{1}$ of size \(m\) is the set $\mathcal{U}=\{1, 1^{2}, \dotsc, 1^{m}\}$, and the polynomial associated is
\[
V_{\mathcal{U}}
=
\begin{vmatrix}
1 & 1 & \cdot & \cdot & 1
\\
x_{1,0} & x_{1,1} & \cdot & \cdot & x_{1,m}
\\
\cdot & \cdot & \cdot & \cdot & \cdot
\\
\cdot & \cdot & \cdot & \cdot & \cdot
\\
x_{1,0}^{m} & x_{1,1}^{m} & \cdot & \cdot & x_{1,m}^{m}
\end{vmatrix},
\]
which is the usual Vandermonde determinant: the result is accordingly well-known.

Suppose the result granted for any \(m \in \N_{\geq 1}\), \(p \in \N_{\geq 2}\) with \(m+p \geq r \geq 3\). The goal is prove it for every \(m \in \N_{\geq 1}\), \(p  \in \N_{\geq 2}\) with \(m+p=(r+1)\). 

Let \((C_{0}, \dotsc, C_{m})\) be a zero of \(V_{p,m}\). By considering only the full sets \(\mathcal{U}\) belonging to \(F_{p-1,m} \subset F_{p,m}\), one deduces the following by induction hypothesis:
\[
\exists \ 0\leq i\neq j \leq m, \ \hat{C_{i}}=\hat{C_{j}},
\]
where given a column \(C\) of size \(p\), the column \(\hat{C}\) is the column of size \(p-1\) obtained by suppressing the last entry of \(C\).
Without loss of generality, suppose that \(i=0\) and \(j=1\).
By Lemma \ref{lemma: key lemma}, one obtains that \((0, C_{1}-C_{0}, \dotsc, C_{m}-C_{0})\) is also a zero of \(V_{p,m}\). Compute then that for every \(\mathcal{U} \in F_{p,m}\), the following equality holds:
\[
V_{\mathcal{U}}(0,C_{1}-C_{0}, \dotsc, C_{m}-C_{0})
=
\tilde{V}_{\mathcal{U}}(C_{1}-C_{0}, \dotsc, C_{m}-C_{0}).
\]
Accordingly, one is reduced to showing that \(\tilde{\mathcal{H}}_{n,m}\) holds, in a \textsl{particular case}: indeed, the column \(C_{1}-C_{0}\) is zero everywhere except possibly at the last line (since \(\hat{C_{1}}=\hat{C_{0}}\)). 
Consider accordingly the following algebraic set:
\[
X
\bydef
\Big\{(x_{p,1}, C_{2}, \dotsc, C_{m}) \subset \C \times (\C^{p})^{m-1} 
\ | \   
\big(\underbrace{\begin{pmatrix} 0 \\ \cdot \\ \cdot \\ 0 \\ x_{p,1} \end{pmatrix}}_{\bydef C_{1}}, C_{2}, \dotsc, C_{m}\big)
\in 
\tilde{V}_{p,m}
\Big\}.
\]
One must show that if \(\left(x_{p,1}, C_{2}, \dotsc, C_{m}\right) \in I\), then one of the following holds:
\[
\boldsymbol{(*)}
\
\
\
\
 \left(
 \exists \ 1 \leq i \leq m, 
 \
 C_{i}=0
 \right)
 \
 \text{or}
 \
 \left(
\exists \ 1\leq i < j \leq m,
\
C_{i}=C_{j}
\right).
\]
\( \)
\newline

%By Lemma \ref{lemma: rec}, property \(\tilde{\mathcal{H}_{k, \ell}}\) holds for every \(k, \ell \) with \(k+\ell \leq r\), and by the beginning of the proof, property \(\tilde{\mathcal{H}_{1,\ell}}\) holds for any \(\ell \in \N_{\geq 1}\).

Denote \(\pi: X \rightarrow (\C^{p})^{m-1}\) the projection onto the second factor, and
observe that \(\pi(X\cap\Set{x_{p,1}\neq 0})\) is included in the algebraic set
\[
Y
\bydef
\{
(C_{2}, \dotsc, C_{m}) 
\ | \ 
(\hat{C_{2}}, \dotsc, \hat{C_{m}}) \in \tilde{V}_{p-1,m-1}
 \}.
\]
Indeed, there is an inclusion 
\[
a\colon F_{p-1,m-1} \hookrightarrow F_{p,m}
\] 
obtained by sending \(\mathcal{U} \in F_{p-1,m-1}\) to \(a(\mathcal{U})\bydef \{p\} \cup \mathcal{U} \in F_{p,m}\). One computes that
\[
\tilde{V}_{a(\mathcal{U})}(\begin{pmatrix} 0 \\ \cdot \\ \cdot \\ 0 \\ x_{p,1} \end{pmatrix}, C_{2}, \dotsc, C_{m})
=
x_{p,1} \tilde{V}_{\mathcal{U}}(\hat{C_{2}}, \dotsc, \hat{C_{m}}),
\]
so that the inclusion follows. 

Denote for \(2 \leq i \leq m\)
\[
Y_{i}
=
\{
(C_{2}, \dotsc, C_{m}) \in Y \ | \ \hat{C_{i}}=0
\},
\]
as well as for \(2 \leq i < j \leq m\)
\[
Y_{i,j}
=
\{
(C_{2}, \dotsc, C_{m}) \in Y \ | \ \hat{C_{i}}=\hat{C_{j}}
\}.
\]
By induction hypothesis, the closed subsets \(Y_{i}\) and \(Y_{i,j}\) form the irreducible components of \(Y\).
Consider 
\[
O_{i}
=
Y_{i} \cap \{ x_{p,i} \neq 0\} \cap \{\hat{C_{k}} \neq \hat{C_{\ell}} \ \forall \ 2 \leq k < \ell \leq m \}
\]
as well as
\[
O_{i,j}
= 
Y_{i,j}
\cap
\{x_{p,i} \neq x_{p, j}\}
\cap
\{x_{1,i} \dotsb x_{p-1,i} \neq 0 \}
\cap
\{ \hat{C_{k}} \neq \hat{C_{\ell}} \  \forall \ 2 \leq k < \ell \leq m,  (k,\ell) \neq (i,j)
\}.
\]
Those are dense open subsets of \(Y_{i}\) and \(Y_{i,j}\) respectively.
\newline

One first shows that for \(2 \leq i \leq m\), the map
\[
\pi_{i}: \pi^{-1}(Y_{i}) \rightarrow Y_{i}
\]
is generically \(2-1\). More precisely, the following holds for any \((C_{2}, \dotsc, C_{m}) \in O_{i}\):
\begin{equation}
\label{eq: eq1fiber}
\begin{aligned}
\pi^{-1}\Set{(C_{2}, \dotsc, C_{m})}
= 
\Set{(0, C_{2}, \dotsc, C_{m})
(x_{p,i}, C_{2}, \dotsc, C_{m})
}.
\end{aligned}
\end{equation}
It is straightforward to check that the fiber contains at least these two elements, so that one must show that there are no others.
Consider the injection
\[
b\colon F_{p-1, m-2} \hookrightarrow F_{p,m}
\]
obtained by sending \(\mathcal{U} \in F_{p-1, m-2}\) to \(b(\mathcal{U})=\{p, p^{2}\} \cup \mathcal{U}\). Consider the matrix
\[
\tilde{M}_{b(\mathcal{U})}(\begin{pmatrix} 0 \\ \cdot \\ \cdot \\ 0 \\ x_{p,1} \end{pmatrix}, C_{2}, \dotsc, C_{i-1}, \begin{pmatrix} 0 \\ \cdot \\ \cdot \\ 0 \\ x_{p,i} \end{pmatrix}, C_{i+1}, \dotsc, C_{m}).
\]
By developping its determinant according to the two lines corresponding to the words \(p\) and \(p^{2}\), one finds, up to a sign, the following equality:
\begin{equation*}
\begin{aligned}
&
\tilde{V}_{b(\mathcal{U})}(\begin{pmatrix} 0 \\ \cdot \\ \cdot \\ 0 \\ x_{p,1} \end{pmatrix}, C_{2}, \dotsc, C_{i-1}, \begin{pmatrix} 0 \\ \cdot \\ \cdot \\ 0 \\ x_{p,i} \end{pmatrix}, C_{i+1}, \dotsc, C_{m})
\\
&
=
\begin{vmatrix}
x_{p,1} & x_{p,i}
\\
x_{p,1}^{2} & x_{p,i}^2
\end{vmatrix}
\tilde{V}_{\mathcal{U}}(\hat{C_{2}}, \dotsc, \hat{C_{i-1}}, \hat{C_{i+1}}, \dotsc, \hat{C_{m}}).
\end{aligned}
\end{equation*}
On the one hand, by definition of \(O_{i}\) and by induction hypothesis, there exists at least one full set \(\mathcal{U} \in F_{p-1,m-2}\) such that \(\tilde{V}_{\mathcal{U}}(\hat{C_{2}}, \dotsc, \hat{C_{i-1}}, \hat{C_{i+1}}, \dotsc, \hat{C_{m}}) \neq 0\). On the other hand, by definition of \(O_{i}\), one has \(x_{p,i} \neq 0\). The equality \eqref{eq: eq1fiber} follows immediately.
\newline 

Second, one shows that for \(2 \leq i < j \leq m\)
\[
\pi_{i,j}: \pi^{-1}(Y_{i,j}) \rightarrow Y_{i,j}
\]
is generically \(1-1\). More precisely, the following holds for any \((C_{2}, \dotsc, C_{m})\) in \(O_{i,j}\):
\begin{equation}
\label{eq: eq2fiber}
\begin{aligned}
\pi^{-1}\Set{(C_{2}, \dotsc, C_{m})}
=
\Set{(0, C_{2}, \dotsc, C_{m})}.
\end{aligned}
\end{equation}
Once again, it is straightforward to see that the fiber contains at least this element, so that one must show that it is the only one.
By definition of \(O_{i,j}\), and by induction hypothesis, one can find a full set \(\mathcal{U}\) in \(F_{p-1, m-2}\) such that
\[
\tilde{V}_{\mathcal{U}}(\hat{C_{2}}, \dotsc, \hat{C_{i-1}}, \hat{C_{i+1}}, \dotsc, \hat{C_{m}})
\neq
0.
\]
Let \(1 \leq k \leq (p-1)\) be a letter appearing in \(\mathcal{U}\), and consider the full set \(\mathcal{U}'\) in \(F_{p,m}$ defined as follows:
\[
\mathcal{U}'
=
\{p, k \cdot p\} \cup \mathcal{U}.
\]
Consider the matrix 
\[
\tilde{M}_{\mathcal{U}'}(\begin{pmatrix} 0 \\ \cdot \\ \cdot \\ 0 \\ x_{p,1} \end{pmatrix}, C_{2}, \dotsc, C_{m}),
\]
and apply the following operation on the columns
\[
\Co_{i} \leftarrow \Co_{i} - \Co_{j},
\]
where \(\Co_{\ell}\) is the \(\ell\)th column of the previous matrix. Since \(\hat{C_{i}}=\hat{C_{j}}\), all the elements of the new column are zero, except for the elements corresponding to the words \(p\) and \(k\cdot p\), which are respectively \(x_{p,i}-x_{p,j}\) and \(x_{k,i}(x_{p,i}-x_{p,j})\). Develop now the determinant of the new matrix (which is equal to the determinant of the previous one) according to the lines corresponding to the words \(p\) and \(k \cdot p\) to find that
\begin{equation*}
\begin{aligned}
&
\tilde{V}_{\mathcal{U}'}(\begin{pmatrix} 0 \\ \cdot \\ \cdot \\ 0 \\ x_{p,1} \end{pmatrix}, C_{2}, \dotsc, C_{m})
\\
&
=
\begin{vmatrix}
x_{p,1} & x_{p,i}-x_{p,j}
\\
0 & (x_{p,i}-x_{p,j})x_{k,i}
\end{vmatrix}
\tilde{V}_{\mathcal{U}}(\hat{C_{2}}, \dotsc, \hat{C_{i-1}}, \hat{C_{i+1}}, \dotsc, \hat{C_{m}}).
\end{aligned}
\end{equation*}
Since by definition of \(O_{i,j}\), \(x_{k,i}\neq 0\) and \(x_{p,i} \neq x_{p,j}\), the very choice of \(\mathcal{U}\) implies the equality \eqref{eq: eq2fiber}. 
\newline

Now, conclude the proof as follows. Let \(x=(x_{p,1}, C_{2}, \dotsc, C_{m})\in X\) and let \(y=\pi(x)=(C_{2}, \dotsc, C_{m})\). If \(x_{p,1}=0\), then \(\boldsymbol{(*)}\) holds: suppose therefore that \(x_{p,1} \neq 0\), so that \(y\) lies in \(Y\).
Suppose first that \(y\) belongs to \(Y_{i}\) for some \(2\leq i \leq m\), and that its fiber with respect to \(\pi\) is finite. If \(x_{p,i}=0\), then \(C_{i}=0\), so that \(\boldsymbol{(*)}\) holds. Suppose therefore that \(x_{p,i}\neq 0\). Accordingly, the fiber \(\pi^{-1}(\Set{y})\) must be equal to
\[
\{(0, C_{2}, \dotsc, C_{m}), (x_{p,i}, C_{2}, \dotsc, C_{m})\}
\]
as these two elements lies in it, and one knows that the cardinal of the fiber cannot exceed \(2\) (since \(\pi_{i}\) is generically \(2-1\)). Therefore, one necessarily has that \(x_{p,1}=x_{p,i}\), so that \(C_{1}=C_{i}\) (recall that \(\hat{C_{i}}=0\) by definition of \(Y_{i}\)): the property \(\boldsymbol{(*)}\) holds.

Second, suppose that \(y\) belongs to \(Y_{i,j}\) for some \(2 \leq i<j \leq m\), and that its fiber with respect to \(\pi\) is finite. Then its fiber must necessarily be equal to
\[
\{(0,C_{2}, \dotsc, C_{m})
\},
\]
as it contains at most one element (since \(\pi_{i,j}\) is generically \(1-1\)), and this element obviously lies in it. In this case, one deduce that \(C_{1}=0\), so that \(\boldsymbol{(*)}\) holds.

Finally, suppose that \(y\) has infinite fiber. 
Define an injection 
\[
c: F_{p,m-1} \rightarrow F_{p,m}
\] 
as follows: for \(\mathcal{U} \in F_{p,m-1}\), let \(0\leq k \leq m\) be the greatest integer such that \(p^{k}\) is in \(\mathcal{U}\); define then \(c(\mathcal{U})=\{p^{k+1}\} \cup \mathcal{U}\). Now, observe that by developping according to the first column the determinant of the matrix
\[
\tilde{M}_{c(\mathcal{U})}(\begin{pmatrix} 0 \\ \cdot \\ \cdot \\ 0 \\ x_{p,1} \end{pmatrix}, C_{2}, \dotsc, C_{m}),
\]
one obtains a polynomial in \(x_{p,1}\) whose leading coefficient is \(\tilde{V}_{\mathcal{U}}(C_{2}, \dotsc, C_{m})\). For \(y=(C_{2}, \dotsc, C_{m})\) such that the fiber \(\pi^{-1}(\Set{y})\) is not finite, the previous polynomial has therefore an infinite number of roots (i.e. all the elements in the fiber) so that it must be zero. It implies that for every \(\mathcal{U} \in F_{p,m-1}\), one has the following:
\[
\tilde{V}_{\mathcal{U}}(C_{2}, \dotsc, C_{m})
=
0.
\]
By induction hypothesis, there exists either \(2 \leq i \leq m\) such that \(C_{i}=0\), or \(2 \leq i<j \leq m\) such that \(C_{i}=C_{j}\), and thus \(\boldsymbol{(*)}\) holds. This finishes the proof.
\end{proof}

\subsection{Filtration of geometric generalized Wronskians \& representation theory}
\label{sse: gg W and repr}
Recall that for any \(m \in \N_{\geq 1}\), the group of biholomorphisms of \((\C^{p},0)\), and in particular the linear group \(GL_{p}(\C)\), acts on the finite-dimensional \(\C\)-vector space \(\mathcal{W}_{g}(m)\). In this section, we establish a filtration of this representation, and we discuss an elementary property in representation theory: those will be needed for our first application in foliation theory (see Section~\ref{sse: foliation integrability}).
\begin{definition}[Caracteristic sequence]
Let \(\mathcal{U}\) be an admissible set of words of size \(m\), order \(k\) and weight \(w\). Define the \textsl{caracteristic sequence of \(\mathcal{U}\)} as the \(k\)-uple of positive integers
\[
\mathbi{n}(\mathcal{U})\bydef (n_{1}, \dotsc, n_{k}),
\]
where \(n_{i}\) is the number of words of length \(i\) belonging to \(\mathcal{U}\). Note that \(m=n_{1}+n_{2}+ \dotsb + n_{k} \) and \(w(\mathcal{U})=n_{1}+2n_{2}+\dotsb kn_{k}\).

Let \(m \in \N_{\geq 1}\), and let \(\mathbi{n}=(n_{1}, \dotsc, n_{k}) \in \N_{\geq 1}^{k}\) with \(k \in \N_{\geq 1}\).
The \(k\)-uple of positive integers \(\mathbi{n}\) is called a \textsl{caracteristic sequence of order \(k\) of the integer \(m\) } if and only if it is the caracteristic sequence of an admissible set (of size \(m\) and order \(k\)). The \textsl{weight} of the caracteristic sequence \(\mathbi{n}\) is by definition
\[
w(\mathbi{n})=n_{1}+2n_{2}+\dotsb kn_{k},
\]
i.e. it is the weight of any of the possible admissible sets associated to the caracteristic sequence \(\mathbi{n}\).
\end{definition}
One orders such caracteristic sequences \(\mathbi{n}=(n_{1}, \dotsc, n_{k})\)
\begin{itemize}
\item{} by increasing order of their order \(k\);
\item{} at fixed order \(k\), by \textsl{decreasing} order in the lexicographic order of \(\mathbb{N}^{k}\).
\end{itemize}
Namely, \(\mathbf{n'} < \mathbi{n} \) if and only if, either the order of \(\mathbf{n'}\) is strictly less than the order of \(\mathbi{n}\), or, if their order is equal to \(k\), if and only if \(\mathbf{n'}\) is greater than \(\mathbi{n}\) in the lexicographic  order of \(\mathbb{N}^{k}\). This defines a total order on caracteristic sequences.

Fix \(\mathbi{n}=(n_{1}, \dotsc, n_{k})\) a caracteristic sequence of \(m\), and define
\[
\mathcal{W}_{g}^{\leq \mathbi{n}}(m)
\bydef
\Set{
W \in \mathcal{W}_{g}(m)
\
|
\
W=\sum\limits_{\substack{\mathcal{U} \ \text{admissible}, \\ \mathbi{n}(\mathcal{U})\leq\mathbi{n}}} \lambda_{\mathcal{U}} W_{\mathcal{U}}, \ \lambda_{\mathcal{U}} \in \C
}.
\]
Observe that \(\mathcal{W}_{g}^{\leq \mathbi{n}}(m)\) is stable under the action of the biholomorphisms of \((\C^{p},0)\) (thus in particular under the action of the linear group \(GL_{p}(\C)\)), which follows immediately from the formula Lemma \ref{lemma: compo} to compute multi-derivatives of compositions of maps. This gives a filtration of the representation \(\mathcal{W}_{g}(m)\) by sub-representations of \(\mathcal{G}_{p,m}\). 
We will also denote
\[
\mathcal{W}_{g}^{<\mathbi{n}}(m)
\bydef
\Set{
W \in \mathcal{W}_{g}(m)
\
|
\
W=\sum\limits_{\substack{\mathcal{U} \ \text{admissible}, \\ \mathbi{n}(\mathcal{U})<\mathbi{n}}} \lambda_{\mathcal{U}} W_{\mathcal{U}}, \ \lambda_{\mathcal{U}} \in \C
}.
\]
\begin{remark}
The previous filtration  is also valid if one replaces \(\mathcal{W}_{g}(m)\) by \(\mathcal{W}(m)\).
\end{remark}

Let \(\mathcal{U}\) be an admissible set of size \(m\) and caracteristic exponent \(\mathbi{\beta}\). The generalized Wronskian \(W_{\mathcal{U}}\) is a weight vector of weight \(\mathbi{\beta}\) (in the usual sense in representation theory), i.e. one has the following equality
\begin{equation}
\label{eq: represent}
\Diag(\lambda_{1}, \dotsc, \lambda_{p}) \cdot W_{\mathcal{U}}
=
\lambda_{1}^{\beta_{1}} \dotsb \lambda_{p}^{\beta_{p}} W_{\mathcal{U}}
\end{equation}
for any \(\lambda_{1}, \dotsc, \lambda_{p}\) in \(\C^{*}\).

Suppose that \(\mathcal{V}\) is an irreducible representation of \(GL_{p}(\C)\) appearing in \(\mathcal{W}_{g}^{\leq \mathbi{n}}(m)\), and suppose that this representation does not come from the ones of \(\mathcal{W}_{g}^{< \mathbi{n}}(m)\). One knows that for any \(\lambda \in \C^{*}\) and any \(W \in \mathcal{V}\), one has the egality
\[
\Diag(\lambda, \dotsc, \lambda) \cdot W
=
\lambda^{r} W
\]
for some integer \(r \in \N_{\geq 1}\). In particular, this implies that every element of \(\mathcal{V}\) is an unmixed geometric generalized Wronskians of weight \(r\). Since a pure generalized Wronskian \(W_{\mathcal{U}}\), with \(\mathcal{U}\) an admissible set of caracteristic sequence \(\mathbi{n}(\mathcal{U})=\mathbi{n}\), must appear in the writing of at least one element \(W \in \mathcal{V}\) (recall that we assumed that \(\mathcal{V} \not\subset \mathcal{W}_{g}^{< \mathbi{n}}(m)\)), one deduces from \eqref{eq: represent} that necessarily
\[
r=w(\mathbi{n})=n_{1}+2n_{2}+ \dotsb=\beta_{1}+\beta_{2} + \dotsb.
\]
Then, by standard facts in representation theory, one deduces that there exists a partition \(\lambda\) of \(w(\mathbi{n})\) such that
\[
\mathcal{V} 
\simeq 
S^{\lambda} \C^{p}.
\]
This observation will be used in the course of the proof of Theorem \ref{thm: foliation}.
\section{Application in intermediate hyperbolicity}

\label{sse: hyperbolicity}
In this section, we start by recalling a fundamental vanishing theorem of Siu, which asserts that on a projective manifold \(X\), every entire curve must satisfy the differential equation defined by a global jet differentials vanishing along an ample. We then state a similar statement concerning non-degenerate holomorphic maps \(\C^{p}\to X\): every such map must satisfy the partial differential equation defined by a global partial jet differentials vanishing along an ample. We give the proof of this statement in Appendix \ref{appendix: vanishing}, following the approach of the original proof, but using Nevanlinna theory in several variables (instead of one variable).

Then, as an application of this vanishing theorem and our construction of global partial jet differentials via Wronskians, we prove Theorem \ref{thm: mainthm 2}, i.e. we show the algebraic degeneracy of non-degenerate holomorphic maps from \(\C^{p}\) to Fermat hypersurfaces of degree \(\delta\) in projective spaces \(\P^{N}\), as soon as \(\delta > (N+1)(N-p)\).

\subsection{A vanishing theorem}
Recall the following fundamental vanishing theorem, due to Siu:
\begin{theorem}[Siu]
\label{thm: vanishing Siu}
Let \(X\) be a projective manifold equipped with an ample line bundle \(L\), and suppose that there exists a (non-zero) global section 
\[
P \in H^{0}(X, E_{1,k,w}X\otimes L^{-1}) 
\]
where \(k,w \in \N_{\geq 1}\). Then any entire curve \(f: \C \rightarrow X\) satisfies the differential equation
\[
P(f)=P_{f}(\partial_{1,k}f)\equiv0.
\]
\end{theorem}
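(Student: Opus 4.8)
The plan is to argue by contradiction through one-variable Nevanlinna theory, the mechanism being that the negative twist by \(L^{-1}\) is incompatible with the slow growth of jet differentials along an entire curve. Fix a smooth Hermitian metric \(h\) on \(L\) of positive curvature \(\omega \bydef c_{1}(L,h)>0\) — possible since \(L\) is ample — together with an auxiliary Hermitian metric on \(TX\), inducing metrics on \(E_{1,k,w}X\). Suppose, for contradiction, that some entire curve \(f\colon \C \to X\) satisfies \(P(f)=P_{f}(\partial_{1,k}f)\not\equiv 0\). First I would make \(P(f)\) into a bona fide analytic object: evaluating the jet-differential part of the section \(P\in H^{0}(X,E_{1,k,w}X\otimes L^{-1})\) on the \(k\)-jet \(\partial_{1,k}f\) produces a \emph{holomorphic section} \(s\bydef P(f)\) of the pulled-back line bundle \(f^{*}L^{-1}\) over \(\C\), not identically zero by assumption. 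I would then record the two invariants attached to \(f\): the characteristic function \(T_{f,L}(r)=\int_{1}^{r}\tfrac{\diff t}{t}\int_{\abs{z}<t}f^{*}\omega\), which tends to \(+\infty\) because \(f\) is non-constant and \(L\) is ample, and the order-\(k\) jet lift of \(f\).

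The heart of the proof is a pair of opposing estimates. For the lower bound, the clean bookkeeping of signs is obtained by reading \(P\) as a section of \(\mathcal{O}_{X_{k}}(w)\otimes\pi_{k}^{*}L^{-1}\) on a projectivized \(k\)-jet bundle \(\pi_{k}\colon X_{k}\to X\) (so that \(E_{1,k,w}X=\pi_{k,*}\mathcal{O}_{X_{k}}(w)\)), and by lifting \(f\) to \(f_{[k]}\colon\C\to X_{k}\); the condition \(P(f)\not\equiv 0\) means exactly that \(f_{[k]}(\C)\) is not contained in the zero divisor of \(P\). The First Main Theorem applied to this section and to \(f_{[k]}\), together with the additivity of characteristic functions in the line bundle and the identity \(T_{f_{[k]},\pi_{k}^{*}L}=T_{f,L}\), then yields
\[
w\,T_{f_{[k]},\mathcal{O}_{X_{k}}(1)}(r)\;\ge\;T_{f,L}(r)+O(1),
\]
so the negative twist \(L^{-1}\) has turned into the dominant term \(T_{f,L}\) on the right. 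For the upper bound I would invoke the tautological inequality (equivalently, the logarithmic derivative lemma of Nevanlinna theory), which controls the jet characteristic by
\[
T_{f_{[k]},\mathcal{O}_{X_{k}}(1)}(r)\;\le\;O\big(\log r+\log^{+}T_{f,L}(r)\big)
\]
outside a set of values of \(r\) of finite Lebesgue measure.

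Combining the two displays gives \(T_{f,L}(r)\le O(\log r+\log^{+}T_{f,L}(r))\) off an exceptional set, which is impossible for the increasing convex function \(T_{f,L}\) once \(T_{f,L}(r)\to+\infty\): a standard Borel growth (calculus) lemma forces \(T_{f,L}\) to be bounded, contradicting ampleness of \(L\) and non-constancy of \(f\). Hence \(P(f)\equiv 0\), as claimed. I expect the main obstacle to be the upper bound: the logarithmic derivative lemma / tautological inequality is the genuine analytic input, and its correct formulation — with the exceptional set of finite measure, and the bookkeeping ensuring the ample twist supplies the dominating \(T_{f,L}\) rather than a harmless term — is precisely what must be adapted, in several variables, in the appendix. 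The remaining steps (Poincaré–Lelong, the First Main Theorem, additivity of \(T_{f,\cdot}\), and the Borel lemma) are standard once the metrics are fixed.
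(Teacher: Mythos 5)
There is a genuine gap at the very last step, and it is not where you expected it. Your two estimates are fine: the First Main Theorem on the projectivized jet bundle gives \(w\,T_{f_{[k]},\mathcal{O}_{X_{k}}(1)}(r)\ge T_{f,L}(r)+O(1)\) (granting, as you note, that \(f_{[k]}(\C)\) avoids the zero divisor of \(P\)), and the logarithmic derivative lemma gives \(T_{f_{[k]},\mathcal{O}_{X_{k}}(1)}(r)\le O(\log r+\log^{+}T_{f,L}(r))\) off a set of finite measure. But the combined inequality
\[
T_{f,L}(r)\;\le\;O\big(\log r+\log^{+}T_{f,L}(r)\big)
\]
does \emph{not} force \(T_{f,L}\) to be bounded. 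The Borel-type calculus lemma absorbs the \(\log^{+}T_{f,L}\) term, but the \(\log r\) term survives, and the inequality is perfectly consistent with \(T_{f,L}(r)\sim c\log r\): this is exactly the characteristic growth of a non-constant \emph{rational} curve \(f\colon\C\to X\) (a degree-\(d\) map to \(X\subset\P^{N}\) has \(T_{f,L}(r)=d\log r+O(1)\) up to a constant), and nothing in your estimates rules such curves out. So no contradiction is reached at this point, and \(\)``boundedness of \(T_{f,L}\)'' is a false conclusion.

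The correct conclusion from your two displays is only that \(T_{f,L}(r)=O(\log r)\), which by the classical characterization (Theorem \ref{thm: rationality} in the case \(p=1\)) means that \(f\) is rational, i.e.\ extends to a morphism \(\bar f\colon\P^{1}\to X\). The proof then needs an additional, purely algebraic step, which is precisely how the paper finishes (its Appendix~\ref{appendix: vanishing} proves the general \(p\)-variable statement by this route: Lemma~\ref{lemma: lem2} yields \(T_{f}(L,r)=O(\log r)\), Theorem~\ref{thm: rationality} yields rationality, and Lemma~\ref{lemma: lem3} supplies the contradiction). Namely, pull back \(P\) by \(\bar f\) to obtain a non-zero section of \(E_{1,k,w}\P^{1}\otimes \bar f^{*}L^{-1}\); since \(\bar f^{*}L\) has positive degree on \(\P^{1}\) and the graded pieces of \(E_{1,k,w}\P^{1}\) are tensor products \(S^{d_{1}}\Omega_{\P^{1}}\otimes\dotsb\otimes S^{d_{k}}\Omega_{\P^{1}}\) of negative degree, one has \(H^{0}(\P^{1},E_{1,k,w}\P^{1}\otimes\mathcal{O}_{\P^{1}}(-1))=0\), contradicting \(P(f)\not\equiv0\). (In the several-variables setting the paper must additionally handle the indeterminacy locus of the induced rational map \(F\colon\P^{p}\dashrightarrow X\) via Riemann extension; for \(p=1\) this is vacuous.) Without this rationality-plus-vanishing step your argument cannot close, so as written the proposal is incomplete.
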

Recall that the conclusion of this theorem means that for any \(z \in \C\), the evaluation of \(P_{f(z)}\) on the \(1\)-germ 
\[
f(z+\cdot)\colon (\C,0) \to (X, f(z))
\] 
is zero.
This theorem was first proved by Siu using methods from Nevanlinna theory: see \cite{Dem} for a short exposition of the proof. Later, Demailly proved this vanishing result first for invariant
jet differentials bundles \(E_{1,k,w}^{\inv}\), using the Demailly-Semple tower and the Ahlfors-Schwarz lemma. Then, with a clever argument, he obtained the general statement for every jet differentials bundles (see  \cite{DemSurvey}[Chapter 7])).
In the setting of \(p\)-germs, Siu's scheme of proof can be carried over verbatim, with some Nevanlinna theoretic considerations in several variables. In Appendix \ref{appendix: vanishing}, we prove the following generalization of Theorem \ref{thm: vanishing Siu}:
\begin{theorem}
\label{thm: vanishing}
Let \(X\) be a projective manifold equipped with an ample line bundle \(L\), and suppose that there exists a (non-zero) global section
\[
P \in H^{0}(X, E_{p,k,w}X\otimes L^{-1}),
\]
where \(p,k,w \in \N_{\geq 1}\). Then any non-degenerate map \(f: \C^{p} \rightarrow X\) satisfies the partial differential equation
\[
P(f)=P_{f}(\partial_{p,k}f)\equiv0.
\]
\end{theorem}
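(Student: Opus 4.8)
The plan is to transpose Siu's Nevanlinna-theoretic proof of Theorem~\ref{thm: vanishing Siu} from entire curves to non-degenerate maps \(f\colon \C^{p}\to X\), the only substantial change being that one-variable value distribution theory must be replaced by its several-variable counterpart on \(\C^{p}\). Let me stress at the outset why one cannot simply reduce to the case \(p=1\) by restricting \(f\) to complex lines \(t\mapsto f(a+tv)\): such a restriction only records the directional jets \(\sum_{\overline{u}} v^{\overline{u}}\,\partial_{\overline{u}}f\), whereas a section of \(E_{p,k,w}X\) genuinely involves all the mixed partials \(\partial_{\overline{u}}f\) independently, so no single slicing sees \(P\). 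A genuinely \(p\)-dimensional argument is therefore unavoidable.

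First I would set up the several-variable Nevanlinna machinery relative to the ball exhaustion \(\{\|z\|<r\}\) of \(\C^{p}\): fixing a smooth metric \(h\) on \(L\) with positive curvature \(\omega_{L}=c_{1}(L,h)\), I introduce the characteristic function \(T_{f,L}(r)=\int_{r_{0}}^{r}\frac{dt}{t^{2p-1}}\int_{\|z\|<t} f^{*}\omega_{L}\wedge (dd^{c}\|z\|^{2})^{p-1}\), together with the proximity and counting functions \(m(r,\cdot)\), \(N(r,\cdot)\) of meromorphic functions on \(\C^{p}\) (integration against the normalized measure on \(\{\|z\|=r\}\)), and I record the First Main Theorem and the Poincaré--Lelong/Jensen formula on balls. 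Since \(f\) is non-degenerate, \(df\) has rank \(p\) at a generic point, which guarantees not only \(T_{f,L}(r)\to\infty\) but also \(T_{f,L}(r)/\log r\to\infty\). The key analytic input — the place where ``Nevanlinna theory in several variables'' really enters — is the several-variable logarithmic derivative lemma: for a meromorphic function \(g\) on \(\C^{p}\) and any word \(\overline{u}\),
\[
m\!\left(r,\tfrac{\partial_{\overline{u}}g}{g}\right)=O\big(\log^{+}T(r,g)+\log r\big)
\]
for all \(r\) outside a set of finite Lebesgue measure. This replaces the one-variable estimate used by Siu.

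Then I would argue by contradiction: assume \(P(f)\not\equiv 0\). Trivialising \(L\) on an affine chart of a projective embedding by \(|L^{a}|\), the evaluation \(P(f)\) is, up to the fixed frame, a weight-\(w\) polynomial in the partial derivatives \(\partial_{\overline{u}}\big(s_{j}(f)/s_{0}(f)\big)\) with holomorphic coefficients, tensored with the \(L^{-1}\)-twist. Taking \(\log|P(f)|_{f^{*}h^{-1}}\), integrating over \(\{\|z\|=r\}\) and applying the Poincaré--Lelong formula, the non-negativity \(N_{P(f)}(r,0)\ge 0\) of the zero-counting function furnishes a lower bound, while the weight-\(w\) jet part is controlled by the logarithmic derivative lemma and contributes only \(O(\log^{+}T_{f,L}(r)+\log r)\). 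The decisive point is that the ample twist \(L^{-1}\) contributes the full characteristic \(T_{f,L}(r)\) with the favourable sign, once the terms \(\log^{+}|s_{j}(f)/s_{0}(f)|\) have cancelled against the contribution of the local weight of the metric \(h\). Putting the two estimates together yields \(T_{f,L}(r)\le o\big(T_{f,L}(r)\big)+O(\log r)\) outside a set of finite measure, contradicting \(T_{f,L}(r)/\log r\to\infty\); hence \(P(f)\equiv 0\).

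The hard part is twofold. First, one must have at hand a correct several-variable logarithmic derivative lemma with honest control of the exceptional set on which the estimate may fail, and must propagate this ``outside a set of finite measure'' bookkeeping cleanly through the whole argument — this is genuinely more delicate than in one variable. Second, and most subtle, is the sign and weight bookkeeping showing that the negative twist by \(L^{-1}\) produces a dominant \(T_{f,L}(r)\) term: it rests on the exact cancellation between the \(\log^{+}\) of the coordinate ratios \(s_{j}(f)/s_{0}(f)\) — each of size \(O(T_{f,L}(r))\) — and the potential of \(h\), a computation that must be organised carefully across charts by a partition of unity. Finally, the residual case \(T_{f,L}(r)=O(\log r)\), corresponding to an algebraic (polynomial) map, is handled separately, since \(P\) then vanishes on an algebraic subvariety containing the image.
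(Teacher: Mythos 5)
There is a genuine gap, and it sits at the pivot of your argument: the claim that non-degeneracy of \(f\) forces \(T_{f,L}(r)/\log r\to\infty\) is false. Non-degeneracy only says that \(df\) has generic rank \(p\); it is perfectly compatible with \(f\) being rational --- think of a non-degenerate polynomial map, or a linear \(\C^{p}\subset\P^{p}\subset X\) when \(X\) contains a linear subspace --- and for a rational map one has \(T_{f,L}(r)=O(\log r)\): this is exactly the characterization recalled in Theorem \ref{thm: rationality}. Consequently your main chain \og logarithmic derivative lemma gives \(T_{f,L}(r)\le o\big(T_{f,L}(r)\big)+O(\log r)\), contradiction\fg does not produce a contradiction: what this estimate proves is precisely that \(f\) is rational, and this is how the paper uses the very same computation (Lemma \ref{lemma: lem2}, whose proof matches your Poincar\'e--Lelong/Jensen plus log-derivative scheme). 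So the Nevanlinna half of your proposal is essentially the paper's, but its correct conclusion is rationality of \(f\), not \(P(f)\equiv 0\).

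What you dismiss in one sentence as \og the residual case\fg is therefore the remaining substance of the theorem, and your proposed disposal of it (\og \(P\) then vanishes on an algebraic subvariety containing the image\fg) is not an argument: \(P\) is a jet differential vanishing along an ample divisor, there is no reason for it to vanish on the Zariski closure of \(f(\C^{p})\), and in any case \(P(f)\) involves the derivatives of \(f\), not merely its image. The paper's treatment of this case is genuinely algebraic: extend \(f\) to a rational map \(F\colon\P^{p}\dashrightarrow X\) with indeterminacy locus \(I_{F}\) of codimension at least \(2\); pull back \(P\) to a non-zero section of \(E_{p,k,w}\P^{p}\otimes F^{*}L^{-1}\) over \(\P^{p}\setminus I_{F}\); extend it across \(I_{F}\) by Riemann extension; observe that \(F^{*}L\) is a positive multiple of \(\O_{\P^{p}}(1)\); and contradict the vanishing \(H^{0}\big(\P^{p},E_{p,k,w}\P^{p}\otimes\O_{\P^{p}}(-1)\big)=0\) of Lemma \ref{lemma: lem3}, itself proved via the filtration of partial jet differentials whose graded pieces reduce the statement to \(H^{0}\big(\P^{p},\Omega_{\P^{p}}^{\otimes m}\otimes\O_{\P^{p}}(-1)\big)=0\). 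None of this appears in your proposal. A smaller, fixable point: the \og exact cancellation\fg you invoke between the terms \(\log^{+}\lvert s_{j}(f)/s_{0}(f)\rvert\) (each of size \(O(T_{f,L}(r))\)) and the metric weight is delicate as stated; the paper sidesteps it by writing \(P\) in the logarithmic coordinates \(\log(X_{i}/X_{j})\), so that the jet part of the pulled-back section involves only logarithmic derivatives \(\partial_{\overline{u}}\log(v_{i,j}\circ f)\), each controlled directly by the several-variable logarithmic derivative lemma.
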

\begin{remark}
The Demailly-Semple tower and the Ahlfors Schwarz lemma can be generalized in the setting of $p$-germs, \(p>1\), and Demailly's scheme of proof leads to a vanishing result (see \cite{Rousseau}). However, for \(p>1\), the bridge between the generalized Demailly-Semple tower and the \textsl{invariant} \(p\)-jet differentials is not completely clear (see Problem 2.9 in \cite{Rousseau}). Furthermore, while the gap between invariant and non-invariant jet bundles is small for \(p=1\) (see \cite{DemSurvey}), this is no longer true for \(p>1\).
\end{remark}

\subsection{Holomorphic maps from $\C^{p}$ to a Fermat hypersurface}
Recall the following standard definition:
\begin{definition}[Fermat hypersuface]
A Fermat hypersurface \(F\) of degree \(\delta \geq 1\) inside the projective space \(\P^{N}\), \(N \geq 1\), is any hypersurface defined by an equation of the following form
\[
\lambda_{0}X_{0}^{\delta} + \dotsb + \lambda_{N} X_{N}^{\delta}=0
\]
where \((\lambda_{0}, \dotsc, \lambda_{N}) \) is a non-zero vector in  \(\C^{N+1}\). 
\end{definition}

The goal of this section is to prove the following theorem:
\begin{theorem}
\label{thm: hyperbolicity}
Let \(H \subset \P^{N}\) be a Fermat hypersurface of degree \(\delta\). If
\[
\delta > (N+1)(N-p),
\]
then any non-degenerate holomorphic map
\(f\colon \C^{p} \rightarrow H\)
is algebraically degenerate. More precisely, there exists a Fermat hypersurface  \(H' \subsetneq \P^{N-1} \subsetneq \P^{N}\) of degree \(\delta\)  such that
\[
f(\C^{p}) \subset H' \cap H.
\]
\end{theorem}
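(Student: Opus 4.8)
The plan is to combine the explicit jet differentials furnished by geometric generalized Wronskians (Proposition \ref{prop: Wronskian jet}) with Siu's vanishing theorem (Theorem \ref{thm: vanishing}) and the linear-dependence criterion (Theorem \ref{thm: geometric generalized W}). Write $f=[f_{0}:\dotsb:f_{N}]$ in homogeneous coordinates, set $x_{i}=X_{i}|_{H}\in H^{0}(H,\O_{H}(1))$ and $\sigma_{i}=\lambda_{i}x_{i}^{\delta}\in H^{0}(H,\O_{H}(\delta))$, so that the Fermat equation reads $\sum_{i=0}^{N}\sigma_{i}=0$ on $H$. If some $f_{j}\equiv 0$ then $f(\C^{p})$ already lies in the coordinate hyperplane $\P^{N-1}=\{X_{j}=0\}$, hence in the Fermat hypersurface $H'=H\cap\{X_{j}=0\}\subset\P^{N-1}$, and we are done; so I may assume every $f_{i}\not\equiv 0$.

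For a full set $\mathcal{U}$ of size $N-1$ and order $k$, Proposition \ref{prop: Wronskian jet} produces a global section $W_{\mathcal{U}}(\sigma_{0},\dotsc,\sigma_{N-1})\in H^{0}(H,E_{p,k,\boldsymbol{\beta}}H\otimes\O_{H}(\delta N))$. The key computation is a divisibility statement: since each entry $\partial_{\overline{u}}\sigma_{i}$ with $\ell(\overline{u})\leq k$ is divisible by $x_{i}^{\delta-k}$, each column is divisible by $x_{i}^{\delta-k}$; and, crucially, the relation $\sum_{i}\sigma_{i}=0$ lets one rewrite $W_{\mathcal{U}}(\sigma_{0},\dotsc,\sigma_{N-1})$, up to sign and the $\lambda_{i}$'s, as the Wronskian obtained by dropping any one index, so that the jet differential is in fact divisible by $\prod_{i=0}^{N}x_{i}^{\delta-k}$. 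Dividing out this section of $\O_{H}((N+1)(\delta-k))$ yields
\[
P_{\mathcal{U}}\in H^{0}\!\big(H,\,E_{p,k,\boldsymbol{\beta}}H\otimes\O_{H}\big((N+1)k-\delta\big)\big).
\]
When $\delta>(N+1)k$ the bundle $\O_{H}(\delta-(N+1)k)$ is ample, so $P_{\mathcal{U}}$ is valued in $E_{p,k,w}H\otimes(\text{ample})^{-1}$; Theorem \ref{thm: vanishing} then forces $P_{\mathcal{U}}(f)\equiv 0$, and since $\prod_{i}f_{i}^{\delta-k}\not\equiv 0$ this gives $W_{\mathcal{U}}(f_{0}^{\delta},\dotsc,f_{N-1}^{\delta})\equiv 0$ (absorbing the nonzero constants $\lambda_{i}$).

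Now I restrict to full sets of order $k\leq N-p$, which is exactly the regime allowed by the hypothesis $\delta>(N+1)(N-p)$. The aim is to apply the contrapositive of Theorem \ref{thm: geometric generalized W}: if every pure geometric generalized Wronskian of $f_{0}^{\delta},\dotsc,f_{N-1}^{\delta}$ coming from a full set of order $\leq N-p$ vanishes identically, then these $N$ functions are linearly dependent. A dependence relation $\sum_{j}c_{j}f_{j}^{\delta}=0$ is itself a Fermat relation, independent of the given one since it omits $f_{N}^{\delta}$; combining the two exhibits $f(\C^{p})$ inside $H\cap H'$ for a Fermat hypersurface $H'$, so that $f$ is algebraically degenerate, and an induction on $N$ refines this to the stated form with $H'\subsetneq\P^{N-1}$.

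The main obstacle is twofold and lies precisely in matching the threshold $(N+1)(N-p)$. First, the Fermat-driven divisibility by the \emph{full} product $\prod_{i=0}^{N}x_{i}^{\delta-k}$ — rather than only by the $N$ coordinates appearing in a given Wronskian — is what converts a positive twist into the negative twist $(N+1)k-\delta$, and it has to be carried out carefully through the relation $\sum_{i}\sigma_{i}=0$. Second, and more seriously, I must guarantee that pure geometric generalized Wronskians of order at most $N-p$ already detect linear dependence of the functions $f_{i}^{\delta}$: by Theorem \ref{mainthm A} \emph{some} full set of size $N-1$ works, but its order may be as large as $N-1$, whereas here only orders $\leq N-p$ give a negative twist. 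Reducing to the leading exponents $\boldsymbol{\alpha}_{i}=\delta\cdot\ord(f_{i})\in\N^{p}$, this becomes the combinatorial assertion that $N$ distinct such exponents admit a nonzero geometric Vandermonde $V_{\mathcal{U}}$ with $\mathcal{U}$ of order $\leq N-p$; configurations where this fails (for instance affinely degenerate exponents, which already cause trouble when $p=N-1$ and $k=1$) will have to be disposed of by the induction on $N$, in which such a degeneracy itself produces the sought relation or lowers the ambient dimension.
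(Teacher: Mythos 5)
Your per-Wronskian computation is sound and matches the paper's: for a full set \(\mathcal{U}\) of order \(k\), each entry \(\partial_{\overline{u}}X_{i}^{\delta}\) is divisible by \(X_{i}^{\delta-k}\), the Fermat relation upgrades this to divisibility by \((X_{0}\dotsb X_{N})^{\delta-k}\) on \(H\), and the resulting twist \((N+1)k-\delta\) is negative exactly when \(\delta>(N+1)k\), so Theorem \ref{thm: vanishing} does kill \(W_{\mathcal{U}}(f_{0}^{\delta},\dotsc,f_{N-1}^{\delta})\) for every full set of order \(k\leq N-p\). But the step you yourself flag as ``more serious'' is a genuine gap, and it does not close the way you propose. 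Theorem \ref{thm: geometric generalized W} requires the vanishing of \(W_{\mathcal{U}}\) for \emph{all} full sets of size \(N-1\), and as soon as \(p\geq 2\) there exist full sets of order up to \(N-1\); the restricted family of order \(\leq N-p\) provably fails to detect independence. For instance with \(p=2\), \(N=3\), the only full set of order \(1\) is \(\{1,2\}\), and \(W_{\{1,2\}}(1,z_{1},z_{1}^{2})\equiv 0\) although \(1,z_{1},z_{1}^{2}\) are linearly independent. Your fallback --- an induction on \(N\) in which ``such a degeneracy itself produces the sought relation or lowers the ambient dimension'' --- cannot work as stated, because the obstruction is not a linear relation among the tested functions (there is none in the example above), so the failure of the restricted Wronskians neither yields a hypersurface \(H'\) nor reduces \(N\): there is nothing to induct on.

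The paper fills exactly this hole using the hypothesis your argument never invokes, the non-degeneracy of \(f\). One reparametrizes locally so that \(f\circ\varphi=(z_{1},\dotsc,z_{p},g_{p+1},\dotsc,g_{N})\) in an affine chart, and partitions the full sets of size \(N-1\) into \(F_{+}\) (those containing the \(p\) length-one words \(1,\dotsc,p\)) and \(F_{-}\). A counting argument shows every \(\mathcal{U}\in F_{+}\) automatically has order \(\leq N-p\), so your factorization-plus-vanishing argument applies there --- with the subtlety that Theorem \ref{thm: vanishing} must be applied to the whole orbit \(\psi\cdot W_{\mathcal{U}}\) under biholomorphisms \(\psi\) of \((\C^{p},0)\), so that the intrinsic vanishing can be transported through \(\varphi\) into a vanishing on \(f\circ\varphi\). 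For \(\mathcal{U}\in F_{-}\), which may have large order, no vanishing theorem is needed at all: some letter \(i\leq p\) occurs in no word of \(\mathcal{U}\), so after substituting \(f_{1}^{\delta}+\dotsb+f_{N}^{\delta}=-1\) the \(i\)-th column of \(W_{\mathcal{U}}(z_{1}^{\delta},\dotsc,z_{p}^{\delta},g_{p+1}^{\delta},\dotsc,-1)\) is \((z_{i}^{\delta},0,\dotsc,0)^{T}\), proportional to the constant last column, and the determinant vanishes identically by the alternating property. Together these give the vanishing for \emph{every} full set of size \(N-1\), after which Theorem \ref{thm: geometric generalized W} legitimately produces the relation \(\sum_{i}\lambda_{i}X_{i}^{\delta}\circ f\equiv 0\) and hence the Fermat hypersurface \(H'\). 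To repair your proof you would need to import this normal form and the \(F_{+}/F_{-}\) dichotomy; the rest of your outline can then stand essentially as written.
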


Note that this interpolates between the following two well known results. On the one hand, if \(\delta > (N+1)\), \(H\) has  canonical divisor $K_{H}$ ample by the adjunction formula. Accordingly, the Fermat hypersurface \(H\) is of general type, which in turn implies that \(H\) is measure hyperbolic.  This property is enough to ensure the non-existence of non-degenerate families of entire curves \( \C \times \B^{N-2} \rightarrow H\), which implies in particular the non-existence of non-degenerate holomorphic maps \(\C^{N-1} \rightarrow H\). On the other hand, if \(\delta > (N+1)(N-1)
\), it is known that every holomorphic map \(\C \to H\) is algebraically degenerate: see e.g. \cite{kob13}[p.144-145] or \cite{Santa}[Chapter 11].

\begin{proof}
Suppose without loss of generality that \(F=\Set{X_{0}^{\delta}+ \dotsc + X_{N}^{\delta}=0}\), and that the image of \(f\) is not included in any of the coordinate hyperplane \(\Set{X_{i}=0}\).
Fix \(z_{0} \in \C^{p}\) a non-degenerate point of \(f\) and, up to translation, suppose that \(z_{0}=0\). Suppose furthermore that, for instance, \(X_{0}(f(0)) \neq 0\), so that one can write locally around \(0=z_{0}\):
\[
f
=
(f_{1}, \dotsc, f_{N}).
\]
Since \(f\) is non-degenerate at \(0\), there exists a biholomorphism \(\varphi: U_{1} \to U_{2}\) between neighborhoods of \(0\) such that, up to reordering the variables in $\P^{N}$, one has the following equality around \(0\)
\[
f\circ\varphi
=
(z_{1}, \dotsc, z_{p}, g_{p+1}, \dotsc, g_{N}),
\]
where the functions \(g_{i}\) are holomorphic and defined around \(0\). 

Consider the pure geometric generalized Wronskians associated to the \(N\) global sections \(X_{1}^{\delta}, \dotsc, X_{N}^{\delta}\) of \(\O_{\P^{N}}(\delta)\).
Partition \(F_{p,N-1}\) (which we recall is the set of full sets of size \(N-1\) with words in \(\mathcal{W}_{p}\)) into \(F_{+}\) and \(F_{-}\), where \(F_{+}\) is the set of full sets containing the \(p\) words \(\{1\}, \dotsc, \{p\}\), whereas \(F_{-}\) is its complementary. 

One first shows that, in a neighborhood of \(0=z_{0}\), one has the following equality for any \(\mathcal{U} \in F_{-}\):
\[
W_{\mathcal{U}}(X_{1}^{\delta}, \dotsc, X_{N}^{\delta})_{| H}(f\circ \varphi) \equiv 0.
\]
Indeed, using the equality 
 \(f_{1}^{\delta} + \dotsc f_{N}^{\delta}=-1\), valid around \(0\),
one can write (up to sign):
\[
W_{\mathcal{U}}(X_{1}^{\delta}, \dotsc, X_{N}^{\delta})_{| H}(f \circ \varphi)
\overset{loc}{=}
W_{\mathcal{U}}(z_{1}^{\delta}, \dotsc, z_{p}^{\delta}, g_{p+1}^{\delta}, \dotsc, g_{N-1}^{\delta}, -1).
\]
Since \(\mathcal{U}\) is in \(F_{-}\), there exists a letter \(1 \leq i \leq p\) such that no word in \(\mathcal{U}\) contains \(i\). Therefore, the \(i\)th column in the matrix defining 
\(
W_{\mathcal{U}}(z_{1}^{\delta}, \dotsc, z_{p}^{\delta}, g_{p+1}^{\delta}, \dotsc, g_{N-1}^{\delta}, -1)
\)
is equal to
\(
\begin{pmatrix}
z_{i}^{\delta}
\\
0 
\\
\cdot
\\
\cdot
\\
0 
\end{pmatrix},
\)
and is thus proportional to the last column
\(
\begin{pmatrix}
-1 
\\
0 
\\
\cdot
\\
\cdot
\\
0 
\end{pmatrix},
\)
which proves the announced equality, by alternating property of the determinant.

Next, one shows that the following equality is satisfied for any \(\mathcal{U} \in F_{+}\) and any \(\psi\) biholomorphism of \((\C^{p},0)\):
\[
(\psi \cdot W_{\mathcal{U}})(X_{1}^{\delta}, \dotsc, X_{N}^{\delta})_{| H}(f) \equiv 0.
\]
The idea is to use a factorization trick as in \cite{DemSurvey}, and apply the vanishing Theorem \ref{thm: vanishing}.
Observe that, by construction, any full set \(\mathcal{U} \in F_{+}\) has order  less or equal than \((N-p)\). Accordingly, by multi-linearity of the determinant, one can factorize \((\psi \cdot W_{\mathcal{U}})(X_{1}^{\delta}, \dotsc, X_{N}^{\delta})\) by 
\[
(X_{1} \dotsb X_{N})^{\delta- (N-p)}.
\]
By restricting \((\psi \cdot W_{\mathcal{U}})(X_{1}^{\delta}, \dotsc, X_{N}^{\delta})\) to the Fermat hypersurface \(H=\Set{X_{0}^{\delta}+ \dotsb + X_{N}^{\delta}=0}\), the multi-linearity and alternating property of the determinant gives the equality:
\[
(\psi \cdot W_{\mathcal{U}})(X_{1}^{\delta}, \dotsc, X_{N}^{\delta})_{| H}
=
(\psi \cdot W_{\mathcal{U}})(X_{1}^{\delta}, \dotsc, X_{N-1}^{\delta}, -X_{0}^{\delta})_{| H}.
\]
Therefore, one deduces that \((\psi \cdot W_{\mathcal{U}})(X_{1}^{\delta}, \dotsc, X_{N}^{\delta})_{| H}\) can in fact be factorized by \((X_{0}\dotsc X_{N})^{\delta - (N-p)}_{| H}\), so that one can write
\[
(\psi \cdot W_{\mathcal{U}})(X_{1}^{\delta}, \dotsc, X_{N}^{\delta})_{| H}
=
(X_{0}\dotsc X_{N})^{\delta - (N-p)}_{| H}
\cdot
\tilde{W_{\mathcal{U}}},
\]
where 
\(\tilde{W_{\mathcal{U}}} \in H^{0}\big(H, E_{p,k, \boldsymbol{\beta}}H \otimes L^{(N+1)(N-p)-\delta}\big)\), with \(k\) the order of \(\mathcal{U}\) and \(\boldsymbol{\beta}\) its caracteristic exponent (see Definitions \ref{def: order}).
Since \(\delta > (N+1)(N-p)\),  the vanishing Theorem \ref{thm: vanishing} yields the equality
\[
(\psi \cdot W_{\mathcal{U}})(X_{1}^{\delta}, \dotsc, X_{N}^{\delta})_{| H}(f) \equiv 0.
\]
Now, observe that this implies that in a neighborhood of \(0=z_{0}\), one has the following equality for any \(\mathcal{U} \in F_{+}\): 
\[
W_{\mathcal{U}}(X_{1}^{\delta}, \dotsc, X_{N}^{\delta})_{| H}(f \circ \varphi) \equiv 0.
\]
Indeed, for \(z\) in a neighborhood of \(0\), consider the biholomorphism \(\psi\) of \((\C^{p},0)\) induced by \(\varphi\)
\[
\psi\bydef \varphi(z+\cdot) - \varphi(z),
\]
and observe that the evaluation of \(W_{\mathcal{U}}(X_{1}^{\delta}, \dotsc, X_{N}^{\delta})_{| H}\) at the \(p\)-germ \((f\circ \varphi) (z+ \cdot)\) is equal to the evaluation of \((\psi \cdot W_{\mathcal{U}})(X_{1}^{\delta}, \dotsc, X_{N}^{\delta})_{| H}\) at the \(p\)-germ \(f(\varphi(z)+\cdot)\). 

All in all, one has shown that in a neighborhood of \(0=z_{0}\), one has the following equality for any \(\mathcal{U}\) full set of size \(m\)
\[
W_{\mathcal{U}}(X_{1}^{\delta}, \dotsc, X_{N}^{\delta})_{| H}(f \circ \varphi) \equiv 0.
\]
By Theorem \ref{thm: geometric generalized W}, one deduces the existence of constants \(\lambda_{1}, \dotsc, \lambda_{N}\) in \(\C\) such that, locally around \(0=z_{0}\), the following equality is satisfied:
\[
\sum\limits_{i=1}^{N} 
\lambda_{i} X_{i}^{\delta} \circ f 
=
0.
\]
This expression is analytic on \(\C^{p}\), so that the previous equality is valid everywhere. This shows that the image of \(f\) is included in the Fermat hypersurface
\[
\Set{\lambda_{1}X_{1}^{\delta}+ \dotsb \lambda_{N}X_{N}^{\delta}=0} \subset \P^{N-1} \subset \P^{N},
\]
which finishes the proof.
\end{proof}
Note that if one wishes to apply successively the previous theorem, one has to check that the induced map
\( \C^{p} \rightarrow F' \cap F \) remains non-degenerate. In the case where \(p=1\), this is always the case, which leads to the neat description given in \cite{kob13}.

\section{Applications in foliation theory}
\label{se: foliation intro}
Geometric generalized Wronskians must be seen as an effective way of producing sections of some specific kinds: for instance, we used them in the previous section to produce partial differential equations vanishing along an ample. In this section, we will only consider specific geometric generalized Wronskians: therefore, before stating our two applications in foliation theory, we set some notations and state a few elementary facts concerning the combinatorics of the full sets associated to these specific Wronskians.

For what follows, we fix \(p\) an integer. For any \(n \in \N_{\geq 1}\), we denote by \(\mathcal{U}_{n}\) the set \(\mathcal{W}_{p, \leq n}\): this is obviously a full set of words. The size of this set is equal to the dimension of the vector space spanned by the polynomials of degrees ranging from \(1\) to \(n\) in \(\C[X_{1}, \dotsc, X_{p}]\) , i.e.
\[
\abs{\mathcal{U}_{n}}
=
 \sum\limits_{i=1}^{n}
 \binom{p+i-1}{p-1}.
\]
From the above equality, we deduce immediately that the weight of the full set \(\mathcal{U}_{n}\) is then equal to
\[
w(\mathcal{U}_{n})
=
\sum\limits_{i=1}^{n} i \binom{p+i-1}{p-1}.
\]

We will especially be interested in the asymptotic behavior of \(\abs{\mathcal{U}_{n}}\) and \(w(\mathcal{U}_{n})\) for \(n \to \infty\). The above formula allows to obtain the following estimates:
\[
\abs{\mathcal{U}_{n}}
=
\frac{1}{(p-1)!}
\sum\limits_{i=1}^{n}\left(
i^{p-1}
+
O( i^{p-2}) \right)
\sim
\frac{1}{p!}n^{p}
\]
and
\[
w(\mathcal{U}_{n})
=
\frac{1}{(p-1)!}
\sum\limits_{i=1}^{n}\left(
i^{p}
+
O( i^{p-1}) \right)
\sim \frac{p}{(p+1)!} n^{p+1}.
\]

Let us now briefly recall basic facts concerning foliations on \textsl{normal} varieties.
Recall first that, in the setting of normal  varieties, one defines the \textsl{tangent sheaf \(TX\)} of \(X\) as the dual of the sheaf \(\Omega_{X}\) of Khäler differentials on \(X\), i.e. \(TX \bydef \Omega_{X}^{*}\). A foliation on a normal variety is then defined as follows:
\begin{definition}[Foliations]
\label{def: foliation}
A foliation \(\mathcal{F}\) on a normal variety \(X\) is a coherent subsheaf \(T_{\mathcal{F}}\) of \(TX\) such that
\begin{enumerate}[(i)]
\item{} \(T_{\mathcal{F}}\) is closed under Lie bracket;
\item{} \(T_{\mathcal{F}}\) is saturated in \(TX\) i.e. the quotient \(TX/T_{\mathcal{F}}\) is torsion-free.
\end{enumerate}
\end{definition}
What will be important for us is the following fact: outside a singular set \(\Sing(\mathcal{F}) \supset \Sing(X)\) of codimension at least \(2\), the foliation \(\mathcal{F}\) is \textsl{regular}. This means that, in a neighborhood \(U_{x}\) of any point \(x \in X \setminus \Sing(\mathcal{F})\), the subsheaf \(T\mathcal{F}_{\vert U_{x}}\) is a subbundle of the holomorphic tangent sheaf \(TX_{\vert U_{x}}\), which can be integrated (via the usual Frobenius theorem, thanks to condition \((i)\) in the definition of a foliation). This leads to the following definition of \textsl{leaves of a foliation}:
\begin{definition}[Leaves of a foliation, algebraic leaves]
Let \(X\) be a normal variety, and let \(\mathcal{F}\) be a foliation on \(X\). A \textsl{leaf of \(\mathcal{F}\)} is a connected, locally closed holomorphic submanifold \(F \subset X \setminus \Sing(\mathcal{F})\) such that the tangent bundle \(TF\) of \(F\) is equal to \((T_{\mathcal{F}})_{\vert F}\).
A leaf \(F\) is called \textsl{algebraic} if it is open in its Zariski closure.
\end{definition}

Recall that any \textsl{reflexive} sheaf \(\mathcal{E}\) (i.e. a coherent sheaf isomorphic to its double dual \(\mathcal{E}^{**}\)) on a normal variety \(X\) satisfies Serre's \(S_{2}\) condition (see \cite{Har94}[Theorem 1.9]). Therefore, since \(\codim(\Sing(\mathcal{F}))\geq 2\),  one has the following isomorphism
\[
H^{0}(X \setminus \Sing(\mathcal{F}),\mathcal{E})
\simeq
H^{0}(X, \mathcal{E}),
\]
i.e. every section of \(\mathcal{E}\) defined on \(X \setminus \Sing(\mathcal{F})\) extends uniquely to a global section of \(\mathcal{E}\) (see \cite{Har94}[Prop 1.11]).

As outside \(\Sing(\mathcal{F})\) the local picture of the foliation is explicit, the aforementioned result leads to consider especially reflexive sheaves. This motivates the following usual definition:
\begin{definition}[Reflexive hull, determinant]
\label{def: reflexive}
Let \(X\) be a normal projective variety, and let \(\mathcal{E}\) a coherent sheaf on \(X\). For any \(m \in \N\), define the \textsl{reflexive hull of \(\mathcal{E}^{\otimes m}\)}, denoted \(\mathcal{E}^{[m]}\), as the following double dual:
\[
\mathcal{E}^{[m]}
\bydef
\left(\mathcal{E}^{\otimes m}\right)^{**}.
\]
The \textsl{determinant of \(\mathcal{E}\)}, denoted \(\det(\mathcal{E})\), is defined similarly
\[
\det(\mathcal{E})
\bydef
\left(\Ext^{\rank(\mathcal{E})} \mathcal{E}\right)^{**}.
\]
\end{definition}
Note that, when \(X\) is regular (e.g. \(X\) smooth), the determinant of a coherent sheaf defines an invertible sheaf (see \cite{Har80}[Prop. 1.9]), and thus a line bundle. In any event, there is a canonical reflexive sheaf of rank one associated to any foliation on a normal variety, called the \textsl{canonical bundle of the foliation}:
\begin{definition}[Cotangent bundle, canonical bundle of a foliation]
Let \(X\) be a normal variety, and let \(\mathcal{F}\) be a foliation on \(X\). The \textsl{canonical bundle of \(\mathcal{F}\)}, denoted \(\mathcal{K}_{\mathcal{F}}\), is defined as follows
\[
K_{\mathcal{F}}
\bydef
\det(\Omega_{F}),
\]
where \(\Omega_{F} \bydef T_{\mathcal{F}}^{*}\) is the \textsl{cotangent bundle of the foliation}.
\end{definition}

\subsection{Algebraicity of leaves}
\label{sse: foliation integrability}
Using the construction of Wronskians in the case where \(p=1\), one has the following application regarding foliations by curves on normal projective varieties. This was pointed out to me by Erwan Rousseau, following an observation of Jorge Vitorio Pereira.
\begin{proposition}
\label{prop: fol rk 1}
Let \(X\) be a normal projective variety and let \(\mathcal{F}\) be a foliation by curves on \(X\). 
Suppose that \(\mathcal{F}\) is not algebraically integrable, i.e. that a general leaf of \(\mathcal{F}\) is not algebraic. Then the canonical bundle of the foliation \(K_{\mathcal{F}}\) is pseudo-effective.
\end{proposition}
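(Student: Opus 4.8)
The plan is to verify pseudo-effectiveness of $K_{\mathcal{F}}$ through its sheaf-theoretic characterization (the one recalled right after Theorem~\ref{thm: CPBD}, which for the rank-one foliation $\mathcal{F}$ reads $\Omega_{\mathcal{F}}^{[m]}=K_{\mathcal{F}}^{[m]}$): fixing an ample line bundle $L$ on $X$, I must produce, for every $c\in\N_{\geq 1}$, integers $m>nc$ with $H^{0}(X,K_{\mathcal{F}}^{[m]}\otimes L^{n})\neq\Set{0}$. The sections will come from one-variable Wronskians (the case $p=1$ of Proposition~\ref{prop: Brot}) taken \emph{along the leaves} of $\mathcal{F}$.

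\textbf{Construction and twist.} Fix $a\gg 0$ so that $L^{\otimes a}$ is very ample, set $M+1\bydef h^{0}(X,L^{\otimes a})$, and let $s_{0},\dotsc,s_{M}$ be a basis of $H^{0}(X,L^{\otimes a})$. Over the regular locus $X\setminus\Sing(\mathcal{F})$, where $T_{\mathcal{F}}$ is a line subbundle of $TX$, evaluating a weight-$w$ jet differential on $1$-germs tangent to $\mathcal{F}$ and using weighted homogeneity yields an evaluation morphism $E_{1,k,w}X\to K_{\mathcal{F}}^{\otimes w}$ (recall $K_{\mathcal{F}}=T_{\mathcal{F}}^{*}$ in rank one). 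Composing it with the section $W(s_{0},\dotsc,s_{M})\in H^{0}\big(X,E_{1,M,w}X\otimes(L^{\otimes a})^{M+1}\big)$ of Proposition~\ref{prop: Brot}, with $w=\frac{M(M+1)}{2}$, produces a section over $X\setminus\Sing(\mathcal{F})$ of $K_{\mathcal{F}}^{\otimes w}\otimes L^{a(M+1)}$; since $\codim\Sing(\mathcal{F})\geq 2$ and the target is reflexive, it extends to a section
\[
\Theta_{a}\in H^{0}\big(X,K_{\mathcal{F}}^{[w]}\otimes L^{a(M+1)}\big),\qquad w=\tfrac{M(M+1)}{2}.
\]
Here $m=w$ and $n=a(M+1)$ give the ratio $m/n=M/(2a)$; by asymptotic Riemann--Roch $M\sim\frac{(L^{N})}{N!}a^{N}$ with $N=\dim X\geq 2$ (a non-algebraically-integrable foliation by curves forces $N\geq 2$), so $M/(2a)\to\infty$ as $a\to\infty$. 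Thus, \emph{provided $\Theta_{a}\not\equiv 0$ for arbitrarily large $a$}, the pseudo-effectiveness criterion is met.

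\textbf{Nonvanishing and the role of non-integrability.} By the fundamental property of one-variable Wronskians (the case $p=1$ of Theorem~\ref{thm: geometric generalized W}, i.e. Bôcher's theorem), $\Theta_{a}$ is non-zero precisely when the restrictions $s_{0}|_{F},\dotsc,s_{M}|_{F}$ to a general leaf $F$ are linearly independent over $\C$, equivalently when the Zariski closure $\overline{F}$ is not contained in any member of $\abs{L^{\otimes a}}$. If the general leaf is Zariski dense, i.e. $\overline{F}=X$, this holds automatically and $\Theta_{a}\neq 0$ for every very ample $L^{\otimes a}$, which finishes the proof in this case.

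\textbf{The main obstacle.} The genuine difficulty is that a non-algebraically-integrable $\mathcal{F}$ may have general leaf with $\overline{F}=V\subsetneq X$ (necessarily $\dim V\geq 2$, since a leaf dense in a curve would be algebraic); then $H^{0}(X,L^{\otimes a}\otimes\mathcal{I}_{V})\neq\Set{0}$ forces $\Theta_{a}\equiv 0$, so the full-basis Wronskian degenerates and cannot be repaired by passing to a sub-basis, which would only be defined fibrewise. To handle this I would pass to the fibration $\pi\colon X\dashrightarrow B$ whose general fibre is the leaf-closure $V$ (the algebraic reduction of $\mathcal{F}$), on whose general fibre the restricted foliation $\mathcal{F}|_{V}$ is again a foliation by curves but now with a \emph{Zariski-dense} leaf. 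Running the dense-leaf construction fibrewise and organizing the fibrewise Wronskians into a global object should produce a section of $K_{\mathcal{F}}^{[w]}\otimes L^{n}\otimes\pi^{*}A_{B}^{\,r}$ for some ample $A_{B}$ on $B$; since $\pi^{*}A_{B}$ is nef and dominated by a multiple of $L$, one can absorb it at the cost of enlarging $n$ by a bounded factor, and the ratio $m/n$ still tends to infinity. Making the fibrewise sections vary holomorphically in $b$, and controlling the base-twist $r$ relative to $w$, is the technical heart of the argument; once it is in place, pseudo-effectiveness of $K_{\mathcal{F}}$ follows exactly as in the Zariski-dense case.
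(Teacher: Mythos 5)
There is a genuine gap, and it is exactly where you locate "the technical heart": your treatment of the case \(\overline{F}=V\subsetneq X\) is only a sketch (algebraic reduction, fibrewise Wronskians, a base twist \(\pi^{*}A_{B}^{\,r}\) to be controlled), and none of it is carried out. Worse, the obstacle you set up is illusory, and your reason for dismissing the sub-basis fix --- that a sub-family of sections "would only be defined fibrewise" --- is mistaken. The Wronskian construction produces a \emph{single global section} of \(K_{\mathcal{F}}^{[w]}\otimes L^{n}\) on \(X\), and to show it is nonzero you only need to test it along \emph{one} general leaf; nothing has to vary holomorphically with the leaf. This is precisely how the paper proceeds: since \(\mathcal{F}\) is not algebraically integrable, the Zariski closure \(V\) of a general leaf \(F\) satisfies \(\dim V\geq 2\) (a leaf dense in a curve is open in its closure, hence algebraic, as you yourself observe), so the image of the restriction map \(H^{0}(X,L^{r})\to H^{0}(V,L^{r}_{\,|V})\) has dimension at least \(Cr^{2}\) for a uniform \(C>0\). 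One then chooses \(m+1=Cr^{2}\) sections of \(L^{r}\) whose restrictions to \(V\) --- hence to the Zariski-dense leaf \(F\) --- are linearly independent; by Bôcher's theorem the one-variable Wronskian along \(F\) is not identically zero, so the induced section of \(\bigl(K_{\mathcal{F}}^{m(m+1)/2}\otimes L^{r(m+1)}\bigr)_{|X\setminus\Sing(\mathcal{F})}\) is nonzero, extends across \(\Sing(\mathcal{F})\) by reflexivity, and the ratio \(\frac{r(m+1)}{m(m+1)/2}=\frac{2r}{m}\sim\frac{2}{Cr}\to 0\) gives pseudo-effectiveness in one stroke, with no case distinction on \(\overline{F}\).

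Two smaller points. First, the paper deliberately does \emph{not} take a full basis of \(H^{0}(X,L^{a})\), precisely to avoid the degeneracy you ran into; note also that the modest growth \(Cr^{2}\) suffices for the ratio to tend to zero, so your Riemann--Roch count \(M\sim a^{N}\) is more than is needed. Second, your intermediate claim of an evaluation morphism \(E_{1,k,w}X\to K_{\mathcal{F}}^{\otimes w}\) is not correct for arbitrary jet differentials: evaluation along leafwise parametrizations transforms by \((\varphi')^{w}\) under reparametrization only for \emph{invariant} jet differentials. It is harmless here because the one-variable Wronskian does lie in \(E_{1,k,w}^{\inv}X\otimes L^{m+1}\) (as the paper notes after Proposition \ref{prop: Wronskian jet}), and the paper sidesteps the issue by computing the reparametrization factor \((\varphi_{\mathbi{t}}'(z))^{m(m+1)/2}\) directly; but as stated your step would fail for a general element of \(E_{1,k,w}X\).
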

\begin{proof}
Polarize the projective variety \(X\) with an ample line bundle \(L\), and denote \(N=\dim(X)\). Since the general leaf of \(\mathcal{F}\) is not algebraic, there exists a constant \(C \in \N_{\geq 1}\) such that for any \(r \in \N_{\geq 1}\), there is at least \(Cr^{2}\) sections in \(H^{0}(X, L^{r})\) that are linearly independant when restricted to a general leaf \(F\):
let \(s_{0}, \dotsc, s_{m}\) be \(m+1=Cr^{2}\) such sections.

For what follows, one works on the open set \(X \setminus \Sing(\mathcal{F})\).
Denote \(\gamma\colon  (\C^{N-1},0) \times (\C,0) \to X \) a local integration of the foliation around a point \(x \in X \setminus \Sing(\mathcal{F})\). Therefore, the leaves of the foliation are (locally) parametrized by \(\gamma_{\mathbi{t}}\colon (\C,0) \to X\), where \(\mathbi{t}\) denotes coordinates on \((\C^{N-1},0)\). Two local integrations of the foliation differ by a family of biholomorphisms \(\varphi_{\mathbi{t}}\colon (\C,0) \to (\C,0)\). Evaluating the Wronskian \(W(s_{0}, \dotsc, s_{m})\) at the germs \(\gamma_{\mathbi{t}}(z+\cdot)\) and \(\gamma_{\mathbi{t}} \circ \varphi_{\mathbi{t}}(z+\cdot)\) for \(z\) in a neighborhood of \(0\) yields two numbers that are related as follows:
\[
W(s_{0}, \dotsc, s_{m})(\gamma_{\mathbi{t}}\circ \varphi_{\mathbi{t}}(z + \cdot))
=
(\varphi_{\mathbi{t}}'(z))^{\frac{m(m+1)}{2}} W(s_{0}, \dotsc, s_{m})(\gamma_{\mathbi{t}}(z+ \cdot)).
\]
Furthermore, they are non identically zero since the sections are linearly independant when restricted to a general leaf \(F\).
Recalling the twist by \(L^{r(m+1)}\) induced by \(W(s_{0}, \dotsc, s_{m})\), the previous equality means exactly that the Wronskian \(W(s_{0}, \dotsc, s_{m})\) induces, by evaluation on the foliation, a non-zero global section of 
\[
\left(
K_{\mathcal{F}}^{\frac{m(m+1)}{2}} \otimes L^{r(m+1)}
\right)_{\big\vert X \setminus \Sing(\mathcal{F})}.
\]
Since the previous coherent sheaf of rank one is reflexive, and since the codimension of \(\Sing(\mathcal{F})\) is at least \(2\), such global sections actually extend to global sections of  \(K_{\mathcal{F}}^{\frac{m(m+1)}{2}} \otimes L^{r(m+1)}\).
Now, letting \(r \to \infty\), this gives the pseudo-effectiveness of \(K_{\mathcal{F}}\) since \(\frac{r(m+1)}{m(m+1)} \sim \frac{1}{Cr}\).
\end{proof}
The previous proposition must be interpretated as a condition ensuring the algebraic integrability of a foliation by curves: namely, a  foliation by curves \(\mathcal{F}\) is algebraically integrable if its cotangent bundle (which is also the canonical bundle in the case of a foliation by curves) is not pseudo-effective. By the works of Bogomolov-McQuillan, Bost, Campana-Paun and Druel, this condition remains valid for higher dimensional foliations: see \cite{bogomolov2016rational}, \cite{Bostfol}, \cite{CampanaPaun}, \cite{Druel}.

The goal of this section is to adapt the argument of Proposition \ref{prop: fol rk 1} in the case of foliations of rank \(p>1\), using geometric generalized Wronskians. We are then able to prove a weak version of the previous algebraic integrability criterion, namely:
\begin{theorem}
\label{thm: foliation}
Let \(X\) be a normal projective variety, \(\mathcal{F}\) a  foliation of rank \(p \geq 1\) on \(X\) and \(L \to X\) an ample line bundle. Suppose that \(\mathcal{F}\) is not algebraically integrable. Then for any \(c \in \N_{\geq 1}\), there exists \(m, n \in \N_{\geq 1}\) satisfying \(m > nc\) such that the following holds
\[
H^{0}(X, \Omega_{\mathcal{F}}^{[m]} \otimes L^{n}) \neq \Set{0}.
\]
\end{theorem}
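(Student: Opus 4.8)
The plan is to transpose the rank-one argument of Proposition~\ref{prop: fol rk 1} to the setting of $p$-germs, replacing the one-variable Wronskian by geometric generalized Wronskians and using Main Theorem~\ref{thm: geometric generalized W} to control non-vanishing. First I would polarize $X$ with $L$ and work on the regular locus $X\setminus\Sing(\mathcal{F})$, where the foliation integrates and a general leaf is parametrized by $p$-germs $\gamma_{\mathbf{t}}\colon(\C^p,0)\to X$, two integrations differing by a family of biholomorphisms $\varphi_{\mathbf{t}}$ of $(\C^p,0)$. Since $\mathcal{F}$ is not algebraically integrable, the Zariski closure $\overline{F}$ of a general leaf has dimension at least $p+1$; hence the image of the restriction map $H^0(X,L^r)\to H^0(\overline{F},L^r|_{\overline{F}})$ has dimension growing like $r^{\dim\overline{F}}\geq r^{p+1}$, so there is a constant $C\in\N_{\geq1}$ such that for every $r$ at least $Cr^{p+1}$ sections of $L^r$ are linearly independent in restriction to a general leaf. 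Fix such sections $s_0,\dots,s_m$ with $m+1\sim Cr^{p+1}$.

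Applying Theorem~\ref{thm: geometric generalized W} to the functions $s_i\circ\gamma_{\mathbf{t}}$ (read in a trivialization of $L$) produces a full set $\mathcal{U}$ of size $m$ with $W_{\mathcal{U}}(s_0,\dots,s_m)\not\equiv0$ along a general leaf. The crucial point is a lower bound on the weight of $\mathcal{U}$ valid no matter which full set Theorem~\ref{thm: geometric generalized W} returns: a full set of size $m$ is a downward-closed family of $m$ words, so its weight $\sum_{\overline{u}\in\mathcal{U}}\ell(\overline{u})$ is minimized by packing the shortest available words, that is by $\mathcal{U}_n=\mathcal{W}_{p,\leq n}$ with $n$ fixed by $|\mathcal{U}_n|=m$. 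Using the asymptotics $|\mathcal{U}_n|\sim\frac{1}{p!}n^p$ and $w(\mathcal{U}_n)\sim\frac{p}{(p+1)!}n^{p+1}$ recalled above, this gives $w(\mathcal{U})\geq w(\mathcal{U}_n)\sim\frac{p}{(p+1)!}(p!\,m)^{(p+1)/p}$, i.e. a weight growing like $m^{(p+1)/p}$, which is strictly faster than linearly in $m$ for $p\geq1$.

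Next I would globalize. By (the proof of) Proposition~\ref{prop: Wronskian jet}, $W_{\mathcal{U}}(s_0,\dots,s_m)$ is a nonzero section of $E_{p,k,\boldsymbol{\beta}}X\otimes L^{r(m+1)}$ on $X\setminus\Sing(\mathcal{F})$, where $w=|\boldsymbol{\beta}|=w(\mathcal{U})$. Evaluating it on the leaves and tracking the transformation under the reparametrizations $\varphi_{\mathbf{t}}$ should convert this jet differential into an object built from the cotangent bundle of the foliation: invoking the filtration and the representation-theoretic description of Section~\ref{sse: gg W and repr}, where the weight-$w$ part decomposes into Schur functors $S^\lambda\C^p$ with $\lambda\vdash w$, the evaluation lands in $\bigoplus_{\lambda\vdash w}S^\lambda\Omega_{\mathcal{F}}\subset\Omega_{\mathcal{F}}^{\otimes w}$, hence defines a nonzero section of $\Omega_{\mathcal{F}}^{[w]}\otimes L^{r(m+1)}$ after passing to the reflexive hull and extending across $\Sing(\mathcal{F})$ (of codimension $\geq2$). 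Setting $m'=w$ and $n'=r(m+1)$, the weight bound then yields
\[
\frac{m'}{n'}=\frac{w(\mathcal{U})}{r(m+1)}\gtrsim\frac{m^{(p+1)/p}}{r\,m}=\frac{m^{1/p}}{r}\sim r^{1/p}\xrightarrow[r\to\infty]{}\infty,
\]
using $m\sim Cr^{p+1}$; so for any prescribed $c$ a large enough $r$ produces $m'>n'c$, which is exactly the assertion.

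The main obstacle is this last globalization step. In rank one the Wronskian is invariant, so evaluation along the leaf produces a genuine power of $K_{\mathcal{F}}$; for $p>1$ a general $W_{\mathcal{U}}$ is only a weight vector, and its evaluation along leaves transforms under the full (non-linear) reparametrization group, not merely through $\diff\varphi_{\mathbf{t}}(0)$. Making rigorous that, modulo the lower filtration steps $\mathcal{W}_g^{<\mathbf{n}}(m)$, this evaluation descends to a section of a Schur functor of $\Omega_{\mathcal{F}}$ of the correct total degree $w$ --- and, most delicately, that the non-vanishing survives this reduction to the associated graded --- is where the filtration and representation theory of Section~\ref{sse: gg W and repr} are genuinely needed, and is the part I expect to require the most care.
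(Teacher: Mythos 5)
Your outline coincides with the paper's proof in every quantitative respect --- the count of \(Cr^{p+1}\) sections independent on a general leaf, the combinatorial lower bound \(w \geq w(\mathcal{U}_{n})\) for \(\abs{\mathcal{U}_{n}}=m\) (the paper proves exactly this minimality, phrased for characteristic sequences of admissible sets rather than full sets), the reflexive extension across \(\Sing(\mathcal{F})\), and the final asymptotics \(m^{1/p}/r \sim r^{1/p}\to\infty\) --- but the step you flag as the main obstacle is indeed where your argument, \emph{in the order you set it up}, has a genuine gap, and the paper closes it by reversing your order of quantifiers. You first fix one full set \(\mathcal{U}\) produced by Theorem \ref{thm: geometric generalized W} and then try to descend the evaluation of \(W_{\mathcal{U}}\) to the associated graded, worrying whether non-vanishing survives; the worry is legitimate, because the evaluation of a single \(W_{\mathcal{U}}\) along leaves is not intrinsic --- it changes under the non-linear reparametrizations \(\varphi_{\mathbf{t}}\). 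The paper instead works with the full filtered space of Section \ref{sse: gg W and repr} and defines \(\mathbf{n}\) to be \emph{minimal}, for the total order on characteristic sequences, such that some \(W \in \mathcal{W}_{g}^{\leq\mathbf{n}}(m)\) satisfies \(W(s_{0},\dotsc,s_{m})(\gamma_{\mathbf{t}})\not\equiv 0\) (such an \(\mathbf{n}\) exists precisely by Theorem \ref{thm: geometric generalized W}). Minimality forces every element of \(\mathcal{W}_{g}^{<\mathbf{n}}(m)\) to evaluate to zero identically along the leaves; since \(\varphi\cdot W = \diff\varphi(0)\cdot W + W'\) with \(W' \in \mathcal{W}_{g}^{<\mathbf{n}}(m)\) (via Lemma \ref{lemma: compo}), the reparametrization action on evaluations is then \emph{exactly} the linear action of \(\diff\varphi_{\mathbf{t}}(\mathbf{z})\): nothing has to ``survive'' passage to the graded, because one works at the first filtration step where anything is nonzero at all.

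Concretely, the paper then picks an irreducible \(GL_{p}(\C)\)-subrepresentation \(\mathcal{V}\subset\mathcal{W}_{g}^{\leq\mathbf{n}}(m)\) whose elements do not all evaluate to zero; by minimality \(\mathcal{V}\not\subset\mathcal{W}_{g}^{<\mathbf{n}}(m)\), so \(\mathcal{V}\simeq S^{\lambda}\C^{p}\) for a partition \(\lambda\) of \(w(\mathbf{n})\), and the simultaneous evaluation of a basis \((W_{T})_{T}\) transforms as a local section of \(S^{\lambda}\Omega_{\mathcal{F}}\), glues to a nonzero section of \(S^{\lambda}\Omega_{\mathcal{F}}\otimes L^{r(m+1)}\) on the regular locus, injects into \(\Omega_{\mathcal{F}}^{[w(\mathbf{n})]}\otimes L^{r(m+1)}\), and extends by reflexivity, as you describe. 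Note that your weight estimate is unharmed by this substitution: the minimal \(\mathbf{n}\) need not be the characteristic sequence of the \(\mathcal{U}\) furnished by Theorem \ref{thm: geometric generalized W}, but every characteristic sequence of an admissible set of size \(m\) has weight at least \(w(\mathcal{U}_{n})\) --- the same packing argument you give for full sets --- so \(w(\mathbf{n})\geq w(\mathcal{U}_{n})\) and your final computation goes through verbatim. With this single reordering (minimal filtration step first, descent second) your proposal becomes the paper's proof.
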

The strongest version, as stated in \cite{Druel}[Proposition 8.4], is obtained by replacing in the previous theorem the (reflexive) tensor product \(\Omega_{\mathcal{F}}^{[m]}\) by the (reflexive) symmetric product \(S^{[m]} \Omega_{\mathcal{F}}\).
\begin{proof}
Start by arguing as in Proposition \ref{prop: fol rk 1}. Polarize the projective variety \(X\) with the ample line bundle \(L\), and denote \(N=\dim(X)\). Since the general leaf of \(\mathcal{F}\) is not algebraic, there exists a constant \(C \in \N_{\geq 1}\) such that for any \(r \in \N_{\geq 1}\), there is at least \(Cr^{p+1}\) sections in \(H^{0}(X, L^{r})\) that are independant when restricted to a general leaf \(F\). 

For what follows, one works on the open set \(X \setminus \Sing(\mathcal{F})\).
Keeping the notations of the introduction of this section, denote \(m=m(n)=\abs{\mathcal{U}_{n}}\), and let \(r=r(n) \in \N\) be the smallest integer such that \(Cr^{p+1} > m\).
Let then \(s_{0}, \dotsc, s_{m}\) be \(m+1\) global sections of \(L^{r}\).

Denote \(\gamma\colon  (\C^{N-p},0) \times (\C^{p},0) \to X \) a local integration of the foliation around a point \(x \in X \setminus \Sing(\mathcal{F})\). Therefore, the leaves of the foliation are (locally) parametrized by \(\gamma_{\mathbi{t}}\colon (\C^{p},0) \to X\), where \(\mathbi{t}\) denotes coordinates on \((\C^{N-p},0)\). Two local integrations of the foliation differ by a family of biholomorphisms \(\varphi_{\mathbi{t}}\colon (\C^{p},0) \to (\C^{p},0)\).

This time, one has a large family of geometric generalized Wronskians that can be evaluated at the germ \(\gamma_{\mathbi{t}}\), namely
\[
W(s_{0}, \dotsc, s_{m})
\]
where \(W\) is any element in \(\mathcal{W}_{g}(m)\). To define an object that is intrinsic to the foliation, one proceeds as follows. Recall that we introduced in Section~\ref{sse: gg W and repr} a filtration of  \(\mathcal{W}_{g}(m)\) by caracteric sequences of \(m\).
Since the sections \(s_{0}, \dotsc, s_{m}\) are linearly independant when restricted to a general leaf \(L\) of \(\mathcal{F}\), Theorem \ref{thm: geometric generalized W} ensures that there exists at least one caracteristic sequence \(\mathbi{n}\) such that not all elements \(W(s_{0}, \dotsc, s_{m})\), \(W \in \mathcal{W}^{\leq \mathbi{n}}_{g}(m)\),  vanish identically when evaluated at \(\gamma_{\mathbi{t}}\). Fix then \(\mathbi{n}=(n_{1}, \dotsc, n_{k})\) minimal for this property, with respect to the ordering defined in Section~\ref{sse: gg W and repr}.

The key observation is then the following:
 for any \(W \in \mathcal{W}_{g}^{\leq \mathbi{n}}(m)\), the action of the biholomorphisms of \((\C^{p},0)\) on \(W(s_{0}, \dotsc, s_{m})(\gamma_{\mathbi{t}})\) reduces to the action of the linear group \(GL_{p}(\C)\). Indeed, using the formula Lemma \ref{lemma: compo} to compute multi-derivates of compositions of maps and the multi-linearity of the determinant, one sees that for \(\varphi\) a biholomorphism of \((\C^{p},0)\) and \(W \in \mathcal{W}^{\leq \mathbi{n}}_{g}(m)\),  \(\varphi \cdot W\) can be expressed as follows
\[
\varphi \cdot W
=
\diff \varphi(0)\cdot W
+
W',
\]
where \(W' \in \mathcal{W}_{g}^{<\mathbf{n}}(m)\). The very definition of \(\mathbi{n}\) gives then the equality 
\[
(\varphi_{\mathbi{t}} \cdot W)(s_{0}, \dotsc, s_{m})(\gamma_{\mathbi{t}}(\mathbi{z}+\cdot))
=
(\diff \varphi_{\mathbi{t}}(\mathbi{z})\cdot W)(s_{0}, \dotsc, s_{m})(\gamma_{\mathbi{t}}(\mathbi{z}+\cdot)).
\]

Now, pick one irreducible representation (of \(GL_{p}(\C)\)) \(\mathcal{V}\) of \(\mathcal{W}_{g}^{\leq \mathbi{n}}(m)\) whose elements do not all vanish identically when associated to \(s_{0}, \dotsc, s_{m}\) and evaluated at \(\gamma_{\mathbi{t}}\). By definition of \(\mathbi{n}\), one knows that \(\mathcal{V}\) is not included in \(\mathcal{W}_{g}^{< \mathbi{n}}(m)\). By what we saw at end of Section~\ref{sse: gg W and repr}, there exists a partition \(\lambda=(\lambda_{1}, \dotsc, \lambda_{r})\) of the number \(w\bydef w(\mathbi{n})\) such that
\[
\mathcal{V} \simeq S^{\lambda}\C^{p},
\]
where \(S^{\lambda}\) is the Schur functor associated to the partition \(\lambda\). Fix a basis \(\left(W_{T}\right)_{T \in \mathcal{T}}\) of \(\mathcal{V}\), where \(\mathcal{T}\) is the set of semi-standard tableaux of shape \(\lambda\), and consider the evaluation map at \(\gamma_{\mathbi{t}}(\mathbi{z}+\cdot)\):
\[
(\mathbi{t}, \mathbi{z})  
\mapsto 
\big(
W_{T}(s_{0}, \dotsc, s_{m})(\gamma_{\mathbi{t}}(\mathbi{z}+\cdot)
\big)_{T \in \mathcal{T}} \in \C^{\dim S^{\lambda}\C^{p}}.
\]
By very construction, if one reparametrizes the base-space by the family of diffeomorphisms \(\varphi_{\mathbi{t}}: (\C^{p},0) \to (\C^{p},0)\), one has the following equality
\[
\big(W_{T}(s_{0}, \dotsc, s_{m})\big)_{T \in \mathcal{T}}(\gamma_{\mathbi{t}} \circ \varphi_{\mathbi{t}}(\mathbi{z}+\cdot))
=
\big(\diff \varphi_{\mathbi{t}}(\mathbi{z}) \cdot W_{T}(s_{0}, \dotsc, s_{m})\big)_{T \in \mathcal{T}}(\gamma_{\mathbi{t}}(\mathbi{z}+\cdot)),
\]
 where, on the right, the action of the invertible linear map \(\diff \varphi_{\mathbi{t}}(\mathbi{z})\) on \(W_{T}\) is induced by the natural action of \(GL_{p}(\C)\) on \(S^{\lambda}\C^{p}\). In other words, the vector
 \[
 \left(W_{T}(s_{0}, \dotsc, s_{m})(\gamma_{\mathbi{t}}(\mathbi{z}+\cdot))\right)_{T \in \mathcal{T}}
 \]
 behaves exactly like an element of \(S^{\lambda}\Omega_{\mathcal{F}}\) written in the local trivialization \(\gamma\) adapted to the foliation \(\mathcal{F}\). Therefore, recalling the twist by \(L^{r(m+1)}\) induced by any geometric generalized Wronskian \(W(s_{0}, \dotsc, s_{m})\), one has constructed, by evaluating simultaneously the Wronskians \((W_{T}(s_{0}, \dotsc, s_{m}))_{T \in \mathcal{T}}\) at the foliation, a non-zero global section of 
 \[
 \left(S^{\lambda}\Omega_{\mathcal{F}} \otimes L^{r(m+1)}\right)_{\big\vert X \setminus \Sing(\mathcal{F})}.
 \] 
 In particular, this provides a non-zero global section of 
 \[
 \left(\Omega_{\mathcal{F}}^{\otimes w(\mathbi{n})} \otimes L^{r(m+1)}\right)_{\vert X \setminus \Sing(\mathcal{F})}
 =
 \left(\Omega_{\mathcal{F}}^{[w(\mathbi{n})]} \otimes L^{r(m+1)}\right)_{\vert X \setminus \Sing(\mathcal{F})},
 \]
 where the previous equality follows from the very definition of \(\Sing(\mathcal{F})\).
 As in Proposition \ref{prop: fol rk 1}, the previous coherent sheaf is reflexive, and since the codimension of \(\Sing(\mathcal{F})\) is at least \(2\), such global sections actually extend to global sections of  \(\Omega_{\mathcal{F}}^{[w(\mathbi{n})]} \otimes L^{r(m+1)}\).
 
 It now remains to evaluate the asymptotic behavior of the quotient
 \[
 \frac{r(n)(m(n)+1)}{w(\mathbi{n}(n))}
 \]
 for \(n \to \infty\), where we recall that \(m(n)=\abs{\mathcal{U}_{n}}\), \(r(n)\) is the smallest integer so that \(m(n) \geq Cr(n)^{p+1}\) and \(\mathbi{n}(n)\) is a caracteristic sequence of an admissible set of size \(m(n)\). To conclude, it is of course enough to show that this quotient tends to zero for \(n \to \infty\).
 Recall that the if the caracteristic sequence \(\mathbi{n}(n)\) writes \(\mathbi{n}(n)=(n_{1}(n), \dotsc n_{k(n)}(n))\), one has the following equalities
\[
\left\{
\begin{array}{rcr}
n_{1}(n)+2n_{2}(n)+ \dotsb + k(n)n_{k(n)}(n)
=
w(\mathbi{n}(n))
\\
n_{1}(n) + \dotsc + n_{k}(n)
=
m(n)
=
\abs{\mathcal{U}_{n}}
\end{array}
\right..
\]
Amongst the possible values of
\(
w(n)
=
n_{1}+2n_{2}+ \dotsb + k(n)n_{k(n)},
\)
where \((n_{1}, \dotsc, n_{k(n)})\) satisfies 
\begin{itemize}
\item{} \(n_{1} + n_{2} + \dotsb + n_{k(n)}=m(n) \);
\item{} \(n_{i} \leq \abs{\Set{\overline{w} \in \mathcal{W}_{p} \ | \ \ell(\overline{w})=i}}\) (recall that \(\mathbi{n}(n)\) is a caracteristique sequence!),
\end{itemize}
the minimal one \(w_{\min}(n)\)  is obtained by chosing successively for each \(n_{i}\) the maximal possible value. Therefore, from our very choice of \(m(n)=\abs{\mathcal{U}_{n}}\), one sees that
\[
w_{\min}(n)
=
w(\mathcal{U}_{n}).
\]
One can now conclude via the asymptotic estimates obtained in the introduction
\[
 \frac{r(n)(m(n)+1)}{w(\mathbi{n}(n))}
 \leq
 \frac{r(n)(m(n)+1)}{w_{\min}(n)}
 \sim
 r(n) \frac{\abs{\mathcal{U}_{n}}}{w(\mathcal{U}_{n})}
 \sim 
 \frac{p+1}{p} \frac{r(n)}{n} \to 0,
 \]
where the last limit follows from the estimate
\[
r(n) \leq \frac{1}{C} m(n)^{\frac{1}{p+1}} \sim C' n^{\frac{p}{p+1}},
\]
where \(C'\) is a constant.
\end{proof}

\subsection{Adjoint line bundles of foliated varieties.}
\label{sse: adjoint}
In the previous section, and especially in the proof of Theorem \ref{thm: foliation}, we used a stratification of all possible geometric generalized Wronskians of size \(\abs{\mathcal{U}_{n}}=\abs{\mathcal{W}_{p,\leq n}}\), \(n \in \N\), in order to discuss possible constructions of non-zero sections of the tensor algebra of a foliation of rank \(p\). The (unique) geometric generalized Wronskian belonging to the first step of the filtration is nothing but the Wronskian associated to the full set \(\mathcal{U}_{n}\), where we recall that by definition \(\mathcal{U}_{n} \bydef \mathcal{W}_{p, \leq n}\). (As in the previous section, the integer \(p\) is fixed). In this situation, the section we can produce with such a generalized Wronskian is straightforward:
\begin{lemma}
\label{lemma: Wronskian det}
For any \(\varphi: \C^{p} \to \C^{p} \) holomorphic map, and any \(f_{0}, \dotsc, f_{m}\) holomorphic functions, where \(m=\abs{\mathcal{U}_{n}}\), one has the equality
\[
W_{\mathcal{U}_{n}}(f_{0} \circ \varphi, \dotsc, f_{m} \circ \varphi)
=
\det(\diff \varphi)^{\frac{w(\mathcal{U}_{n})}{p}}W_{\mathcal{U}_{n}}(f_{0}, \dotsc, f_{m}).
\]
\end{lemma}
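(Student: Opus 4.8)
The plan is to realise \(W_{\mathcal{U}_{n}}\) as the determinant of a \emph{full jet matrix} and to track how that matrix transforms under precomposition by \(\varphi\). Because \(\mathcal{U}_{n}=\mathcal{W}_{p,\leq n}\) consists of \emph{every} word of length at most \(n\), the rows of the \((m+1)\times(m+1)\) matrix defining \(W_{\mathcal{U}_{n}}(f_{0},\dotsc,f_{m})\) are indexed by all operators \(\partial_{\overline{u}}\), \(\overline{u}\in\mathcal{W}_{p,\leq n}\) (the empty word giving the evaluation row). Write \(M_{f}(x)\) for this matrix at \(x\), so that \(W_{\mathcal{U}_{n}}(f_{0},\dotsc,f_{m})(x)=\det M_{f}(x)\), the \(i\)-th column being the full \(n\)-jet \(\big(\partial_{\overline{v}}f_{i}(x)\big)_{\overline{v}}\).

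First I would invoke the multivariate Faà di Bruno formula (Lemma \ref{lemma: compo}) to express the full jet of \(f_{i}\circ\varphi\) at \(x\) through the full jet of \(f_{i}\) at \(\varphi(x)\). The crucial point is that \(\partial_{\overline{u}}(f_{i}\circ\varphi)\) is a linear combination of the quantities \((\partial_{\overline{v}}f_{i})\circ\varphi\) with \(\ell(\overline{v})\leq\ell(\overline{u})\), with coefficients that are universal polynomials in the partial derivatives of \(\varphi\) and are \emph{independent of} \(f_{i}\). This produces a matrix \(J_{n}(\varphi)_{x}\), not depending on the \(f_{i}\), with \(M_{f\circ\varphi}(x)=J_{n}(\varphi)_{x}\,M_{f}(\varphi(x))\). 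Taking determinants gives
\[
W_{\mathcal{U}_{n}}(f_{0}\circ\varphi,\dotsc,f_{m}\circ\varphi)(x)=\det\big(J_{n}(\varphi)_{x}\big)\,W_{\mathcal{U}_{n}}(f_{0},\dotsc,f_{m})(\varphi(x)),
\]
so everything reduces to computing \(\det J_{n}(\varphi)_{x}\) (with the bookkeeping that the factor \(W_{\mathcal{U}_{n}}(f_{0},\dotsc,f_{m})\) on the right is taken at \(\varphi(x)\), exactly as in the classical one–variable Wronskian identity).

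The main work, and the only real obstacle, is this determinant. Ordering rows and columns by increasing word length, the inequality \(\ell(\overline{v})\leq\ell(\overline{u})\) above shows that \(J_{n}(\varphi)_{x}\) is block triangular for the filtration by length, so its determinant is the product of the determinants of its diagonal blocks. On the length-\(k\) block only the top-order part of \(\partial_{\overline{u}}(f_{i}\circ\varphi)\) contributes; that part involves solely the first derivatives \(\diff\varphi(x)\) and has exactly the coefficients one obtains when \(\varphi\) is replaced by the linear map \(\diff\varphi(x)\). Hence the length-\(k\) diagonal block is the \(k\)-th symmetric power representation evaluated at \(\diff\varphi(x)\), whose determinant is \(\det(\diff\varphi(x))^{\frac{k}{p}\binom{p+k-1}{p-1}}\), the exponent being \(\tfrac1p\) of the total degree of \(S^{k}\C^{p}\). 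Multiplying over \(1\leq k\leq n\) yields
\[
\det J_{n}(\varphi)_{x}=\det(\diff\varphi(x))^{\frac1p\sum_{k=1}^{n}k\binom{p+k-1}{p-1}}=\det(\diff\varphi(x))^{\frac{w(\mathcal{U}_{n})}{p}},
\]
which is precisely the claimed factor.

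Two remarks would round things off. To bypass the explicit symmetric-power computation one may instead observe that \(A\mapsto\det J_{n}(A)\) (for linear \(\varphi=A\)) is a polynomial character of \(GL_{p}(\C)\), hence of the form \(\det^{\,r}\); one then pins down \(r=w(\mathcal{U}_{n})/p\) by testing on scalars \(A=\lambda I\) via the weight identity \eqref{eq: represent}, using that the characteristic exponent of \(\mathcal{U}_{n}\) is symmetric, \(\beta_{1}=\dotsb=\beta_{p}=w(\mathcal{U}_{n})/p\). This symmetry also shows \(w(\mathcal{U}_{n})/p\in\N\), so the stated power is well defined even when \(\diff\varphi\) is non-invertible. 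Finally, specialising to a biholomorphism \(\varphi\) of \((\C^{p},0)\) and evaluating at \(x=0\) recovers the invariance \(\varphi\cdot W_{\mathcal{U}_{n}}=\det(\diff\varphi(0))^{w(\mathcal{U}_{n})/p}W_{\mathcal{U}_{n}}\), i.e. that \(W_{\mathcal{U}_{n}}(s_{0},\dotsc,s_{m})\) defines an invariant section.
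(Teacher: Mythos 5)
Your proof is correct, and your main line of argument differs from the paper's in one interesting way. The paper's proof compresses your first two steps into ``one easily sees'': using Lemma \ref{lemma: compo} and multi-linearity of the determinant, it asserts directly that \(W_{\mathcal{U}_{n}}(f_{0}\circ\varphi,\dotsc,f_{m}\circ\varphi)=P(\partial_{1}\varphi,\dotsc,\partial_{p}\varphi)\cdot W_{\mathcal{U}_{n}}(f_{0},\dotsc,f_{m})\) with \(P\) a polynomial in the \emph{first} derivatives of \(\varphi\) only; your block-triangular jet transition matrix \(J_{n}(\varphi)_{x}\) is exactly the missing justification, and it makes visible the crucial role of completeness of \(\mathcal{U}_{n}=\mathcal{W}_{p,\leq n}\) (the matrix identity \(M_{f\circ\varphi}(x)=J_{n}(\varphi)_{x}M_{f}(\varphi(x))\) requires every word of length \(\leq n\) to index a row, which is what fails for a general full set). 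Where you then diverge: the paper identifies \(P\) abstractly --- the line \(\C\,W_{\mathcal{U}_{n}}\subset\mathcal{W}_{g}(m)\) is a one-dimensional polynomial representation of \(GL_{p}(\C)\), hence \(P=\det^{r}\), and \(r=w(\mathcal{U}_{n})/p\) is read off on \(\varphi=\Diag(\lambda_{1},\dotsc,\lambda_{p})\) using \(\boldsymbol{\beta}(\mathcal{U}_{n})=(\tfrac{w}{p},\dotsc,\tfrac{w}{p})\) --- which is precisely the alternative you sketch in your closing remark. Your primary route instead computes \(\det J_{n}(\varphi)_{x}\) head-on as a product of symmetric-power determinants \(\det(\diff\varphi(x))^{\frac{k}{p}\binom{p+k-1}{p-1}}\), summing to \(w(\mathcal{U}_{n})/p\) by the paper's formula for \(w(\mathcal{U}_{n})\). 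Each approach has its merits: yours is self-contained and immediately valid for arbitrary holomorphic \(\varphi\) (no invertibility needed, and you rightly note \(w(\mathcal{U}_{n})/p\in\N\) so the power makes sense), while the paper's character argument avoids the symmetric-power bookkeeping. One small point of care in your computation: the length-\(k\) diagonal block is only \emph{conjugate} to \(S^{k}(\diff\varphi(x))\) --- by a diagonal matrix of multinomial constants, depending on whether one coordinatizes \(k\)-jets by the \(\partial_{\overline{v}}\) or by symmetric tensors --- but conjugation leaves the determinant unchanged, so your formula stands.
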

\begin{proof}
Using the formula Lemma \ref{lemma: compo} to compute multi-derivatives of composition of maps as well as the multi-linearity of the determinant, one easily sees that there exists a polynomial \(P\) in the \emph{first} derivatives of \(\varphi\) such that
\[
W_{\mathcal{U}_{n}}(f_{0} \circ \varphi, \dotsc, f_{m} \circ \varphi)
=
P(\partial_{1}\varphi, \dotsc, \partial_{p}\varphi) \times W_{\mathcal{U}_{n}}(f_{0}, \dotsc, f_{m}).
\]
This implies in particular that the line in \(\mathcal{W}_{g}(m)\) spanned by \(W_{\mathcal{U}_{n}}\) defines a one-dimensional representation of \(GL_{p}(\C)\): therefore, it must be a power of the determinant. On the other hand, if one takes \(\varphi= \Diag(\lambda_{1}, \dotsc, \lambda_{p})\), one has the following equality
\[
W_{\mathcal{U}_{n}}(f_{0} \circ \varphi, \dotsc, f_{m} \circ \varphi)
=
\left(\lambda_{1}\dotsb \lambda_{p}\right)^{\frac{w(\mathcal{U}_{n})}{p}}
W_{\mathcal{U}_{n}}(f_{0}, \dotsc, f_{m}).
\]
(Recall that the caracteristic exponent of \(\mathcal{U}_{n}\) is \(\boldsymbol{\beta}=(\frac{w(\mathcal{U}_{n})}{p}, \dotsc, \frac{w(\mathcal{U}_{n})}{p})\)). This finishes the proof of the lemma.
\end{proof}
Of course, we want to be able to produce non-zero sections, and to this end, we introduce the following definition:
\begin{definition}
Let \(X\) be a smooth projective variety of dimension \(N=\dim(X)\), \(\mathcal{F}\) a regular foliation of rank \(p \geq 1\)  and \(L \to X\) a line bundle. Let \(x \in X\) and let \((\mathbi{t},\mathbi{z}) \in (\C^{N-p},0) \times (\C^{p},0)\) be coordinates in a foliated trivializing chart around \(x\). For any \(n \in \N_{\geq 1}\), one says that the line bundle \(L\) \textsl{generates \(n\)-jets of the leaf at \(x\)} if and only if there exists sections \(s_{0}, \dotsc, s_{m}\) of \(H^{0}(X,L)\) such that \(s_{0}(0,\bigcdot), \dotsc, s_{m}(0, \bigcdot)\) generate \(\mathcal{O}_{(\C^{p},0)}/_{m_{0}^{n+1}}\).
\end{definition}
In the case where \(\mathcal{F}=TX\) is the trivial foliation, one simply says that \textsl{L generates \(n\)-jets at \(x\)} (see \cite{Laz1}[Definition 5.1.15]). Once this definition is introduced, the following lemma will allow us to produce non-zero sections (recall that the quotient \(\mathcal{O}_{(\C^{p},0)}/_{m_{0}^{n+1}}\) is isomorphic to the space of polynomials \(\C[Z_{1}, \dotsc, Z_{p}]_{\leq n}\) of degrees \(\leq n\)).
\begin{lemma}
\label{lemma: non-vanishing section}
Let \(m=\abs{\mathcal{U}_{n}}\).
If \(f_{0}, \dotsc, f_{m}\) are polynomials forming a basis of \(\C[Z_{1}, \dotsc, Z_{p}]_{\leq n}\), then
\(
W_{\mathcal{U}_{n}}(f_{0}, \dotsc, f_{m})
\)
is a non-zero constant.
\end{lemma}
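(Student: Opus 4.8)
The plan is to reduce to the monomial basis, to establish that the Wronskian is constant by a degree count, and then to read off its nonzero value by specializing at the origin. Since $\mathcal{U}_{n}=\mathcal{W}_{p,\leq n}$, the row-index set of the matrix defining $W_{\mathcal{U}_{n}}$, namely $\{\emptyset\}\cup\mathcal{U}_{n}=\mathcal{W}_{p,\leq n}$, is in natural bijection with the set of multi-indices $\boldsymbol{\alpha}$ satisfying $|\boldsymbol{\alpha}|\leq n$; the number of these is $\binom{p+n}{p}=\abs{\mathcal{U}_{n}}+1=m+1=\dim_{\C}\C[Z_{1},\dots,Z_{p}]_{\leq n}$, so the monomials $\big(Z^{\boldsymbol{\beta}}\big)_{|\boldsymbol{\beta}|\leq n}$ form a basis of $\C[Z_{1},\dots,Z_{p}]_{\leq n}$. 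Any basis $(f_{0},\dots,f_{m})$ differs from this monomial basis by an invertible constant matrix $B$, and the computation recorded in the proof of Theorem~\ref{thm: generalized W} gives $W_{\mathcal{U}_{n}}(f_{0},\dots,f_{m})=\det(B)\,W_{\mathcal{U}_{n}}\big((Z^{\boldsymbol{\beta}})_{|\boldsymbol{\beta}|\leq n}\big)$ with $\det(B)\neq 0$. It therefore suffices to treat the monomial basis, which I would order (following Remark~\ref{rmk: order}) so that the $j$-th column $Z^{\boldsymbol{\beta}}$ carries the same multi-index as the $j$-th row $\overline{u}$, i.e. $\boldsymbol{\beta}=\boldsymbol{\alpha}(\overline{u})$.

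Next I would prove constancy. The $(\overline{u},\boldsymbol{\beta})$ entry of the matrix is $\partial_{\overline{u}}Z^{\boldsymbol{\beta}}$, a scalar multiple of $Z^{\boldsymbol{\beta}-\boldsymbol{\alpha}(\overline{u})}$ when $\boldsymbol{\alpha}(\overline{u})\leq\boldsymbol{\beta}$ and zero otherwise. In the Leibniz expansion of the determinant, each nonzero term is a product of one entry per row and per column, so its total degree equals $\sum_{|\boldsymbol{\beta}|\leq n}|\boldsymbol{\beta}|-\sum_{\overline{u}}\ell(\overline{u})$. Because the multi-indices $\boldsymbol{\alpha}(\overline{u})$ and the exponents $\boldsymbol{\beta}$ both range over all multi-indices of weight at most $n$, these two sums coincide (both equal $w(\mathcal{U}_{n})$), so every nonzero term has degree $0$: the Wronskian is a homogeneous polynomial of degree $0$, hence a constant. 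Alternatively, this follows from Lemma~\ref{lemma: Wronskian det} applied to a translation $\varphi(\mathbf{z})=\mathbf{z}+\mathbf{a}$, for which $\det\diff\varphi=1$ while translation acts unipotently, hence with determinant $1$, on $\C[Z_{1},\dots,Z_{p}]_{\leq n}$. This degree bookkeeping, together with the correct matching of derivation words to monomials, is the step I expect to be the main point of the argument.

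Finally, being constant, the Wronskian equals its value at $\mathbf{z}=0$. There one has $\partial_{\overline{u}}Z^{\boldsymbol{\beta}}\big|_{0}=\boldsymbol{\alpha}(\overline{u})!\,\delta_{\boldsymbol{\alpha}(\overline{u}),\boldsymbol{\beta}}$, where $\boldsymbol{\alpha}!\bydef\prod_{i=1}^{p}\alpha_{i}!$, so with the aligned ordering the matrix becomes diagonal and
\[
W_{\mathcal{U}_{n}}\big((Z^{\boldsymbol{\beta}})_{|\boldsymbol{\beta}|\leq n}\big)=\prod_{|\boldsymbol{\alpha}|\leq n}\boldsymbol{\alpha}!\neq 0.
\]
Restoring an arbitrary basis multiplies this nonzero constant by $\det(B)\neq 0$, which shows that $W_{\mathcal{U}_{n}}(f_{0},\dots,f_{m})$ is a nonzero constant and completes the proof.
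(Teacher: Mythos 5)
Your proof is correct, but it is genuinely different from the paper's: it is precisely the ``direct computation'' that the author mentions and then deliberately avoids. The paper gets the two halves of the statement by other means. For non-vanishing, it invokes Theorem~\ref{thm: geometric generalized W}: since the \(f_{i}\) are linearly independent, \emph{some} full set \(\mathcal{U}\) of size \(m\) gives \(W_{\mathcal{U}}(f_{0},\dotsc,f_{m})\not\equiv 0\); but any full set of size \(m\) other than \(\mathcal{U}_{n}\) must contain a word of length \(>n\), whose derivative annihilates all polynomials of degree \(\leq n\), so the only candidate is \(\mathcal{U}_{n}\). For constancy, it differentiates the determinant: in \(\partial_{i}W_{\mathcal{U}_{n}}(f_{0},\dotsc,f_{m})\) each summand has one row replaced by the row of \(\partial_{\overline{u}\cdot i}f_{j}\), which either duplicates a row already present in \(\mathcal{U}_{n}\) (killed by the alternating property) or corresponds to a word of length \(>n\) (a zero row), so \(\partial_{i}W\equiv 0\). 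Your route instead reduces to the monomial basis via \(W(f\cdot B)=\det(B)\,W(f)\), proves constancy by the homogeneity count \(\sum_{|\boldsymbol{\beta}|\leq n}|\boldsymbol{\beta}|=\sum_{\overline{u}}\ell(\overline{u})=w(\mathcal{U}_{n})\) (or by translation invariance, using that translations act unipotently on \(\C[Z_{1},\dotsc,Z_{p}]_{\leq n}\)), and then evaluates at the origin to get the diagonal matrix with entries \(\boldsymbol{\alpha}!\). What your argument buys is that it is elementary and self-contained --- in particular it does not rest on Theorem~\ref{thm: geometric generalized W}, the hardest result of the paper --- and it produces the explicit value \(\det(B)\prod_{|\boldsymbol{\alpha}|\leq n}\boldsymbol{\alpha}!\) rather than mere non-vanishing. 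What the paper's argument buys is brevity and the absence of any combinatorial bookkeeping: it works directly with arbitrary \(f_{i}\), never needs the row/column matching between words and multi-indices, and incidentally isolates the structural fact that \(\mathcal{U}_{n}\) is the unique full set of its size whose Wronskian can survive on polynomials of degree \(\leq n\). Both are complete proofs; your degree count and the alignment of derivation words with monomial exponents are carried out correctly.
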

\begin{proof}
This could follow from a direct computation, but one rather proceeds as follows. Observe first that if \(\mathcal{U}\) is a full set of size \(m\) distinct from \(\mathcal{U}_{n}\), then one necessarily has that
\[
W_{\mathcal{U}}(f_{0}, \dotsc, f_{m}) \equiv 0.
\]
Indeed, the full set \(\mathcal{U}\) must contain a word \(\overline{u}\) of length \(\ell(\overline{u}) > n\), so that partial derivative \(\partial_{\overline{u}}\) annihilates any of the polynomials \(f_{i}\) (as they are of degree \(\leq n\)). On the other hand, since the \(f_{i}\)'s are linearly independent, one knows from Theorem \ref{thm: geometric generalized W} that \(W_{\mathcal{U}_{n}}(f_{0}, \dotsc, f_{m})\) cannot be identically zero.

To conclude, it is enough to show that \(\partial_{i} W_{\mathcal{U}_{n}}(f_{0}, \dotsc, f_{m}) \equiv 0\) for any \(1 \leq i \leq p\). This follows immediately from the formula to compute the derivative of a determinant, and the fact that \(\partial_{\overline{u}}\) annihilates any of the \(f_{i}\)'s for \(\overline{u}\) a word of length \(\ell(\overline{u}) > n\).
\end{proof}

With these two lemmas at hand, we can now easily prove the following:

\begin{proposition}
\label{prop: Wronskian construction}
Let \(X\) be a smooth projective variety, \(\mathcal{F}\) a regular foliation of rank \(p \geq 1\)  and \(L \to X\) a line bundle. Let \(x \in X\), \(n \in \N_{\geq 1}\) and suppose that \(L\) generates \(n\)-jets of the leaf at \(x\). Then there exists a section of the line bundle
\[
\frac{1}{p} w(\mathcal{U}_{n}) K_{\mathcal{F}} 
+
(\abs{\mathcal{U}_{n}}+1)L
\]
that does not vanish at \(x\).
\end{proposition}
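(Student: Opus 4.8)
The plan is to realize the desired section as the evaluation of the invariant Wronskian \(W_{\mathcal{U}_{n}}\) on a well-chosen family of sections of \(L\), and to read off its non-vanishing at \(x\) from Lemma \ref{lemma: non-vanishing section}. Set \(m \bydef \abs{\mathcal{U}_{n}}\). First I would unwind the hypothesis: since \(L\) generates \(n\)-jets of the leaf at \(x\), there are sections \(s_{0}, \dotsc, s_{m} \in H^{0}(X,L)\) whose leafwise restrictions \(s_{0}(0,\bigcdot), \dotsc, s_{m}(0,\bigcdot)\) generate \(\mathcal{O}_{(\C^{p},0)}/m_{0}^{n+1} \simeq \C[Z_{1}, \dotsc, Z_{p}]_{\leq n}\). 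As there are exactly \(m+1 = \abs{\mathcal{U}_{n}}+1 = \dim \C[Z_{1}, \dotsc, Z_{p}]_{\leq n}\) of them, their \(n\)-jets in fact form a \emph{basis} of \(\C[Z_{1}, \dotsc, Z_{p}]_{\leq n}\).

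Next comes the construction of the section. The full set \(\mathcal{U}_{n}=\mathcal{W}_{p,\leq n}\) has size \(m\), order \(n\), and (by the symmetry of its letters) characteristic exponent \(\boldsymbol{\beta}(\mathcal{U}_{n}) = (\frac{w(\mathcal{U}_{n})}{p}, \dotsc, \frac{w(\mathcal{U}_{n})}{p})\); in particular \(\frac{w(\mathcal{U}_{n})}{p} \in \N\). By Proposition \ref{prop: Wronskian jet}, \(W_{\mathcal{U}_{n}}(s_{0}, \dotsc, s_{m})\) is a global section of \(E_{p,n,\boldsymbol{\beta}}X \otimes L^{m+1}\). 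The crucial input is Lemma \ref{lemma: Wronskian det}: \(W_{\mathcal{U}_{n}}\) is invariant, transforming under any reparametrization \(\varphi\) by \(\det(\diff \varphi)^{w(\mathcal{U}_{n})/p}\). Arguing as in Proposition \ref{prop: fol rk 1} and the proof of Theorem \ref{thm: foliation}, I would evaluate \(W_{\mathcal{U}_{n}}(s_{0}, \dotsc, s_{m})\) along the local leaf-parametrizations \(\gamma_{\mathbi{t}}\) of the regular foliation \(\mathcal{F}\). The invariance forces the local evaluations to glue through the cocycle \(\det(\diff \varphi_{\mathbi{t}})^{w(\mathcal{U}_{n})/p}\), which is precisely the transition cocycle of \(K_{\mathcal{F}}^{w(\mathcal{U}_{n})/p} = \det(\Omega_{\mathcal{F}})^{w(\mathcal{U}_{n})/p}\). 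Together with the \(L^{m+1}\)-twist carried by any geometric generalized Wronskian of sections of \(L\), this produces a global section of \(\frac{1}{p} w(\mathcal{U}_{n}) K_{\mathcal{F}} + (\abs{\mathcal{U}_{n}}+1) L\). Since \(X\) is smooth and \(\mathcal{F}\) regular, \(K_{\mathcal{F}}\) is a genuine line bundle and \(\Sing(\mathcal{F}) = \emptyset\), so no reflexive-extension step is needed (in contrast to Theorem \ref{thm: foliation}).

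Finally I would check non-vanishing at \(x\). Let \(\gamma_{0}\colon \mathbi{z} \mapsto \gamma(0,\mathbi{z})\) parametrize the leaf through \(x = \gamma(0,0)\). Evaluating the section at \(x\) amounts, up to the non-zero trivialization factor coming from \(L^{m+1}\), to computing \(W_{\mathcal{U}_{n}}\big(s_{0}(0,\bigcdot), \dotsc, s_{m}(0,\bigcdot)\big)(0)\). Since every word of \(\mathcal{U}_{n}\) has length \(\leq n\), this value depends only on the \(n\)-jets of the \(s_{i}(0,\bigcdot)\); replacing them by their order-\(n\) Taylor polynomials, which by the first step form a basis of \(\C[Z_{1}, \dotsc, Z_{p}]_{\leq n}\), Lemma \ref{lemma: non-vanishing section} shows this is a non-zero constant. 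Hence the constructed section does not vanish at \(x\).

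The step I expect to be the main obstacle is the globalization in the second paragraph: verifying rigorously that the invariant transformation law \(\det(\diff \varphi)^{w(\mathcal{U}_{n})/p}\) of Lemma \ref{lemma: Wronskian det} matches the transition cocycle of the power \(K_{\mathcal{F}}^{w(\mathcal{U}_{n})/p}\), so that the leafwise evaluations patch into a bona fide global section. This is exactly the foliation-adapted mechanism of Proposition \ref{prop: fol rk 1}, specialized to the invariant Wronskian \(W_{\mathcal{U}_{n}}\) and to sections of \(L\) (i.e.\ \(r=1\)); the remainder is bookkeeping of the twists.
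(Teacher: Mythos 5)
Your proposal is correct and follows essentially the same route as the paper's own proof: extract \(m+1=\abs{\mathcal{U}_{n}}+1\) sections whose leafwise \(n\)-jets form a basis of \(\C[Z_{1},\dotsc,Z_{p}]_{\leq n}\), get non-vanishing at \(x\) from Lemma \ref{lemma: non-vanishing section}, and glue the evaluations of \(W_{\mathcal{U}_{n}}\) in foliated charts through the cocycle \(\det(\diff_{\mathbi{z}}\varphi_{\mathbi{t}})^{w(\mathcal{U}_{n})/p}\) of Lemma \ref{lemma: Wronskian det} combined with the \(L^{\abs{\mathcal{U}_{n}}+1}\)-twist, exactly as in the globalization step of Theorem \ref{thm: foliation}. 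Your explicit reduction to the order-\(n\) Taylor polynomials before invoking Lemma \ref{lemma: non-vanishing section} is a minor extra precision over the paper, which applies that lemma directly to the restricted sections.
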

\begin{proof}
Denote \(N=\dim(X)\). Let \((\C^{N-p},0)\times (\C^{p},0) \overset{\gamma}{\simeq} U_{x}\) be a trivializing open set of the foliation around \(x\), and denote \((\mathbi{t}, \mathbi{z})\) the coordinates on \((\C^{N-p},0)\times (\C^{p},0)\). Denote also \(m= \abs{\mathcal{U}_{n}}\). By hypothesis, one can pick \(s_{0}, \dotsc, s_{m}\) global sections of \(L\) such that the reduction of
 \[
 (s_{0}(\gamma(0,\mathbi{z})), \dotsc, s_{m}(\gamma(0,\mathbi{z}))
 \]
 modulo \(\left(m_{(\C^{p},0)}\right)^{n+1}\) forms a basis of \(\C[Z_{1}, \dotsc, Z_{p}]_{\leq n}\). In particular, by Lemma \ref{lemma: non-vanishing section}, the function
 \[
 W_{\mathcal{U}_{n}}\left(s_{0}(\gamma(0,\mathbi{z})), \dotsc, s_{m}(\gamma(0,\mathbi{z}))\right)
 \]
 does not vanish at \(\mathbi{z}=0\).
 
 Once this is established, one argues as in Theorem \ref{thm: foliation} to produce the wanted section. A change of chart for the foliation is caracterized by a biholomorphism 
 \[
 (\psi, \varphi): (\C^{N-p},0)\times (\C^{p},0) \to (\C^{N-p},0)\times (\C^{p},0),
 \]
with \(\psi\) depending on \(\mathbi{t}\) only. By Lemma \ref{lemma: non-vanishing section}, the numbers 
\[
W_{\mathcal{U}_{n}}
\big(
s_{0}(\gamma(\mathbi{t},\mathbi{z})), \dotsc, s_{m}(\gamma(\mathbi{t},\mathbi{z}))
\big)
\]
and
\[
W_{\mathcal{U}_{n}}
\big(
s_{0}(\gamma(\psi(\mathbi{t}),\varphi(\mathbi{t},\mathbi{z})), \dotsc, s_{m}(\gamma(\psi(\mathbi{t}),\varphi(\mathbi{t},\mathbi{z}))
\big)
\]
differ by multiplication by the number 
\(\left(
\det(\diff_{\mathbi{z}} \varphi_{\mathbi{t}})
\right)^{\frac{w(\mathcal{U}_{n})}{p}}\). Implicit in the previous writing is a choice of trivialization for the line bundle \(L\), and one knows by standard properties of geometric generalized Wronskians that one passes from a trivialization to another by multiplication by the transition maps of the line bundle elevated to the \(\left(\abs{\mathcal{U}_{n}}+1\right)\) power. 

All in all, this shows that one has defined a global section of 
\[
K_{\mathcal{F}}^{\frac{w(\mathcal{U}_{n})}{p}}
\otimes 
L^{\abs{\mathcal{U}_{n}}+1}
\]
that does not vanish at \(x\), which proves the proposition.
\end{proof}

To discuss further the corollaries of this proposition, we are naturally lead to introduce the following definition, generalizing a definition of Demailly \cite{demailly1992singular} to the case of foliated varieties:
\begin{definition}
Let \(L\) be a line bundle on a smooth projective variety \(X\), and let \(\mathcal{F}\) be a regular foliation on \(X\). For any \(x \in X\), define
\[
s(L, \mathcal{F};x)
\bydef
\max \Set{r \ | \ \text{\(L\) generates \(r\)-jets of the leaf at \(x\)}}.
\]
If the previous set is empty, set \(s(L, \mathcal{F};x)=-1\).
Define further
\[
\sigma(L, \mathcal{F};x)
\bydef
\limsup_{k} \frac{s(kL, \mathcal{F}; x)}{k},
\]
as well as 
\[
\sigma(L, \mathcal{F})
\bydef 
\inf_{x \in X}
\sigma(L, \mathcal{F};x).
\]
\end{definition}
In the case where \(\mathcal{F}=TX\) and \(L\) is ample, Demailly has shown that \(\sigma(L, TX;x)\) is equal to the Seshadri constant \(\varepsilon(L;x)\) at \(x\): see \cite{demailly1992singular}, and see \cite{Laz1}[Section 5] for definitions and further details on the Seshadri constant.

With these definitions at hand, we deduce almost immediately from the previous Proposition \ref{prop: Wronskian construction} the following corollary:
\begin{corollary}
Let \(X\) be a smooth projective variety, \(\mathcal{F}\) a regular foliation of rank \(p \geq 1\)  and \(L \to X\) a nef line bundle. 
Suppose that there exists a constant \(K >0\) such that the following inequality holds outside a countable union of points:
\[
\sigma(L,\mathcal{F};x) \geq K.
\]
Then the line bundle \(K_{\mathcal{F}}+\frac{(p+1)}{K}L\) is nef.
\end{corollary}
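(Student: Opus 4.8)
The plan is to check nefness directly on curves: as $X$ is projective, it suffices to show $M\cdot C\geq 0$ for every irreducible curve $C\subset X$, where $M\bydef K_{\mathcal{F}}+\frac{p+1}{K}L$. So I would fix an arbitrary irreducible curve $C$ and try to produce, for that $C$, effective divisors in classes proportional to $M$ (up to a controlled error) that do not contain $C$.

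The core idea is to feed Proposition \ref{prop: Wronskian construction} a base point lying \emph{on} $C$. Since the hypothesis fails only on a countable union of points while $C$ has uncountably many points, I would pick a very general $x\in C$ with $\sigma(L,\mathcal{F};x)\geq K$ (such a point is automatically a smooth point of $\mathcal{F}$, as $\mathcal{F}$ is regular). By definition of $\sigma$ as a $\limsup$, there is a sequence $k_{j}\to\infty$ with $n_{j}\bydef s(k_{j}L,\mathcal{F};x)$ satisfying $n_{j}/k_{j}\to\sigma(L,\mathcal{F};x)\geq K$; in particular $k_{j}L$ generates $n_{j}$-jets of the leaf at $x$. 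Proposition \ref{prop: Wronskian construction}, applied to $k_{j}L$, then provides a global section $s_{j}$ of
\[
B_{j}\bydef \tfrac{1}{p}w(\mathcal{U}_{n_{j}})\,K_{\mathcal{F}}+\big(\abs{\mathcal{U}_{n_{j}}}+1\big)k_{j}\,L
\]
with $s_{j}(x)\neq 0$.

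The decisive step is then purely intersection-theoretic. Because $x\in C$ and $s_{j}(x)\neq 0$, the effective divisor $\Div(s_{j})$ does not contain $C$, so $B_{j}\cdot C=\Div(s_{j})\cdot C\geq 0$. Dividing by $\tfrac{1}{p}w(\mathcal{U}_{n_{j}})>0$ I would obtain
\[
K_{\mathcal{F}}\cdot C+c_{j}\,(L\cdot C)\geq 0,\qquad c_{j}\bydef\frac{p\big(\abs{\mathcal{U}_{n_{j}}}+1\big)}{w(\mathcal{U}_{n_{j}})}\,k_{j}.
\]
Plugging in the asymptotics $\abs{\mathcal{U}_{n}}\sim\frac{1}{p!}n^{p}$ and $w(\mathcal{U}_{n})\sim\frac{p}{(p+1)!}n^{p+1}$ recorded earlier gives $\frac{p(\abs{\mathcal{U}_{n}}+1)}{w(\mathcal{U}_{n})}\sim\frac{p+1}{n}$, hence $c_{j}\sim(p+1)\frac{k_{j}}{n_{j}}\to\frac{p+1}{\sigma(L,\mathcal{F};x)}$. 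Letting $j\to\infty$ yields $K_{\mathcal{F}}\cdot C+\frac{p+1}{\sigma(L,\mathcal{F};x)}(L\cdot C)\geq 0$.

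To finish, I would use that $L$ is nef, so $L\cdot C\geq 0$, together with $\sigma(L,\mathcal{F};x)\geq K$, i.e. $\frac{p+1}{K}\geq\frac{p+1}{\sigma(L,\mathcal{F};x)}$; enlarging the coefficient of the nonnegative number $L\cdot C$ preserves the inequality, so
\[
M\cdot C=K_{\mathcal{F}}\cdot C+\frac{p+1}{K}(L\cdot C)\geq K_{\mathcal{F}}\cdot C+\frac{p+1}{\sigma(L,\mathcal{F};x)}(L\cdot C)\geq 0.
\]
Since $C$ was arbitrary, $M$ is nef. I expect the only genuinely delicate point to be the justification that a very general point of the \emph{fixed} curve $C$ may simultaneously avoid the countable bad locus and serve as a valid input to Proposition \ref{prop: Wronskian construction}; the remaining steps are bookkeeping with the two asymptotic estimates. (If $\sigma(L,\mathcal{F};x)=+\infty$ the middle coefficient is $0$ and the argument is unaffected.)
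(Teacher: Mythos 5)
Your proposal is correct and follows essentially the same route as the paper's proof: fix an irreducible curve \(C\), choose \(x \in C\) outside the countable bad locus, apply Proposition \ref{prop: Wronskian construction} to \(kL\) along a sequence realizing the \(\limsup\) to obtain \(\big(\tfrac{1}{p}w(\mathcal{U}_{n})K_{\mathcal{F}}+k(\abs{\mathcal{U}_{n}}+1)L\big)\cdot C \geq 0\), and conclude via the asymptotics of \(\abs{\mathcal{U}_{n}}\) and \(w(\mathcal{U}_{n})\) together with the nefness of \(L\) and \(\sigma(L,\mathcal{F};x)\geq K\). Your write-up is in fact slightly more explicit than the paper's at two points it leaves implicit --- why the non-vanishing of the section at \(x\in C\) forces \(\Div(s_{j})\not\supset C\), and the degenerate case \(\sigma(L,\mathcal{F};x)=+\infty\) --- but the argument is the same.
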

\begin{proof}
Let \(C\) be any irreducible curve in \(X\), and let \(x \in C\) be a point outside the countable union of points of the statement. Denote \(n(k)\bydef s(kL, \mathcal{F};x) \), so that by hypothesis \(\limsup_{k} \frac{n(k)}{k} \geq K\). By Propososition \ref{prop: Wronskian construction} applied to the line bundle \(L^{k}\), one immediately deduces that the following inequality holds:
\[
\left(\frac{1}{p} w(\mathcal{U}_{n(k)}) K_{\mathcal{F}} 
+
k(\abs{\mathcal{U}_{n(k)}}+1)L\right) \cdot C
\geq 0.
\]
Now, as for \(k \to \infty\) (or rather an extracted sequence) one has the following estimate
\[
\frac{n(k)(\abs{\mathcal{U}_{n(k)}}+1)}{\frac{1}{p} w(\mathcal{U}_{n(k)})}
\sim
(p+1)\frac{k}{n(k)},
\]
one deduces that \(\left(K_{\mathcal{F}} +\frac{(p+1)}{K}L\right)\cdot C \geq 0\) (recall that \(L\) is nef). As this holds for any curve, this proves the result.
\end{proof}
We can specialize the previous corollary to the case where \(\mathcal{F}=TX\) is the trivial foliation, and \(L\) is an ample line bundle. (Recall that in this case, the constant \(\sigma(L, TX;x)\) is nothing but the Seshadri constant \(\varepsilon(L;x)\) of \(L\) at \(x\) by a Theorem of Demailly \cite{demailly1992singular}).
\begin{corollary}
\label{cor: cor2}
Let \(X\) be a smooth projective variety and \(L \to X\) an ample line bundle. 
Suppose that there exists a constant \(K >0\) such that the following inequality holds outside a countable union of points:
\[
\varepsilon(L;x) \geq K.
\]
Then the line bundle \(K_{X}+\frac{(\dim(X)+1)}{K}L\) is nef.
\end{corollary}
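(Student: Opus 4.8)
The plan is to deduce this statement directly from the preceding corollary by specializing to the trivial foliation. First I would set $\mathcal{F} = TX$, which is a regular foliation of rank $p = \dim(X) = N$ on the smooth projective variety $X$. Since $T_{\mathcal{F}} = TX$, one has $\Omega_{\mathcal{F}} = \Omega_{X}$, and therefore its canonical bundle is $K_{\mathcal{F}} = \det(\Omega_{X}) = K_{X}$. With this identification, the conclusion $K_{\mathcal{F}} + \frac{(p+1)}{K}L$ of the preceding corollary becomes exactly $K_{X} + \frac{(\dim(X)+1)}{K}L$.

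Next I would check that the hypotheses of the preceding corollary are met. Since $L$ is ample it is in particular nef, so the nef hypothesis on $L$ holds. The only genuine input is the identification of the relevant invariants: by Demailly's theorem \cite{demailly1992singular}, for an ample line bundle $L$ the quantity $\sigma(L, TX; x)$ coincides with the Seshadri constant $\varepsilon(L;x)$ at every point $x$. Consequently, the assumed inequality $\varepsilon(L;x) \geq K$ outside a countable union of points is precisely the inequality $\sigma(L, TX; x) \geq K$ required by the hypothesis of the preceding corollary.

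Having matched both the hypotheses and the conclusion, I would simply invoke the preceding corollary to conclude that $K_{X} + \frac{(\dim(X)+1)}{K}L$ is nef. The main (and essentially only) non-formal step is the appeal to Demailly's identity $\sigma(L, TX; x) = \varepsilon(L;x)$; everything else amounts to unwinding the definition of the canonical bundle of the trivial foliation and observing that ampleness implies nefness. I therefore expect no real obstacle here, as the statement is a straightforward specialization of the foliated result.
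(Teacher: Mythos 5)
Your proposal is correct and is exactly the argument the paper intends: the paper derives Corollary \ref{cor: cor2} from the preceding corollary by taking \(\mathcal{F}=TX\) (so \(p=\dim(X)\) and \(K_{\mathcal{F}}=K_{X}\)), noting that ample implies nef, and invoking Demailly's theorem identifying \(\sigma(L,TX;x)\) with the Seshadri constant \(\varepsilon(L;x)\). Nothing is missing; the specialization is as routine as you expect.
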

Note that this gives an elementary way to recover the upper bound of the Seshadri constant of the anti-canonical bundle of a Fano manifold \(X\) (i.e. a smooth projective variety \(X\) with \(-K_{X}\) ample):
\[
\varepsilon(X, -K_{X})
\leq 
\dim(X)+1.
\]
See e.g. \cite{bauer2009seshadri} where the following stronger result is shown: \(\varepsilon(X, -K_{X})=\dim(X)+1\) if and only if \(X=\P^{\dim(X)}\), and otherwise \(\varepsilon(X, -K_{X}) \leq \dim(X) \).

Adjoint linear systems of the form \(K_{X} +  mL\), with \(m \in \N_{\geq 1}\) and \(L\) an ample line bundle, have been extensively studied in the litterature, motivated by Fujita's conjectures, that we recall:
\begin{itemize}
\item{(Fujita's freeness conjecture)}  \(K_{X}+(\dim(X)+1)L\) is globally generated;
\item{(Fujita's very ampleness conjecture)} \(K_{X}+ (\dim(X)+2)L\) is very ample.
\end{itemize}
Fujita's freeness conjecture was proved for \(\dim(X)=2, 3, 4, 5\) by respectively \cite{reider1988vector}, \cite{ein1993global}, \cite{kawamata1995fujita} and \cite{ye2020fujita}. It holds when \(L\) is free (and of course ample) thanks to Kodaira vanishing theorem and Castelnuovo-Mumford regularity (see \cite{Laz1}[1.8]). Without any hypothesis  on \(L\) (beside ampleness), an easy corollary of Mori's cone Theorem implies that \(K_{X}+ (\dim(X)+1)L\) is nef. 

The previous Corollary \ref{cor: cor2} would give an alternate proof of the fact that \(K_{X} + (\dim(X)+1)L\) is nef provided one can prove that for any ample line bundle \(L\), one has the following estimate
\[
\varepsilon(L;x) \geq 1
\]
outside a countable union of points. It turns out to be true when \(X\) is a surface (see \cite{Laz1}[Proposition 5.3]), and the conjecture appearing in the litterature regarding Seshadri constants of ample line bundles asserts that the inequality
\(
\varepsilon(L;x) \geq 1
\) 
should hold for \(x\) very general. In any event, it has been proved that the following inequality holds
\[
\varepsilon(L;x) \geq \frac{1}{\dim(X)}
\]
for \(x\) very general (see \cite{Laz1}[Theorem 5.2.5]).

Returning to the case of foliated varieties, it is naturally tempting to state the following conjectures:
\begin{conjecture}
Let \(\mathcal{F}\) be a regular foliation of rank \(p\) on a smooth projective variety \(X\). Then:
\begin{itemize}
\item{} \(K_{\mathcal{F}} + (p+1)L\) is globally generated;
\item{} \(K_{\mathcal{F}} + (p+2)L\) is very ample.
\end{itemize}
\end{conjecture}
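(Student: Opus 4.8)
The plan is to reduce both assertions to the jet-generation input of Proposition~\ref{prop: Wronskian construction} and then to isolate exactly where the mere ampleness of $L$ is too weak. The key observation is that the threshold twist $(p+1)$ is \emph{precisely} the output of the Wronskian construction at jet-order $n=1$: since $\mathcal{U}_{1}=\mathcal{W}_{p,\leq 1}=\{1,\dots,p\}$ one has $\abs{\mathcal{U}_{1}}=p$ and $w(\mathcal{U}_{1})=p$, whence $\tfrac1p w(\mathcal{U}_{1})=1$ and $\abs{\mathcal{U}_{1}}+1=p+1$. Thus Proposition~\ref{prop: Wronskian construction} applied with $n=1$ reads: \emph{if $L$ generates $1$-jets of the leaf at $x$, then $K_{\mathcal{F}}+(p+1)L$ carries a section not vanishing at $x$.} Global generation would therefore follow immediately if one could show that an ample $L$ generates $1$-jets of the leaf at every point $x$ — and it is exactly here that ampleness alone is insufficient, this being the foliated incarnation of the Fujita difficulty.

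First I would pass to powers to capture the numerical content. Applying Proposition~\ref{prop: Wronskian construction} to $L^{\otimes k}$ at jet-order $n=n(k)=s(kL,\mathcal{F};x)$ produces a section of $\tfrac1p w(\mathcal{U}_{n(k)})K_{\mathcal{F}}+k(\abs{\mathcal{U}_{n(k)}}+1)L$ not vanishing at $x$, and the asymptotics recalled in the introduction give $\frac{k(\abs{\mathcal{U}_{n(k)}}+1)}{\tfrac1p w(\mathcal{U}_{n(k)})}\sim (p+1)\tfrac{k}{n(k)}\to (p+1)/\sigma(L,\mathcal{F};x)$. Hence, under a Seshadri-type hypothesis $\sigma(L,\mathcal{F};x)\geq 1$ one recovers sections of large multiples $q\bigl(K_{\mathcal{F}}+(p+1)L\bigr)$ non-vanishing at $x$; this merely reproduces the nef-ness of $K_{\mathcal{F}}+\tfrac{p+1}{K}L$ already obtained above (taken with $K=1$). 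The very ampleness statement would follow by running the same scheme while separating two points and their leaf-wise $1$-jets simultaneously, the extra unit of twist in $(p+2)L$ arising exactly as in the manifold case.

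Two genuine gaps separate this from a proof, and I expect the second to be the real obstacle. The first is positivity: for $L$ merely ample, $\sigma(L,\mathcal{F};x)\geq 1$ is a foliated analogue of the open Seshadri-constant lower bound $\varepsilon(L;x)\geq 1$, known only in low dimensions even for $\mathcal{F}=TX$, and the best unconditional bound — of the shape $\varepsilon(L;x)\geq \tfrac{1}{\dim X}$ — feeds through Corollary~\ref{cor: cor2} to give nef-ness of a far larger adjoint bundle. The second, and harder, gap is the descent from non-vanishing of a \emph{high multiple} $q(K_{\mathcal{F}}+(p+1)L)$ to global generation of the bundle \emph{itself}. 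In the manifold case this is effected by a Kodaira--Nadel vanishing theorem plus a Castelnuovo--Mumford bootstrap; but writing $K_{\mathcal{F}}=K_{X}+\det N_{\mathcal{F}}$, with $N_{\mathcal{F}}$ the normal bundle of the (regular) foliation, a vanishing statement for $K_{\mathcal{F}}+mL=K_{X}+(\det N_{\mathcal{F}}+mL)$ would require $\det N_{\mathcal{F}}+mL$ to be nef and big, and $\det N_{\mathcal{F}}$ is uncontrolled — it may be negative enough to spoil this for the sharp values $m=p+1,p+2$. Because the Wronskian construction is intrinsically asymptotic, it detects only numerical (nef) positivity and cannot by itself reach these effective thresholds; closing the conjecture would therefore demand a new Kodaira-type vanishing theorem for adjoint bundles of foliations, beyond the reach of the present methods.
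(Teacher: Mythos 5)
The statement you were asked to prove is not proved in the paper at all: it is stated as a \emph{conjecture}, offered by analogy with Fujita's conjectures, and the paper immediately follows it with two open Questions (a Kodaira-type vanishing for \(K_{\mathcal{F}}+L\), and a Mori cone theorem with \(K_{X}\) replaced by \(K_{\mathcal{F}}\)) that are precisely the missing ingredients. So there is no proof to compare yours against, and your proposal — which honestly concludes that the statement is out of reach of the paper's methods — is the correct assessment. Your diagnosis matches the paper's own framing point for point: the Wronskian construction is asymptotic and can only detect nef-ness (this is exactly what Proposition \ref{prop: Wronskian construction} plus the limit \(k/n(k)\) argument yields in the Corollary), and upgrading nef-ness of high multiples to effective global generation or very ampleness would, as in the classical case, require a vanishing theorem plus a Castelnuovo--Mumford bootstrap that is unavailable for \(K_{\mathcal{F}}=K_{X}+\det N_{\mathcal{F}}\), since \(\det N_{\mathcal{F}}\) is uncontrolled. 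Your identification of the Seshadri-type gap (\(\sigma(L,\mathcal{F};x)\geq 1\) being a foliated analogue of an open conjecture even for \(\mathcal{F}=TX\)) likewise mirrors the paper's discussion surrounding Corollary \ref{cor: cor2}.

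One observation in your proposal is a genuine addition not made explicit in the paper and worth recording: at jet-order \(n=1\) one has \(\mathcal{U}_{1}=\mathcal{W}_{p,\leq 1}\), so \(\abs{\mathcal{U}_{1}}=p\) and \(w(\mathcal{U}_{1})=p\), and Proposition \ref{prop: Wronskian construction} specializes to the clean statement that if \(L\) generates \(1\)-jets of the leaf at \(x\), then \(K_{\mathcal{F}}+(p+1)L\) has a section non-vanishing at \(x\). This explains why the threshold \(p+1\) in the conjectured freeness statement is the natural one from the Wronskian viewpoint, and it correctly locates the whole difficulty in the jet-generation hypothesis, which mere ampleness of \(L\) does not supply. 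In short: you have not proved the statement, but no proof exists in the paper either, and your two named obstacles are exactly the ones the paper itself poses as open problems.
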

In view of the results used to prove partial results motivating Fujita's conjecture, we are also naturally lead to address the following questions:
\begin{question}
For \(X\) a smooth projective variety equipped with a foliation \(\mathcal{F}\), and \(L\) an ample line bundle on \(X\), do we have a vanishing theorem alike Kodaïra's vanishing theorem? Especially, do we have \(H^{1}(X, K_{\mathcal{F}}+L)=0\)?
\end{question}

\begin{question}
In the same setting as before, does there exist a ``Mori cone Theorem ``, where \(K_{X}\) is replaced by \(K_{\mathcal{F}}\)?
\end{question}
Regarding this question, let us note that it has a positive answer in the case of foliations by curves: see \cite{bogomolov2016rational}. Furthermore, in \cite{spicer2017higher}, a cone theorem for codimension one foliations on \(3\)-folds is established.

\newpage

\appendix

\section{Formulas to compute partial derivatives of products and compositions}
\label{appendix: formula}
In this appendix, we gather  two formulas that are repeatedly used in the text: a Leibniz rule to compute higher order partial derivatives of product of functions, and a formula to compute higher order partial derivatives of composition of maps. Both formulas are proved by a straightforward induction, using in both cases the well-known formulas for the first derivatives.

The Leibniz rule reads as follows:
\begin{lemma}[Leibniz rule]
\label{lemma: Leibniz}
Let \(f, g: (\C^{p},0) \rightarrow \C\) be holomorphic functions. 
There exists universal constants \(C_{(\overline{a},\overline{b})} \in \N_{\geq 1}\), where \((\overline{a}, \overline{b})\) runs over \(\mathcal{W}_{p}^{2}\), such that for any word \(\overline{u} \in \mathcal{W}_{p}\), one has the equality
\[
\partial_{\overline{u}}(fg)
=
\sum\limits_{\overline{u_{1}}\cdot \overline{u_{2}}=\overline{u}} C_{(\overline{u_{1}}, \overline{u_{2}})} \partial_{\overline{u_{1}}}f \partial_{\overline{u_{2}}}g,
\]
where the sum symbol runs over the decomposition of the word \(\overline{u}\) into two words such that
\begin{itemize}
\item{}
the order is taken into account;
\item{}
 the empty word is allowed.
\end{itemize}

The previous constants are defined by the following formula:
\[
C_{\overline{a}, \overline{b}}=\prod\limits_{i=1}^{p}\binom{\alpha_{i}(\overline{a})+\alpha_{i}(\overline{b})}{\alpha_{i}(\overline{a})}.
\]
\end{lemma}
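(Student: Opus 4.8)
The plan is to establish the formula by induction on the length $\ell(\overline{u})$ of the word, invoking at each step nothing more than the ordinary one-variable product rule $\partial_i(fg) = (\partial_i f)g + f(\partial_i g)$; this is legitimate because the operators $\partial_1, \dotsc, \partial_p$ commute on holomorphic functions, so $\partial_{\overline{u}}$ depends only on the multiset of letters of $\overline{u}$. The base case $\ell(\overline{u}) = 0$ is the empty word: there $\partial_{\emptyset}(fg) = fg$, the only decomposition is $(\emptyset, \emptyset)$, and $C_{\emptyset, \emptyset} = \prod_{i} \binom{0}{0} = 1$, so the identity holds trivially.

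For the inductive step I would take a word $\overline{u}$ of length $k+1$, let $i$ be its first letter, and factor $\partial_{\overline{u}} = \partial_i \circ \partial_{\overline{v}}$, where $\overline{v}$ is the word of length $k$ obtained by deleting one occurrence of $i$ from $\overline{u}$. Applying the induction hypothesis to $\partial_{\overline{v}}(fg)$ and then differentiating once more with $\partial_i$ via the one-variable product rule yields
\[
\partial_{\overline{u}}(fg) = \sum_{\overline{v_1}\cdot\overline{v_2}=\overline{v}} C_{\overline{v_1},\overline{v_2}}\bigl( \partial_{\overline{v_1}+e_i}f\cdot\partial_{\overline{v_2}}g + \partial_{\overline{v_1}}f\cdot\partial_{\overline{v_2}+e_i}g \bigr),
\]
where $\overline{w}+e_i$ denotes the word obtained from $\overline{w}$ by adjoining one more letter $i$.

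The rest is a purely combinatorial re-collection. A given decomposition $\overline{u_1}\cdot\overline{u_2} = \overline{u}$ is produced by the two families above in at most two ways, namely from $(\overline{u_1}-e_i, \overline{u_2})$ and from $(\overline{u_1}, \overline{u_2}-e_i)$, so the coefficient of $\partial_{\overline{u_1}}f\cdot\partial_{\overline{u_2}}g$ equals $C_{\overline{u_1}-e_i,\overline{u_2}} + C_{\overline{u_1},\overline{u_2}-e_i}$. Factoring out the common product $\prod_{j\ne i}\binom{\alpha_j(\overline{u_1})+\alpha_j(\overline{u_2})}{\alpha_j(\overline{u_1})}$, the verification that this matches $C_{\overline{u_1},\overline{u_2}}$ reduces exactly to Pascal's identity
\[
\binom{a+b-1}{a-1} + \binom{a+b-1}{a} = \binom{a+b}{a},
\]
with $a = \alpha_i(\overline{u_1})$ and $b = \alpha_i(\overline{u_2})$. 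This closes the induction and at the same time identifies the universal constants as the multi-binomial coefficients $C_{\overline{a},\overline{b}} = \prod_{i} \binom{\alpha_i(\overline{a})+\alpha_i(\overline{b})}{\alpha_i(\overline{a})}$.

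I expect no genuine obstacle: this is the classical multivariate Leibniz rule transcribed into the word notation of Section~\ref{sse: g W}. The only point demanding care is the indexing in the re-collection step, where one must track which decompositions of $\overline{u}$ arise from which decompositions of $\overline{v}$ and adopt the convention that a binomial coefficient with a negative entry vanishes, so that the boundary cases $\alpha_i(\overline{u_1}) = 0$ or $\alpha_i(\overline{u_2}) = 0$ are absorbed uniformly into Pascal's identity.
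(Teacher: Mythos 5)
Your proof is correct. The single-letter factorization $\partial_{\overline{u}} = \partial_i \circ \partial_{\overline{v}}$ is legitimate since $\partial_{\overline{u}}$ depends only on the multi-index $\left(\alpha_{1}(\overline{u}), \dotsc, \alpha_{p}(\overline{u})\right)$, your re-collection correctly identifies the at most two preimages $(\overline{u_{1}}-e_{i}, \overline{u_{2}})$ and $(\overline{u_{1}}, \overline{u_{2}}-e_{i})$ of each decomposition of $\overline{u}$ (and the decompositions are indeed ordered pairs of multi-indices, not prefix-suffix splits of the string, which your indexing handles properly), and Pascal's identity with the vanishing convention closes the induction. Your route is, however, genuinely different from the paper's. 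The paper inducts on the number of variables $p$ rather than on $\ell(\overline{u})$: it writes $\overline{u} = \overline{v}\cdot(p^{k})$ with $k = \alpha_{p}(\overline{u})$, applies the full classical one-variable Leibniz rule to the whole block, $\partial_{(p^{k})}(fg) = \sum_{i=0}^{k}\binom{k}{i}\,\partial_{(p^{i})}f\,\partial_{(p^{k-i})}g$, and then applies the induction hypothesis in $p-1$ variables to $\partial_{\overline{v}}$ of each product; this produces the multiplicative recursion $C_{(\overline{v_{1}}\cdot(p^{i}),\,\overline{v_{2}}\cdot(p^{k-i}))} = \binom{k}{i}\,C_{(\overline{v_{1}},\overline{v_{2}})}$, from which the closed-form product of binomials drops out constructively, with no Pascal identity and no re-collection of terms. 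Your version buys a more elementary input (only the first-order product rule, never the one-variable Leibniz rule) at the cost of the combinatorial bookkeeping in the re-collection step, and it must verify the closed formula for $C_{\overline{a},\overline{b}}$ a posteriori via the additive recursion $C_{\overline{u_{1}}-e_{i},\overline{u_{2}}} + C_{\overline{u_{1}},\overline{u_{2}}-e_{i}} = C_{\overline{u_{1}},\overline{u_{2}}}$; the paper's version reads the constants off directly but presupposes the full one-variable formula. Both arguments are sound and yield the same universal constants.
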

\begin{proof}
We argue by induction on \(p \geq 1\). For \(p=1\), this is the usual Leibniz rule, as $\overline{u}$ is of the form \((1^{k}), k \geq 1\), and thus \(\partial_{\overline{u}}=(\frac{d}{dz})^{k}\). Let then \(p > 1\), and write 
\[
\overline{u}=\overline{v} \cdot (p^{k}),
\]
where \(k=\alpha_{p}(\overline{u})\).
If \(k=0\), then the result follows by induction. Suppose therefore that \(k \geq 1\), and write
\begin{equation*}
\begin{aligned}
\partial_{\overline{u}}(fg)
&
=
\partial_{\overline{v}}\big(\partial_{(p^{k})}(fg)\big)
\\
&
=
\partial_{\overline{v}}\Big(\sum\limits_{i=0}^{k} \binom{k}{i} \partial_{(p^{i})}(f)  \partial_{(p^{k-i})}(g) \Big)
\\
&
=
\sum\limits_{i=0}^{k} \binom{k}{i} \partial_{\overline{v}}\big(\partial_{(p^{i})}(f) \partial_{(p^{k-i})}(g)\big),
\end{aligned}
\end{equation*}
where for the second equality one uses the usual Leibniz rule.
By induction, one deduces that
\[
\partial_{\overline{u}}(fg)
=
\sum\limits_{i=0}^{k} \sum\limits_{\overline{v_{1}}\cdot\overline{v_{2}}=\overline{v}}  
\binom{k}{i} 
C_{(\overline{v_{1}}, \overline{v_{2}})} 
\partial_{\overline{v_{1}}.(p^{i})}(f) \partial_{\overline{v_{2}}.(p^{k-i})}(g),
\]
so that
\[
C_{(\overline{v_{1}}.(p^{i}), \overline{v_{2}}.(p^{k-i}))}
=
\binom{k}{i} C_{\overline{v_{1}}, \overline{v_{2}}},
\]
which concludes the proof.
\end{proof}

As for the formula to compute higher order partial derivatives of composition of maps, it reads as follows. Note that we have not given an explicit formula for the constants as in the Leibniz rule: it seems to be a much more complicated combinatorial problem to do so.

\begin{lemma}[Derivative of composition of maps]
\label{lemma: compo}
Let \(f: \C^{n} \to \C\) be a holomorphic function and let \(\varphi: \C^{p} \to \C^{n}\) be a holomorphic map. There exists universal constants \(\left(D_{\overline{a_{1}}, \dotsc, \overline{a_{k}}}(n)\right)_{k \in \N, \overline{a_{i}} \in \mathcal{W}_{p}}\) in \(\N\) such that for any word \(\overline{u} \in \mathcal{W}_{p}\), one has the following formula to compute the \(\overline{u}\)-partial derivative of compositions of functions:
\[
\partial_{\overline{u}}\left(f\circ \varphi\right)
=
\sum\limits_{k}
\sum\limits_{\overline{u_{1}} \dotsc \overline{u_{k}} = \overline{u}}
D_{\overline{u_{1}}, \dotsc, \overline{u_{k}}}(n)
\sum\limits_{i_{1}=1, \dotsc, i_{k}=1}^{n}
\partial_{\overline{i_{1} \dotsc i_{k}}}f \circ \varphi \times\partial_{\overline{u_{1}}} \varphi_{i_{1}} \times \dotsc \times \partial_{\overline{u_{k}}} \varphi_{i_{k}},
\]
where the second sum symbol runs over the decompositions of the word \(\overline{u}\) into \(k\) words such that
\begin{itemize}
\item{} the order is not taken into account (i.e. two decompositions are considered the same if one is obtained from the other by a permutation of the words);
\item{} the empty word is \textsl{not} allowed (i.e. one considers proper decompositions into \(k\) words).
\end{itemize}
\end{lemma}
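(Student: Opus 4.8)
The plan is to prove the formula by induction on the length $\ell(\overline{u})$ of the word $\overline{u} \in \mathcal{W}_{p}$, using at each step only the elementary first-order chain rule together with the ordinary product rule (the $p=1$ instance of Lemma \ref{lemma: Leibniz}). A preliminary remark that makes the bookkeeping sound is that mixed holomorphic partial derivatives commute, so the operator $\partial_{\overline{u}}$ depends only on the multiset of letters of $\overline{u}$ (equivalently, on the exponents $\alpha_{i}(\overline{u})$); thus prepending a letter to a word and re-expressing the result in lexicographic order is harmless. For the base case $\ell(\overline{u})=1$ the word is a single letter $\overline{u}=(j)$, and the statement reduces to the chain rule
\[
\partial_{(j)}(f \circ \varphi) = \sum_{i=1}^{n} (\partial_{(i)}f \circ \varphi)\, \partial_{(j)}\varphi_{i},
\]
which is exactly the claimed expression with $k=1$, trivial decomposition $\overline{u_{1}}=\overline{u}$, and constant $1$.

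For the inductive step I would write $\overline{u}=(j)\cdot \overline{v}$ with $\ell(\overline{v})=\ell(\overline{u})-1$, so that $\partial_{\overline{u}}=\partial_{(j)}\,\partial_{\overline{v}}$, and apply the induction hypothesis to $\partial_{\overline{v}}(f\circ\varphi)$. Then I would apply $\partial_{(j)}$ to the resulting sum by distributing it over the factors of each monomial via the product rule. Each monomial has the shape $(\partial_{\overline{i_{1}\dotsc i_{k}}}f\circ\varphi)\,\partial_{\overline{v_{1}}}\varphi_{i_{1}}\dotsm\partial_{\overline{v_{k}}}\varphi_{i_{k}}$, and $\partial_{(j)}$ produces two kinds of terms. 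When $\partial_{(j)}$ hits the $f$-factor, the chain rule introduces a fresh index $i_{k+1}$ and appends a new singleton part $(j)$ to the decomposition, raising $k$ by one and yielding $(\partial_{\overline{i_{1}\dotsc i_{k+1}}}f\circ\varphi)\,\partial_{\overline{v_{1}}}\varphi_{i_{1}}\dotsm\partial_{\overline{v_{k}}}\varphi_{i_{k}}\,\partial_{(j)}\varphi_{i_{k+1}}$. When $\partial_{(j)}$ hits a $\varphi$-factor $\partial_{\overline{v_{\ell}}}\varphi_{i_{\ell}}$, it merges $(j)$ into that part, replacing $\overline{v_{\ell}}$ by $(j)\cdot\overline{v_{\ell}}$ and keeping $k$ fixed. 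In both cases the output is a sum of terms of precisely the claimed form, now indexed by proper decompositions of $\overline{u}=(j)\cdot\overline{v}$ into nonempty words; the new coefficients are obtained from the old ones by addition, hence remain non-negative integers independent of $f$ and $\varphi$.

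The main obstacle will be purely combinatorial: I must check that every proper unordered decomposition of $\overline{u}$ is genuinely produced by one of the two routes above, and that the several monomials landing on the \emph{same} unordered decomposition — reached either by appending $(j)$ as a new part or by merging it into any one of the existing parts — are collected into a single, well-defined coefficient $D_{\overline{u_{1}},\dotsc,\overline{u_{k}}}(n)$. Here the commutativity of partial derivatives is exactly what guarantees that this collection is consistent and that the resulting universal constants do not depend on $f$ or $\varphi$. Since the lemma only asserts the \emph{existence} of such constants in $\N$ rather than an explicit formula, I do not need to track their precise values; establishing that the sum closes up on proper unordered decompositions with well-defined non-negative integer coefficients completes the induction.
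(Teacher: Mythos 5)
Your proof is correct and takes essentially the same approach as the paper's: induction on \(\ell(\overline{u})\), peeling off a single letter \(j\) and distributing \(\partial_{(j)}\) over each monomial via the product rule, with exactly the same two kinds of resulting terms (a fresh index from the chain rule applied to the \(f\)-factor, creating a new singleton part \((j)\), versus merging \((j)\) into an existing part \(\overline{v_{\ell}}\)), then collecting coefficients into universal constants in \(\N\). The only cosmetic difference is that you prepend the letter where the paper appends it, which is immaterial since mixed holomorphic partials commute --- a point you rightly make explicit.
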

\begin{proof}
Proceed by induction on the length of the word \(\overline{u}\), the case of \(\ell(\overline{u})=1\) being the usual formula to compute the derivative of composition of maps, with \(D_{\overline{1}}(n)= \dotsb=D_{\overline{p}}(n)=1\).

 Let therefore \(\overline{u}\) be a word of length \(\ell > 1\), and suppose that that it writes  \(\overline{u}=\overline{u'}\cdot \overline{j}\) for some \(1 \leq j \leq p\). By induction, one can write the following:
\[
\partial_{\overline{u}}\left(f\circ \varphi\right)
=
\sum\limits_{k}
\sum\limits_{\overline{u_{1}} \dotsc \overline{u_{k}} = \overline{u'}}
D_{\overline{u_{1}}, \dotsc, \overline{u_{k}}}(n)
\sum\limits_{i_{1}, \dotsc, i_{k}}
\partial_{j}\left(
\partial_{\overline{i_{1} \dotsc i_{k}}}f \circ \varphi \times\partial_{\overline{u_{1}}} \varphi_{i_{1}} \times \dotsc \times \partial_{\overline{u_{k}}} \varphi_{i_{k}}
\right).
\]
Each term appearing on the right can be rewritten as the sum of
\[
\sum\limits_{i=1}^{n} \partial_{\overline{i_{1} \dotsc i_{k}i}}f \circ \varphi \times \partial_{j}\varphi_{i} \times\partial_{\overline{u_{1}}} \varphi_{i_{1}} \times \dotsc \times \partial_{\overline{u_{k}}} \varphi_{i_{k}}
\]
and
\[
\partial_{\overline{i_{1} \dotsc i_{k}}}f \circ \varphi\left(
\sum\limits_{\ell=1}^{k}
 \partial_{\overline{u_{1}}} \varphi_{i_{1}} \times \dotsc \times \partial_{\overline{u_{\ell}}\cdot \overline{j}} \varphi_{i_{\ell}} \times \dotsc \times \partial_{\overline{u_{k}}} \varphi_{i_{k}}
\right).
\]
Therefore, one can indeed write \(\partial_{\overline{u}}\left(f\circ \varphi\right)\) under the following form:
\[
\partial_{\overline{u}}\left(f\circ \varphi\right)
=
\sum\limits_{k}
\sum\limits_{\overline{u_{1}} \dotsc \overline{u_{k}} = \overline{u}}
D_{\overline{u_{1}}, \dotsc, \overline{u_{k}}}(n)
\sum\limits_{i_{1}, \dotsc, i_{k}}
\partial_{\overline{i_{1} \dotsc i_{k}}}f \circ \varphi \times\partial_{\overline{u_{1}}} \varphi_{i_{1}} \times \dotsc \times \partial_{\overline{u_{k}}} \varphi_{i_{k}}
\]
for some constants \(D_{\overline{u_{1}}, \dotsc, \overline{u_{k}}}(n)\) in \(\N\).
\end{proof}

\newpage 

\section{A vanishing theorem}
\label{appendix: vanishing}
This appendix is devoted to the proof of the vanishing Theorem \ref{thm: vanishing}.
First, we introduce notations and results from Nevanlinna theory. For details, we refer to \cite{Nevan}.

\begin{definition} 
Let \(X\) be a complex manifold, and \(L \rightarrow X\)  an hermitian line bundle on \(X\) with metric \(h\). The \textsl{characteristic function} of a holomorphic map  \(f: \C^{p} \rightarrow X\) is defined as follows
\[
T_{f}(L,r)
\bydef
\int_{0}^{r} \frac{dt}{t} \int_{\B^{p}_{t}} f^{*}(c_{h})\wedge \omega_{0}^{p-1},
\]
where \(\B^{p}_{t}\) is the ball of radius \(t\) in \(\C^{p}\), \(c_{h}\) is the Chern form of the metric \(h\), and \(\omega_{0}=dd^{c} log ||Z||^2\) is the homogeneous metric form on \(\C^{p}\), where \(Z=(z_{1}, \dotsc, z_{p})\) are the canonical coordinates.
\end{definition}
One is mainly interested in the asymptotic behavior of \(T_{f}(L,r)\) for \(r \rightarrow \infty\), which does not depend on the metric \(h\) (see \cite{Nevan}[p.38]). This behavior dictates the rationality of the holomorphic map \(f\) when \(X\) is a projective variety, and \(L\) an ample line bundle (see \cite{Nevan}[p.66]):
\begin{theorem}
\label{thm: rationality}
A holomorphic map $f: \C^{p} \rightarrow \P^{N}$ is rational if and only if $T_{f}(L,r)=O(\log(r))$.
\end{theorem}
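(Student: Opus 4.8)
The plan is to reduce the statement to the growth of a reduced representation of $f$ and then to slice by complex lines. First I would fix entire functions $f_{0},\dots,f_{N}$ on $\C^{p}$ without common zeros representing $f$, set $F=(f_{0},\dots,f_{N})$, and normalise the Fubini--Study metric so that $f^{*}c_{h}=dd^{c}\log\lVert F\rVert^{2}$. Applying the Jensen formula of value distribution theory (Stoll) to the plurisubharmonic function $\log\lVert F\rVert^{2}$ against the positive current $\omega_{0}^{p-1}$ then yields
\[
T_{f}(L,r)=\int_{S_{r}}\log\lVert F\rVert^{2}\,\sigma_{r}+O(1),
\]
where $S_{r}=\partial\B^{p}_{r}$ and $\sigma_{r}=d^{c}\log\lVert Z\rVert^{2}\wedge\omega_{0}^{p-1}\big|_{S_{r}}$ is the $U(p)$-invariant probability measure (its total mass being $1$ is exactly why $\omega_{0}=dd^{c}\log\lVert Z\rVert^{2}$ is used). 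Writing $\lambda(r)\bydef\int_{S_{r}}\log\lVert F\rVert^{2}\,\sigma_{r}$, this identifies $T_{f}(L,r)$ with $\lambda(r)$ up to a bounded term. The easy implication is then immediate: if $f$ is rational one may take the $f_{i}$ to be polynomials of some degree $d$, so that $\log\lVert F\rVert\le d\log r+O(1)$ on $S_{r}$ and hence $T_{f}(L,r)=O(\log r)$.

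For the converse I would exploit the Hopf-type disintegration of $\sigma_{r}$. Since the surjection $(\theta,v)\mapsto re^{i\theta}v$ from $(\R/2\pi\Z)\times S^{2p-1}$ onto $S_{r}$ pushes the product of invariant measures forward to $\sigma_{r}$, one obtains the averaging identity
\[
\lambda(r)=\int_{S^{2p-1}}m_{v}(r)\,d\mu(v),\qquad m_{v}(r)\bydef\frac{1}{2\pi}\int_{0}^{2\pi}\log\lVert F(re^{i\theta}v)\rVert^{2}\,d\theta,
\]
with $\mu$ the invariant probability measure on $S^{2p-1}$. Assuming now $T_{f}(L,r)=O(\log r)$, so that $\int_{S^{2p-1}}m_{v}(r)\,d\mu(v)=O(\log r)$, I would note that for each fixed $v$ the function $\zeta\mapsto\log\lVert F(\zeta v)\rVert^{2}$ is subharmonic on $\C$, whence $m_{v}(e^{t})$ is convex and nondecreasing in $t$ and $\lim_{r\to\infty}m_{v}(r)/\log r$ exists in $[0,\infty]$. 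Applying Fatou's lemma along $r=n\in\N$ to $\int_{S^{2p-1}}\bigl(m_{v}(n)/\log n\bigr)\,d\mu(v)\le C$ shows this limit is finite for $\mu$-almost every $v$, i.e.\ $m_{v}(r)=O(\log r)$ a.e. For such $v$ the one-variable map $\zeta\mapsto[f_{0}(\zeta v):\dots:f_{N}(\zeta v)]$ has characteristic function $\tfrac12 m_{v}(r)+O(1)=O(\log r)$, so by the First Main Theorem each coordinate has proximity function $O(\log r)$, and the classical one-variable case forces every slice $\zeta\mapsto f_{i}(\zeta v)$ to be a polynomial.

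It remains to upgrade ``polynomial on almost every line'' to ``polynomial''. I would expand each $f_{i}=\sum_{k\ge 0}P_{i,k}$ into homogeneous components, so that $f_{i}(\zeta v)=\sum_{k}P_{i,k}(v)\zeta^{k}$. The sets $\{v:\deg_{\zeta}f_{i}(\zeta v)\le d\text{ for all }i\}$ increase to a set of full $\mu$-measure, so one of them, say for $d=d_{0}$, has positive measure; on it $P_{i,k}(v)=0$ for all $k>d_{0}$. Since a nonzero homogeneous polynomial cannot vanish on a subset of $S^{2p-1}$ of positive measure, $P_{i,k}\equiv 0$ for $k>d_{0}$, so every $f_{i}$ is a polynomial and $f=[f_{0}:\dots:f_{N}]$ is rational.

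The main obstacle is the bookkeeping of the first two displays: establishing the Jensen identity and the disintegration of $\sigma_{r}$ cleanly, which is equivalent to controlling the negative part of $\log\lVert F\rVert$ through the First Main Theorem so that the integral bound on $\lambda(r)$ is genuinely a bound on growth. Once those value-distribution identities are in place, the slicing argument---made uniform across lines by the convexity of $m_{v}$ in $\log r$---is routine, and the passage from slices to a global polynomial is purely algebraic.
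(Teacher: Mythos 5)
The paper does not prove this statement: it is quoted from \cite{Nevan} (p.~66) as a standard fact of several-variable Nevanlinna theory, so your proposal has to stand on its own. Its first half does: the existence of a global reduced representation $F=(f_{0},\dots,f_{N})$ (every line bundle on $\C^{p}$ is trivial), the Green--Jensen identity $T_{f}(L,r)=\tfrac12\int_{S_{r}}\log\|F\|^{2}\,\sigma+O(1)$, the Hopf disintegration of $\sigma$ into circle means $m_{v}(r)$, the convexity of $m_{v}$ in $\log r$, and the Fatou argument (which works because $m_{v}(r)\ge \log\|F(0)\|^{2}$ by the sub-mean-value property, a lower bound you should state explicitly) correctly yield $m_{v}(r)=O_{v}(\log r)$ for $\mu$-almost every direction $v$. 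The easy direction is also fine.

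The gap is in the sentence ``the classical one-variable case forces every slice $\zeta\mapsto f_{i}(\zeta v)$ to be a polynomial'', and it sinks the final, ``purely algebraic'' step. The bound $m_{v}(r)=O(\log r)$ only controls the Cartan characteristic of the \emph{projectivized} slice $g_{v}=[f_{0}(\zeta v):\dots:f_{N}(\zeta v)]$, hence makes $g_{v}$ rational; the entire coordinates themselves are determined only up to a common nowhere-vanishing factor $e^{u(\zeta)}$, and such a factor is invisible to $m_{v}$, since the circle mean of $\operatorname{Re}u$ is the constant $\operatorname{Re}u(0)$. (Your parenthetical appeal to the First Main Theorem to bound the proximity function of each \emph{coordinate} fails for the same reason: the negative part of $\log\|F\|$ on circles is not controlled.) Concretely, $F=(e^{z_{1}},e^{z_{1}}z_{1})$ is a perfectly good reduced representation of the rational map $[1:z_{1}]$, which satisfies $T_{f}(L,r)=O(\log r)$, yet for every $v$ with $v_{1}\neq 0$ no slice coordinate is a polynomial; your sets $\{v:\deg_{\zeta}f_{i}(\zeta v)\le d\ \forall i\}$ are then contained in the null set $\{v_{1}=0\}$, so they need not ``increase to a set of full $\mu$-measure'' and the homogeneous-expansion upgrade never gets off the ground. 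To repair the proof you must descend to the meromorphic ratios $u=f_{i}/f_{j}$: rationality of $g_{v}$ gives that $u(\zeta v)$ is rational in $\zeta$, with degree uniformly bounded on a positive-measure set of $v$ (the asymptotic slope $\lim_{r}m_{v}(r)/\log r$ is measurable and a.e.\ finite), and one can then globalize, e.g.\ by Kronecker's Hankel-determinant criterion: the relevant Hankel determinants formed from the homogeneous components of $u$ near $0$ are polynomials in $v$ vanishing on a positive-measure subset of the sphere, hence identically, after which one assembles a global rational expression (alternatively, one follows the classical route of showing that a meromorphic function on $\C^{p}$ with $T(r)=O(\log r)$ has algebraic polar divisor, clears denominators, and bounds Taylor coefficients). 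This genuinely missing ingredient is the actual content of the theorem.
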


The following theorem is the several variables version of the so-called lemma on the logarithmic derivative (see \cite{dlog}):
\begin{theorem}
\label{thm: dlog}
Let $g: \C^{p} \rightarrow \P^{1}$ be a meromorphic map, and let $\overline{u}$ be a word in $\mathcal{W}_{p}$. There exists constants $a_{0}, a_{1}, a_{2}$ in $\R_{+}$ such that
\[
\int_{\partial B_{r}^{p}} \log^{+} |\frac{\partial_{\overline{u}}G}{G}| \cdot \sigma
\leq
a_{0} + a_{1}\log(r) + a_{2}\log T_{g}(L,r)
\
 ||,
\]
where \(G=\frac{g_{1}}{g_{0}}\) is \(g\) in its inhomogeneous form, and \(\sigma=d^{c}\log ||Z||^2 \wedge \omega_{0}^{p-1}\) is the Poincaré form on \(\C^{p}\).
\end{theorem}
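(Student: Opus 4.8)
The plan is to reduce the several-variable statement to the classical one-variable lemma on the logarithmic derivative, in two stages: first peeling the word $\overline{u}$ down to single letters, and then handling a single partial derivative by slicing $\C^{p}$ along complex lines through the origin.

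First I would reduce to the case $\ell(\overline{u})=1$ by induction on the length of $\overline{u}$. Peeling off the last letter $i$ of $\overline{u}=\overline{v}\cdot i$ and setting $h\bydef\partial_{\overline{v}}G$, the commutation of holomorphic partial derivatives gives $\partial_{\overline{u}}G=\partial_{i}h$, whence the elementary factorization
\[
\frac{\partial_{\overline{u}}G}{G}=\frac{\partial_{i}h}{h}\cdot\frac{\partial_{\overline{v}}G}{G}.
\]
Combined with the subadditivity $\log^{+}|ab|\le\log^{+}|a|+\log^{+}|b|$, this yields
\[
\int_{\partial B_{r}^{p}}\log^{+}\Bigl|\tfrac{\partial_{\overline{u}}G}{G}\Bigr|\,\sigma
\le
\int_{\partial B_{r}^{p}}\log^{+}\Bigl|\tfrac{\partial_{i}h}{h}\Bigr|\,\sigma
+
\int_{\partial B_{r}^{p}}\log^{+}\Bigl|\tfrac{\partial_{\overline{v}}G}{G}\Bigr|\,\sigma.
\]
The second term is handled by the induction hypothesis, while the first is a single-partial-derivative term applied to the auxiliary meromorphic function $h$. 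To close the induction I also need the comparison $\log^{+}T_{h}(L,r)\le\log^{+}T_{g}(L,r)+O(\log r)$, which follows from the first-order case together with the first main theorem \cite{Nevan}.

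Second, for a single partial derivative I would slice by the complex lines $\ell_{w}=\Set{\zeta w \suchthat \zeta\in\C}$, $w\in S^{2p-1}$, through the origin. On each line the restriction $G_{w}(\zeta)\bydef G(\zeta w)$ is a one-variable meromorphic map, and its ordinary derivative $G_{w}'(\zeta)=\sum_{j}w_{j}(\partial_{j}G)(\zeta w)$ is the directional derivative of $G$. Applying the classical one-variable lemma on the logarithmic derivative on each slice and integrating over $w$, I would invoke the averaging identity $\int_{S^{2p-1}}T_{G_{w}}(L,r)\,d\mu(w)=T_{g}(L,r)+O(1)$ (the several-variable first main theorem, with $\mu$ the rotation-invariant measure compatible with $\sigma$ and $\omega_{0}$) and the concavity of $\log$ (Jensen's inequality) to pull the average inside the logarithm; this produces a bound of the required shape $a_{0}+a_{1}\log r+a_{2}\log T_{g}(L,r)$, but a priori only for the \emph{radial} logarithmic derivative $\tfrac{1}{\|Z\|}\sum_{j}z_{j}\partial_{j}G/G$ that the origin-lines see pointwise. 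To pass from the radial derivative to the individual coordinate derivative $\partial_{i}G$, I would exploit the unitary invariance of $\sigma$: replacing $G$ by $G\circ A$ for $A\in U(p)$ turns $\partial_{i}(G\circ A)$ into a directional derivative $\sum_{j}A_{ji}(\partial_{j}G)\circ A$, and averaging over $U(p)$ interchanges the coordinate and radial proximity functions up to a controllable error.

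The main obstacle is the exceptional-set bookkeeping encoded by the symbol $||$, that is, the passage to ``all $r$ outside a set of finite logarithmic measure''. After the averaging over lines and over $U(p)$, this requires a several-variable Borel (calculus) growth lemma controlling the radial derivative $\tfrac{d}{d\log r}T_{g}(L,r)$ by $T_{g}(L,r)^{1+\varepsilon}$ off a small set of radii; this is the point where the multidimensional theory genuinely departs from the one-variable case, and it is the technical heart of the argument. I would either carry out this growth estimate directly on $B_{r}^{p}$ following \cite{dlog}, keeping track of the $O(\log r)$ error introduced by the unitary-averaging step, or, if a shorter route is preferred, simply invoke the first-order several-variable logarithmic derivative lemma of \cite{dlog} and feed it into the length induction of the first paragraph.
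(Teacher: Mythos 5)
You should first be aware that the paper contains no proof of Theorem \ref{thm: dlog}: it is imported verbatim from the reference \cite{dlog} (Vitter's several-variables lemma on the logarithmic derivative), so there is no in-paper argument to compare against, and your proposal is judged as a reconstruction of the cited result. Your first stage is fine: the induction on \(\ell(\overline{u})\) via the factorization \(\partial_{\overline{u}}G/G=(\partial_{i}h/h)\cdot(\partial_{\overline{v}}G/G)\) with \(h\bydef\partial_{\overline{v}}G\) is correct (holomorphic partials commute), and the needed comparison \(\log^{+}T_{h}(L,r)\leq \log^{+}T_{g}(L,r)+O(\log r)\) does follow from the first-order case together with \(N(r,\partial_{i}G)\leq 2N(r,G)\) and \(m(r,\partial_{i}G)\leq m(r,G)+m(r,\partial_{i}G/G)\); the finitely many exceptional sets produced along the induction union harmlessly. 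This reduction is standard and unobjectionable.

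The genuine gap is in your second stage. Slicing by complex lines through the origin controls, at a point \(z\in\partial B_{r}^{p}\), only the derivative in the single direction \(z\) itself, i.e.\ the radial (Euler) logarithmic derivative \(\sum_{j}z_{j}\partial_{j}G/G\) — and your proposed unitary averaging cannot upgrade this to \(\partial_{i}G/G\), because the whole construction is unitarily equivariant: for \(A\in U(p)\) one computes \(\sum_{i}z_{i}\,\partial_{i}(G\circ A)(z)=\sum_{j}(Az)_{j}\,(\partial_{j}G)(Az)\), which is the radial derivative of \(G\) at \(Az\); since \(\sigma\) and \(T_{g}\) are unitarily invariant, the sliced inequality for \(G\circ A\) is literally the same inequality as for \(G\), and averaging over \(U(p)\) returns the radial estimate unchanged — the span of controlled directions at \(z\) never leaves \(\C\cdot z\). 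To capture a fixed coordinate derivative one must slice by parallel lines \(\zeta\mapsto(z',\zeta)\), but then the averaging identity you invoke fails: \(\omega_{0}^{p-1}\) disintegrates the characteristic precisely over lines through the origin (that is why \(\int_{S^{2p-1}}T_{G_{w}}(L,r)\,d\mu(w)=T_{g}(L,r)+O(1)\) holds), not over parallel families, and relating the sliced characteristics \(T_{G_{z'}}\) to \(T_{g}\) requires changes of radius and a Fubini argument that is exactly the technical content of \cite{dlog}. So the argument as written does not close. Your fallback — invoking the first-order case of \cite{dlog} and feeding it into the length induction — is legitimate and would yield a complete proof, but note that it reduces your proposal to elementary scaffolding around the cited result, which is in effect what the paper itself does by quoting \cite{dlog} for the full statement.
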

Recall that the notation ``\(||\)`` means that the equality is satisfied outside a measurable set of finite Lebesgue measure, and that \(\log^{+}\bydef \max(0, \log)\). From the previous theorem, one deduces the following corollary:
\begin{corollary}
\label{cor: dlog}
Let \(g: \C^{p} \rightarrow \P^{1}\) be a meromorphic map, and let \(\overline{u}\) be a word in \(\mathcal{W}_{p}\). Then 
\[
\int_{\partial B_{r}^{p}} \log^{+} |\partial_{\overline{u}} \log G| \cdot \sigma
\underset{r \rightarrow \infty}{=}
O\big(\log(r)+\log T_{g}(L,r)\big).
\]
where \(G=\frac{g_{1}}{g_{0}}\) is \(g\) in its inhomogeneous form.
\end{corollary}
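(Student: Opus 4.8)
The plan is to reduce Corollary \ref{cor: dlog} to the logarithmic derivative Theorem \ref{thm: dlog}, by expanding $\partial_{\overline{u}}\log G$ as a finite combination of ordinary logarithmic derivatives $\partial_{\overline{v}}G/G$. First, I would apply the formula Lemma \ref{lemma: compo} to the composition $\log \circ G$, taking $f = \log$ and $\varphi = G$, so that $n = 1$ and every index $i_{j}$ is forced to equal $1$. Since $(\log)^{(k)}(w) = (-1)^{k-1}(k-1)!\,w^{-k}$, this yields the finite expansion
\[
\partial_{\overline{u}}\log G
=
\sum_{k}
\sum_{\overline{u_{1}}\dotsb \overline{u_{k}} = \overline{u}}
(-1)^{k-1}(k-1)!\,D_{\overline{u_{1}}, \dotsc, \overline{u_{k}}}(1)
\prod_{j=1}^{k} \frac{\partial_{\overline{u_{j}}}G}{G},
\]
where the inner sum runs over the proper decompositions of $\overline{u}$ (empty words excluded) and each $\overline{u_{j}}$ is a nonempty subword of $\overline{u}$, hence an element of $\mathcal{W}_{p}$ to which Theorem \ref{thm: dlog} applies.

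Second, I would invoke the elementary properties of $\log^{+}$, namely $\log^{+}\abs{\sum_{i} a_{i}} \leq \sum_{i}\log^{+}\abs{a_{i}} + \log M$ (with $M$ the number of terms), $\log^{+}\abs{ab} \leq \log^{+}\abs{a} + \log^{+}\abs{b}$ and $\log^{+}\abs{c\,a} \leq \log^{+}\abs{a} + \log^{+}\abs{c}$, in order to obtain the pointwise bound
\[
\log^{+}\abs{\partial_{\overline{u}}\log G}
\leq
\sum_{k}
\sum_{\overline{u_{1}}\dotsb \overline{u_{k}} = \overline{u}}
\sum_{j=1}^{k}
\log^{+}\abs{\frac{\partial_{\overline{u_{j}}}G}{G}}
+
C,
\]
where both the constant $C$ and the total number of summands depend only on $\overline{u}$ (through the universal constants $D_{\overline{u_{1}}, \dotsc, \overline{u_{k}}}(1)$ of Lemma \ref{lemma: compo}), and in particular neither on $G$ nor on $r$.

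Finally, I would integrate this inequality over $\partial B_{r}^{p}$ against the Poincaré form $\sigma$ and apply Theorem \ref{thm: dlog} to each of the finitely many integrals $\int_{\partial B_{r}^{p}} \log^{+}\abs{\partial_{\overline{u_{j}}}G/G}\cdot \sigma$, every one of which is bounded by $a_{0} + a_{1}\log(r) + a_{2}\log T_{g}(L,r)$ outside a set of finite Lebesgue measure. Since there are only finitely many such terms, the union of the corresponding exceptional sets still has finite measure, and summing the bounds yields the announced estimate $O\big(\log(r) + \log T_{g}(L,r)\big)$. I do not expect a serious obstacle here, as all the analytic content is already carried by Theorem \ref{thm: dlog}; the only points that require genuine care are the verification that every subword produced by Lemma \ref{lemma: compo} does lie in $\mathcal{W}_{p}$, so that the logarithmic derivative lemma is applicable to it, and the bookkeeping guaranteeing that the constant $C$ and the number of summands in the $\log^{+}$ estimate remain independent of $r$.
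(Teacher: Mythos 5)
Your proposal is correct and takes essentially the same route as the paper: the paper's proof consists precisely of observing that \(\partial_{\overline{u}} \log G\) expands as a finite \(\Z\)-linear combination of terms \(\partial_{\overline{u_{1}}}G \dotsb \partial_{\overline{u_{k}}}G / G^{k}\) with constants independent of \(G\), and then applying Theorem \ref{thm: dlog} directly. Your derivation of that expansion from Lemma \ref{lemma: compo} (with \(f=\log\), \(n=1\)) and your explicit \(\log^{+}\) and exceptional-set bookkeeping simply spell out what the paper compresses into ``a direct application of the previous theorem''.
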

\begin{proof}
Once one observes that \(\partial_{\overline{u}} \log G \) can be written
\[
\partial_{\overline{u}} \log G
=
\frac{\partial_{\overline{u}} G}{G} + \sum\limits_{\ell(\overline{u_{1}}) + \dotsb + \ell(\overline{u_{k}}) < \ell(\overline{u})} C_{(\overline{u_{1}}, \dotsc, \overline{u_{k}})} \frac{\partial_{\overline{u_{1}}}G \dotsc \partial_{\overline{u_{k}}}G}{G^{k}}
\]
where \(C_{(\overline{u_{1}}, \dotsc, \overline{u_{k}})} \in \Z\) are constants independant of \(G\), the corollary then follows from a direct application of the previous theorem.
\end{proof}

The remaining of the appendix is devoted to the proof of the vanishing Theorem \ref{thm: vanishing}:
\begin{theorem}
Let \(X\) be a projective manifold equipped with an ample line bundle \(L\), and suppose that there exists a (non-zero) global section
\[
P \in H^{0}(X, E_{p,k,w}X\otimes L^{-1}),
\]
where \(p,k,w \in \N_{\geq 1}\). Then any non-degenerate map \(f \colon \C^{p} \rightarrow X \) satisfies the differential equation
\[
P(f)=P_{f}(\partial_{p,k}f)\equiv0.
\]

\end{theorem}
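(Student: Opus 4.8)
The plan is to run Siu's original scheme of proof, replacing one-variable Nevanlinna theory by the several-variable version recalled above. Fix a smooth metric $h$ on the ample line bundle $L$ with positive Chern form $c_{h}$, and assume for contradiction that the holomorphic section $P(f)$ of $f^{*}L^{-1}$ over $\C^{p}$ is not identically zero. The whole argument amounts to comparing a lower bound for the mean of $\log\|P(f)\|$ over the spheres $\partial\B^{p}_{r}$, coming from the First Main Theorem, with an upper bound for its positive part, coming from the logarithmic derivative lemma.

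First I would establish the lower bound. By the Green--Jensen formula in $\C^{p}$ (integrating against the Poincaré form $\sigma$ and using the averaging $\int_{0}^{r}\frac{dt}{t}\int_{\B^{p}_{t}}(\cdot)\wedge\omega_{0}^{p-1}$ that defines $T_{f}$), the Poincaré--Lelong identity $dd^{c}\log\|P(f)\|^{2}=[\Div P(f)]+f^{*}c_{h}$ --- where the sign $+f^{*}c_{h}$ is precisely due to the \emph{negative} twist by $L^{-1}$ --- yields
\[
\int_{\partial\B^{p}_{r}}\log\|P(f)\|\,\sigma=N_{P(f)}(r)+T_{f}(L,r)+O(1),
\]
with $N_{P(f)}(r)\ge 0$ the integrated counting function of $\Div P(f)$. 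Since $L$ is ample and $f$ is non-degenerate, $T_{f}(L,r)\to\infty$, whence the lower bound $\int_{\partial\B^{p}_{r}}\log^{+}\|P(f)\|\,\sigma\ge T_{f}(L,r)+O(1)$. (If $T_{f}(L,r)=O(\log r)$, then $f$ is rational by Theorem \ref{thm: rationality}, and this borderline case is settled directly, $f^{*}L^{-1}$ being anti-ample; so I assume $T_{f}(L,r)/\log r\to\infty$.)

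The heart of the proof is the matching upper bound
\[
\int_{\partial\B^{p}_{r}}\log^{+}\|P(f)\|\,\sigma=O\big(\log r+\log T_{f}(L,r)\big)
\]
(valid outside a set of finite measure). The key structural fact is that $\|P(f)\|$ is intrinsic, hence invariant under rescaling of a homogeneous representation $f=[f_{0}:\dotsb:f_{M}]$ associated with a very ample power of $L$; consequently it depends only on the ratios $g_{ij}=f_{i}/f_{j}$ and their partial derivatives $\partial_{\overline{u}}g_{ij}$, $\ell(\overline{u})\le k$. To exploit this I would, pointwise on $\partial\B^{p}_{r}$, select an index $j$ with $|f_{j}|=\max_{\ell}|f_{\ell}|$: on the corresponding finitely many measurable pieces the metric factor is bounded and each $g_{ij}$ satisfies $|g_{ij}|\le 1$. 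Writing $\partial_{\overline{u}}g_{ij}=g_{ij}\cdot\big(\partial_{\overline{u}}\log g_{ij}+\dotsb\big)$ via the Leibniz and composition formulas (Lemmas \ref{lemma: Leibniz} and \ref{lemma: compo}) and using $\log^{+}|g_{ij}|=0$, the weight-$w$ local expression of $P$ is dominated by a universal sum of products of logarithmic derivatives $\partial_{\overline{u}}\log g_{ij}$. Applying Corollary \ref{cor: dlog} to each meromorphic function $g_{ij}\colon\C^{p}\to\P^{1}$, whose characteristic satisfies $T_{g_{ij}}(r)=O(T_{f}(L,r))$, and summing over the finitely many indices and words gives the claimed bound.

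Combining the two estimates produces $T_{f}(L,r)+O(1)\le O\big(\log r+\log T_{f}(L,r)\big)$ outside a set of finite measure, which is impossible once $T_{f}(L,r)/\log r\to\infty$; hence $P(f)\equiv 0$. The main obstacle is the upper bound: the delicate points are the reduction to the scale-invariant ratios $g_{ij}$ (so that the positive contributions $\log^{+}|f_{i}|$, which are themselves of size $T_{f}$, cancel and do not spoil the estimate) and the bookkeeping needed to extract genuine logarithmic derivatives from the weight-$w$ jet-differential expression before invoking the several-variable logarithmic derivative lemma; the exceptional-set ``$\|$'' caveat is handled exactly as in the one-variable theory.
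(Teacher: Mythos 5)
Your Nevanlinna-theoretic half is essentially the paper's own argument (Siu's scheme, as carried out in Appendix~\ref{appendix: vanishing}): the Poincar\'e--Lelong/Jensen lower bound \(\int_{\partial\B^{p}_{r}}\log\|P(f)\|\,\sigma \geq T_{f}(L,r)+O(1)\), and the upper bound \(O(\log r+\log T_{f}(L,r))\) obtained by writing the local expression of \(P(f)\) in terms of logarithmic derivatives of ratios of homogeneous coordinates and invoking Corollary~\ref{cor: dlog}, together with \(T_{g_{ij}}(r)=O(T_{f}(L,r))\) from Lemma~\ref{lemma: lem1}, is exactly the content of the paper's Lemma~\ref{lemma: lem2}. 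So far the two proofs coincide in substance.

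The genuine gap is at the point you call the ``borderline case'', which is in fact the \emph{only} case, and which you dismiss in one clause. First, a logical slip: the negation of \(T_{f}(L,r)=O(\log r)\) is \(\limsup T_{f}/\log r=\infty\), not \(T_{f}/\log r\to\infty\), so your dichotomy is not exhaustive as stated; moreover, the inequality \(T_{f}(L,r)\leq C\bigl(\log r+\log T_{f}(L,r)\bigr)\) outside a set of finite measure, combined with the monotonicity of \(T_{f}\), forces \(T_{f}(L,r)=O(\log r)\) outright --- the growth comparison never produces a contradiction by itself, it always lands you in the rational case. Second, and more seriously, ``settled directly, \(f^{*}L^{-1}\) being anti-ample'' is not a proof: over \(\C^{p}\) every holomorphic line bundle is trivial (\(\C^{p}\) is Stein and contractible), so negativity of \(f^{*}L^{-1}\) imposes no constraint whatsoever on the section \(P(f)\) over \(\C^{p}\). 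What the paper actually does here is a genuine third of the argument: extend \(f\) to a rational map \(F\colon\P^{p}\dashrightarrow X\) via Theorem~\ref{thm: rationality}; pull back \(P\) to a section of \(E_{p,k,w}\P^{p}\otimes F^{*}L^{-1}\) defined off the indeterminacy locus \(I_{F}\), which has codimension \(\geq 2\), and extend it across \(I_{F}\) by Riemann-type extension; check that \(F^{*}L\) is a \emph{positive} multiple of \(\O_{\P^{p}}(1)\) (using \(\Pic(\P^{p})=\Z\) and the pulled-back sections of \(L\)); and finally invoke the algebraic vanishing \(H^{0}(\P^{p},E_{p,k,w}\P^{p}\otimes\O_{\P^{p}}(-1))=0\) (Lemma~\ref{lemma: lem3}), itself proved via the filtration of \(E_{p,k,w}\) with graded pieces built from \(\Omega_{\P^{p}}^{\otimes m}\) and the computation \(H^{0}(\P^{p},\Omega_{\P^{p}}^{\otimes m}\otimes\O_{\P^{p}}(-1))=0\). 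A negative twist alone does not yield this vanishing: a priori the jet bundle \(E_{p,k,w}\P^{p}\) could carry enough positivity to compensate \(\O_{\P^{p}}(-1)\), and ruling that out is precisely the missing lemma your proposal never supplies.
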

The exposition follows very closely the proof in one variable as described in \cite{Dem}, with suitable adaptations for the several variables setting. We could write down the proof by refering to the one in \cite{Dem}, and indicate the changes; however, since the paper is written in french, we give details for the reader's convenience.
We state and prove three lemmas, and then proceed to the proof.

\begin{lemma}
\label{lemma: lem1}
Let \(u: Y \dashrightarrow Z\) be a rational map between two projective varieties, and let \(\omega\) and \(\omega'\) be hermitian metrics on \(Y\) and \(Z\) respectively. There exists a constant \(C>0\) such that for any non-degenerate holomorphic map 
\(f: \C^{p} \rightarrow Y\)
whose image is not included in the indeterminacy locus of \(u\), the following inequality is satisfied
\[
T_{u \circ f}(L,r)
\leq 
CT_{f}(L,r) + O(1)
\]
where the constant \(O(1)\) might depend on \(f\).
\end{lemma}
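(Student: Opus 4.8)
The plan is to reduce everything to the functoriality of the characteristic function under \emph{morphisms}, together with a comparison estimate for arbitrary line bundles. First I would record two elementary properties of $T_f$, both immediate from the definition and the Green--Jensen formula. Property (a): if $\Phi\colon X\to X'$ is a morphism of projective manifolds and $L'$ is a line bundle on $X'$, then $T_{\Phi\circ f}(L',r)=T_f(\Phi^{*}L',r)+O(1)$ (the $O(1)$ absorbing the choice of metric, since $T_f$ depends on the metric only up to a bounded term). Property (b): for any line bundle $M$ and any ample $L$ on $Y$ there is a constant $C$ with $T_f(M,r)\leq C\,T_f(L,r)+O(1)$. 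For (b) I would pick $C$ so large that $L^{C}\otimes M^{-1}$ is globally generated; additivity of $T_f$ in the line bundle gives $T_f(L^{C},r)-T_f(M,r)=T_f(L^{C}\otimes M^{-1},r)+O(1)$, and for a globally generated bundle $Q$ one has $T_f(Q,r)=T_{\phi_Q\circ f}(\O(1),r)+O(1)\geq -O(1)$, where $\phi_Q$ is the induced morphism and the characteristic function of $\O(1)$ is bounded below by the nonnegative counting function via the First Main Theorem. Since $T_f(L^{C},r)=C\,T_f(L,r)$, this yields (b).

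Next I would remove the metric $\omega'$ in favour of an ample class. As $Z$ is compact, choosing $L'$ very ample on $Z$ with positive curvature $\omega_{L'}$, one has $\omega'\leq C_0\,\omega_{L'}$ as $(1,1)$-forms; wedging the pullback of the positive form $C_0\omega_{L'}-\omega'$ with $\omega_0^{p-1}\geq 0$ and integrating gives $T_{u\circ f}(\omega',r)\leq C_0\,T_{u\circ f}(L',r)+O(1)$. It therefore suffices to bound $T_{u\circ f}(L',r)$ for $L'$ very ample. Using the embedding $\iota\colon Z\hookrightarrow\P^{M}$ it defines and property (a) applied to the morphism $\iota$, I obtain $T_{u\circ f}(L',r)=T_{v\circ f}(\O(1),r)+O(1)$, where $v\bydef\iota\circ u\colon Y\dashrightarrow\P^{M}$ is the induced rational map.

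The core step is to estimate $T_{v\circ f}(\O(1),r)$. I would write $v=[P_0:\dotsb:P_M]$ with the $P_i$ sections of a line bundle $B$ on $Y$ whose base locus is exactly the indeterminacy locus $\mathrm{Ind}(u)$; one may arrange $B$ to be dominated by a power of $L$. Because $f(\C^{p})\not\subset\mathrm{Ind}(u)$, the functions $P_i\circ f$ do not all vanish identically, so $v\circ f$ is a genuine holomorphic map and
\[
(v\circ f)^{*}\omega_{FS}=f^{*}c_1(B,h_B)+dd^{c}\varphi,\qquad \varphi=\log\!\Big(\textstyle\sum_i |P_i|^2_{h_B}\Big)\circ f,
\]
where $\varphi$ is bounded above since $\sum_i|P_i|^2_{h_B}$ is continuous on the compact $Y$. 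Integrating against $\omega_0^{p-1}$ and applying the Green--Jensen formula, the term $dd^{c}\varphi$ contributes only boundary integrals of $\varphi$ against $\sigma$, bounded above by $\sup\varphi=O(1)$; hence $T_{v\circ f}(\O(1),r)\leq T_f(B,r)+O(1)$. Combining with (b) to pass from $B$ to $L$ gives $T_{v\circ f}(\O(1),r)\leq C\,T_f(L,r)+O(1)$, and chaining the displayed inequalities proves the lemma.

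I expect the main obstacle to be the bookkeeping in this core step: justifying that $v$ is presented by sections of an honest line bundle $B$ on all of $Y$ with base locus equal to $\mathrm{Ind}(u)$ (so that the hypothesis $f(\C^{p})\not\subset\mathrm{Ind}(u)$ translates precisely into $v\circ f$ being well defined), and getting the signs right in the Green--Jensen identity so that only an upper bound $O(1)$ survives from the potential $\varphi$. The comparison (b) and the reduction from $\omega'$ to a very ample $L'$ are then routine once the basic properties of $T_f$ are in hand.
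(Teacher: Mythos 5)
Your proof is correct and, at its core, identical to the paper's: the paper begins by normalizing \(\omega\) and \(\omega'\) to be restrictions of Fubini--Study metrics under projective embeddings, represents \(u\) by degree-\(0\) rational functions \(u_{j}=p_{j}/q\) with the \(p_{j}\) of common degree \(m\), and writes \(u^{*}\omega'=m\omega+\frac{i}{2\pi}\partial\overline{\partial}\psi\) with \(\psi=\log\big(\sum_{j}\abs{p_{j}}^{2}/\abs{\mathbf{z}}^{2m}\big)\) bounded above, concluding by the generalized Jensen formula --- which is precisely your core step in the special case \(B=\O_{Y}(m)\) with the induced Fubini--Study metric and \(\varphi=\psi\circ f\). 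Your preliminary properties (a) and (b) and the reduction from \(\omega'\) to a very ample \(L'\) simply replace, in more modular form, the paper's opening normalization, so the two arguments coincide up to packaging (and they share the same mild glossing over the case where the common zero locus of the chosen representation strictly contains the indeterminacy locus).
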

\begin{proof}
One can always suppose that \(X\) (resp. \(Y\)) is embedded in a projective space \(\P^{N}\) (resp. \(\P^{N'}\)), and that \(\omega\) (resp. \(\omega'\)) is the restriction of the Fubini-Study metric on \(X\) (resp. \(Y\)). Let \((u_{0}, \dotsc, u_{N'})\) be rational functions of degree \(0\) defining \(u\) in restriction to \(X\), and write \(u_{j}=\frac{p_{j}}{q}\), where \(q\) and the \(p_{j}\)'s are polynomials with same degree \(m\).

Outside the indeterminacy locus of \(u\), one can write
\[
u^{*}\omega'=m\omega + \frac{i}{2\pi} \partial\overline{\partial} \psi,
\]
where \(\psi(\mathbf{z})
\bydef 
\log
\left(
\sum\limits_{j=0}^{N} \frac{\abs{p_{j}(\mathbf{z})}}{\abs{\mathbf{z}}^{2m}}
\right)
\)
is an upper-bounded function on \(\P^{N}\). Therefore, one deduces that 
\[
(u \circ f)^{*}\omega'
=
mf^{*}\omega + \frac{i}{2\pi} \partial\overline{\partial}(\psi \circ f).
\]
Using a generalization of Jensen's formula, see e.g \cite{Nevan}[p.29, Lemma 2], one deduces the result.
\end{proof}

\begin{lemma}
\label{lemma: lem2}
Let \(X\) be a projective manifold equipped with an ample line bundle \(L\), and suppose that there exists a non-zero global section
\[
P \in H^{0}(X, E_{p,k,w}X\otimes L^{-1}),
\]
where \(p,k,w \in \N_{\geq 1}\). Then any non-degenerate holomorphic map  \(f\colon \C^{p} \rightarrow X\) satisfies the following estimate:
\[
T_{f}(L,r) \underset{r \rightarrow \infty}{=} O(\log(r)).
\]
\end{lemma}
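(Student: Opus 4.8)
The plan is to transpose Siu's one-variable argument (as described in \cite{Dem}) to the several-variable setting, feeding on the logarithmic derivative estimate Corollary \ref{cor: dlog}. Fix a metric $h$ on $L$ with positive curvature form $\Theta = c_h > 0$, choose $q$ so that $L^{q}$ is very ample, and use the resulting embedding $\Phi \colon X \hookrightarrow \P^{N}$ together with a reduced representation $\Phi \circ f = [f_{0} : \dotsb : f_{N}]$, the $f_{i}$ entire on $\C^{p}$ with no common zero. By Lemma \ref{lemma: lem1} and the comparison $T_{\Phi \circ f}(\O(1),r) = q\, T_{f}(L,r) + O(1)$, it suffices to control $T_{f}(L,r)$ through quantities attached to the meromorphic coordinate ratios $F_{ij} = f_{i}/f_{j}$; non-degeneracy of $f$ guarantees these are non-constant, so that Corollary \ref{cor: dlog} applies to each of them and, in particular, $T_{f}(L,r) \to \infty$.

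Next I would evaluate the operator on $f$: applied to the $k$-jet $\partial_{p,k}f$, the jet differential $P$ produces a holomorphic section $\Psi \bydef P(f)$ of $f^{*}L^{-1}$ over $\C^{p}$. The bundle $f^{*}L^{-1}$ carries the pulled-back metric $h^{-1}$, whose curvature is $-f^{*}\Theta \leq 0$; hence by Poincaré--Lelong the function $\log \|\Psi\|^{2}_{h^{-1}}$ is plurisubharmonic, with $dd^{c}\log\|\Psi\|^{2} = [Z_{\Psi}] + f^{*}\Theta$. Applying the Jensen formula on balls (\cite{Nevan}[p.29, Lemma 2], as already used in Lemma \ref{lemma: lem1}) and integrating $\int_{0}^{r}\frac{dt}{t}$, the $f^{*}\Theta$-part reproduces a positive multiple of $T_{f}(L,r)$, while the effective divisor $[Z_{\Psi}]$ contributes a nonnegative counting term that can be dropped. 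This yields a one-sided bound of the form $T_{f}(L,r) \leq C\int_{\partial B_{r}^{p}}\log^{+}\|\Psi\|_{h^{-1}}\,\sigma + O(1)$ for some constant $C>0$.

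The technical heart, and the step I expect to be the main obstacle, is the estimate $\int_{\partial B_{r}^{p}}\log^{+}\|\Psi\|_{h^{-1}}\,\sigma = O\big(\log r + \log T_{f}(L,r)\big)$ valid outside a set of finite Lebesgue measure. The key point is that $\|\Psi\|_{h^{-1}}$ carries \emph{no net polynomial growth} in the $f_{i}$: although in an affine chart $P$ is a weighted polynomial of degree $w$ in the multi-derivatives $\partial_{\overline{u}}F_{ij}$ with $\ell(\overline{u}) \leq k$, and although the chosen frame of $L^{-1}$ contributes a factor that is a power of $\|f\|/|f_{j}|$, these two homogeneities cancel precisely because $\Psi$ is an \emph{intrinsic} section of a bundle over the compact manifold $X$. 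Concretely, I would fix a smooth Hermitian metric on $X$ and bound $\|\Psi\|_{h^{-1}}$ by the corresponding weight-$w$ jet-norm of $\partial_{p,k}f$; writing each $\partial_{\overline{u}}F_{ij} = F_{ij}\cdot(\text{polynomial in the }\partial_{\overline{v}}\log F_{ij})$ by means of the composition formula Lemma \ref{lemma: compo}, the projectively invariant combination reduces $\log^{+}\|\Psi\|_{h^{-1}}$ to a finite sum of terms $\log^{+}|\partial_{\overline{v}}\log F_{ij}|$ plus $O(1)$. Each of these is then controlled by Corollary \ref{cor: dlog}, producing the announced $O(\log r + \log T_{f}(L,r))$ bound. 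Carrying out this cancellation rigorously, and keeping track of the several-variable bookkeeping, is the delicate part.

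Finally, combining the two previous displays gives a self-improving inequality $T_{f}(L,r) \leq a_{0} + a_{1}\log r + a_{2}\log T_{f}(L,r)$ outside a set of finite measure. Since $T_{f}(L,r)$ is nondecreasing in $r$, a standard Borel-type lemma removes the exceptional set, and the inequality then forces $T_{f}(L,r) = O(\log r)$: indeed any faster growth would make $\log T_{f}(L,r) = o(T_{f}(L,r))$, contradicting the bound. This is exactly the asserted estimate, and through Theorem \ref{thm: rationality} it is what will allow one to conclude rationality of $f$ in the proof of Theorem \ref{thm: vanishing}.
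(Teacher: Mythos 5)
Your proof is correct and follows essentially the same route as the paper's: Poincar\'e--Lelong plus Jensen applied to the section \(P(f)\) of \(f^{*}L^{-1}\) to bound \(T_{f}(L,r)\) by a boundary integral of \(\log^{+}\) of the evaluated section, reduction of that integrand to logarithmic derivatives of the coordinate ratios (the paper implements your ``homogeneity cancellation'' by taking \(\log(v_{i,0}\circ f)\) as local coordinates and using compactness of \(X\) to get uniform constants), then Corollary \ref{cor: dlog}, Lemma \ref{lemma: lem1}, and the self-improving inequality \(T_{f}(L,r)\leq a_{0}+a_{1}\log r+a_{2}\log T_{f}(L,r)\). If anything, your final step is spelled out more carefully than the paper's, which leaves the exceptional-set removal and the concluding inequality implicit (and whose last display omits the \(\log\) in front of \(T_{v_{i,j}\circ f}\) that your version correctly retains).
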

\begin{proof}
Denote \(\pi: J_{p,k}X \rightarrow X\) the canonical projection as well as \(\tilde{L}=\pi^{*}L\) the pull-back line bundle on \(J_{p,k}X\). Note that the global section \(P \in H^{0}(X, E_{p,k,w}X\otimes L^{-1})\) induces a section 
\[
s: J_{p,k}X \rightarrow \tilde{L}^{-1}, \ \ (x, \partial_{p,k}\gamma) \mapsto \big((x, \partial_{p,k}\gamma), P_{x}(\partial_{p,k} \gamma)\big).
\]
Note also that the holomorphic map \(f: \C^{p} \rightarrow X\) induces a holomorphic map
\[
f_{[k]}: \C^{p} \rightarrow J_{p,k}X
\]
satisfying the following equality: \(f_{[k]}^{*}\tilde{L}=f^{*}L\). 
As a corollary of Poincaré-Lelong formula, one obtains the following equality in the sense of currents (see \cite{Nevan}[p.54]):
\[
dd^{c} \log ||f_{[k]}^{*}s||_{f^{*}h} = -c_{f^{*}h^{-1}} + \Div(f_{[k]}^{*}s)
\]
where \(h\) is any positive metric on \(L\). By functoriality of the Chern class, this can be rewritten as follows
\[
dd^{c} \log ||f_{[k]}^{*}s||_{f^{*}h} = f^{*}c_{h} + \Div(f_{[k]}^{*}s).
\]
In particular, as the current associated to a divisor is always positive, one has the inequality (in the sense of currents)
\[
dd^{c} \log ||f_{[k]}^{*}s||_{f^{*}h} \geq f^{*}c_{h}.
\]
It implies that
\begin{equation*}
\begin{aligned}
T_{f}(L,r)
&
=
\int_{0}^{r} \frac{dt}{t} \int_{\B^{p}_{t}} f^{*}(c_{h})\wedge \omega_{0}^{p-1}
\\
&
\leq
\int_{0}^{r} \frac{dt}{t} \int_{\B^{p}_{t}} dd^{c} \log ||f_{[k]}^{*}s||_{f^{*}h}\wedge \omega_{0}^{p-1}
\\
&
=
\frac{1}{2} \int_{\partial \B_{r}^{p}} \log ||f_{[k]}^{*}s||_{f^{*}h} \cdot \sigma + O(1)
\end{aligned}
\end{equation*}
where one uses a generalization of Jensen's formula (see \cite{Nevan}[p.29, Lemma 2]) for the last equality.

As \(X\) is projective, one can embedd it in a projective space \(\P^{N}\), and consider the following finite family of rational maps
\[
\big(v_{i,j}=\frac{X_{i}}{X_{j}}\big)_{i\neq j}
\]
where \((X_{0}, \dotsc, X_{N})\) are the homogeneous coordinates on \(\P^{N}\).
Let \(x_{0} \in X\). As \(X\) is smooth, one can extract \(n=\dim(X)\) rational maps from the previous family such that they form local coordinates of \(X\) around \(x_{0}\). One we can always suppose that none of them vanishes around \(x_{0}\), and without loss of generality, suppose that this family is \((v_{1,0}, \dotsc, v_{n,0})\). By taking a local determination of their logarithm, observe that
\[
\big(\log (v_{1,0}), \dotsc, \log(v_{n,0})\big)
\]
remains local coordinates around \(x_{0} \in X\).
Writing \(f_{[k]}^{*}s\) in these local coordinates, one sees that it is expressed as a polynomial in the derivatives 
\[
\Big(\partial_{\overline{u}}(\log(v_{i,0} \circ f))\Big)_{\substack{1 \leq i \leq n \\ \overline{u} \in \mathcal{W}_{p, \leq k}}}
\]
with holomorphic coefficients in the variable \(f\), i.e.
\[
f_{[k]}^{*}s
\overset{loc}{=}
Q\big(f, \partial_{\overline{u}}(\log(v_{i,0} \circ f)) \big),
\]
where \(Q\big(z, \bold{w}=(w_{\overline{u},i}\big)_{\overline{u},i})= \sum\limits_{\boldsymbol{\alpha}} c_{\boldsymbol{\alpha}}(z) \bold{w}^{\boldsymbol{\alpha}}\), with \(c_{\boldsymbol{\alpha}}\) holomorphic.
From this equality, one infers that there exists a constant \(K\) such that the following inequality is satisfied on a neighborhood \(U_{x_{0}}\) of \(x_{0}\):
\[
 \log ||f_{[k]}^{*}s||_{f^{*}h} 
 \leq
 K
\sum\limits_{1 \leq i \leq n} \sum\limits_{\overline{u} \in \mathcal{W}_{p, \leq k}} \log^{+} | \partial_{\overline{u}}(\log(v_{i,0} \circ f)) |.
 \]

One can do the previous reasoning around every point of $X$, and by compacity of \(X\), one deduces that there exists a constant \(\overline{K}\) and a family of indexes 
\[
I=\{(1,0), \dotsc, (n,0), \dotsc \} \subset \{0, \dotsc, N\}^{2}
\]
such that for all \(x \in X\), the following inequality is satisfied:
\[
 \log ||f_{[k]}^{*}s||_{f^{*}h} 
 \leq
 \overline{K}
\sum\limits_{(i,j) \in I} \sum\limits_{\overline{u} \in \mathcal{W}_{p, \leq k}} \log^{+} | \partial_{\overline{u}}(\log(v_{i,j} \circ f)) |.
 \]
Now, for each \((i,j) \in I\), \(v_{i,j} \circ f\) can be seen as the inhomogeneous form of a meromorphic map \(f_{i,j}: \C^{p} \rightarrow \P^{1}\), so that the (corollary of) the lemma on the logarithmic derivative applies. One therefore gets the following inequalities:
\begin{equation*}
\begin{aligned}
T_{f}(L,r)
&
\leq
O(1) 
+
\frac{\overline{K}}{2} 
\sum\limits_{(i,j) \in I} \sum\limits_{\overline{u} \in \mathcal{W}_{p, \leq k}}
 \int_{\partial \B_{r}^{p}}
\log^{+} | \partial_{\overline{u}}(\log(v_{i,j} \circ f)) |
\cdot
\sigma
\\
&
\leq O(\log(r)) + O\left(\sum\limits_{(i,j) \in I}T_{v_{i,j}\circ f}(r)\right) 
\ 
||.
\end{aligned}
\end{equation*}
A straightorward application of Lemma \ref{lemma: lem1} finishes the proof.
\end{proof}

\begin{lemma}
\label{lemma: lem3}
For any \(p,k,w \in \N_{\geq 1}\), the following vanishing result holds:
\[
H^{0}(\P^{p}, E_{p,k,w}\P^{p}\otimes \O_{\P^{p}}(-1))
=0.
\]
\end{lemma}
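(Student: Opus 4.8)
The plan is to exploit the filtration of $E_{p,k,w}\P^{p}$ described above, whose graded pieces are tensor products of symmetric powers of the cotangent bundle $\Omega_{\P^{p}}$, and to bound each such piece by a sufficiently negative twist of $\O_{\P^{p}}$ via the Euler sequence. Since $\O_{\P^{p}}(-1)$ is a line bundle, tensoring the filtration by it preserves exactness, and the graded pieces become $\big(\bigotimes_{\overline{u}} S^{d(\overline{u})}\Omega_{\P^{p}}\big)\otimes \O_{\P^{p}}(-1)$, where the integers $d(\overline{u})$ satisfy $\sum_{\overline{u}\in \mathcal{W}_{p,\leq k}} d(\overline{u})\ell(\overline{u})=w$ (obtained by summing over $j$ the constraint $\sum_{\overline{u}} d(\overline{u})\alpha_{j}(\overline{u})=\beta_{j}$, since $\ell(\overline{u})=\sum_{j}\alpha_{j}(\overline{u})$ and $w=\abs{\boldsymbol{\beta}}$). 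A repeated use of the long exact sequence in cohomology then reduces the statement to showing that each such graded piece has no global section: indeed, if $0\to A\to B\to C\to 0$ with $H^{0}(A)=H^{0}(C)=0$, then $H^{0}(B)=0$.

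To handle a single graded piece, first I would recall the (dual) Euler sequence
\[
0 \to \Omega_{\P^{p}} \to \O_{\P^{p}}(-1)^{\oplus(p+1)} \to \O_{\P^{p}} \to 0,
\]
which exhibits $\Omega_{\P^{p}}$ as a subbundle of $\O_{\P^{p}}(-1)^{\oplus(p+1)}$. Since taking symmetric powers preserves inclusions of vector bundles in characteristic zero, one obtains for each $d$ an injection of bundles
\[
S^{d}\Omega_{\P^{p}} \hookrightarrow S^{d}\big(\O_{\P^{p}}(-1)^{\oplus(p+1)}\big) \simeq \O_{\P^{p}}(-d)^{\oplus N_{d}}, \qquad N_{d}=\binom{p+d}{d},
\]
where the last identification comes from $\O_{\P^{p}}(-1)^{\oplus(p+1)} \simeq \C^{p+1}\otimes \O_{\P^{p}}(-1)$. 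Tensoring these injections together yields an injection of vector bundles
\[
\bigotimes_{\overline{u}} S^{d(\overline{u})}\Omega_{\P^{p}} \hookrightarrow \O_{\P^{p}}(-D)^{\oplus M}, \qquad D \bydef \sum_{\overline{u}} d(\overline{u}),
\]
for a suitable $M \in \N_{\geq 1}$.

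The final step is the degree count. Because $w \geq 1$ and every word in $\mathcal{W}_{p,\leq k}$ has length $\ell(\overline{u})\geq 1$, the relation $w=\sum_{\overline{u}} d(\overline{u})\ell(\overline{u})$ forces at least one $d(\overline{u})$ to be positive, hence $D \geq 1$. Twisting the above injection by $\O_{\P^{p}}(-1)$ gives an injection of $\big(\bigotimes_{\overline{u}} S^{d(\overline{u})}\Omega_{\P^{p}}\big)\otimes \O_{\P^{p}}(-1)$ into $\O_{\P^{p}}(-D-1)^{\oplus M}$, and since $-D-1 \leq -2 < 0$ one has $H^{0}(\P^{p}, \O_{\P^{p}}(-D-1))=0$. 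As $H^{0}$ is left exact, the graded piece twisted by $\O_{\P^{p}}(-1)$ has no global section, which is exactly what the reduction requires, and the filtration argument then gives the claim. I do not expect a genuine obstacle here; the only points needing care are the compatibility of symmetric powers and tensor products with subbundle inclusions (valid over $\C$) and the bookkeeping giving $D\geq 1$, which is where the hypothesis $w\geq 1$ enters essentially.
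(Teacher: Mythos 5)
Your proof is correct and takes essentially the same route as the paper: the paper likewise reduces via the filtration of \(E_{p,k,w}\P^{p}\) to graded pieces built from \(\Omega_{\P^{p}}\) (it passes to the tensor powers \(\Omega_{\P^{p}}^{\otimes m}\), of which your pieces \(\bigotimes_{\overline{u}} S^{d(\overline{u})}\Omega_{\P^{p}}\) are direct summands in characteristic zero) and then kills sections using the Euler sequence, merely sketching this as ``induction with explicit computations''; your one-shot embedding \(S^{d}\Omega_{\P^{p}} \hookrightarrow \O_{\P^{p}}(-d)^{\oplus N_{d}}\) plus left exactness of \(H^{0}\) is a clean non-inductive packaging of exactly that computation. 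One small remark: the hypothesis \(w \geq 1\) is not actually essential at the last step, since even if \(D=0\) the extra twist by \(\O_{\P^{p}}(-1)\) alone already gives \(H^{0}=0\).
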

\begin{proof}
Using the filtration described at the end of Section~\ref{ssse: jets differentials}, one infers that it is enough to prove the following vanishing result for any \(m \in \N_{\geq 1}\):
\[
H^{0}(\P^{p}, \Omega_{\P^{p}}^{\otimes m} \otimes \O_{\P^{p}}(-1))
=
0.
\]
This follows from a general vanishing theorem (see \cite{Dem}), but in our situation, one can argue by induction with explicit computations (using the Euler sequence for \(\Omega_{\P^{p}}\) and the computation of the cohomology of the line bundles \(\O_{\P^{p}}(r)\), \(r \in \Z\)).
\end{proof}
We can now prove the wanted vanishing theorem:

\begin{proof}[Proof of Theorem \ref{thm: vanishing}]
Argue by contradiction, and suppose that there exists a non-degenerate holomorphic map \(f: \C^{p} \rightarrow X\) such that \(P_{f}(\partial_{p,k}f) \not\equiv 0\). By Lemma \ref{lemma: lem2} and Theorem \ref{thm: dlog},  the map \(f\) is actually rational, so that it is induced by a rational map
\[
F: \P^{p} \dashrightarrow X,
\]
which is well defined outside a set \(I_{F}\) of codimension \(\geq 2\). One can pull-back the global section \(P\) to obtain a non-zero global section
\[
F^{*}P \in H^{0}(\P^{p} \setminus I_{F}, E_{p,k,w}\P^{p}\otimes F^{*}L^{-1}).
\]
Note that \(F\) is well-defined at the level of Picard groups (as it is well defined in codimension \(\geq 2\)), so that \(F^{*}L\) is indeed a line bundle on \(X\). Furthermore, observe that it must be a positive multiple of \(\mathcal{O}(1)\), hence very ample. Indeed, \(\Pic(\P^{p})=\Z\cdot \mathcal{O}(1)\), and \(F^{*}L\) has non-zero global sections, obtained by pulling-back the one coming from \(L\) via \(F\): they are à priori well-defined on \(\P^{p} \setminus I_{F}\), but they extend by Riemann's extension theorem.

By invoking once again Riemann's extension theorem, the pull-back section \(F^{*}P\) extends to a non-zero element of \(H^{0}(\P^{p}, E_{p,k,w}\P^{p}\otimes F^{*}L^{-1})\). This contradicts Lemma \ref{lemma: lem3}, and finishes the proof.
\end{proof}

\ack{I would like to thank Erwan Rousseau for his support, his comments and for the helpful discussions we had. I also would like to thank Jorge Vitorio Pereira for his useful comments and advices.}

\bibliographystyle{alpha}
\bibliography{Wronskians}

\end{document}